\documentclass[a4paper,10pt]{article}
\usepackage[utf8]{inputenc}
 \pdfminorversion=7
\usepackage[english]{babel}
\usepackage{graphicx}
\usepackage{fancyhdr}
\usepackage{geometry}
\geometry{margin=1in}
\usepackage{amsfonts}
\usepackage{amssymb}
\usepackage{amsthm}
\usepackage{amsmath}
\usepackage{amsfonts}
\usepackage{amssymb}
\usepackage{mathtools}
\usepackage{enumitem}
\usepackage[percent]{overpic}
\usepackage{tikz}
\usepackage{mathrsfs}
\usepackage{ifthen,tabularx,graphicx,multirow}
\usepackage{xargs}
\usepackage[colorinlistoftodos,prependcaption,textsize=tiny]{todonotes}
\newcommandx{\at}[2][1=]{\todo[linecolor=red,backgroundcolor=red!25,bordercolor=red,#1]{#2}}
\numberwithin{equation}{section}
\newtheorem{theorem}{Theorem}

\newtheorem{example}{Example}
\newtheorem{lemma}{Lemma}
\newtheorem{corollary}{Corollary}
\newtheorem{proposition}{Proposition}
\newtheorem{definition}{Definition}
\numberwithin{theorem}{section}
\numberwithin{lemma}{section}
\numberwithin{corollary}{section}
\numberwithin{proposition}{section}
\numberwithin{definition}{section}
\numberwithin{example}{section}
\interfootnotelinepenalty=10000
\usepackage{algorithm}
\usepackage{hyperref}
\usepackage{algpseudocode}
\makeatletter 
\newcounter{algorithmicH}
\let\oldalgorithmic\algorithmic
\renewcommand{\algorithmic}{
  \stepcounter{algorithmicH}
  \oldalgorithmic}
\renewcommand{\theHALG@line}{ALG@line.\thealgorithmicH.\arabic{ALG@line}}
\makeatother

\usepackage{mathtools,booktabs}
\DeclarePairedDelimiter\ceil{\lceil}{\rceil}

\usepackage{varwidth}

\usepackage[capitalise]{cleveref}
\usepackage{cite}
\definecolor{rev1}{HTML}{cb270f}
\definecolor{rev2}{HTML}{1c8235}

\title{Rigorous data-driven computation of spectral properties of Koopman operators for dynamical systems}
\author{Matthew J. Colbrook\thanks{Department of Applied Mathematics and Theoretical Physics, University of Cambridge, Cambridge, CB3 0WA. ({m.colbrook@damtp.cam.ac.uk})} \and Alex Townsend\thanks{Department of Mathematics, Cornell University, Ithaca, NY  14853. ({townsend@cornell.edu})}}
\date{}
\begin{document}
\maketitle
 
\begin{abstract}
Koopman operators are infinite-dimensional operators that globally linearize nonlinear dynamical systems, making their spectral information valuable for understanding dynamics. However, Koopman operators can have continuous spectra and infinite-dimensional invariant subspaces, making computing their spectral information a considerable challenge. This paper describes data-driven algorithms with rigorous convergence guarantees for computing spectral information of Koopman operators from trajectory data. We introduce residual dynamic mode decomposition (ResDMD), which provides the first scheme for computing the spectra and pseudospectra of general Koopman operators from snapshot data without spectral pollution. Using the resolvent operator and ResDMD, we compute smoothed approximations of spectral measures associated with general measure-preserving dynamical systems. We prove explicit convergence theorems for our algorithms, which can achieve high-order convergence even for chaotic systems when computing the density of the continuous spectrum and the discrete spectrum. Since our algorithms come with error control, ResDMD allows aposteri verification of spectral quantities, Koopman mode decompositions, and learned dictionaries. We demonstrate our algorithms on the tent map, circle rotations, Gauss iterated map, nonlinear pendulum, double pendulum, and Lorenz system. Finally, we provide kernelized variants of our algorithms for dynamical systems with a high-dimensional state space. This allows us to compute the spectral measure associated with the dynamics of a protein molecule with a 20,046-dimensional state space and compute nonlinear Koopman modes with error bounds for turbulent flow past aerofoils with Reynolds number $>10^5$ that has a 295,122-dimensional state space.
\end{abstract}

\begin{keywords}
Dynamical systems, Koopman operator, Data-driven discovery, Dynamic mode decomposition, Spectral theory
\end{keywords}

\begin{AMS}
37M10, 65P99, 65F99, 65T99, 37A30, 47A10, 47B33, 37N10, 37N25 
\end{AMS}

\section{Introduction}
Dynamical systems are a mathematical description of time-dependent states or quantities that characterize evolving processes in classical mechanics, electrical circuits, fluid flows, climatology, finance, neuroscience, epidemiology, and many other fields. Throughout the paper, we consider autonomous dynamical systems whose state evolves over a state-space $\Omega\subseteq\mathbb{R}^d$ in discrete time-steps according to a function $F:\Omega \rightarrow \Omega$. In other words, we consider dynamical systems of the form 
\begin{equation} 
\pmb{x}_{n+1} = F(\pmb{x}_n), \qquad n\geq 0, 
\label{eq:DynamicalSystem} 
\end{equation} 
where $\pmb{x}_0$ is a given initial condition. Such a dynamical system forms a trajectory of iterates $\pmb{x}_0,\pmb{x}_1,\pmb{x}_2,\ldots$ in $\Omega$. We want to analyze such trajectories to answer questions about the system's behavior. The interaction between numerical analysis and dynamical systems theory has stimulated remarkable growth in the subject since the 1960s~\cite{kalman1963mathematical,lorenz1963deterministic,epstein1969stochastic,stuart1998dynamical}. With the arrival of big data \cite{hey2009fourth}, modern statistical learning \cite{friedman2017elements}, and machine learning \cite{mohri2018foundations}, data-driven algorithms are now becoming increasingly important in understanding dynamical systems~\cite{schmidt2009distilling,brunton2019data}.

A classical viewpoint to analyze dynamical systems that originates in the seminal work of Poincar\'{e}~\cite{poincare1899methodes} is to study fixed points and periodic orbits, as well as stable and unstable manifolds. Two fundamental challenges with Poincar\'{e}'s geometric state-space viewpoint are: 
\begin{itemize}[leftmargin=*]

\item \textbf{Nonlinear dynamics:} To understand the stability of fixed points of nonlinear dynamical systems, one typically forms local models centered at these fixed points. Such models allow one to predict long-time dynamics in small neighborhoods of fixed points and attracting manifolds. However, they do not provide reasonable predictions for all initial conditions. A global understanding of nonlinear dynamics in state-space remains largely qualitative~\cite{budivsic2012applied}. For example, Brunton and Kutz describe the global understanding of nonlinear dynamics in state-space as a ``\textit{mathematical grand challenge of the 21st century}"~\cite{brunton2019data}. This paper focuses on the Koopman operator viewpoint of dynamical systems. Koopman operators provide a global linearization of~\eqref{eq:DynamicalSystem} by studying the evolution of observables of the system. 
	
\item \textbf{Unknown dynamics:} For many applications, a system's dynamics may be too complicated to describe analytically, or we may have incomplete knowledge of its evolution. Typically, we can only acquire several sequences of iterates of~\eqref{eq:DynamicalSystem} starting at different values of $\pmb{x}_0$. This constraint means that constructing local models can be impossible. In this paper, we focus on data-driven approaches to learning and analyzing the dynamical system with trajectories of iterates from~\eqref{eq:DynamicalSystem}. 

\end{itemize}

Koopman operator theory, which dates back to Koopman and von Neumann~\cite{koopman1932dynamical,koopman1931hamiltonian}, is an alternative viewpoint to analyze a dynamical system that uses the space of scalar observable functions~\cite{mezicAMS}. Its increasing popularity has led to the term ``Koopmanism''~\cite{budivsic2012applied} and thousands of articles over the last decade. Sparked by \cite{mezic2005spectral,mezic2004comparison}, a reason for the recent attention is its use in data-driven methods for studying dynamical systems (see~\cite{brunton2021modern} for a review and the history). Some popular applications include fluid dynamics~\cite{schmid2010dynamic,rowley2009spectral,mezic2013analysis,giannakis2018koopman}, epidemiology~\cite{proctor2015discovering}, neuroscience~\cite{brunton2016extracting}, finance~\cite{mann2016dynamic}, robotics~\cite{berger2015estimation,bruder2019modeling}, power grids~\cite{susuki2011coherent,susuki2011nonlinear}, and molecular dynamics\cite{nuske2014variational,klus2018data,schwantes2015modeling,schwantes2013improvements}. 

Let $g:\Omega\rightarrow\mathbb{C}$ be a function that one can use to indirectly measure the dynamical system's state in~\eqref{eq:DynamicalSystem}. Such a function $g$ is often called an observable. Therefore, $g(\pmb{x}_k)$ indirectly measures $\pmb{x}_k$ and $g(\pmb{x}_{k+1}) = g(F(\pmb{x}_k))$ measures the state one time-step later. Since we are interested in how the state evolves, we can define an observable $\mathcal{K}_g: \Omega \rightarrow \mathbb{C}$ such that $\mathcal{K}_g(\pmb{x}) = g(F(\pmb{x})) = (g\,\circ\, F)(\pmb{x})$, which tells us how our measurement evolves from one time-step to the next. In this way, $\mathcal{K}_g$ indirectly monitors the evolution of~\eqref{eq:DynamicalSystem}. After exhaustively using $g$ to measure the dynamical system at various time steps, we might want to probe the dynamical system using a different observable. As one tries different observables to probe the dynamics, one wonders if there is a principled way to select ``good" observables. This motivates studying the Koopman operator given by $\mathcal{K}: g \mapsto \mathcal{K}_g$. One typically works in the Hilbert space $L^2(\Omega,\omega)$ of observables for a positive measure $\omega$ on $\Omega$.\footnote{We do not assume that this measure is invariant, and the most common choice of $\omega$ is the standard Lebesgue measure. This choice is natural for Hamiltonian systems for which the Koopman operator is unitary on $L^2(\Omega,\omega)$. For other systems, we can select $\omega$ according to the region where we wish to study the dynamics, such as a Gaussian measure.} To be precise, we consider $\mathcal{K} : \mathcal{D}(\mathcal{K})\rightarrow L^2(\Omega,\omega)$, where $\mathcal{D}(\mathcal{K}) \subseteq L^2(\Omega,\omega)$ is a suitable domain of observables. That is,
\begin{equation} 
[\mathcal{K}g](\pmb{x}) = (g\circ F)(\pmb{x}), \qquad \pmb{x}\in\Omega, \qquad g\in \mathcal{D}(\mathcal{K}), 
\label{eq:KoopmanOperator} 
\end{equation}
where the equality is understood in the $L^2(\Omega,\omega)$ sense. For a fixed $g$, $[\mathcal{K}g](\pmb{x})$ measures the state of the dynamical system after one time-step if it is currently at $\pmb{x}$. On the other hand, for a fixed $\pmb{x}$,~\eqref{eq:KoopmanOperator} is a linear composition operator with $F$. Since $\mathcal{K}$ is a linear operator, regardless of whether the dynamics are linear or nonlinear, the spectral information of $\mathcal{K}$ determines the behavior of the dynamical system \eqref{eq:DynamicalSystem} (e.g., see~\eqref{koopman_mode_decomp_re}). However, since $\mathcal{K}$ is an infinite-dimensional operator, its spectral information can be far more complicated than that of a finite matrix, and far more difficult to compute~\cite{colbrookthesis,colbrook2022computation,colbrook2022foundations,SCI_ref,colbrook2019infinite}. For example, $\mathcal{K}$ can have both discrete\footnote{Throughout this paper, we use the term ``discrete spectra'' to mean the eigenvalues of $\mathcal{K}$, also known as the point spectrum. This also includes embedded eigenvalues, in contrast to the usual definition of discrete spectrum.} and continuous spectra~\cite{mezic2005spectral}.

Computing the spectral properties of $\mathcal{K}$ is an active area of research, and several popular algorithms exist.\footnote{Computing spectral properties of Perron–Frobenius or transfer operators is another area of active research~\cite{dellnitz1999approximation,froyland1997computer,froyland2007ulam,junge2009discretization}. On appropriate spaces, which are often not Hilbert spaces, the Perron–Frobenius operator is the adjoint of the Koopman operator. One can extend some of our notions to Banach spaces~\cite{seidel2012n}; however, a difficulty is that the approximation of finite matrix norms can be NP-hard~\cite{hendrickx2010matrix}.} Generalized Laplace analysis, which is related to the power method, uses prior information about the eigenvalues of $\mathcal{K}$ to compute corresponding Koopman modes.\footnote{These are vectors to reconstruct the system's state as a linear combination of the candidate Koopman eigenfunctions.} Dynamic mode decomposition (DMD), which is related to the proper orthogonal decomposition, uses a linear model to fit trajectory data~\cite{schmid2010dynamic,tu2014dynamic,rowley2009spectral,kutz2016dynamic}, and extended DMD (EDMD)~\cite{williams2015data,williams2015kernel,klus2020data} is a Galerkin approximation of $\mathcal{K}$ that uses a rich dictionary of observables (see \cref{sec:basic_EDMD}). Other data-driven methods include deep learning~\cite{li2017extended,yeung2019learning,lusch2018deep,mardt2018vampnets,otto2019linearly}, reduced-order modeling~\cite{guo2019data,benner2015survey}, sparse identification of nonlinear dynamics~\cite{brunton2016discovering,rudy2017data}, and kernel analog forecasting~\cite{burov2021kernel,giannakis2021learning,zhao2016analog}. However, remaining challenges include the following:
\begin{itemize}[leftmargin=*]

\item[C1.] \textbf{Continuous spectra:} Since the Koopman operator is infinite-dimensional, it can have a continuous spectrum. Continuous spectra must be treated with considerable care as discretizing $\mathcal{K}$ destroys its presence. Several data-driven approaches are proposed to handle continuous spectra, such as HAVOK analysis~\cite{brunton2017chaos}, which applies DMD to a vector of time-delayed measurements, and feed-forward neural networks with an auxiliary network to parametrize the continuous spectrum~\cite{lusch2018deep}. While these methods are promising, they currently lack convergence guarantees. Most existing nonparametric approaches for computing continuous spectra of $\mathcal{K}$ are restricted to ergodic systems~\cite{giannakis2019data}, as this allows relevant integrals to be computed using long-time averages. For example,~\cite{arbabi2017study} approximates discrete spectra using harmonic averaging, applies an ergodic cleaning process, and then estimates the continuous spectrum using Welch's method~\cite{welch1967use}. Though these methods are related to well-studied techniques in signal processing~\cite{ghil2002advanced}, they can be challenging to apply in the presence of noise or if a pair of eigenvalues are close together. Moreover, they often rely on heuristic parameter choices and cleanup procedures. Other approaches for ergodic systems include approximating the functional calculus using integral operator compact regularizations \cite{das2021reproducing}, and approximating the moments of the spectral measure using ergodicity followed by separating the atomic and continuous parts of the spectrum using the Christoffel--Darboux kernel~\cite{korda2020data}. In this paper, we do not assume ergodicity, and we develop a computational framework that deals jointly with continuous and discrete spectra via computing smoothed approximations of spectral measures with explicit high-order convergence rates.

\item[C2.] \textbf{Invariant subspaces:} A finite-dimensional invariant subspace of $\mathcal{K}$ is a space of observables $\mathcal{G} = {\rm span}\!\left\{g_1,\ldots,g_k\right\}$ such that $\mathcal{K}g\in \mathcal{G}$ for all $g\in\mathcal{G}$. A common assumption in the literature is that $\mathcal{K}$ has a finite-dimensional nontrivial\footnote{If $\omega(\Omega)<\infty$, the constant function $1$ generates an invariant subspace, but this is not interesting for studying dynamics.} invariant space, which may not be the case (e.g., when the system is mixing). Even if a (nontrivial) finite-dimensional invariant subspace exists, it can be challenging to compute or may not capture all of the dynamics of interest. Often, one must settle for approximate invariant subspaces, and methods such as EDMD generally result in closure issues~\cite{brunton2016koopman}. In this paper, we develop methods that directly compute spectral properties of $\mathcal{K}$ instead of restrictions of $\mathcal{K}$ to finite-dimensional subspaces.

\item[C3.] \textbf{Spectral pollution:} A well-known difficulty of computing spectra of infinite-dimensional operators is spectral pollution, where discretizations cause spurious eigenvalues that have nothing to do with the operator~\cite{lewin2010spectral,colbrook2019compute,colbrook2023avoiding}. Methods such as EDMD suffer from spectral pollution~\cite{williams2015data}, and heuristics are common to reduce a user's concern. One can compare eigenvalues computed using different discretization sizes or verify that a candidate eigenfunction behaves linearly on the trajectories of \eqref{eq:DynamicalSystem} in the way that the corresponding eigenvalue predicts~\cite{kaiser2017data}. In some cases, it is possible to approximate the spectral information of a Koopman operator without spectral pollution. For example, when performing a Krylov subspace method on a finite-dimensional invariant subspace of known dimension, the Hankel-DMD algorithm computes the corresponding eigenvalues for ergodic systems~\cite{arbabi2017ergodic}. Instead, it is highly desirable to have a principled way of detecting spectral pollution with as few assumptions as possible. In this paper, we develop methods to compute residuals associated with the spectrum with error control, allowing us to compute spectra of general Koopman operators without spectral pollution.

\item[C4.] \textbf{Chaotic behavior:} Many dynamical systems exhibit rich, chaotic behavior. If a system is chaotic, small perturbations to the initial state $\pmb{x}_0$ can lead to large perturbations of the state later. Koopman operators with continuous spectra are a generic feature when the underlying dynamics are chaotic~\cite{basley2011experimental,arbabi2017study}. Chaotic behavior can make data-driven approaches challenging when using long-time trajectories.\footnote{Many dynamical systems, such as hyperbolic systems with chaotic dynamics, satisfy shadowing theorems, meaning that numerical trajectories remain close to true trajectories. Under suitable conditions, we can still apply quadrature rules such as~\eqref{quad_comp222}.} In this paper, we use one-step trajectory data, called snapshot data, that can take the form of long or short trajectories depending on the system and available data.

\item{C5.} 	\textbf{Nonlinearity and high-dimensional state-space:} For many dynamical systems, the corresponding dynamical system is strongly nonlinear and has a very large state-space dimension. This can make choosing a good set of observables challenging. Often this choice takes the form of a learned dictionary. In this paper, our algorithms come with verification (i.e., error bounds), which allows us to perform a-posteriori verification that the learned dictionary was reasonable.

\end{itemize} 
In response to these challenges, we provide rigorously convergent and data-driven algorithms to compute the spectral properties of Koopman operators. Throughout the paper, we only assume that $\mathcal{K}$ is a closed and densely defined operator, which is an essential assumption before talking about spectral information of $\mathcal{K}$; otherwise, the spectrum is the whole of $\mathbb{C}$. In particular, we do not assume that $\mathcal{K}$ has a nontrivial finite-dimensional invariant subspace or that it only has a discrete spectrum.

\subsection{Novel contributions}

To tackle $\textbf{C1}$, we give a computational framework for computing the spectral measures of Koopman operators associated with measure-preserving dynamical systems. Our algorithms can achieve arbitrarily high-order convergence. We deal with discrete and continuous spectra of a Koopman operator $\mathcal{K}$ by calculating them together, and we prove rigorous convergence results:
\begin{itemize}[noitemsep]\setlength{\itemindent}{-1em}
	\item Pointwise recovery of spectral densities of the continuous part of the spectrum of $\mathcal{K}$ (see~\cref{thm:unitary_pointwise_convergence}).
	\item Weak convergence, involving integration against test functions (see~\cref{thm:unitary_weak_convergence}).
	\item The recovery of the eigenvalues of $\mathcal{K}$ (see~\cref{atom_theorem}).
\end{itemize}
\cref{alg:spec_meas_poly} is based on estimating autocorrelations and filtering, which requires long-time trajectory data from~\eqref{eq:DynamicalSystem}. Using a connection with the resolvent operator, we develop an alternative rational-based approach that can achieve high-order convergence on short-time trajectory data (see~\cref{alg:spec_meas_rat}) while being robust to noise (see~\cref{sec:stability_argument}).

To tackle $\textbf{C2}$ and $\textbf{C3}$, we derive a new DMD-type algorithm called residual DMD (ResDMD). ResDMD constructs Galerkin approximations for not only $\mathcal{K}$ but also $\mathcal{K}^*\mathcal{K}$. 
This key difference (which requires no extra data) allows us to rigorously compute spectra of general Koopman operators (without the measure-preserving assumption) together with residuals. ResDMD goes a long way to overcoming challenges \textbf{C2} and \textbf{C3} by allowing us to:
\begin{itemize}[noitemsep]\setlength{\itemindent}{-1em}
	\item Remove the spectral pollution of extended DMD (see~\cref{alg:mod_EDMD,triv_prop}).
	\item Compute spectra and pseudospectra with convergence guarantees (see~\cref{alg:res_EDMD,sec:computing_spectra_limits}).
\end{itemize}
It is important to stress that ResDMD computes residuals associated with the underlying infinite-dimensional operator $\mathcal{K}$, in contrast to earlier work that computes residuals of observables with respect to a finite DMD discretization matrix~\cite{drmac2018data}. However, in contrast to ResDMD, residuals with respect to a finite DMD discretization cannot give error bounds on the spectral information of $\mathcal{K}$ and can suffer from issues such as spectral pollution.

All of our algorithms can be used for chaotic systems, and we do not run into the difficulties of $\textbf{C4}$. We also describe a kernelized variant of ResDMD (see~\cref{alg:kern_algs}). We first use the kernelized EDMD~\cite{williams2015kernel} to learn a suitable dictionary of observables from a subset of the given data, and then we employ this dictionary in \cref{alg:mod_EDMD,alg:res_EDMD,alg:spec_meas_rat}. Compared with traditional kernelized approaches, our key advantage is that we have convergence theory and perform a posterior verification that the learned dictionary is suitable, allowing us to tackle $\textbf{C5}$.

Code for ResDMD and the examples of this paper is provided at \textcolor[rgb]{0,0,1}{\url{https://github.com/MColbrook/Residual-Dynamic-Mode-Decomposition}}.

\subsection{Paper structure} 
In~\cref{sec:background}, we provide background material to introduce spectral measures and pseudospectra of operators. In~\cref{sec:section3_filter_fin}, we describe data-driven algorithms for computing the spectral measures of Koopman operators associated with measure-preserving dynamics when one is given long-time trajectory data. In~\cref{sec:RES_DMD}, we present ResDMD for computing spectral properties of general Koopman operators. In~\cref{res_kern_fin}, we compute spectral measures of $\mathcal{K}$ associated with measure-preserving dynamics using snapshot data. Finally, in~\cref{sec:LARGEDIM}, we develop a kernelized approach and apply our algorithms to dynamical systems with high-dimensional state-spaces. Throughout, we use $\langle\cdot,\cdot\rangle$ and $\|\cdot\|$ to denote the inner product and norm corresponding to $L^2(\Omega,\omega)$, respectively. We use $\sigma(T)$ to denote the spectrum of a linear operator $T$ and $\|T\|$ to denote its operator norm if it is bounded.  

\section{Spectral measures, spectra and pseudospectra}\label{sec:background} 
We are interested in the spectral measures, spectra, and pseudospectra of Koopman operators. This section introduces the relevant background for these spectral properties.

\subsection{Spectral measures for measure-preserving dynamical systems}\label{sec:spec_meas_def_jkhkhjk}
Suppose that the associated dynamics is measure-preserving so that $\omega(E) = \omega\left( \left\{\pmb{x}: F(\pmb{x})\in E\right\} \right)$ for any Borel measurable subset $E\subset\Omega$. Equivalently, this means that the Koopman operator $\mathcal{K}$ associated with the dynamical system in~\eqref{eq:DynamicalSystem} is an isometry, i.e., $\|\mathcal{K}g\|=\|g\|$ for all observables $g\in \mathcal{D}(\mathcal{K})=L^2(\Omega,\omega)$. Dynamical systems such as Hamiltonian flows~\cite{arnold1989mathematical}, geodesic flows on Riemannian manifolds~\cite[Chapter 5]{dubrovin2012modern}, Bernoulli schemes in probability theory~\cite{shields1973theory}, and ergodic systems~\cite{walters2000introduction} are all measuring-preserving. Moreover, many dynamical systems become measure-preserving in the long run~\cite{mezic2005spectral}. 

Spectral measures provide a way of diagonalizing normal operators, including self-adjoint and unitary operators, even in the presence of continuous spectra. Unfortunately, a Koopman operator that is an isometry does not necessarily commute with its adjoint (see~\cref{sec:tent_map}). Therefore, we must consider a unitary extension (see~\cref{sec:unitaryExtension}) before defining a spectral measure (see~\cref{sec:spec_meas_cric_def}) and Koopman mode decomposition (see~\cref{sec:cts_disc_KMD}).

\subsubsection{Unitary extensions of isometries}\label{sec:unitaryExtension} 
Given a Koopman operator $\mathcal{K}$ of a measure-preserving dynamical system, we use the concept of unitary extension to formally construct a related normal operator $\mathcal{K}'$. That is, suppose that $\mathcal{K}:L^2(\Omega,\omega)\rightarrow L^2(\Omega,\omega)$ is an isometry, then there exists a unitary extension $\mathcal{K}'$ defined on an extended Hilbert space $\mathcal{H}'$ with $L^2(\Omega,\omega)\subset\mathcal{H}'$~\cite[Proposition I.2.3]{nagy2010harmonic}.\footnote{To see how to extend $\mathcal{K}$ to a unitary operator $\mathcal{K}'$, consider the Wold--von Neumann decomposition~\cite[Theorem I.1.1]{nagy2010harmonic}. This decomposition states that $\mathcal{K}$ can be written as $\mathcal{K}=(\oplus _{\alpha \in I}S_\alpha)\oplus U$ for some index set $I$, where $S_\alpha$ is the unilateral shift on a Hilbert space $\mathcal{H}_\alpha$ and $U$ is a unitary operator. Since one can extend any unilateral shift to a unitary bilateral shift, one can extend $\mathcal{K}$ to a unitary operator $\mathcal{K}'$.} Even though such an extension is not unique, it allows us to understand the spectral information of $\mathcal{K}$ by considering $\mathcal{K}'$, which is a normal operator. If $F$ is invertible and measure-preserving, $\mathcal{K}$ is unitary and we can simply take $\mathcal{K}'=\mathcal{K}$ and $\mathcal{H}'=L^2(\Omega,\omega)$.

\subsubsection{Spectral measures of normal operators}\label{sec:unitarySpectralMeasure}
To explain the notion of spectral measures for a normal operator, we begin with the more familiar finite-dimensional case. Any linear operator acting on a finite-dimensional Hilbert space has a purely discrete spectrum consisting of eigenvalues. In particular, the spectral theorem for a normal matrix $A\in \mathbb{C}^{n\times n}$, i.e., $A^*A = AA^*$, states that there exists an orthonormal basis of eigenvectors $v_1,\dots,v_n$ for $\mathbb{C}^n$ such that
\begin{equation}\label{eqn:disc_decomp}
v = \left(\sum_{k=1}^n v_kv_k^*\right)v, \quad v\in\mathbb{C}^n \qquad\text{and}\qquad Av = \left(\sum_{k=1}^n\lambda_k v_kv_k^*\right)v, \quad v\in\mathbb{C}^n,
\end{equation}
where $\lambda_1,\ldots,\lambda_n$ are eigenvalues of $A$, i.e., $Av_k = \lambda_kv_k$ for $1\leq k\leq n$. In other words, the projections $v_kv_k^*$ simultaneously decompose the space $\mathbb{C}^n$ and diagonalize the operator $A$. This intuition carries over to the infinite-dimensional setting by replacing $v\in\mathbb{C}^n$ by $f\in\mathcal{H}'$, and $A$ by a normal operator $\mathcal{K}'$. However, the eigenvectors of $\mathcal{K}'$ need not form a basis for $\mathcal{H}'$ or diagonalize $\mathcal{K}'$. Instead, the spectral theorem for normal operators states that the projections $v_kv_k^*$ in~\eqref{eqn:disc_decomp} can be replaced by a projection-valued measure $\mathcal{E}$ supported on the spectrum of $\mathcal{K}'$~\cite[Thm.~VIII.6]{reed1972methods}. In our setting, $\mathcal{K}'$ is unitary; hence, its spectrum is contained inside the unit circle $\mathbb{T}$. The measure $\mathcal{E}$ assigns an orthogonal projector to each Borel measurable subset of $\mathbb{T}$ such that
\[
f=\left(\int_\mathbb{T} d\mathcal{E}(y)\right)f \qquad\text{and}\qquad \mathcal{K}'f=\left(\int_\mathbb{T} y\,d\mathcal{E}(y)\right)f, \qquad f\in\mathcal{H}'.
\]
Analogous to~\eqref{eqn:disc_decomp}, $\mathcal{E}$ decomposes $\mathcal{H}'$ and diagonalizes the operator $\mathcal{K}'$. For example, if $U\subset\mathbb{T}$ contains only discrete eigenvalues of $\mathcal{K}'$ and no other types of spectra, then $\mathcal{E}(U)$ is simply the spectral projector onto the invariant subspace spanned by the corresponding eigenfunctions. More generally, $\mathcal{E}$ decomposes elements of $\mathcal{H}'$ along the discrete and continuous spectrum of $\mathcal{K}'$ (see \cref{sec:cts_disc_KMD}).

\subsubsection{Spectral measures of Koopman operators}\label{sec:spec_meas_cric_def}
Given an observable $g\in L^2(\Omega,\omega)\subset\mathcal{H}'$ of interest such that $\|g\| = 1$, the spectral measure of $\mathcal{K}'$ with respect to $g$ is a scalar measure defined as $\mu_{g}(U):=\langle\mathcal{E}(U)g,g\rangle$, where $U\subset\mathbb{T}$ is a Borel measurable set~\cite{reed1972methods}. For plotting and visualization, it is more convenient to equivalently consider the corresponding probability measures $\nu_g$ defined on the periodic interval $[-\pi,\pi]_{\mathrm{per}}$ after a change of variables $\lambda=\exp(i\theta)$ so that $d\mu_g(\lambda)=d\nu_g(\theta)$. Therefore, throughout this paper, we compute and visualize $\nu_g$. We use the notation \smash{$\int_{[-\pi,\pi]_{\mathrm{per}}}$} to denote integration along the periodic interval $[-\pi,\pi]_{\mathrm{per}}$ as \smash{$\int_{-\pi}^\pi$} is ambiguous since spectral measures can have atoms at $\pm\pi$. The particular choice of $g$ is up to the practitioner: smooth $g$ makes $\nu_g$ easier to compute but tends to blur out the spectral information of $\mathcal{K}$, whereas $\nu_g$ is more challenging to compute for nonsmooth $g$ but can give better resolution of the underlying dynamics. In other situations, the application dictates that a particular $g$ is interesting (see~\cref{examMDMD}).

To compute $\nu_g$, we start by noting that the Fourier coefficients of $\nu_g$ are given by
\begin{equation}
\widehat{\nu_g}(n):=\frac{1}{2\pi}\int_{[-\pi,\pi]_{\mathrm{per}}}e^{-in\theta}\,d\nu_g(\theta)=\frac{1}{2\pi}\int_{\mathbb{T}}\lambda^{-n}\,d\mu_{g}(\lambda)=\frac{1}{2\pi}\langle \mathcal{K}'^{-n}g,g\rangle, \qquad n\in\mathbb{Z}.
\label{eq:autoCorrelations} 
\end{equation}
Since $\mathcal{K}'$ is unitary, its inverse is its adjoint and thus, the Fourier coefficients of $\nu_g$ can be expressed in terms of correlations $\langle \mathcal{K}^n g,g\rangle$ and $\langle g,\mathcal{K}^n g\rangle$. That is, for $g\in L^2(\Omega,\omega)$,
\begin{equation}
\label{F_to_prod_final}
\widehat{\nu_g}(n)=\frac{1}{2\pi}\langle \mathcal{K}^{-n}g,g\rangle,\quad n<0, \qquad \widehat{\nu_g}(n)=\frac{1}{2\pi}\langle g,\mathcal{K}^ng\rangle,\quad n\geq 0.
\end{equation}
Since $g\in L^2(\Omega,\omega)$ and~\eqref{F_to_prod_final} only depend on correlations with $\mathcal{K}$, and $\nu_g$ is determined by its Fourier coefficients, we find that $\nu_{g}$ is independent of the choice of unitary extension $\mathcal{K}'$. Henceforth, we can safely dispense with the extension $\mathcal{K}'$, and call $\nu_{g}$ the spectral measure of $\mathcal{K}$ with respect to $g$.

From~\eqref{F_to_prod_final}, we find that $\widehat{\nu_{g}}(-n)=\overline{\widehat{\nu_{g}}(n)}$ for $n\in\mathbb{Z}$, which tells us that $\nu_g$ is completely determined by the forward-time dynamical \textit{autocorrelations} $\langle  g,\mathcal{K}^ng\rangle$ with $n\geq 0$. Equivalently, the spectral measure of $\mathcal{K}$ with respect to $g\in L^2(\Omega,\omega)$ is a signature for the forward-time dynamics of~\eqref{eq:DynamicalSystem} if the closure of $\mathrm{span}\{g,\mathcal{K}g,\mathcal{K}^2g,\ldots\}$ is $L^2(\Omega,\omega)$. If the closure of $\mathrm{span}\{g,\mathcal{K}g,\mathcal{K}^2g,\ldots\}$ is $L^2(\Omega,\omega)$, then $g$ is called cyclic. If $g$ is not cyclic, then $\nu_g$ only determines the action of $\mathcal{K}$ on the closure of $\mathrm{span}\{g,\mathcal{K}g,\mathcal{K}^2g,\dots\}$, which can still be useful in some applications.

\subsubsection{Continuous and discrete parts of spectra, and Koopman mode decompositions} \label{sec:cts_disc_KMD}
Of particular importance to dynamical systems is Lebesgue's decomposition of $\nu_g$~\cite{stein2009real}:
\begin{equation}\label{eqn:spec_meas}
d\nu_g(y)= \underbrace{\sum_{\lambda=\exp(i\theta)\in\mathrm{\sigma}_{{\rm p}}(\mathcal{K})}\langle\mathcal{P}_\lambda g,g\rangle\,\delta({y-\theta})dy}_{\text{discrete part}}+\underbrace{\rho_g(y)\,dy +d\nu_g^{(\mathrm{sc})}(y)}_{\text{continuous part}}.
\end{equation}
The discrete (or atomic) part of $\nu_g$ is a sum of Dirac delta distributions, supported on $\mathrm{\sigma}_{{\rm p}}(\mathcal{K})$, the set of eigenvalues of $\mathcal{K}$.\footnote{After mapping to the periodic interval, the discrete part of $\nu_g$ is supported on the closure of $\sigma_{\mathrm{p}}(\mathcal{K}')$. However, we can always choose the extension $\mathcal{K}'$ so that $\mathrm{\sigma}_{{\rm p}}(\mathcal{K}')$=$\mathrm{\sigma}_{{\rm p}}(\mathcal{K})$ with the same eigenspaces.} The coefficient of each $\delta$ in the sum is $\langle\mathcal{P}_\lambda g,g\rangle=\|\mathcal{P}_\lambda g\|^2$, where $\mathcal{P}_\lambda$ is the orthogonal spectral projector associated with the eigenvalue $\lambda$. The continuous part of $\nu_g$ consists of a part that is absolutely continuous with respect to the Lebesgue measure, with Radon--Nikodym derivative $\rho_g\in L^1([-\pi,\pi]_{\rm per})$, and a singular continuous component $\smash{\nu_g^{(\mathrm{sc})}}$. The decomposition in~\eqref{eqn:spec_meas} provides important information on the evolution of dynamical systems. For example, suppose that there is no singular continuous spectrum. Then any $g\in L^2(\Omega,\omega)$ can be written as
$$
g=\sum_{\lambda\in\mathrm{\sigma}_{{\rm p}}(\mathcal{K})}c_\lambda\varphi_\lambda+\int_{[-\pi,\pi]_{\mathrm{per}}}\phi_{\theta,g}\, d\theta,
$$
where the $\varphi_\lambda$ are the eigenfunctions of $\mathcal{K}$, $c_\lambda$ are expansion coefficients and $\phi_{\theta,g}$ is a ``continuously parametrized'' collection of eigenfunctions.\footnote{To be precise, $\phi_{\theta,g}\, d\theta$ is the absolutely continuous component of $d\mathcal{E}(\theta)g$ and $\rho_g(\theta)=\langle \phi_{\theta,g},g\rangle$.} Then, one obtains the Koopman mode decomposition~\cite{mezic2005spectral}
\begin{equation}
\label{koopman_mode_decomp_re}
g(\pmb{x}_n)=[\mathcal{K}^ng](\pmb{x}_0)=\sum_{\lambda\in\mathrm{\sigma}_{{\rm p}}(\mathcal{K})}c_{\lambda}\lambda^n\varphi_\lambda(\pmb{x}_0)+\int_{[-\pi,\pi]_{\mathrm{per}}}e^{in\theta}\phi_{\theta,g}(\pmb{x}_0)\, d\theta.
\end{equation}

One can often characterize a dynamical system in terms of these decompositions. For example, suppose $F$ is measure-preserving and bijective, and $\omega$ is a probability measure. In that case, the dynamical system is: (1) ergodic if and only if $\lambda=1$ is a simple eigenvalue of $\mathcal{K}$, (2) weakly mixing if and only if $\lambda=1$ is a simple eigenvalue of $\mathcal{K}$ and there are no other eigenvalues, and (3) mixing if $\lambda=1$ is a simple eigenvalue of $\mathcal{K}$ and $\mathcal{K}$ has absolutely continuous spectrum on $\mathrm{span}\{1\}^\perp$~\cite{halmos2017lectures}. Different spectral types also have interpretations in the context of fluid mechanics~\cite{mezic2013analysis}, and weakly anomalous transport where the Koopman operator has singular continuous spectra~\cite{zaslavsky2002chaos}. 

\subsection{Spectra and pseudospectra}\label{pseudospec_intro_NLP}
Since the Koopman operators of interest in this paper are not always normal, the spectrum of $\mathcal{K}$ can be sensitive to small perturbations~\cite{trefethen2005spectra}. We care about pseudospectra as they are a way to determine which regions of the computed spectra are accurate and trustworthy. Moreover, if the Koopman operator is nonnormal, the system's transient behavior can differ greatly from the behavior at large times. Pseudospectra can be used to detect and quantify transients that are not captured by the spectrum~\cite[Section IV]{trefethen2005spectra}~\cite{trefethen1993hydrodynamic}. For a finite matrix $A$ and $\epsilon>0$, the $\epsilon$-pseudospectrum of $A$ is defined as $\mathrm{\sigma}_\epsilon(A) = \left\{\lambda\in\mathbb{C}: \|(A-\lambda I)^{-1}\|\geq1/\epsilon \right\}= \cup_{\|B\|\leq\epsilon} \mathrm{\sigma}(A + B)$, where $\mathrm{\sigma}(A+B)$ is the set of eigenvalues of $A + B$.\footnote{Some authors use a strict inequality in the definition of $\epsilon$-pseudospectra.} Therefore, the $\epsilon$-pseudospectra of $A$ are regions in the complex plane enclosing eigenvalues of $A$ that tell us how far an $\epsilon$ sized perturbation can perturb an eigenvalue. The $\epsilon$-pseudospectra of a Koopman operator must be defined with some care because $\mathcal{K}$ may be an unbounded operator~\cite{shargorodsky2008level}. We define the $\epsilon$-pseudospectra of $\mathcal{K}$ to be the following~\cite[Prop.~4.15]{roch1996c}:\footnote{While~\cite[Prop.~4.15]{roch1996c} considers bounded operators, it can be adjusted to cover unbounded operators~\cite[Thm.~4.3]{trefethen2005spectra}.}
$$
\mathrm{\sigma}_{\epsilon}(\mathcal{K}):=\mathrm{cl}\left(\{\lambda\in\mathbb{C}:\|(\mathcal{K}-\lambda)^{-1}\| > 1/\epsilon\}\right)=\mathrm{cl}\left(\cup_{\|\mathcal{B}\|< \epsilon}\mathrm{\sigma}(\mathcal{K}+\mathcal{B})\right),
$$
where ${\rm cl}$ is the closure of the set.

\begin{figure}[!tbp]
 \centering
 \begin{minipage}[b]{0.49\textwidth}
  \begin{overpic}[width=\textwidth,trim={0mm 0mm 0mm 0mm},clip]{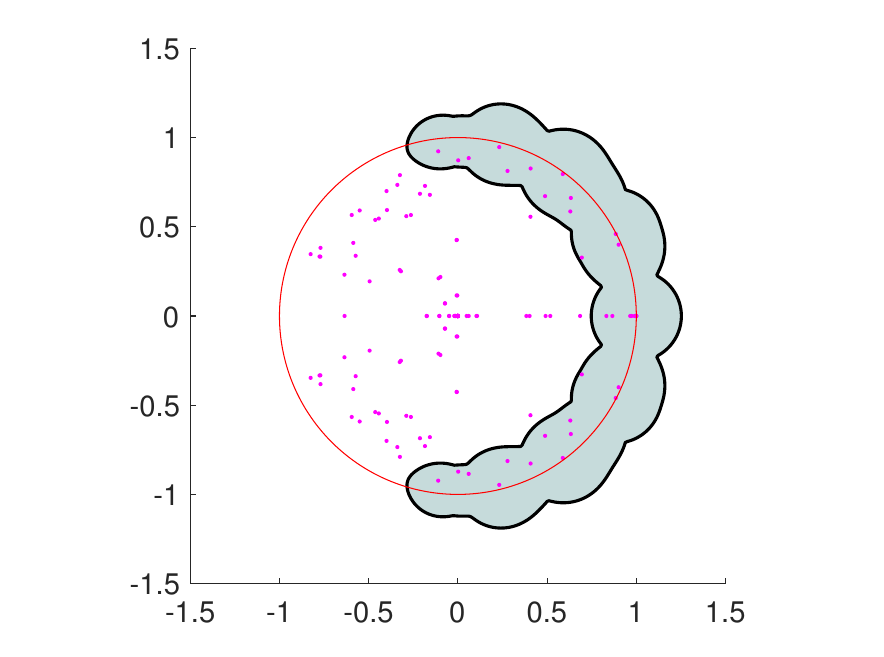}
		\put (45,73) {$N_K=152$}
   \put (47,-3) {$\mathrm{Re}(\lambda)$}
		\put (8,33) {\rotatebox{90}{$\mathrm{Im}(\lambda)$}}
   \end{overpic}
 \end{minipage}
	\begin{minipage}[b]{0.49\textwidth}
  \begin{overpic}[width=\textwidth,trim={0mm 0mm 0mm 0mm},clip]{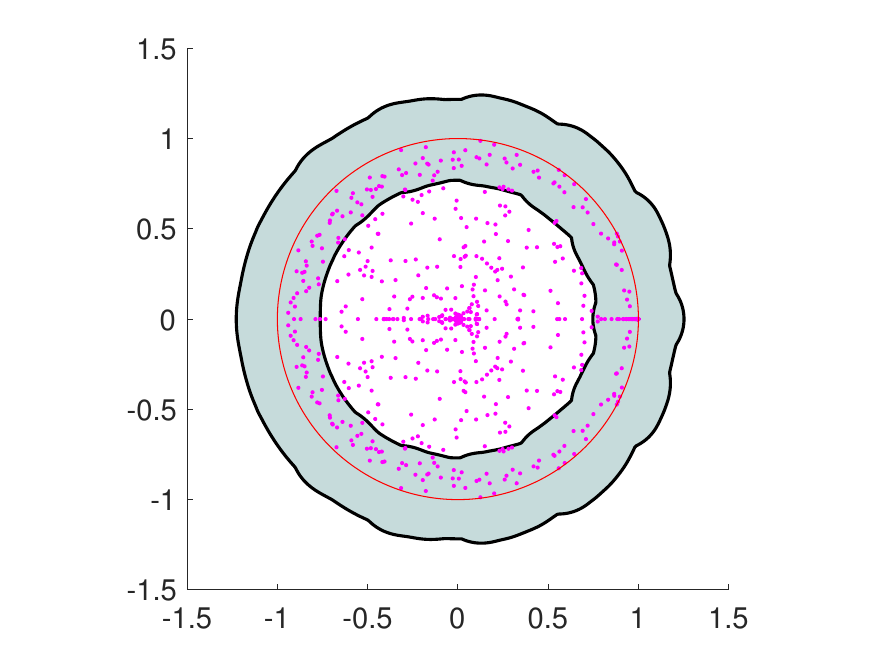}
		\put (45,73) {$N_K=964$}
   \put (47,-3) {$\mathrm{Re}(\lambda)$}
		\put (8,33) {\rotatebox{90}{$\mathrm{Im}(\lambda)$}}
   \end{overpic}
 \end{minipage}
	  \caption{The $\epsilon$-pseudospectra for the nonlinear pendulum and $\epsilon=0.25$ (shaded region) computed using \cref{alg:res_EDMD} with discretization sizes $N_K$. Discretization sizes correspond to a hyperbolic cross approximation (see~\cref{non_lin_pend_pse_exam}). The computed $\epsilon$-pseudospectra converge as $N_K\rightarrow\infty$. The unit circle (red line) is shown with the EDMD eigenvalues (magenta dots), many of which are spurious. Our algorithm called ResDMD removes spurious eigenvalues by computing pseudospectra (see~\cref{sec:RES_DMD}).}
\label{fig:pseudo1}
\end{figure}

Pseudospectra allow us to detect so-called \textit{spectral pollution}, which are spurious eigenvalues that are caused by discretization and have no relation with the underlying Koopman operator. Spectral pollution can cluster at points not in the spectrum of $\mathcal{K}$, even when $\mathcal{K}$ is a normal operator, as well as persist as the discretization size increases~\cite{lewin2010spectral}. For example, consider the dynamical system of the nonlinear pendulum. Let $\pmb{x}=(x_1,x_2)=(\theta,\dot{\theta})$ be the state variables governed by the following equations of motion: 
\begin{equation}
\label{eq:non_lin_pend_ham}
\dot{x_1}=x_2,\quad \dot{x_2}=-\sin(x_1),\quad\text{ with}\quad \Omega=[-\pi,\pi]_{\mathrm{per}}\times \mathbb{R},
\end{equation}
where $\omega$ is the standard Lebesgue measure. We consider the corresponding discrete-time dynamical system by sampling with a time-step $\Delta_t=0.5$. The system is Hamiltonian, and hence the Koopman operator is unitary. It follows that $\sigma_\epsilon(\mathcal{K})=\{z\in\mathbb{C}:\mathrm{dist}(z,\mathbb{T})\leq \epsilon\}$. \cref{fig:pseudo1} shows the pseudospectrum computed using \cref{alg:res_EDMD} for $\epsilon=0.25$ (see~\cref{non_lin_pend_pse_exam}). The algorithm uses a discretization size of $N_K$ to compute a set guaranteed to be inside the $\epsilon$-pseudospectrum (i.e., no spectral pollution) that also converges as $N_K\rightarrow\infty$. We also show the corresponding EDMD eigenvalues. Some of these EDMD eigenvalues are reliable, but most are not, demonstrating severe spectral pollution. Note that this spectral pollution has nothing to do with any stability issues but instead is due to the discretization of the infinite-dimensional operator $\mathcal{K}$ by a finite matrix. Using the $\epsilon$-pseudospectrum for different $\epsilon$, we can detect which eigenvalues are reliable (see~\cref{alg:mod_EDMD}). 

\section{Computing spectral measures from trajectory data}\label{sec:section3_filter_fin}
This section assumes that the Koopman operator $\mathcal{K}$ is associated with a measure-preserving dynamical system and hence is an isometry. We aim to compute smoothed approximations of $\nu_g$ from trajectory data of the dynamical system, even in the presence of continuous spectra. More precisely, we want to select a smoothing parameter $\epsilon>0$ and then compute a smooth periodic function $\nu^\epsilon_g$ such that for any continuous periodic function $\phi$ we have~\cite[Ch.~1]{billingsley2013convergence} 
\begin{equation}
\label{def:wk_conv}
\int_{[-\pi,\pi]_{\mathrm{per}}} \phi(\theta)\nu^\epsilon_g(\theta)\,d\theta \approx \int_{[-\pi,\pi]_{\mathrm{per}}}\phi(\theta)\,d\nu_g(\theta).
\end{equation}
Moreover, we want our smoothed approximation $\nu^\epsilon_g$ to converge weakly to $\nu_g$ $\epsilon\rightarrow 0$, meaning that the error in~\eqref{def:wk_conv} goes to zero.\footnote{Since $\int_{[-\pi,\pi]_{\mathrm{per}}}\phi(\theta)\,d\nu_g(\theta)=\langle \phi(-i\log(\mathcal{K}))g,g\rangle$, this convergence also approximates the functional calculus of $\mathcal{K}$. For example, if the dynamical system \eqref{eq:DynamicalSystem} corresponds to sampling a continuous-time dynamical system at discrete time steps, one could use \eqref{def:wk_conv} to recover spectral properties of Koopman operators that generate continuous-time dynamics.} We describe an algorithm that achieves this from trajectory data (see~\cref{alg:spec_meas_poly}). The algorithm deals with general spectral measures, including those with a singular continuous component. We also evaluate approximations of the density $\rho_g$ pointwise and can evaluate the atomic part of $\nu_g$, corresponding to eigenvalues and eigenfunctions of $\mathcal{K}$.  

\subsection{Computing autocorrelations from trajectory data}\label{sec:ComputingAutoCorrelations} 
We want to collect trajectory data from~\eqref{eq:DynamicalSystem} and use this data to recover as much information as possible about the spectral properties of $\mathcal{K}$. We assume that~\eqref{eq:DynamicalSystem} is observed for $M_2$ time-steps, starting at $M_1$ initial conditions. It is helpful to view the trajectory data as an $M_1\times M_2$ matrix 
\begin{equation} 
B_{\text{data}} = \begin{bmatrix} 
\pmb{x}_0^{(1)} & \cdots & \pmb{x}_{M_2-1}^{(1)}\\ 
\vdots & \ddots & \vdots \\ 
\pmb{x}_0^{(M_1)} & \cdots & \pmb{x}_{M_2-1}^{(M_1)}\\ 
\end{bmatrix}.
\label{eq:TrajectoryData} 
\end{equation} 
Each row of $B_{\text{data}}$ corresponds to an observation of $M_2$ time steps of a single trajectory of the dynamical system. In particular, \smash{$\pmb{x}_{i+1}^{(j)} = F(\pmb{x}_i^{(j)})$} for $0\leq i\leq M_2-2$ and $1\leq j\leq M_1$. 

There are two main ways that trajectory data might be collected: (1) Experimentally-determined initial states of the dynamical system, where one must do the best one can with predetermined initial states, and (2) Algorithmically-determined initial states, where the algorithm can select the initial states and then record the dynamics. When recovering properties of $\mathcal{K}$, often it is best to have lots of initial states that explore the whole state-space $\Omega$ with a preference of having $M_1$ large and $M_2$ small. If $M_2$ is too large, then each trajectory could quickly get trapped in attracting states, making it difficult to recover the global properties of the dynamical system.

The type of trajectory data determines how we calculate autocorrelations. There are three main ways to compute autocorrelations corresponding to different types of initial conditions $\{\pmb{x}_0^{(j)}\}_{j=1}^{M_1}$:

\begin{enumerate}[leftmargin=*]

\item \textbf{Initial conditions at quadrature nodes:} 
Suppose that we are free to choose $\{\pmb{x}_0^{(j)}\}_{j=1}^{M_1}$ so that they are an $M_1$-point quadrature rule with weights $\{w_j\}_{j=1}^{M_1}$. Integrals and inner products are then approximated with numerical integration by evaluating functions at the data points. This means that autocorrelations that determine $\nu_g$ (see~\cref{sec:spec_meas_cric_def}) can be approximated as 
$$
\langle g,\mathcal{K}^ng\rangle = \int_{\Omega}g(\pmb{x})\overline{[\mathcal{K}^ng](\pmb{x})}\,d\omega(\pmb{x}) \approx\sum_{j=1}^{M_1}w_jg(\pmb{x}_0^{(j)})\overline{[\mathcal{K}^ng](\pmb{x}_0^{(j)})}=\sum_{j=1}^{M_1}w_jg(\pmb{x}_{0}^{(j)})\overline{g(\pmb{x}_n^{(j)})}, \qquad n\geq 0.
$$
High-order quadrature rules can lead to fast rates of convergence. If $\mathcal{K}^ng$ is analytic in a neighborhood of $\Omega$, then we can often select a quadrature rule that even converges exponentially as $M_1\rightarrow\infty$~\cite{trefethen2014exponentially}. 

\item \textbf{Random initial conditions:} Suppose that $\{\pmb{x}_0^{(j)}\}_{j=1}^{M_1}$ are drawn independently at random from a fixed probability distribution over $\Omega$ that is absolutely continuous with respect to $\omega$ with a sufficiently regular Radon--Nikodym derivative $\kappa$. Then, the autocorrelations of $\mathcal{K}$ can be approximated using Monte Carlo integration, i.e.,
$$
\langle g,\mathcal{K}^ng\rangle\approx\frac{1}{M_1}\sum_{j=1}^{M_1}\kappa(\pmb{x}_0^{(j)})g(\pmb{x}_0^{(j)})\overline{[\mathcal{K}^ng](\pmb{x}_0^{(j)})}=\frac{1}{M_1}\sum_{j=1}^{M_1}\kappa(\pmb{x}_0^{(j)})g(\pmb{x}_{0}^{(j)})\overline{g(\pmb{x}_n^{(j)})}.
$$
This typically converges at a rate of $\mathcal{O}(M_1^{-1/2})$~\cite{caflisch1998monte}, but is a practical approach if the state-space dimension is large. One could also consider quasi-Monte Carlo integration, which can achieve a faster rate of $\mathcal{O}(M_1^{-1})$ (up to logarithmic factors) under suitable conditions~\cite{caflisch1998monte}.

\item \textbf{A single fixed initial condition:} Intuitively, a system is ergodic if any trajectory eventually visits all parts of the state space. Formally, this means that $F$ in~\eqref{eq:DynamicalSystem} is measure-preserving, $\omega$ is a probability measure, 
and if $A$ is a Borel measurable subset of $\Omega$ with $\left\{\pmb{x}: F(\pmb{x})\in A\right\}\subset A$, 
then $\omega(A)=0$ or $\omega(A)=1$. If a dynamical system is ergodic, then Birkhoff's ergodic theorem~\cite{birkhoff1931proof} implies that
\begin{equation}
\label{quad_comp222}
\langle g,\mathcal{K}^ng\rangle=\lim_{M_2\rightarrow\infty} \frac{1}{M_2-n}\sum_{j=0}^{M_2-n-1}g(\pmb{x}_{j})\overline{[\mathcal{K}^ng](\pmb{x}_{j})}=\lim_{M_2\rightarrow\infty} \frac{1}{M_2-n}\sum_{j=0}^{M_2-n-1}g(\pmb{x}_{j})\overline{g(\pmb{x}_{j+n})}, 
\end{equation}
for almost any initial condition $\pmb{x}_0$.\footnote{Here we mean $\omega$-almost any initial condition. In practice, it often holds in the Lebesgue sense for almost all initial conditions, and then one can randomly pick $\pmb{x}_0$ from a suitable distribution over $\Omega$.} However, the sampling scheme in \eqref{quad_comp222} is restricted to ergodic dynamical systems and often requires very long trajectories (see~\cite{kachurovskii1996rate} for convergence rates).
\end{enumerate}

If one is entirely free to select the initial conditions of the trajectory data, and $d$ is not too large, then we recommend picking them based on a high-order quadrature rule (see~\cref{sec:matrix_conv_galerkin,examp:mouse}). We can use sparse grids when the state-space dimension $d$ is moderately large. When $d$ is large, we can use a kernelized approach (see~\cref{sec:LARGEDIM}). One may also want to study the dynamics near attractors, where setting up an explicit quadrature nodes for initial conditions can be tricky. In this case, small $M_1$ and large $M_2$ can be ideal with ergodic sampling \cite[Section 7.2]{korda2020data}. We have no control over the initial conditions for experimental data, and we may need to combine the above methods of computing autocorrelations. Often it is assumed that the initial conditions correspond to random initial conditions or that we have access to long trajectories. We deal with two examples of real-world data sets in~\cref{sec:LARGEDIM}.
 
\subsection{Recovering the spectral measure from autocorrelations} 
We now suppose that one has already computed the autocorrelations $\langle g,\mathcal{K}^ng\rangle$ for $0\leq n\leq N$ and would like to recover a smoothed approximation of $\nu_g$. Since the Fourier coefficients of $\nu_g$ are given by autocorrelations (see~\eqref{eq:autoCorrelations}), the task is similar to Fourier recovery~\cite{gottlieb1997gibbs,adcock2012stable}. We are particularly interested in approaches with good convergence properties as $N\rightarrow\infty$, as this reduces the number of computed autocorrelations and the sample size $M=M_1M_2$ required for good recovery of the spectral measure. 

Motivated by the classical task of recovering a continuous function by its partial Fourier series~\cite{fejer1900fonctions}, we start by considering the ``windowing trick" from sampling theory. That is, we define a smoothed approximation to $\nu_g$ as a measure with density
\begin{equation}
\label{fejer_sum}
\nu_{g,N}(\theta)=\sum_{n=-N}^N \varphi\!\left(\frac{n}{N}\right)\widehat{\nu_g}(n)e^{in\theta} = \frac{1}{2\pi} \sum_{n=-N}^{-1}\varphi\!\left(\frac{n}{N}\right) \overline{\langle g,\mathcal{K}^{-n}g\rangle} e^{in\theta} + \frac{1}{2\pi} \sum_{n=0}^{N}\varphi\!\left(\frac{n}{N}\right)\langle g,\mathcal{K}^{n}g\rangle e^{in\theta}. 
\end{equation}
The function $\varphi : [-1,1]\rightarrow \mathbb{R}$ is often called a filter function \cite{tadmor2007gibbs,hesthaven2017numerical}. The idea of $\varphi$ is that $\varphi(x)$ is close to $1$ when $x$ is close to $0$, and $\varphi$ tapers to $0$ near $x=\pm 1$. By carefully tapering $\varphi$, the partial sum in~\eqref{fejer_sum} converges to $\nu_g$ as $N\rightarrow\infty$ (see~\cref{sec:convergence}). For fast pointwise or weak convergence of $\nu_{g,N}$ to $\nu_g$, it is desirable for $\varphi$ to be an even function that smoothly tapers from $1$ to $0$.

Using the definition of $\widehat{\nu_g}(n)$, we have
\begin{align}
\nu_{g,N}(\theta_0)=\sum_{n=-N}^N \varphi\!\left(\frac{n}{N}\right)\frac{1}{2\pi}\int_{[-\pi,\pi]_{\mathrm{per}}}e^{-in\theta_0}\, d\nu_g(\theta) e^{in\theta_0}=\int_{[-\pi,\pi]_{\mathrm{per}}}\frac{1}{2\pi}\sum_{n=-N}^N \varphi\!\left(\frac{n}{N}\right)e^{in(\theta_0-\theta)} \, d\nu_g(\theta),\label{rev_added_conv_kernel}
\end{align}
which represents our smoothed approximation as a convolution with a smoothing kernel. One of the simplest filters is the hat function $\varphi_{\rm hat}(x) = 1-|x|$, for which~\eqref{fejer_sum} corresponds to the classical Ces\`{a}ro summation of Fourier series and~\eqref{rev_added_conv_kernel} is the convolution of $\nu_g$ with the Fej\`{e}r kernel, i.e., \smash{$F_{N}(\theta)=\sum_{n=-N}^N(1-{|n|}/{N})e^{in\theta}$}. From this viewpoint, the fact that $\nu_{g,N}$ weakly converges to $\nu_g$ is immediate from Parseval's formula~\cite[Section 1.7]{katznelson2004introduction}. In particular, for any Lipschitz continuous test function $\phi$ with Lipschitz constant $L$, 
$$
\left|\phi(\theta)-\frac{1}{2\pi}[\phi*F_N](\theta)\right|\leq \frac{1}{2\pi}\int_{[-\pi,\pi]_{\mathrm{per}}}\underbrace{|\phi(\theta-s)-\phi(\theta)|}_{\leq L|s|}F_N(s)\,ds\leq \frac{L}{N\pi}\int_0^\pi s \frac{1-\cos(Ns)}{1-\cos(s)}\,ds,
$$
where we use the fact that $F_N$ is nonnegative and integrates to one. Using Fubini's theorem, we can write
$$
\int_{[-\pi,\pi]_{\mathrm{per}}} \phi(\theta)\nu_{g,N}(\theta)\,d\theta= \frac{1}{2\pi}\int_{[-\pi,\pi]_{\mathrm{per}}} [\phi*F_N](\theta) \, d\nu_g(\theta),
$$
where $*$ denotes convolution.  Since we can bound $s^2/(1-\cos(s))$ from above by $\pi^2/2$ for $s\in[0,\pi]$, we have
\begin{align}
\left| \int_{[-\pi,\pi]_{\mathrm{per}}}\phi(\theta)\,d\nu_g(\theta)-\int_{[-\pi,\pi]_{\mathrm{per}}} \phi(\theta)\nu_{g,N}(\theta)\,d\theta\right|&\leq \frac{\pi L}{2N}\int_0^\pi \frac{1-\cos(Ns)}{s}\,ds\\
&= \frac{\pi L}{2N}\int_0^{\pi N}\frac{1-\cos(x)}{x}\,dx=\mathcal{O}(N^{-1}\log(N)),\label{fejer_limit}
\end{align}
where the last equality follows from a change of variables. Therefore, there is slow weak convergence of $\nu_{g,N}$ to $\nu_g$ as $N\rightarrow \infty$, providing us with an algorithm for ensuring~\eqref{def:wk_conv} with $\epsilon=1/N$. \cref{alg:spec_meas_poly} summarizes our computational framework for recovering a smoothed version of $\nu_g$ from autocorrelations of the trajectory data.

\begin{algorithm}[t]
\textbf{Input:} Trajectory data, a filter $\varphi$, and an observable $g\in L^2(\Omega,\omega)$. \\
\vspace{-4mm}
\begin{algorithmic}[1]
\State Approximate the autocorrelations $a_n = \frac{1}{2\pi}\langle g,\mathcal{K}^ng\rangle$ for $0\leq n\leq N$. 
(The precise value of $N$ and the approach depends on the trajectory data (see~\cref{sec:ComputingAutoCorrelations}).)
\State Set $a_{-n} = \overline{a_n}$ for $1\leq n\leq N$.
\end{algorithmic} \textbf{Output:} The function $\nu_{g,N}(\theta) = \sum_{n=-N}^N \varphi\left(\frac{n}{N}\right)a_n e^{in\theta}$ that can be evaluated for any $\theta\in[-\pi,\pi]_{\rm per}$.
\caption{A computational framework for recovering an approximation of the spectral measure $\nu_g$ associated with a Koopman operator that is an isometry.}
\label{alg:spec_meas_poly}
\end{algorithm}

Other filter functions can provide a faster rate of convergence than $\varphi_{\rm hat}(x) = 1-|x|$, including the cosine and fourth-order filters~\cite{vandeven1991family,gottlieb1997gibbs}:
\[
\varphi_{{\rm cos}}(x) =\frac{1}{2}(1+\cos(\pi x)), \qquad \varphi_{{\rm four}}(x) = 1- x^4(-20|x|^3 + 70x^2 - 84|x| + 35).
\] 
For the recovery of measures, we find that a particularly good choice is
\begin{equation}
\label{cpt_kernel}
\varphi_{{\rm bump}}(x)=
\exp\left(-\frac{2}{1-|x|}\exp\left(-\frac{c}{x^4}\right)\right), \qquad c\approx 0.109550455106347,
\end{equation}
where the value of $c$ is selected so that $\varphi_{{\rm bump}}(1/2)=1/2$. This filter can lead to arbitrary high orders of convergence with errors between $\nu_{g,N}$ and $\nu_g$ that go to zero faster than any polynomial in $N^{-1}$(see~\cref{sec:convergence}). A further useful property is that $\nu_{g,N}$ localizes any singular behavior of $\nu_g$ (see~\cref{sec:high_order_needed3}).\footnote{This because the kernel associated with $\varphi_{{\rm bump}}$ (see \cref{polynomial_magic}) is highly localized due to the smoothness of $\varphi_{{\rm bump}}$.} 

To demonstrate the application of various filters for the recovery of spectral measures from autocorrelations, we consider the pedagogical dynamical system given by the shift operator. The observed orders of convergence of~\cref{alg:spec_meas_poly} are predicted by our theory (see~\cref{sec:high_order_kernel}). 

\begin{example}[Shift operator]\label{example_shift_operator}
Consider the shift operator with state-space $\Omega=\mathbb{Z}$ (and counting measure $\omega$) given by 
$$
x_{n+1} = F(x_{n}), \qquad F(x) = x + 1.  
$$
We seek to compute the spectral measure $\nu_g$ with respect to $g\in L^2(\mathbb{Z},\omega)=\ell^2(\mathbb{Z})$, where $\ell^2(\mathbb{Z})$ is the space of square summable doubly infinite vectors. This example is a building block of many dynamical systems, such as Bernoulli shifts, with so-called Lebesgue spectrum~\cite[Chapter 2]{arnold1968ergodic}. We consider the observable $g(k)=C\sin(k)/k$, where $C\approx 0.564189583547756$ is a normalization constant so that $\|g\|=1$. For this example, $\nu_g$ is absolutely continuous, but $\rho_g$ has discontinuities at $\theta=\pm1$. \cref{fig:shift_example} shows the weak convergence (left) and pointwise convergence (right) for various filters.

\begin{figure}[t]
 \centering
  \begin{minipage}[b]{0.49\textwidth}
  \begin{overpic}[width=\textwidth]{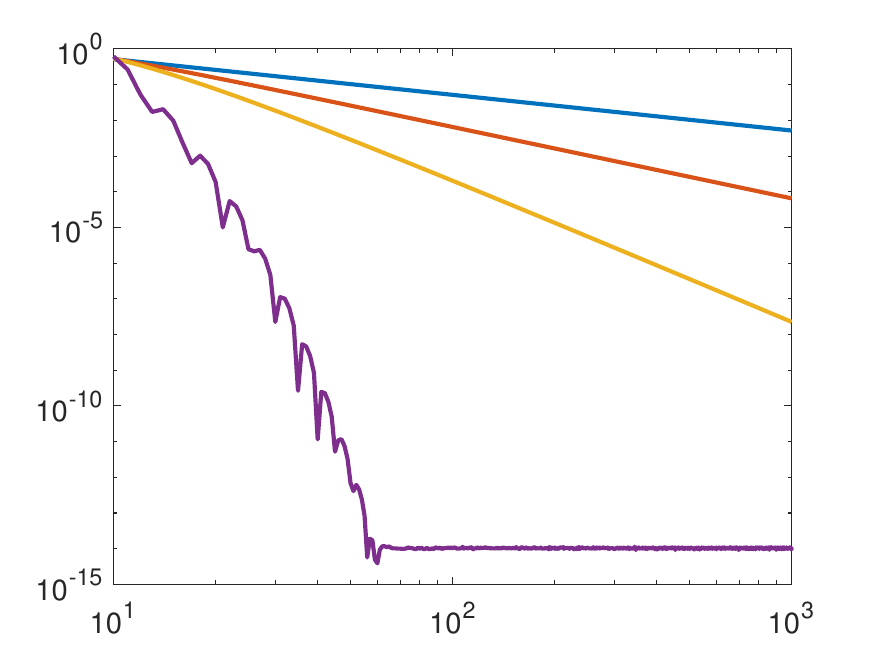}
  \put (25,72) {Weak Convergence Error}
  \put (50,-3) {$\displaystyle N$}
  \put(70,64) {\rotatebox{-6}{$\varphi_{\rm hat}$}}
  \put(70,58) {\rotatebox{-10}{$\varphi_{{\rm cos}}$}}
  \put(70,49) {\rotatebox{-23}{$\varphi_{{\rm four}}$}}
  \put(70,15) {\rotatebox{0}{$\varphi_{{\rm bump}}$}}
  \end{overpic}
 \end{minipage}
	\begin{minipage}[b]{0.49\textwidth}
  \begin{overpic}[width=\textwidth]{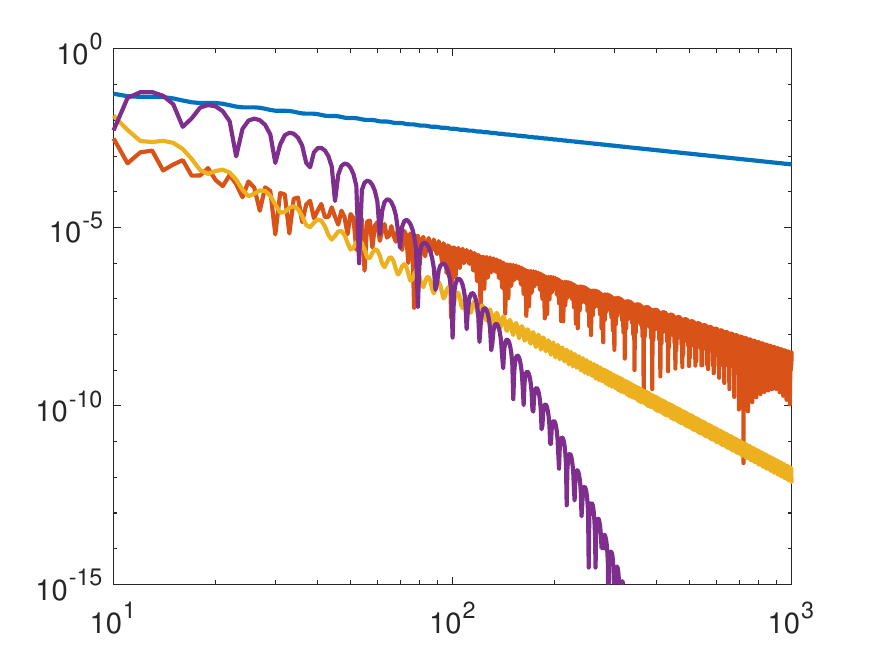}
   \put (35,72) {Pointwise Error}
   \put (50,-3) {$\displaystyle N$}
  \put(70,60) {\rotatebox{-6}{$\varphi_{\rm hat}$}}
  \put(70,43) {\rotatebox{-16}{$\varphi_{{\rm cos}}$}}
  \put(70,27) {\rotatebox{-30}{$\varphi_{{\rm four}}$}}
  \put(65,25) {\rotatebox{-65}{$\varphi_{{\rm bump}}$}}
   \end{overpic}
 \end{minipage}
 \hfill
 \caption{Relative errors between $\nu_{g,N}$ and $\nu_g$ for the shift operator computed with filters $\varphi_{\rm hat}$ (blue), $\varphi_{{\rm cos}}$ (red), $\varphi_{{\rm four}}$ (yellow), and $\varphi_{{\rm bump}}$ (purple). Left: Relative error between $\nu_{g,N}$ to $\nu_g$ in the sense of~\eqref{fejer_limit}, illustrating weak convergence in \eqref{def:wk_conv} for the test function $\phi(\theta)=\cos(5\theta)/(2+\cos(\theta))$. Right: Relative error between $\nu_{g,N}$ to $\rho_g$ at $\theta=0$, illustrating pointwise convergence. The errors are calculated using an exact analytical expression for $\nu_g$.}
	\label{fig:shift_example}
\end{figure}
\end{example} 

\subsection{High-order kernels for spectral measure recovery}\label{sec:high_order_kernel}
To develop convergence theory for~\cref{alg:spec_meas_poly}, we view filtering in the broader context of convolution with kernels. This provides a unified framework with~\cref{res_kern_fin} and simplifies the arguments when analyzing the recovery of measures (that may be highly singular), as opposed to the recovery of functions.

Instead of viewing $\nu_{g,N}$ as constructed by ``windowing" the Fourier series of $\nu_g$ in~\eqref{fejer_sum} by a filter, we form an approximation to $\nu_g$ by convolution. That is, we define 
$$
\nu_g^\epsilon(\theta_0) = \int_{[-\pi,\pi]_{\rm per}} K_\epsilon(\theta_0 - \theta) d\nu_g(\theta),
$$
where $K_\epsilon$ are a family of integrable functions $\{K_{\epsilon}:0<\epsilon\leq 1\}$ satisfying certain properties (see~\cref{unit_mth_kern_def}) so that $\nu_g^\epsilon$ converges to $\nu_g$ in some sense. We saw in~\eqref{rev_added_conv_kernel} that $\nu_g^\epsilon = \nu_{g,N}$ when $K_\epsilon(\theta) = \smash{\frac{1}{2\pi}\sum_{n=-N}^N \varphi\left(\frac{n}{N}\right)e^{in\theta}}$ and $N = \lfloor \epsilon^{-1}\rfloor$, say. The most famous example of $K_\epsilon$ is the Poisson kernel for the unit disc given by~\cite[p.16]{katznelson2004introduction}
\begin{equation}
\label{pois_def_kern}
K_{\epsilon}(\theta)=\frac{1}{2\pi}\frac{(1+\epsilon)^2-1}{1+(1+\epsilon)^2-2(1+\epsilon)\cos(\theta)},
\end{equation}
in polar coordinates with $r=(1+\epsilon)^{-1}$. The Poisson kernel is a first-order kernel because, up to a logarithmic factor, it leads to a first-order algebraic convergence rate of $\nu_g^\epsilon$ to $\nu_g$. We now give the following general definition of an $m$th order kernel and justify their name by showing that they lead to an $m$th order rate of convergence of $\nu_g^\epsilon$ to $\nu_g$ in a weak and pointwise sense (see~\cref{sec:convergence}).

\begin{definition}[$m$th order periodic kernel]\label{unit_mth_kern_def} Let $\{K_{\epsilon}:0<\epsilon\leq 1\}$ be a family of integrable functions on the periodic interval $[-\pi,\pi]_{\mathrm{per}}$. We say that $\{K_{\epsilon}\}$ is an $m$th order kernel for $[-\pi,\pi]_{\mathrm{per}}$ if
\begin{itemize}
	\item[(i)] (Normalized) $\int_{[-\pi,\pi]_{\mathrm{per}}} K_{\epsilon}(\theta)\,d\theta=1$.
	\item[(ii)] (Approximately vanishing moments) There exists a constant $C_K$ such that\footnote{One may wonder if insisting on exactly vanishing moments or removing the logarithmic term improves convergence rates as $\epsilon\downarrow 0$. This is not the case (see the discussion in \cref{sec:convergence}).}
	\begin{equation}\label{unit_first_lem2}
	\left|\int_{[-\pi,\pi]_{\mathrm{per}}}\theta^nK_{\epsilon}(\theta)\,d\theta\right|\leq C_{K} \epsilon^m \log(\epsilon^{-1}), \quad \text{for any integer $1\leq n\leq m-1$.} 
	\end{equation}
	\item[(iii)] (Decay away from $0$) For any $\theta\in[-\pi,\pi]$ and $0<\epsilon\leq 1$,
	\begin{equation}\label{unitary_decay}
	\left|K_{\epsilon}(\theta)\right|\leq \frac{C_K\epsilon^m}{(\epsilon+|\theta|)^{m+1}}.
	\end{equation}
\end{itemize}
\label{def:mthOrderKernel}
\end{definition}
The conditions in~\cref{def:mthOrderKernel} are primarily technical assumptions that allow one to prove appropriate convergence rates of $\nu_g^\epsilon$ to $\nu_g$. For pointwise convergence, property (iii) is required to apply a local cut-off argument away from singular parts of the measure. Properties (i) and (ii) are used to show that terms vanish in a local Taylor series expansion of the Radon--Nikodym derivative, and the remainder is bounded by (iii). For weak convergence, we apply similar arguments to the test function by Fubini's theorem. 

The properties of an $m$th order kernel can be translated to properties of a filter.
\begin{proposition}\label{polynomial_magic}
Let $m\in\mathbb{N}$ and suppose that $\varphi$ is an even continuous function that is compactly supported on $[-1,1]$ such that (a) $\varphi \in \mathcal{C}^{m-1}([-1,1])$, (b) $\varphi(0)=1$ and $\varphi^{(n)}(0)=0$ for any integer $1\leq n\leq m-1$, (c) $\varphi^{(n)}(1)=0$ for any integer $0\leq n\leq m-1$, and (d) $\varphi|_{[0,1]}\in \mathcal{C}^{m+1}([0,1])$. Then,
\begin{equation} 
K_{\epsilon}(\theta) = \frac{1}{2\pi}\sum_{n=-N}^N \varphi\left(\frac{n}{N}\right)e^{in\theta}, \qquad N = \lfloor \epsilon^{-1} \rfloor
\label{FilterToKernel}
\end{equation} 
is an $m$th order kernel for $[-\pi,\pi]_{\mathrm{per}}$.
\end{proposition}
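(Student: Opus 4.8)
The plan is to verify directly the three defining properties (i)--(iii) of an $m$th order kernel in \cref{def:mthOrderKernel} for the trigonometric polynomial $K_\epsilon$ in \eqref{FilterToKernel}, with the Poisson summation formula as the main technical device: it rewrites the ``windowed'' partial sum as a rescaled, shifted periodization of the Fourier transform $\hat\varphi(u)=\int_{-1}^1\varphi(y)e^{-iuy}\,dy$ of the zero-extended filter. First I would record the elementary regularity facts. Extending $\varphi$ by zero to all of $\mathbb{R}$, assumptions (a) and (c) together with evenness make this extension lie in $\mathcal{C}^{m-1}(\mathbb{R})$ (the one-sided derivatives up to order $m-1$ match at $x=1$ by (c), and at $x=-1$ by symmetry), while (d) guarantees that $\varphi^{(m)}$ and $\varphi^{(m+1)}$ exist and are bounded on each of $[-1,0]$ and $[0,1]$, so $\varphi^{(m)}$ is piecewise $\mathcal{C}^1$ with jumps confined to $\{-1,0,1\}$ and $\varphi^{(m+1)}\in L^\infty(\mathbb{R})$ with compact support. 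Integrating by parts $m+1$ times (the last step in the Stieltjes sense) then yields the pointwise decay bound $|\hat\varphi(u)|\le C_\varphi(1+|u|)^{-(m+1)}$, where the extra power beyond the naive $|u|^{-m}$ comes precisely from $\varphi^{(m)}$ being of bounded variation. Property (i) is then immediate, since only the $k=0$ term of $K_\epsilon$ survives integration over $[-\pi,\pi]_{\rm per}$ and $\varphi(0)=1$.

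For (ii) and (iii) I would apply Poisson summation to $f(x)=\varphi(x/N)e^{ix\theta}$, whose Fourier transform is $\hat f(\xi)=N\hat\varphi(N\xi-N\theta)$; since $f$ is continuous with compact support and $\hat f\in L^1$, both series converge absolutely and we get (using that $\hat\varphi$ is real and even)
\[
K_\epsilon(\theta)=\frac{N}{2\pi}\sum_{j\in\mathbb{Z}}\hat\varphi\big(N(\theta-2\pi j)\big),\qquad N=\lfloor\epsilon^{-1}\rfloor .
\]
For (iii), the $j=0$ term is bounded by $\tfrac{N}{2\pi}C_\varphi(1+N|\theta|)^{-(m+1)}$, which after writing $1+N|\theta|=N(N^{-1}+|\theta|)$ and using $N\asymp\epsilon^{-1}$ is $\lesssim\epsilon^m(\epsilon+|\theta|)^{-(m+1)}$; and for $\theta\in[-\pi,\pi]$ every $j\neq0$ satisfies $|\theta-2\pi j|\ge\pi(2|j|-1)$, so the remaining terms sum to $\lesssim N^{-m}\lesssim\epsilon^m$, which is absorbed into the same bound since $(\epsilon+|\theta|)^{m+1}$ is bounded above on the periodic interval. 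This gives (iii) after adjusting $C_K$.

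For (ii) with $1\le n\le m-1$, I would again split the representation into $j=0$ and $j\neq0$. In the $j=0$ term, substituting $u=N\theta$ turns $\tfrac{N}{2\pi}\int_{-\pi}^\pi\theta^n\hat\varphi(N\theta)\,d\theta$ into $\tfrac{1}{2\pi N^n}\int_{-N\pi}^{N\pi}u^n\hat\varphi(u)\,du$; the crucial point is that Fourier inversion together with assumption (b) forces $\int_{\mathbb{R}}u^n\hat\varphi(u)\,du=2\pi i^{-n}\varphi^{(n)}(0)=0$, so the truncated integral equals minus its tail, which is $O(N^{n-m})$ because $|u|^n|\hat\varphi(u)|\lesssim|u|^{n-m-1}$ is integrable at infinity (here $n\le m-1$ is exactly what makes $n-m-1\le-2$). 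The $j\neq0$ terms are handled crudely using the decay of $\hat\varphi$ and the separation $|\theta-2\pi j|\ge\pi$, again contributing $O(N^{-m})$. Altogether $\big|\int_{[-\pi,\pi]_{\rm per}}\theta^nK_\epsilon(\theta)\,d\theta\big|\lesssim N^{-m}\lesssim\epsilon^m$, which is even stronger than (ii) requires (no logarithmic factor). I expect the main obstacle to be the bookkeeping behind the decay estimate $|\hat\varphi(u)|\le C_\varphi(1+|u|)^{-(m+1)}$: one must carefully track which derivatives of $\varphi$ are continuous across $\{-1,0,1\}$ versus merely bounded in order to justify $m+1$ integrations by parts (the last Stieltjes), and to confirm that the distributional derivatives agree with the classical a.e.\ ones. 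Everything else is routine once the Poisson-summation representation and this decay bound are in hand.
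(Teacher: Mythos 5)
Your proof is correct, and for properties~(i) and~(iii) it matches the paper's route: Poisson summation converts the windowed Fourier sum into the periodization $\frac{N}{2\pi}\sum_j\hat\varphi(N(\theta-2\pi j))$, and the decay bound $|\hat\varphi(u)|\lesssim(1+|u|)^{-(m+1)}$ comes from $m$ integrations by parts killed off by~(c) plus one more (in the Stieltjes sense, allowed by~(d)) with non-vanishing endpoints; the paper does the same computation on $[0,1]$ with $\cos(kx)$ using evenness, but the mechanism is identical. Where you genuinely diverge is property~(ii). The paper does not verify the moment condition directly: it observes that the Fourier coefficients of $K_\epsilon$ are exactly $\varphi(n/N)$ for $|n|\le N$, bounds $|\varphi(n/N)-1|$ by Taylor's theorem and hypothesis~(b), and then invokes \cref{unitary_rational_tool}, which deduces the moment condition (with a logarithmic loss) from this Fourier-coefficient condition together with~(iii). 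You instead attack~(ii) head-on: in the $j=0$ term of the Poisson sum you change variables, use the Fourier-inversion identity $\int_{\mathbb{R}}u^n\hat\varphi(u)\,du=2\pi i^{-n}\varphi^{(n)}(0)=0$ (valid for $1\le n\le m-1$ since $u^n\hat\varphi\in L^1$) to replace the truncated integral by its tail, and bound the tail and the $j\neq 0$ terms by the decay estimate. Your route is self-contained, yields the cleaner $O(\epsilon^m)$ bound without the $\log(\epsilon^{-1})$ factor, and exploits hypothesis~(b) in a different way (as a vanishing-moment condition on $\hat\varphi$ rather than as a Taylor expansion of $\varphi$). The paper's detour through \cref{unitary_rational_tool} buys a unified framework: the same lemma is reused later (\cref{rat_kern_linear_system}) to prove that the rational kernels of \cref{sec:rat_kern_build22} are $m$th order, so it trades a slightly weaker bound here for a single reusable criterion.
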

\begin{proof}
We need to verify that $K_{\epsilon}$ in~\eqref{FilterToKernel} satisfies the three properties in~\cref{unit_mth_kern_def}. To see that $K_{\epsilon}$ is normalized, we integrate~\eqref{FilterToKernel} term-by-term and note that $\varphi(0) = 1$. 

For property (iii), we define $\Phi(k)=\int_{-1}^1\varphi(x)e^{ikx}\,dx$ and note that by the Poisson summation formula, 
$K_{\epsilon}(\theta)=\frac{N}{2\pi}\sum_{j=-\infty}^\infty \Phi(N(\theta+2\pi j))$. 
Since $\varphi$ is an even function, $\Phi(k)=2\int_0^1 \varphi(x)\cos(kx)\,dx$, and since $\varphi$ satisfies (b), (c) and (d), we find that for $k\neq 0$
$$
\left|\Phi(k)\right|=\frac{2}{k^{m}}\begin{dcases}
\left|\int_0^1\varphi^m(x)\cos(kx)\,dx\right|,&\quad \text{if }m\text{ is even},\\
\left|\int_0^1\varphi^m(x)\sin(kx)\,dx\right|,&\quad \text{if }m\text{ is odd}. 
\end{dcases}
$$
Finally, one can use property (d) to perform integration-by-parts (now with possibly nonvanishing endpoint conditions) to deduce that $|\Phi(k)|\lesssim {(1+|k|)^{-(m+1)}}$. Therefore, we find that
$$
\left|K_\epsilon(\theta)\right|\lesssim\sum_{j=-\infty}^\infty\frac{N}{(1+|N(\theta+2\pi j)|)^{m+1}}\lesssim \frac{N}{(1+N|\theta|)^{m+1}}.
$$
This bound implies the decay condition in \eqref{unitary_decay}.  

For property (ii), note that for any $N\geq |n|$, the following analytical expression holds: 
$$
\int_{[-\pi,\pi]_{\mathrm{per}}}K_{\epsilon}(-\theta)e^{in\theta}\,d\theta-1=\varphi\left(\frac{n}{N}\right)-\varphi(0), \qquad N = \lfloor \epsilon^{-1} \rfloor.
$$
As a consequence of Taylor's theorem and $\varphi^{(j)}(0)=0$,
$$
\left|\int_{[-\pi,\pi]_{\mathrm{per}}}K_{\epsilon}(-\theta)e^{in\theta}\,d\theta-1\right|\leq \frac{n^m\|\varphi|_{[0,1]}^{(m)}\|_{L^\infty}}{m!N^m} = \mathcal{O}\left(\epsilon^{m}\right) \quad \text{as } \epsilon \rightarrow 0.
$$
The result now follows from~\cref{unitary_rational_tool}.
\end{proof}

Therefore, it can be verified that: $\varphi_{\mathrm{hat}}$, $\varphi_{{\rm cos}}$, and $\varphi_{\rm four}$ induce first-order, second-order and fourth-order kernels in \eqref{FilterToKernel}, respectively. Similarly, $\varphi_{\rm bump}$ induces a kernel that is $m$th order for any $m\in\mathbb{N}$. For example, up to a logarithmic factor, the rate of convergence between $\nu_g^\epsilon$ (resp.~$\nu_{g,N}$) and $\nu_g$ for $\varphi_{\rm four}$ is $\mathcal{O}(\epsilon^4)$ as $\epsilon\rightarrow0$ (resp.~$\mathcal{O}(N^{-4})$ as $N\rightarrow \infty$) in a weak and pointwise sense (see~\cref{sec:convergence}).

Readers who are experts on filters may be surprised by condition (d) in~\cref{polynomial_magic}. This additional condition, which holds for all common choices of filters, is theoretically needed to obtain the optimal convergence rates between $\nu_g^\epsilon$ and $\nu_g$. It means that our convergence theory is one algebraic order better than the convergence theorems derived in the literature~\cite[Theorem 3.3]{gottlieb1997gibbs}.

\subsection{Convergence results}\label{sec:convergence}
We now provide convergence theorems for recovering the spectral measure of $\mathcal{K}$. A reader concerned with only the practical aspects of our algorithms can safely skip over this section while appreciating that the convergence guarantees are rigorous.

\subsubsection{Pointwise convergence} 
For a point $\theta_0\in [-\pi,\pi]$, the value of the approximate spectral measure $\nu_g^\epsilon(\theta_0)$ converges to the Radon--Nikodym derivative, $\rho_g(\theta_0)$ provided that $\nu_g$ is absolutely continuous in an interval containing $\theta_0$ (without this separation condition it still converges for almost every $\theta_0$). The convergence rate depends on the smoothness of $\rho_g$ in a small interval $I$ containing $\theta_0$. In particular, we write $\rho_g\in \mathcal{C}^{n,\alpha}(I)$ if $\rho_g$ is $n$-times continuously differentiable on $I$ and the $n$th derivative is H\"{o}lder continuous with parameter $0\leq \alpha<1$. For $h_1\in\mathcal{C}^{0,\alpha}(I)$ and $h_2\in\mathcal{C}^{k,\alpha}(I)$ we define the seminorm and norm, respectively, as
$$
|h_1|_{\mathcal{C}^{0,\alpha}(I)}=\sup_{x\neq y\in I}\frac{|h_1(x)-h_1(y)|}{|x-y|^{\alpha}},\quad \|h_2\|_{\mathcal{C}^{k,\alpha}(I)}=|h_2^{(k)}|_{\mathcal{C}^{0,\alpha}(I)}+\max_{0\leq j\leq k}\|h_2^{(j)}\|_{\infty,I}.
$$
We state the following pointwise convergence theorem for general complex-valued measures $\nu$ as we apply it to measures corresponding to test functions to prove~\cref{thm:unitary_weak_convergence}. The choice $\nu=\nu_g$ with $\|\nu_g\|=1$ in~\cref{thm:unitary_pointwise_convergence} gives pointwise convergence of spectral measures.

\begin{theorem}[Pointwise convergence]\label{thm:unitary_pointwise_convergence}
Let $\{K_{\epsilon}\}$ be an $m$th order kernel for $[-\pi,\pi]_{\mathrm{per}}$ and let $\nu$ be a complex-valued measure on $[-\pi,\pi]_{\mathrm{per}}$ with finite total variation $\|\nu\|$. Suppose that for some $\theta_0\in[-\pi,\pi]$ and $\eta\in(0,\pi)$, $\nu$ is absolutely continuous on $I=(\theta_0-\eta,\theta_0+\eta)$ with Radon--Nikodym derivative $\rho\in \mathcal{C}^{n,\alpha}(I)$ ($\alpha\in[0,1)$). Then the following hold for any $0\leq \epsilon <1$: 
\begin{itemize}
	\item[(i)] If $n+\alpha<m$, then
	\begin{equation}\label{unit_pt_bd1}
	\left|\rho(\theta_0)\!-\!\int_{[-\pi,\pi]_{\mathrm{per}}}\!\!\!\! K_{\epsilon}(\theta_0-\theta) \,d\nu(\theta)\right|\lesssim C_K\!\left(\|\nu\|+\|\rho\|_{\mathcal{C}^{n,\alpha}(I)}\right)\!\! \left(\epsilon^{n+\alpha}+\frac{\epsilon^m}{(\epsilon+\eta)^{m+1}}\right)\!\!\left(1+\eta^{-n-\alpha}\right)\!.
	\end{equation}
	\item[(ii)] If $n+\alpha\geq m$, then
\begin{equation}\label{unit_pt_bd2}
	\left|\rho(\theta_0)\!-\!\int_{[-\pi,\pi]_{\mathrm{per}}}\!\!\!\! K_{\epsilon}(\theta_0-\theta) \,d\nu(\theta)\right|\lesssim C_K\!\left(\|\nu\|+\|\rho\|_{\mathcal{C}^{m}(I)}\right)\!\! \left(\epsilon^{m}\log(\epsilon^{-1})+\frac{\epsilon^m}{(\epsilon+\eta)^{m+1}}\right)\!\!\left(1+\eta^{-m}\right)\!.
	\end{equation}
\end{itemize}
Here, `$\lesssim$' means that the inequality holds up to a constant that only depends on $n+\alpha$ and $m$.
\end{theorem}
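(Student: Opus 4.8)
The plan is to localize the integral to the interval $I$ where $\nu$ is absolutely continuous, handle the far-away contribution using the decay property (iii) of an $m$th order kernel, and then expand $\rho$ in a Taylor series around $\theta_0$ on the near part, using the normalization (i) and approximately vanishing moments (ii) to kill the low-order terms. First I would split
$$
\int_{[-\pi,\pi]_{\mathrm{per}}} K_{\epsilon}(\theta_0-\theta)\,d\nu(\theta) = \int_{|\theta-\theta_0|<\eta} K_{\epsilon}(\theta_0-\theta)\rho(\theta)\,d\theta + \int_{|\theta-\theta_0|\geq\eta} K_{\epsilon}(\theta_0-\theta)\,d\nu(\theta).
$$
For the second (far) term, I would bound $|K_\epsilon(\theta_0-\theta)|\leq C_K\epsilon^m/(\epsilon+|\theta_0-\theta|)^{m+1}\leq C_K\epsilon^m/(\epsilon+\eta)^{m+1}$ on the region $|\theta-\theta_0|\geq\eta$, so this term is at most $C_K\|\nu\|\,\epsilon^m/(\epsilon+\eta)^{m+1}$, which matches the second bracketed term in both \eqref{unit_pt_bd1} and \eqref{unit_pt_bd2}. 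Next I would write $\rho(\theta_0) = \rho(\theta_0)\int_{[-\pi,\pi]_{\mathrm{per}}}K_\epsilon(\theta_0-\theta)\,d\theta$ using normalization, and again split this integral into near and far pieces; the far piece of $\rho(\theta_0)\int_{|\theta-\theta_0|\geq\eta}K_\epsilon$ is again $\mathcal{O}(C_K|\rho(\theta_0)|\epsilon^m/(\epsilon+\eta)^{m+1})$, absorbed into the same term (note $|\rho(\theta_0)|\leq \|\rho\|_{\mathcal{C}^{n,\alpha}(I)}$).

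The heart of the argument is the near term
$$
\int_{|\theta-\theta_0|<\eta} K_{\epsilon}(\theta_0-\theta)\bigl(\rho(\theta)-\rho(\theta_0)\bigr)\,d\theta.
$$
Here I would substitute $s = \theta_0-\theta$ and Taylor-expand $\rho(\theta_0-s)$ about $s=0$ to order $\min(n,m-1)$. In case (i), where $n+\alpha<m$, I expand to order $n$ and use the Hölder bound on $\rho^{(n)}$ to get a remainder of size $\lesssim \|\rho\|_{\mathcal{C}^{n,\alpha}(I)}|s|^{n+\alpha}$; the polynomial terms of degree $1,\dots,n$ pair against $\int_{|s|<\eta} s^j K_\epsilon(s)\,ds$. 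To handle these moment integrals I would extend the integration to all of $[-\pi,\pi]_{\mathrm{per}}$ (the tail $\int_{|s|\geq\eta}|s|^j|K_\epsilon(s)|\,ds \lesssim C_K\epsilon^m\eta^{j-m}$ by (iii), contributing to the $\epsilon^m/(\epsilon+\eta)^{m+1}$-type terms after bookkeeping) and then invoke the approximately-vanishing-moments property (ii), $|\int s^j K_\epsilon\,ds|\leq C_K\epsilon^m\log(\epsilon^{-1})$ for $1\leq j\leq m-1$. The remainder integral $\int_{|s|<\eta}|s|^{n+\alpha}|K_\epsilon(s)|\,ds$ I would estimate directly from (iii): $\int_{\mathbb{R}} |s|^{n+\alpha}\frac{\epsilon^m}{(\epsilon+|s|)^{m+1}}\,ds \lesssim \epsilon^{n+\alpha}$ precisely when $n+\alpha<m$ (the integral converges at infinity only in this regime, which is why the two cases are separated). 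Collecting gives the $\epsilon^{n+\alpha}$ term, with the $\log$-moment contributions being of higher order $\epsilon^m\log(\epsilon^{-1}) = o(\epsilon^{n+\alpha})$ hence absorbable. In case (ii), $n+\alpha\geq m$, I would instead only Taylor-expand to order $m-1$ (discarding smoothness beyond what the kernel can exploit), getting a remainder $\lesssim \|\rho\|_{\mathcal{C}^m(I)}|s|^m$; now $\int_{|s|<\eta}|s|^m|K_\epsilon(s)|\,ds \lesssim \epsilon^m\log(\epsilon^{-1})$ because the integrand $|s|^m\epsilon^m/(\epsilon+|s|)^{m+1}$ is borderline, integrating to a logarithm, and the degree $1,\dots,m-1$ moments again contribute $\epsilon^m\log(\epsilon^{-1})$ via (ii). This yields \eqref{unit_pt_bd2}.

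The factor $(1+\eta^{-n-\alpha})$ (resp. $(1+\eta^{-m})$) enters because the tail estimates $\int_{|s|\geq\eta}|s|^j|K_\epsilon|\,ds$ scale like $\epsilon^m\eta^{j-m}$ for $j\le m$, and rewriting $\epsilon^m\eta^{j-m}$ as $\frac{\epsilon^m}{(\epsilon+\eta)^{m+1}}\cdot(\epsilon+\eta)^{m+1}\eta^{j-m}$ and bounding crudely produces powers of $\eta^{-1}$; one just needs to check that no power worse than $\eta^{-n-\alpha}$ (resp. $\eta^{-m}$) appears, which follows since $j$ ranges only up to $n$ (resp. $m-1$) and the remainder term contributes the dominant negative power. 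The main obstacle — really the only delicate point — is the careful bookkeeping in the borderline case $n+\alpha\ge m$: one must verify that the logarithm genuinely appears (and no worse) in $\int_0^{\eta}\frac{s^m\epsilon^m}{(\epsilon+s)^{m+1}}\,ds$, and that all the $\eta$-dependent constants consolidate into the clean form stated. Everything else is a routine, if slightly tedious, splitting-and-estimating exercise built entirely on properties (i)–(iii) of \cref{def:mthOrderKernel}.
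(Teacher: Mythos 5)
Your decomposition differs genuinely from the paper's. The paper's proof introduces a smooth cutoff $\rho=\rho_1+\rho_2$ with $\rho_1$ compactly supported in $I$ and $\|\rho_1\|_{\mathcal{C}^{n,\alpha}(I)}\lesssim(1+\eta^{-n-\alpha})\|\rho\|_{\mathcal{C}^{n,\alpha}(I)}$; this lets one Taylor-expand a function defined on the whole circle and invoke property (ii) of \cref{unit_mth_kern_def} directly, at the price of the $(1+\eta^{-n-\alpha})$ factor entering through the cutoff's norm. You instead split the integral sharply at $|\theta-\theta_0|=\eta$, Taylor-expand $\rho$ on the near part, and extend the truncated moment integrals to the full interval in order to use (ii), paying instead with tail corrections controlled by (iii). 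Both routes are sound; yours, carried through with the correct tail estimate, never produces an $\eta^{-n-\alpha}$ loss at all and hence proves a marginally sharper bound in which only $\epsilon^m/(\epsilon+\eta)^{m+1}$ carries the $\eta$-dependence.

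The one place your write-up needs repair is the tail bookkeeping for those extended moment integrals. You estimate $\int_{|s|\geq\eta}|s|^j|K_\epsilon(s)|\,ds\lesssim\epsilon^m\eta^{j-m}$ and then claim that after your crude rewriting no power of $\eta^{-1}$ worse than $\eta^{-n-\alpha}$ appears. That claim is false: for $j=1$ your estimate produces $\eta^{-(m-1)}$, which exceeds $\eta^{-n-\alpha}$ whenever $m>n+\alpha+1$, perfectly allowed in case~(i). Concretely, taking $\eta\sim\epsilon^2$ and $m$ large compared with $n+\alpha$, the quantity $\epsilon^m\eta^{1-m}$ outgrows the stated right-hand side as $\epsilon\downarrow 0$. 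The repair is simply to retain the $\epsilon$ in the denominator of property (iii): for $j<m$,
\begin{equation*}
\int_{\eta\leq|s|\leq\pi}|s|^j\,|K_\epsilon(s)|\,ds \leq 2C_K\,\epsilon^m\int_\eta^\pi\frac{ds}{(\epsilon+s)^{m+1-j}} \leq \frac{2C_K\,\epsilon^m}{(m-j)(\epsilon+\eta)^{m-j}},
\end{equation*}
and then $\epsilon^m/(\epsilon+\eta)^{m-j}=(\epsilon+\eta)^{j+1}\,\epsilon^m/(\epsilon+\eta)^{m+1}\lesssim\epsilon^m/(\epsilon+\eta)^{m+1}$ because $\epsilon+\eta\leq 1+\pi$. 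With this sharper estimate every tail term is absorbed directly into the $\epsilon^m/(\epsilon+\eta)^{m+1}$ term of the theorem, and no extra $\eta^{-1}$ powers arise. The remaining steps you describe are correct and mirror the paper's casework: the near-part remainder is $\lesssim\epsilon^{n+\alpha}$ in case~(i) because $\int_0^\infty\tau^{n+\alpha}(1+\tau)^{-(m+1)}\,d\tau<\infty$ exactly when $n+\alpha<m$, is $\lesssim\epsilon^m\log(\epsilon^{-1})$ in case~(ii) from the borderline integral, and the extended moment terms are $\lesssim\epsilon^m\log(\epsilon^{-1})$ by (ii).
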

\begin{proof} By periodicity, we can assume without loss of generality that $\theta_0=0$. First, we decompose $\rho$ into two parts $\rho=\rho_1+\rho_2$, where $\rho_1\in\mathcal{C}^{n,\alpha}(I)$ is compactly supported on $I$ and $\rho_2$ vanishes on $(-\eta/2,+\eta/2)$.
Using \eqref{unitary_decay}, we have
\begin{equation}\label{E_decomp_unit}
\left|\rho(0)-\int_{[-\pi,\pi]_{\mathrm{per}}}\!\!\!\! K_{\epsilon}(-\theta) d\nu(\theta)\right|\leq \left|\rho_1(0)-\int_{[-\pi,\pi]_{\mathrm{per}}}\!\!\!\! K_{\epsilon}(-\theta)\rho_1(\theta)\,d\theta\right|+\int_{|\theta|>\eta/2} \frac{C_K\epsilon^m\,d|\nu^{\mathrm{r}}|(\theta)}{(\epsilon+\eta/2)^{m+1}},
\end{equation}
where $d\nu^{\mathrm{r}}(\theta)=d\nu(\theta)-\rho_1(\theta)\,d\theta$. The second term on the right-hand side of \eqref{E_decomp_unit} is bounded by $\hat C_1 C_K(\|\nu\|+\|\rho_1\|_{L^\infty(I)})\epsilon^m(\epsilon+\eta)^{-(m+1)}$ for some constant $\hat C_1$ independent of all parameters. To bound the first term, we expand $\rho_1$ using Taylor's theorem:
\begin{equation}\label{eqn:taylor}
\rho_1(\theta)=\sum_{j=0}^{k-1}\frac{\rho_1^{(j)}(0)}{j!}\theta^j + \frac{\rho_1^{(k)}(\xi_{\theta})}{k!}\theta^k, \qquad k=\min(n,m),
\end{equation}
where $|\xi_{\theta}|\leq |\theta|$. We now consider the two cases of the theorem separately.

{\bf Case (i): $\mathbf{n+\alpha<m}$.} In this case, $k=n$ and we can select $\rho_1$ so that,
\begin{equation}\label{cut_off1}
\|\rho_1\|_{\mathcal{C}^{n,\alpha}(I)}\leq C(n,\alpha)\|\rho\|_{\mathcal{C}^{n,\alpha}(I)}\left(1+\eta^{-n-\alpha}\right),\quad \|\rho_1\|_{L^\infty(I)}\leq C(n,\alpha)\|\rho\|_{\mathcal{C}^{n,\alpha}(I)}
\end{equation}
for some universal constant $C(n,\alpha)$ that only depends on $n$ and $\alpha$. The existence of such a decomposition follows from standard arguments with cut-off functions. Using \eqref{eqn:taylor}, part (ii) of Definition \ref{unit_mth_kern_def} and the first bound of \eqref{cut_off1}, we obtain
\begin{equation}\label{eqn:approx_error_term1APPENDIX}
\begin{split}
 \left|\rho_1(0)-\int_{[-\pi,\pi]_{\mathrm{per}}}\!\!\!\! K_{\epsilon}(-\theta)\rho_1(\theta)\,d\theta\right|\leq& \hat C_2C_K\|\rho\|_{\mathcal{C}^{n,\alpha}(I)}\epsilon^m \log(\epsilon^{-1})\left(1+\eta^{-n-\alpha}\right)\\
&+\left|\int_{[-\pi,\pi]_{\mathrm{per}}}\!\!\!\! K_{\epsilon}(-\theta)\frac{\rho_1^{(n)}(\xi_{\theta})-\rho_1^{(n)}(0)}{n!}\theta^n\,d\theta\right|,
\end{split}
\end{equation}
for some constant $\hat C_2$ independent of $\epsilon$, $\eta$ and $\nu$ (or $\rho,\rho_1,\rho_2$). Note that we have added the factor of $\rho_1^{(n)}(0)$ into the integrand by a second application of part (ii) of Definition \ref{unit_mth_kern_def} and the fact that $n<m$. The H\"older continuity of $\rho_1^{(n)}$ implies that $|\rho_1^{(n)}(\xi_{\theta})-\rho_1^{(n)}(0)|\leq C(n,\alpha)\|\rho\|_{\mathcal{C}^{n,\alpha}(I)}\left(1+\eta^{-n-\alpha}\right)\theta^{\alpha}$. Using this bound in the integrand on the right-hand side of \eqref{eqn:approx_error_term1APPENDIX} and \eqref{unitary_decay}, we obtain
$$
\left|\rho_1(0)-\int_{[-\pi,\pi]_{\mathrm{per}}}\! \!\! K_{\epsilon}(-\theta)\rho_1(\theta)\,d\theta\right|\leq \hat C_3C_K\|\rho\|_{\mathcal{C}^{n,\alpha}(I)}\left(\!\epsilon^m \log(\epsilon^{-1})+ \epsilon^{n+\alpha}\int_{0}^{\pi/\epsilon}\!\!\!\frac{\tau^{n+\alpha}d\tau}{(1+\tau)^{m+1}}\!\right)\!\left(1\!+\!\eta^{-n-\alpha}\right),
$$
for some constant $\hat C_3$ independent of $\epsilon$, $\eta$ and $\nu$ (or $\rho,\rho_1,\rho_2$). Since $m> n+\alpha$, the integral in brackets converges as $\epsilon\downarrow 0$, and the bound in~\eqref{unit_pt_bd1} now follows.

{\bf Case (ii): $\mathbf{n+\alpha\geq m}$.} 
In this case, $k=m$ and we can select $\rho_1$ such that
$$
\|\rho_1\|_{\mathcal{C}^{m}(I)}\leq C(m)\|\rho\|_{\mathcal{C}^{m}(I)}\left(1+\eta^{-m}\right),
$$
for some universal constant $C(m)$ that only depends on $m$. Again, the existence of such a decomposition follows from standard arguments with cut-off functions. Using \eqref{eqn:taylor} and applying \eqref{unit_first_lem2} to the powers $\theta_j$ for $j<m$ and \eqref{unitary_decay} to the $\theta^m$ term, we obtain
\begin{equation*}
\begin{split}
 \left|\rho_1(0)-\int_{[-\pi,\pi]_{\mathrm{per}}}K_{\epsilon}(-\theta)\rho_1(\theta)\,d\theta\right|&\leq \hat C_2C_K\|\rho\|_{\mathcal{C}^{m}(I)}\left(\epsilon^m \log(\epsilon^{-1})+\epsilon^{m}\int_{0}^{\pi/\epsilon}\frac{\tau^m d\tau}{(1+\tau)^{m+1}}\right)\left(1+\eta^{-m}\right),
\end{split}
\end{equation*}
for some constant $\hat C_2$ independent of $\epsilon$, $\eta$ and $\nu$ (or $\rho,\rho_1,\rho_2$). The bound in \eqref{unit_pt_bd2} now follows.
\end{proof}

The logarithmic term in~\eqref{unit_pt_bd2} is due to the divergence of the integral $\int_{0}^{\pi/\epsilon}\frac{\tau^m d\tau}{(1+\tau)^{m+1}}$ as $\epsilon\downarrow 0$ and cannot, in general, be removed even with exactly vanishing moments replacing~\eqref{unit_first_lem2}.

\subsubsection{Weak convergence}
We now prove weak convergence in the sense of \eqref{def:wk_conv}. Using \cref{thm:unitary_pointwise_convergence} and a duality argument, we can also provide convergence rates for \eqref{def:wk_conv}. 
\begin{theorem}[Weak convergence]\label{thm:unitary_weak_convergence}
Let $\{K_{\epsilon}\}$ be an $m$th order kernel for $[-\pi,\pi]_{\mathrm{per}}$, $\phi\in\mathcal{C}^{n,\alpha}([-\pi,\pi]_{\mathrm{per}})$, and let $\nu_g$ be a spectral measure on the periodic interval $[-\pi,\pi]_{\mathrm{per}}$. Then
	\begin{equation}\label{weak_bd_m_ord}
	\left|\int_{[-\pi,\pi]_{\mathrm{per}}} \phi(\theta)\nu^\epsilon_g(\theta)\,d\theta - \int_{[-\pi,\pi]_{\mathrm{per}}}\phi(\theta)\,d\nu_g(\theta)\right|\lesssim C_K\|\phi\|_{\mathcal{C}^{n,\alpha}([-\pi,\pi]_{\mathrm{per}})}\left(\epsilon^{n+\alpha}+\epsilon^{m}\log(\epsilon^{-1})\right),
	\end{equation}
where `$\lesssim$' means that the inequality holds up to a constant that only depends on $n+\alpha$ and $m$.
\end{theorem}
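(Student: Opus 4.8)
The plan is to reduce the weak estimate to the pointwise estimate of \cref{thm:unitary_pointwise_convergence} by a Fubini/duality argument. First I would unfold the definition $\nu_g^\epsilon(\theta_0)=\int_{[-\pi,\pi]_{\mathrm{per}}}K_\epsilon(\theta_0-\theta)\,d\nu_g(\theta)$ and interchange the two integrations, which is legitimate since $K_\epsilon\in L^1$, $\phi$ is bounded, and $\nu_g$ has finite total variation. This gives
\[
\int_{[-\pi,\pi]_{\mathrm{per}}}\phi(\theta_0)\,\nu_g^\epsilon(\theta_0)\,d\theta_0=\int_{[-\pi,\pi]_{\mathrm{per}}}\Phi_\epsilon(\theta)\,d\nu_g(\theta),\qquad \Phi_\epsilon(\theta):=\int_{[-\pi,\pi]_{\mathrm{per}}}\phi(\theta_0)\,K_\epsilon(\theta_0-\theta)\,d\theta_0,
\]
so the left-hand side of \eqref{weak_bd_m_ord} equals $\left|\int_{[-\pi,\pi]_{\mathrm{per}}}(\Phi_\epsilon-\phi)\,d\nu_g\right|\le\|\nu_g\|\,\sup_{\theta}|\Phi_\epsilon(\theta)-\phi(\theta)|=\sup_{\theta}|\Phi_\epsilon(\theta)-\phi(\theta)|$, using that $\nu_g$ is a probability measure. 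Thus it suffices to bound $\Phi_\epsilon-\phi$ uniformly.

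The second step is to recognise $\Phi_\epsilon$ as an instance of the smoothed measure in \cref{thm:unitary_pointwise_convergence}. Setting $\check K_\epsilon(\psi):=K_\epsilon(-\psi)$, we have $K_\epsilon(\theta_0-\theta)=\check K_\epsilon(\theta-\theta_0)$, hence $\Phi_\epsilon(\theta)=\int_{[-\pi,\pi]_{\mathrm{per}}}\check K_\epsilon(\theta-\theta_0)\phi(\theta_0)\,d\theta_0$; that is, $\Phi_\epsilon$ is the approximate spectral measure built from the kernel $\check K_\epsilon$ applied to the (globally absolutely continuous) measure $d\nu(\theta)=\phi(\theta)\,d\theta$. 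One checks directly from \cref{unit_mth_kern_def} that $\{\check K_\epsilon\}$ is again an $m$th order kernel with the same constant $C_K$: the normalisation and the decay bound \eqref{unitary_decay} are invariant under $\psi\mapsto-\psi$, and $\int_{[-\pi,\pi]_{\mathrm{per}}}\theta^n\check K_\epsilon(\theta)\,d\theta=(-1)^n\int_{[-\pi,\pi]_{\mathrm{per}}}\theta^nK_\epsilon(\theta)\,d\theta$, so \eqref{unit_first_lem2} carries over.

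Now I would apply \cref{thm:unitary_pointwise_convergence} to $\nu$ with density $\rho=\phi\in\mathcal{C}^{n,\alpha}([-\pi,\pi]_{\mathrm{per}})$, kernel $\check K_\epsilon$, base point an arbitrary $\theta_0$, and a fixed $\eta$ (say $\eta=\pi/2$). Since $\phi$ is $\mathcal{C}^{n,\alpha}$ on the whole periodic interval, the hypotheses hold at every $\theta_0$ with $\|\rho\|_{\mathcal{C}^{n,\alpha}(I)}\le\|\phi\|_{\mathcal{C}^{n,\alpha}([-\pi,\pi]_{\mathrm{per}})}$ and $\|\nu\|\le 2\pi\|\phi\|_\infty$, and because $\eta$ is fixed the $\eta$-dependent factors $1+\eta^{-n-\alpha}$ and $\epsilon^m(\epsilon+\eta)^{-(m+1)}$ become constants depending only on $n+\alpha$ and $m$. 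In the regime $n+\alpha<m$, \eqref{unit_pt_bd1} yields $|\Phi_\epsilon(\theta_0)-\phi(\theta_0)|\lesssim C_K\|\phi\|_{\mathcal{C}^{n,\alpha}}(\epsilon^{n+\alpha}+\epsilon^m)\lesssim C_K\|\phi\|_{\mathcal{C}^{n,\alpha}}\,\epsilon^{n+\alpha}$; in the regime $n+\alpha\ge m$, \eqref{unit_pt_bd2} yields $|\Phi_\epsilon(\theta_0)-\phi(\theta_0)|\lesssim C_K\|\phi\|_{\mathcal{C}^{n,\alpha}}\,\epsilon^m\log(\epsilon^{-1})$. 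Either way the bound is $\lesssim C_K\|\phi\|_{\mathcal{C}^{n,\alpha}}(\epsilon^{n+\alpha}+\epsilon^m\log(\epsilon^{-1}))$ uniformly in $\theta_0$, and combining with the first step gives \eqref{weak_bd_m_ord}.

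There is no substantial obstacle: the whole proof is the elementary duality ``testing $\nu_g^\epsilon$ against $\phi$ equals testing $\nu_g$ against the smoothed $\phi$'', reduced via Fubini to the already-proven pointwise theorem. The only points needing care are justifying the interchange of integrals, performing the reflection $K_\epsilon\mapsto\check K_\epsilon$ so as to match the sign convention of \cref{thm:unitary_pointwise_convergence} without assuming $K_\epsilon$ even, and observing that the constants in that theorem are uniform in the base point because $\phi$ is smooth on the entire periodic interval, so one fixed $\eta$ suffices everywhere.
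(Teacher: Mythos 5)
Your proposal is essentially the paper's own proof: both apply Fubini to pass the kernel onto $\phi$, observe that the reflected kernel $\tilde K_\epsilon(\theta)=K_\epsilon(-\theta)$ is again an $m$th order kernel, invoke \cref{thm:unitary_pointwise_convergence} for the absolutely continuous measure with density $\phi$ at a fixed $\eta=\pi/2$, and conclude using that $\nu_g$ is a probability measure. You spell out the verification that $\tilde K_\epsilon$ satisfies \cref{unit_mth_kern_def} (which the paper only asserts), but the argument is otherwise identical.
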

\begin{proof}
Let $\tilde K_\epsilon(\theta)=K_\epsilon(-\theta)$, then it is easily seen that $\{\tilde K_{\epsilon}\}$ is an $m$th order kernel for $[-\pi,\pi]_{\mathrm{per}}$. Fubini's theorem allows us to exchange the order of integration to see that
$$
\int_{[-\pi,\pi]_{\mathrm{per}}} \phi(\theta)\nu^\epsilon_g(\theta)\,d\theta=\int_{[-\pi,\pi]_{\mathrm{per}}}\phi(\theta)[K_\epsilon*\nu_g](\theta)\,d\theta=\int_{[-\pi,\pi]_{\mathrm{per}}}[\tilde K_\epsilon*\phi](\theta)\,d\nu_g(\theta).
$$
We can now apply \cref{thm:unitary_pointwise_convergence} to the absolutely continuous measure with Radon--Nikodym derivative $\phi$ and the kernel $\tilde K_\epsilon$ (e.g., with $\eta=\pi/2$) to see that
$$
\left|[\tilde K_\epsilon*\phi](\theta)-\phi(\theta)\right|\leq C_1C_K\|\phi\|_{\mathcal{C}^{n,\alpha}([-\pi,\pi]_{\mathrm{per}})}\left(\epsilon^{n+\alpha}+\epsilon^{m}\log(\epsilon^{-1})\right),
$$
for some constant $C_1$ depending on $n$, $\alpha$ and $m$. Since $\nu_g$ is a probability measure, \eqref{weak_bd_m_ord} follows.
\end{proof}
The high-order convergence in~\cref{thm:unitary_weak_convergence} does not require regularity assumptions on $\nu_g$. Moreover, though not covered by the theorem, weak convergence still holds for any $m$th order kernel and continuous periodic function $\phi$.

\subsubsection{Recovery of the atomic parts of the spectral measure} 
Finally, we consider the recovery of the atomic parts of spectral measures or, equivalently, $\sigma_{\mathrm{p}}(\mathcal{K})$ - the set of eigenvalues of $\mathcal{K}$ (see~\eqref{eqn:spec_meas}). This convergence is achieved by \textit{rescaling} the smoothed approximation $K_\epsilon*\nu_g$. The following theorem means that~\cref{alg:spec_meas_poly} converges to both the eigenvalues of $\mathcal{K}$ and the continuous part of the spectrum of $\mathcal{K}$.
\begin{theorem}[Recovery of atoms]
\label{atom_theorem}
Let $\{K_{\epsilon}\}$ be an $m$th order kernel for $[-\pi,\pi]_{\mathrm{per}}$ that satisfies $\limsup_{\epsilon\downarrow 0}\frac{\epsilon^{-1}}{|K_\epsilon(0)|}<\infty$, and let $\nu_g$ be a spectral measure on $[-\pi,\pi]_{\mathrm{per}}$. Then, for any $\theta_0\in[-\pi,\pi]_{\mathrm{per}}$,
\begin{equation}
\label{origin_well_behaved20}
\nu_g(\{\theta_0\})=\lim_{\epsilon\downarrow 0}\frac{1}{K_\epsilon(0)}[K_\epsilon*\nu_g](\theta_0).
\end{equation}
\end{theorem}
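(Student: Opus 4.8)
The plan is to isolate the contribution of a possible atom of $\nu_g$ at $\theta_0$ and to show that, after dividing by $K_\epsilon(0)$, everything else is negligible. Write $\nu_g=\nu_g(\{\theta_0\})\,\delta_{\theta_0}+\tilde\nu_g$, where $\tilde\nu_g$ is the measure obtained by deleting the mass at $\theta_0$; since $\nu_g$ is a (nonnegative, finite) probability measure, so is $\tilde\nu_g$, and $\tilde\nu_g(\{\theta_0\})=0$. Because $[K_\epsilon*\nu_g](\theta_0)=\int_{[-\pi,\pi]_{\mathrm{per}}}K_\epsilon(\theta_0-\theta)\,d\nu_g(\theta)$, the atom contributes exactly $\nu_g(\{\theta_0\})K_\epsilon(0)$, so
\[
\frac{1}{K_\epsilon(0)}[K_\epsilon*\nu_g](\theta_0)=\nu_g(\{\theta_0\})+\frac{1}{K_\epsilon(0)}[K_\epsilon*\tilde\nu_g](\theta_0),
\]
and it suffices to prove that the last term tends to $0$ as $\epsilon\downarrow 0$. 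Note this uses neither the normalization (i) nor the moment condition (ii) of an $m$th order kernel, only the decay bound (iii).

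Next I would invoke the hypothesis $\limsup_{\epsilon\downarrow 0}\epsilon^{-1}/|K_\epsilon(0)|<\infty$, which supplies a constant $c>0$ and $\epsilon_0>0$ with $|K_\epsilon(0)|\ge c\,\epsilon^{-1}$ (equivalently $1/|K_\epsilon(0)|\le c^{-1}\epsilon$) for all $0<\epsilon<\epsilon_0$; in particular $K_\epsilon(0)\neq 0$, so the right-hand side of \eqref{origin_well_behaved20} makes sense. Fix a small $\delta\in(0,\pi)$ and split the convolution over the periodic neighbourhood $\{|\theta-\theta_0|\le\delta\}$ and its complement. On the far region, property (iii) of \cref{def:mthOrderKernel} gives $|K_\epsilon(\theta_0-\theta)|\le C_K\epsilon^m/(\epsilon+\delta)^{m+1}\le C_K\epsilon^m\delta^{-(m+1)}$, so the far contribution is at most $C_K\epsilon^m\delta^{-(m+1)}\|\tilde\nu_g\|$; dividing by $|K_\epsilon(0)|\ge c\epsilon^{-1}$ makes it $O(\epsilon^{m+1})$ for fixed $\delta$, hence $\to 0$. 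On the near region I use the crude consequence $|K_\epsilon(\theta_0-\theta)|\le C_K\epsilon^m/\epsilon^{m+1}=C_K/\epsilon$ of (iii), so the near contribution is at most $(C_K/\epsilon)\,\tilde\nu_g\big((\theta_0-\delta,\theta_0+\delta)\big)$, and dividing by $|K_\epsilon(0)|\ge c\epsilon^{-1}$ leaves $(C_K/c)\,\tilde\nu_g\big((\theta_0-\delta,\theta_0+\delta)\big)$, a bound independent of $\epsilon$.

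Finally I would close the $\epsilon$–$\delta$ argument. Because $\tilde\nu_g$ has no atom at $\theta_0$, continuity of measures from above gives $\tilde\nu_g\big((\theta_0-\delta,\theta_0+\delta)\big)\downarrow\tilde\nu_g(\{\theta_0\})=0$ as $\delta\downarrow 0$; so given $\varepsilon'>0$, first choose $\delta$ so that the near part is $<\varepsilon'/2$ uniformly in $\epsilon<\epsilon_0$, then shrink $\epsilon$ so the (now $\delta$-dependent) far part is $<\varepsilon'/2$. This yields $|K_\epsilon(0)|^{-1}[K_\epsilon*\tilde\nu_g](\theta_0)\to 0$ and hence \eqref{origin_well_behaved20}. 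The main thing to be careful about — there is no real obstacle, only bookkeeping — is the order of quantifiers in this last step (pick $\delta$ for the target accuracy first, then $\epsilon$), and checking that (iii) is applied with $|\theta_0-\theta|$ understood as the periodic distance. No regularity assumption on $\nu_g$ is used.
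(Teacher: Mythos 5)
Your proof is correct. It opens with the same decomposition as the paper: peel off the atom $\nu_g(\{\theta_0\})\delta_{\theta_0}$, observe that it contributes exactly $\nu_g(\{\theta_0\})K_\epsilon(0)$ to the convolution, and reduce to showing $K_\epsilon(0)^{-1}[K_\epsilon*\tilde\nu_g](\theta_0)\to 0$. Where you diverge from the paper is in how this remainder is killed. The paper notes that the normalized kernel $K_\epsilon(\theta_0-\theta)/K_\epsilon(0)$ is uniformly bounded (via property (iii) and the hypothesis $\limsup_{\epsilon\downarrow 0}\epsilon^{-1}/|K_\epsilon(0)|<\infty$), tends to $0$ pointwise for $\theta\neq\theta_0$, and then invokes the dominated convergence theorem together with $\tilde\nu_g(\{\theta_0\})=0$. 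You instead run an explicit near/far split: the far region is $O(\epsilon^{m+1})$ for fixed $\delta$ by the decay bound, while the near region is bounded by a constant times $\tilde\nu_g\bigl((\theta_0-\delta,\theta_0+\delta)\bigr)$, which shrinks as $\delta\downarrow 0$ by continuity of measures from above. The two routes encode the same underlying fact; the paper's DCT phrasing is more compact, while yours is more elementary and makes the quantitative dependence on $\epsilon$ and $\delta$ visible, which some readers may find clearer. One small slip in your write-up: $\tilde\nu_g$ is a finite nonnegative measure, not a probability measure (its total mass is $1-\nu_g(\{\theta_0\})$); this does not affect the argument, which only needs finiteness and nonnegativity.
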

\begin{proof}
By periodicity, we may assume without loss of generality that $\theta_0=0$. Let $\nu_g'=\nu_g-\nu_g(\{0\})\delta_0$, then
\begin{equation}
\label{extra_step_atom_pf}
\frac{1}{K_\epsilon(0)}[K_\epsilon*\nu_g](0)=\nu_g(\{0\})+\frac{1}{K_\epsilon(0)}[K_\epsilon*\nu_g'](0).
\end{equation}
Consider the function $K_{\epsilon}(-\theta) / K_\epsilon(0)$, which is uniformly bounded for sufficiently small $\epsilon$ using \eqref{unitary_decay} and the assumption $\limsup_{\epsilon\downarrow 0}\frac{\epsilon^{-1}}{|K_\epsilon(0)|}<\infty$. Since $\lim_{\epsilon\downarrow0} K_{\epsilon}(-\theta) / K_\epsilon(0)=0$ for any $\theta\neq 0$ and $\nu_g'(\{0\})=0$,
$$
\lim_{\epsilon\downarrow 0}\frac{1}{K_\epsilon(0)}[K_\epsilon*\nu_g'](0)=\lim_{\epsilon\downarrow 0}\int_{[-\pi,\pi]_{\mathrm{per}}} \frac{K_{\epsilon}(-\theta)}{K_\epsilon(0)}d\nu_g'=0,
$$
where we used the dominated convergence theorem. The theorem now follows from~\eqref{extra_step_atom_pf}. 
\end{proof}

The condition that $\limsup_{\epsilon\downarrow 0}\frac{\epsilon^{-1}}{|K_\epsilon(0)|}<\infty$ is a technical condition that is satisfied by all the kernels constructed in this paper. A condition such as this is required to recover the atomic part of $\nu_g$, as it says that $K_\epsilon$ must become localized around $0$ sufficiently quickly as $\epsilon \rightarrow 0$.

\begin{figure}[t]
 \centering
  \begin{minipage}[b]{0.49\textwidth}
  \begin{overpic}[width=\textwidth]{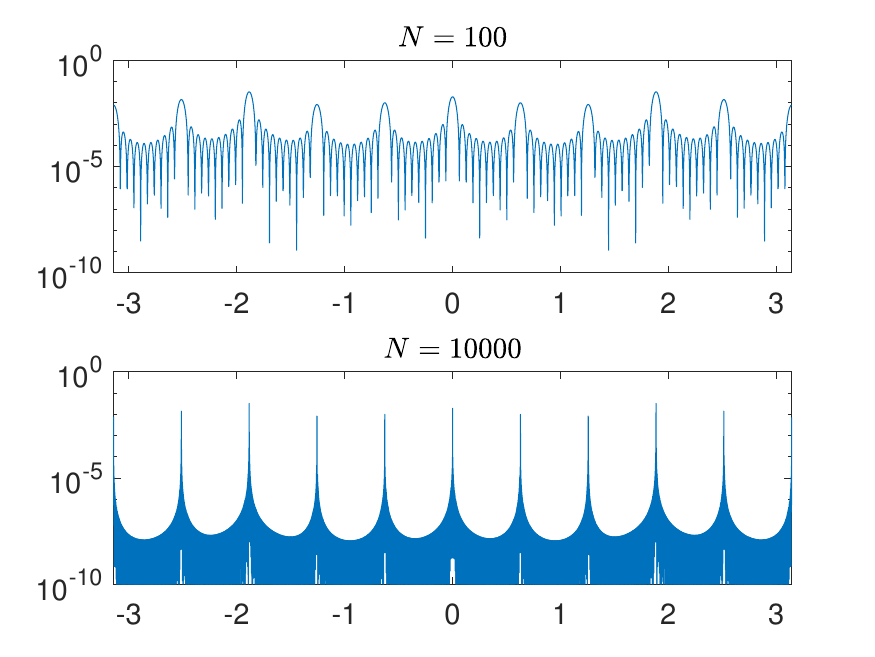}
  \end{overpic}
 \end{minipage}
	\begin{minipage}[b]{0.49\textwidth}
  \begin{overpic}[width=\textwidth]{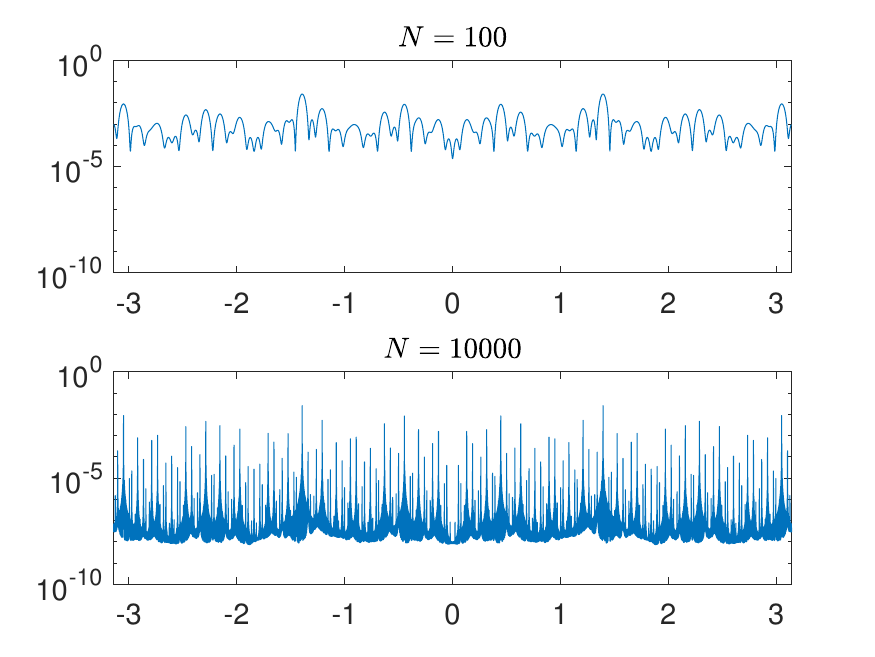}
   \end{overpic}
 \end{minipage}
 \hfill
 \caption{The approximation $[K_\epsilon*\nu_g](\theta)/K_\epsilon(0)$ of the atomic parts of the spectral measure using $\varphi_{\rm hat}$ and $g(x)=C\cos(11x)/\sqrt{1.001+\sin(x)}$, where $C\approx 0.150803789609385$ is a normalization constant so that $\|g\|=1$. As $\epsilon\downarrow 0$, the curves converge pointwise to the map $\theta\mapsto \nu_g(\{\theta\})$. Left column: The approximation for the rotation in~\eqref{rot_irr_eq} with $c=0.7$ and $N=100$ (top-left) and $N=10000$ (bottom-left) ($\epsilon=1/N$). Right column: The approximation for the rotation in~\eqref{rot_irr_eq} with $c=1/\sqrt{2}\approx0.707106781186548$ and and $N=100$ (top-left) and $N=10000$ (bottom-left) ($\epsilon=1/N$).}
\label{fig:irr_rot_example}
\end{figure}

However, we end this section with the following warning to the reader about recovering atomic parts of spectral measures. As soon as $\nu_g$ has atoms (i.e., $\mathcal{K}$ has eigenvalues and $g$ is not orthogonal to all the eigenspaces), the map $\theta\mapsto \nu_g(\{\theta\})$ is discontinuous. One can prove that, in general, separating the point spectrum from the rest of the spectrum, either in terms of spectral measures or spectral sets, is impossible for any algorithm. This holds even for simple classes of operators~\cite{colbrook2021computingCIMP,colbrook2022foundations}, unless we know apriori that the spectrum is discrete in a region of interest~\cite[Section 7.3]{colbrook2021computing}. We use smoothing with convolution kernels to regularize the map $\theta\mapsto \nu_g(\{\theta\})$. Analogues of~\cref{atom_theorem} were proven in \cite{korda2020data} using the Christoffel--Darboux kernel (which should not be confused with a convolution kernel as we have defined it) and in \cite{das2020koopman} using reproducing kernel Hilbert spaces to regularize harmonic averaging techniques. An apparent advantage of the framework of convolution kernels is that we need minimal assumptions on our dynamical system and trajectory data, and can provide explicit convergence rates in~\cref{thm:unitary_pointwise_convergence,thm:unitary_weak_convergence}.

\begin{example}[Irrational circle rotation]\label{example_irrational_circle}
Consider the rotation operator with state-space $\Omega=[-\pi,\pi]_{\mathrm{per}}$ (and standard Lebesgue measure $\omega$) given by 
\begin{equation}
\label{rot_irr_eq}
x_{n+1} = F(x_{n}), \qquad F(x) = x + 2c\pi.  
\end{equation}
If $c=p/q$ is rational, with $p$ and $q$ coprime, then the Koopman operator has pure point spectrum at $\theta = 0, 2\pi/q,\ldots, 2\pi(q-1)/q$. Otherwise, the Koopman operator has dense point spectrum. We consider the observable $g(x)=C\cos(11x)/\sqrt{1.001+\sin(x)}$, where $C\approx 0.150803789609385$ is a normalization constant so that $\|g\|=1$. This particular choice is so that $g$ has a relatively slowly decaying Fourier series.~\cref{fig:irr_rot_example} shows $[K_\epsilon*\nu_g](\theta)/K_\epsilon(0)$ for the filter $\varphi_{\rm hat}(x) = 1-|x|$ and $N=100,10000$. On the left, we plot the results for $c=0.7$ and on the right for $c=1/\sqrt{2}\approx 0.707106781186548$. We see the highly discontinuous nature of $\theta_0\mapsto \nu_g(\{\theta_0\})$, both in terms of $\theta_0$ and the map $F$ itself.
\end{example}

\subsection{Numerical examples}\label{sec:NumericalExamples}
We now consider two numerical examples of \cref{alg:spec_meas_poly} and the convergence theory in \cref{sec:convergence}.

\subsubsection{Tent map}\label{sec:tent_map}

\begin{figure}
 \centering
	\begin{minipage}[b]{0.49\textwidth}
  \begin{overpic}[width=\textwidth,trim={0mm 0mm 0mm 0mm},clip]{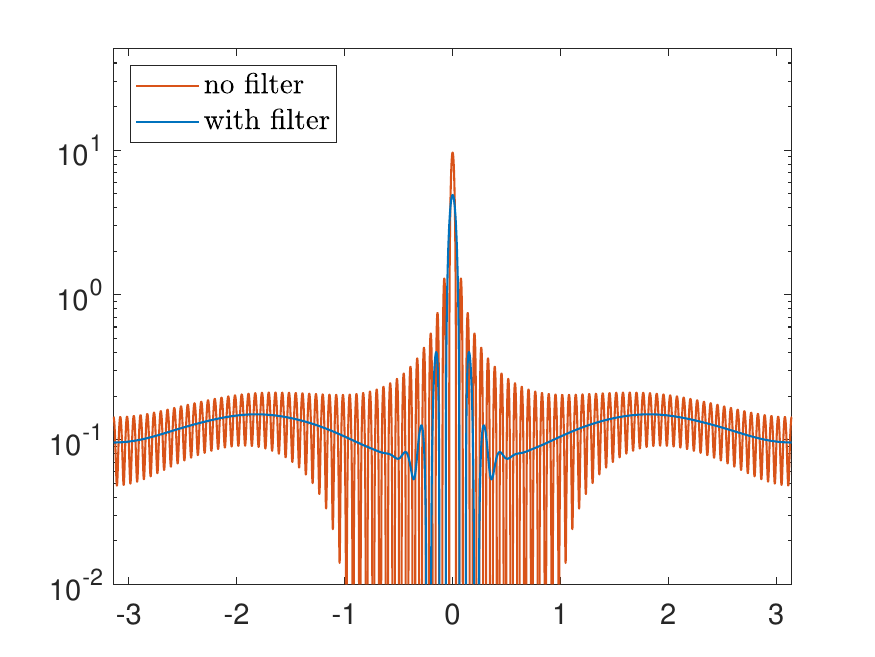}
		\put (45,73) {$\nu_g^{100}(\theta)$}
   \put (50,-3) {$\displaystyle \theta$}
		\put(30,38){\vector(0,-1){10}}
		\put(29,40){$\rho_g$}
		\put(58,58){\vector(-1,0){6}}
		\put(58,57){\small{}Eigenvalue}
   \end{overpic}
 \end{minipage}
	\hfill
	\begin{minipage}[b]{0.49\textwidth}
  \begin{overpic}[width=\textwidth,trim={0mm 0mm 0mm 0mm},clip]{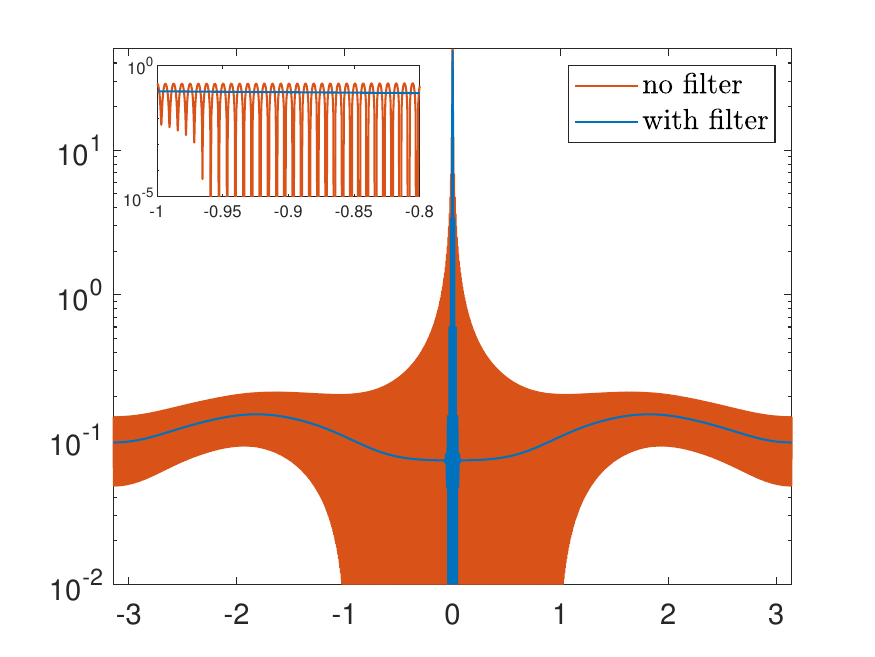}
		\put (45,73) {$\nu_g^{1000}(\theta)$}
   \put (50,-3) {$\displaystyle \theta$}
   \end{overpic}
 \end{minipage}
 \caption{Computed approximate spectral measures with respect to the function $g$ in~\eqref{f_tent2} using \cref{alg:spec_meas_poly} with the filter in \eqref{cpt_kernel} for the tent map. These are computed with (blue) and without (red) the filter in~\eqref{cpt_kernel} for discretization sizes $N = 100$ (left) and $N = 1000$ (right). The function $\nu_{g,N}$ is highly oscillatory if no filter is used (see zoomed-in subplot).} 
\label{fig:tent}
\end{figure}

The tent map with parameter $2$ is the function $F:[0,1]\rightarrow [0,1]$ given by $F(x) = 2\min\{x,1-x\}$. It generates a chaotic system with discrete and continuous spectra. We consider $\Omega=[0,1]$ with the usual Lebesgue measure. The corresponding Koopman operator $\mathcal{K}$ is an isometry. However, it is not unitary since the function $\mathcal{K}g$ is symmetric about $x=1/2$ for any function $g$, and hence $\mathcal{K}$ is not onto. Furthermore, the decomposition in \eqref{eqn:spec_meas} reduces to an atomic part at $\theta=0$ of size $(\int_{0}^1g(x)dx)^2$ and an absolutely continuous part. The tent map thus demonstrates that our algorithm deals with mixed spectral types without a priori knowledge of the eigenvalues of $\mathcal{K}$. 

To compute the inner products $\langle g,\mathcal{K}^ng \rangle$ for \cref{alg:spec_meas_poly}, we sample the observable $g$ on an equally spaced dyadic grid with equal weights.\footnote{While this quadrature rule may seem suboptimal, it is selected because of the dyadic structure of the tent map and to avoid issues when computing the integrals $\langle g,\mathcal{K}^ng \rangle$ for large $n$ due to the chaotic nature of $F$.} 
As an example, consider the arbitrary discontinuous function
\begin{equation}
\label{f_tent2}
g(\theta)=C|\theta-1/3|+C\sin(20\theta)+\begin{dcases}C,  & \theta>0.78,\\
0, & \theta\leq 0.78,
\end{dcases}
\end{equation}
where $C\approx 1.035030525813683$ is a normalization constant. Applying \cref{alg:spec_meas_poly}, we found convergence to the Radon--Nikodym derivative away from the singular part of the measure (the atom at zero) behaved as predicted by \cref{thm:unitary_pointwise_convergence}. To see the importance of the filter, we plot the reconstruction from the Fourier coefficients both with and without the filter \eqref{cpt_kernel} in \cref{fig:tent}. As $N$ increases, we see that the filter localizes the severe oscillations near the origin, and we gain convergence to the Radon--Nikodym derivative away from the eigenvalue at $0$. This is not the case without the filter, where severe oscillations pollute the entire interval. Using the filter in~\eqref{cpt_kernel}, the atomic part approximated via \cref{atom_theorem} was correct to $4.4\times 10^{-4}$ for $N=1000$ and converges in the limit $N\rightarrow\infty$, whereas convergence via \eqref{origin_well_behaved20} does not hold without the filter.

Finally, we consider the sample complexity $M_1M_2$ needed to recover the Fourier coefficients of the measures. We consider the maximum absolute error of the computed $\widehat{\nu_g}(n)$ for $|n|\leq 10$ for our quadrature-based method and the ergodic sampling method from \eqref{quad_comp222}. For the ergodic method, $M_1=1$ and hence the sample complexity is the length of the trajectory used. However, for this example, the naive application of \eqref{quad_comp222} for the ergodic method is severely unstable. Because of the binary nature of the tent map, each application of $F$  loses a single bit of precision. Using floating-point arithmetic means that trajectories eventually stagnate at the fixed point $0$. This error is not a random numerical error but a structured one. A good solution is to perturb each evaluation of $F$ by a small random amount. The instability is absent for the tent map with parameters that avoid exact binary representations. \cref{fig:tent2} shows the results for a range of different $g$. The ergodic method converges like $\mathcal{O}(M_2^{-1/2})$ as $M_2\rightarrow \infty$, and the quadrature-based method converges with a faster rate in $M_1$. 

\begin{figure}
 \centering
 \begin{minipage}[b]{0.325\textwidth}
  \begin{overpic}[width=\textwidth,trim={0mm 0mm 0mm 0mm},clip]{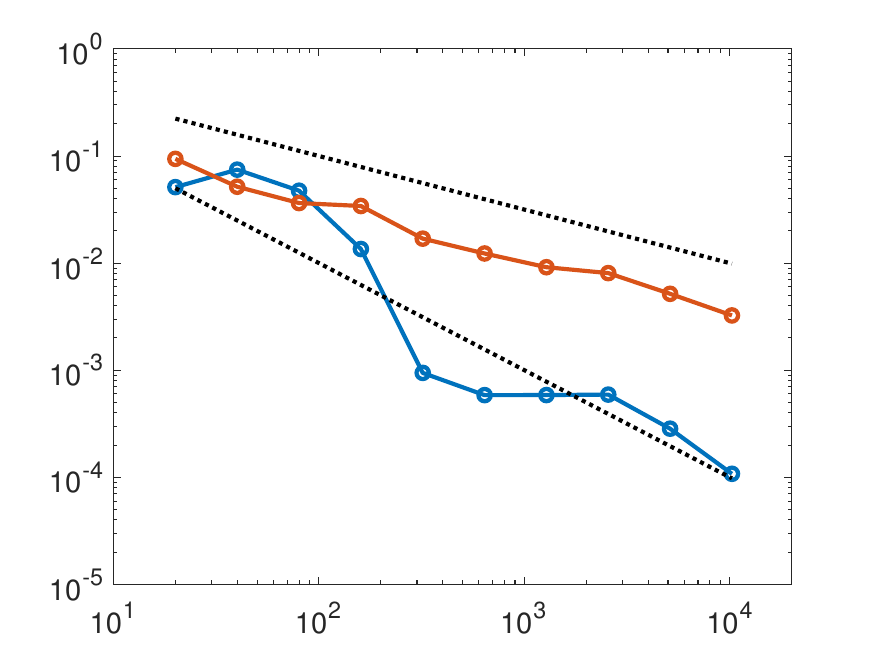}
		\put (24,73) {Error, $g$ from \eqref{f_tent2}}
		\put (30,35) {\small\rotatebox{-28} {$\mathcal{O}((M_1M_2)^{-1})$}}
		\put (30,62) {\small\rotatebox{-14} {$\mathcal{O}((M_1M_2)^{-1/2})$}}
   \put (39,-3) {$M_1M_2$}
   \end{overpic}
 \end{minipage}
	\begin{minipage}[b]{0.325\textwidth}
  \begin{overpic}[width=\textwidth,trim={0mm 0mm 0mm 0mm},clip]{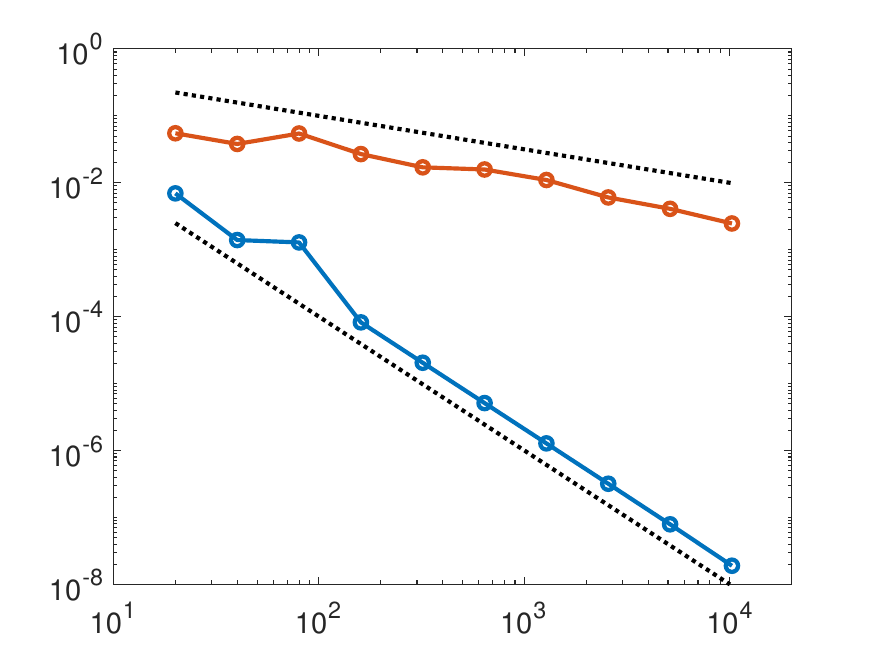}
		\put (10,73) {Error, $g=\sqrt{2} \cos(\pi x-0.1)$}
		\put (30,34) {\small\rotatebox{-32} {$\mathcal{O}((M_1M_2)^{-2})$}}
		\put (40,64) {\small\rotatebox{-8} {$\mathcal{O}((M_1M_2)^{-1/2})$}}
   \put (39,-3) {$M_1M_2$}
   \end{overpic}
 \end{minipage}
	\begin{minipage}[b]{0.325\textwidth}
  \begin{overpic}[width=\textwidth,trim={0mm 0mm 0mm 0mm},clip]{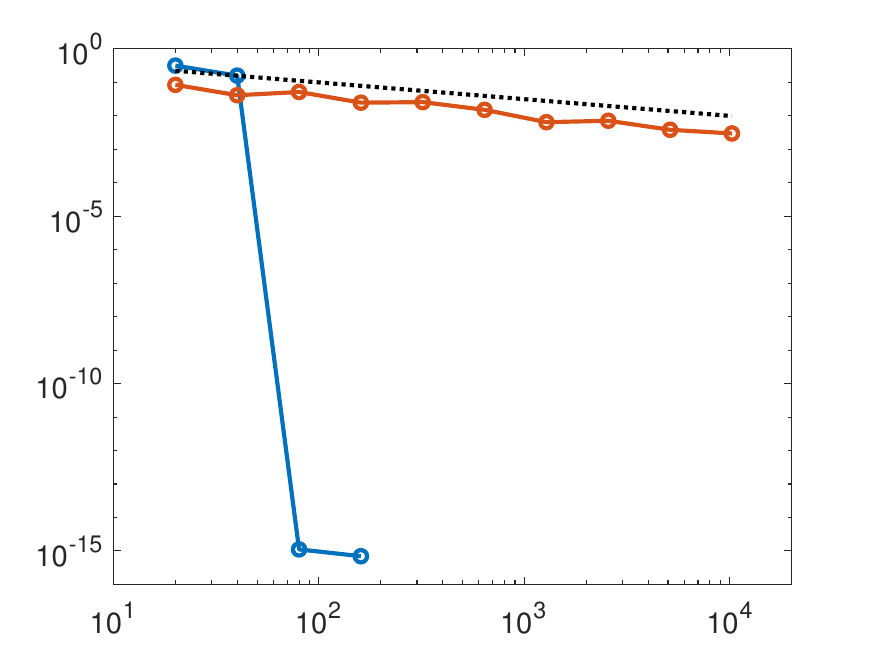}
		\put (15,73) {Error, $g=\sqrt{2} \cos(100\pi x)$}
		\put (45,55) {\small\rotatebox{-4} {$\mathcal{O}((M_1M_2)^{-1/2})$}}
   \put (39,-3) {$M_1M_2$}
   \end{overpic}
 \end{minipage}
  \caption{Maximum absolute error in computing $\widehat{\nu_g}(n)$ for $-10\leq n \leq 10$ with a sample size of $M_1M_2$ for quadrature (blue) and stabilized ergodic (red). Each $g$ is normalized so that $\|g\|=1$.} 
\label{fig:tent2}
\end{figure}

\subsubsection{Nonlinear pendulum}\label{exam:1dpendulum}

\begin{figure}
 \centering
 \begin{minipage}[b]{0.49\textwidth}
  \begin{overpic}[width=\textwidth,trim={0mm 0mm 0mm 0mm},clip]{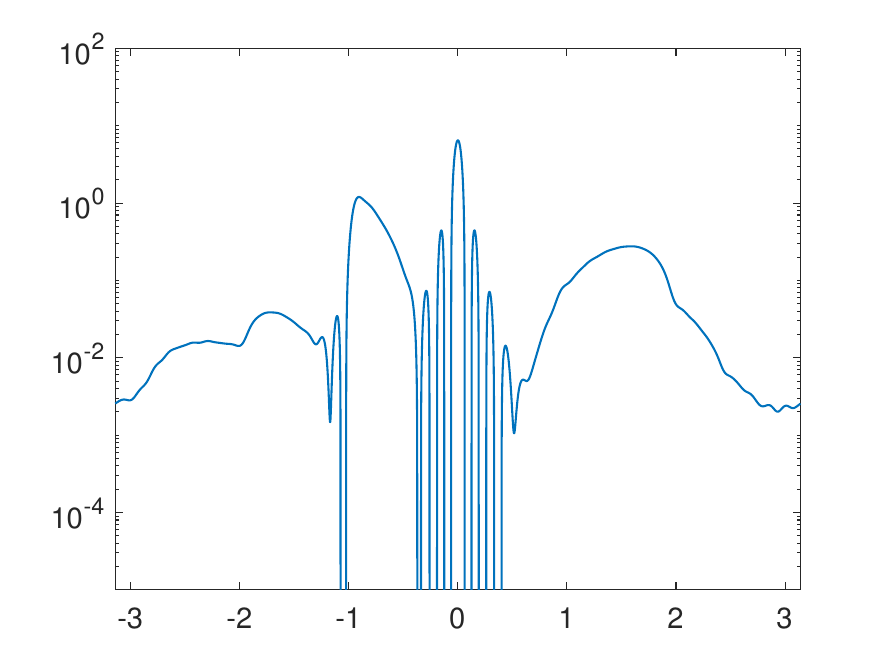}
		\put (45,73) {$\nu_g^{100}(\theta)$}
   \put (50,-3) {$\theta$}
   \end{overpic}
 \end{minipage}
	\hfill
	\begin{minipage}[b]{0.49\textwidth}
  \begin{overpic}[width=\textwidth,trim={0mm 0mm 0mm 0mm},clip]{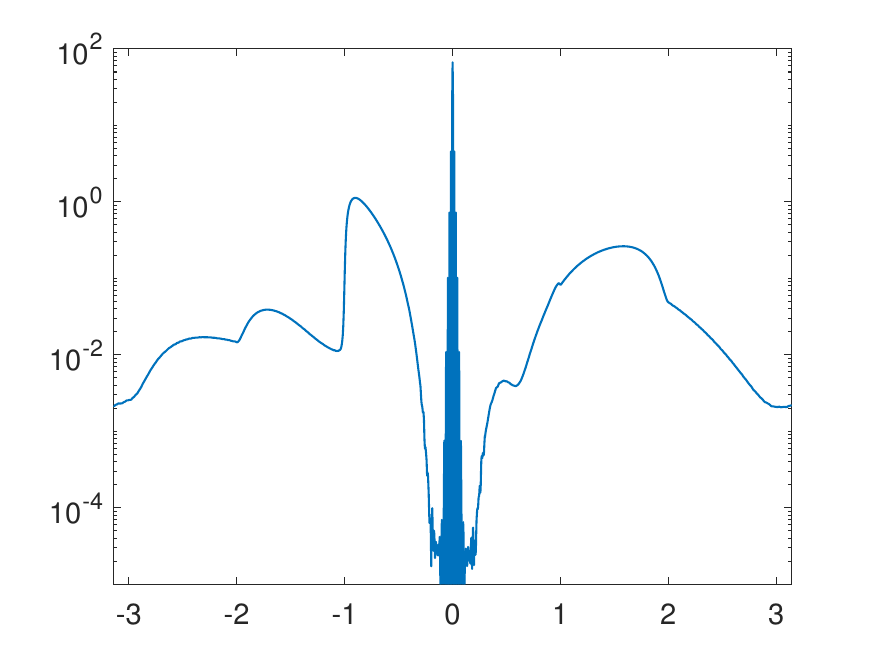}
		\put (45,73) {$\nu_g^{1000}(\theta)$}
   \put (50,-3) {$\theta$}
   \end{overpic}
 \end{minipage}
  \caption{Computed spectral measure using \cref{alg:spec_meas_poly} with \eqref{cpt_kernel} for the nonlinear pendulum.} 
\label{fig:pendulum1}
\end{figure}
The nonlinear pendulum is a nonchaotic Hamiltonian system with continuous spectra and challenging Koopman operator theory. Here, we consider a corresponding discrete-time system by sampling~\eqref{eq:non_lin_pend_ham} with a time-step of $\Delta_t=1$. We collect $M_1$ data points on an equispaced tensor product grid corresponding to the periodic trapezoidal quadrature rule in the $x_1$ direction and a truncated\footnote{We select a truncation to $x_2\in[-L,L]$ so that the $g$ and $\mathcal{K}^ng$ are negligible for $|x_2|>L$.} trapezoidal quadrature rule in the $x_2$ direction. To simulate the collection of trajectory data, we compute trajectories starting at each initial condition using the \texttt{ode45} command in MATLAB. We stress that we only use \texttt{ode45} as a black-box integrator - all of our algorithms in this paper are purely data-driven.

We look at the following observable that involves nontrivial dynamics in each coordinate:
$$
g(x_1,x_2)=C(1+i\sin(x_1))(1-\sqrt{2}x_2)e^{-x_2^2/2},
$$
where $C\approx 0.24466788518668$ is a normalization constant. \cref{fig:pendulum1} shows high resolution approximations of the spectral measure $\nu_g$ for $N=100$ using $M_1=50000$ and $N=1000$ using $M_1=10^6$. The spectral measure is purely continuous (no atoms) away from $\theta=0$, consistent with the general theory of integrable Hamiltonian systems with one degree of freedom \cite{mezic2020spectrum}. Note that the constant function $1$ is not in $L^2([0,2\pi]_{\mathrm{per}}\times\mathbb{R})$ and hence cannot be an eigenfunction. We confirmed this by using \cref{atom_theorem} for larger $N$ and observing that the peak at $\theta=0$ seen in \cref{fig:pendulum1} does not grow as fast as $\propto N$. However, the spectral measure behaves singularly at $\theta=0$.

\section{Residual DMD (ResDMD)} \label{sec:RES_DMD}
In this section, we consider dynamical systems of the form~\eqref{eq:DynamicalSystem} that are not necessarily measure-preserving. Therefore, we cannot assume that $\mathcal{K}$ is an isometry. Instead, all we assume about $\mathcal{K}$ is that it is a closed and densely defined operator. We assume that we have access to a sequence of snapshots, i.e., a trajectory data matrix (see~\eqref{eq:TrajectoryData}) with two columns: 
\begin{equation}
\label{data:snapshots}
B_{\rm data} = \begin{bmatrix}\pmb{x}^{(1)} & \pmb{y}^{(1)}\\
\vdots & \vdots \\
\pmb{x}^{(M)} & \pmb{y}^{(M)}\end{bmatrix},
\end{equation}
where $\pmb{y}^{(j)}=F(\pmb{x}^{(j)})$ for $1\leq j\leq M=M_1$. An $M_1\times M_2$ trajectory data matrix can be converted to the form of~\eqref{data:snapshots} by forgetting that the data comes from longer runs and reshaping the matrix.

Using~\eqref{data:snapshots}, we develop a new algorithm, Residual DMD (ResDMD), that approximates the associated Koopman operator of the dynamics. Our approach allows for Koopman operators $\mathcal{K}$ with no non-trivial finite-dimensional invariant subspace. The critical difference between ResDMD and other DMD algorithms (such as EDMD) is that we construct Galerkin approximations for not only $\mathcal{K}$, but also $\mathcal{K}^*\mathcal{K}$. This difference allows us to have rigorous convergence guarantees for ResDMD when recovering the spectral information of $\mathcal{K}$ and computing spectra and pseudospectra. In particular, we avoid spectral pollution (see~\cref{fig:pseudo1}). 

\subsection{Extended DMD (EDMD) and a new matrix for computing residuals}\label{extendEDMD}
Before discussing our ResDMD approach, we describe EDMD. EDMD constructs a matrix $K_{\mathrm{EDMD}}\in\mathbb{C}^{N_K\times N_K}$ that approximates the action of $\mathcal{K}$ from the snapshot data in~\eqref{data:snapshots}. The original description of EDMD assumes that $\smash{\{\pmb{x}^{(j)}\}_{j=1}^{M}\subset \Omega}$ are drawn independently according to $\omega$~\cite{williams2015data}. Here, we describe EDMD for arbitrary initial states and use $\smash{\{\pmb{x}^{(j)}\}_{j=1}^{M}}$ as quadrature nodes.

\subsubsection{EDMD viewed as a Galerkin method}\label{sec:basic_EDMD}
Given a dictionary $\{\psi_1,\ldots,\psi_{N_K}\}\subset\mathcal{D}(\mathcal{K})$ of observables, EDMD selects a matrix $K_{\mathrm{EDMD}}$ that approximates $\mathcal{K}$ on the subspace ${V}_{{N_K}}=\mathrm{span}\{\psi_1,\ldots,\psi_{N_K}\}$, i.e., ${[\mathcal{K}\psi_j](\pmb{x}) = \psi_j(F(\pmb{x})) \approx \sum_{i=1}^{N_K} (K_{\mathrm{EDMD}})_{ij} \psi_i(\pmb{x})}$ for $1\leq j\leq {N_K}$. Define the vector-valued feature map $\Psi(\pmb{x})=\begin{bmatrix}\psi_1(\pmb{x}) & \cdots& \psi_{{N_K}}(\pmb{x}) \end{bmatrix}\in\mathbb{C}^{1\times {N_K}}.$ Then any $g\in V_{N_K}$ can be written as $g(\pmb{x})=\sum_{j=1}^{N_K}\psi_j(\pmb{x})g_j=\Psi(\pmb{x})\,\pmb{g}$ for some vector $\pmb{g}\in\mathbb{C}^{N_K}$. It follows that
$$
[\mathcal{K}g](\pmb{x})=\Psi(F(\pmb{x}))\,\pmb{g}=\Psi(\pmb{x})(K_{\mathrm{EDMD}}\,\pmb{g})+\underbrace{\left(\sum_{j=1}^{N_K}\psi_j(F(\pmb{x}))g_j-\Psi(\pmb{x})(K_{\mathrm{EDMD}}\,\pmb{g})\right)}_{r(\pmb{g},\pmb{x})}.
$$
Typically, $V_{N_K}$ is not an invariant subspace of $\mathcal{K}$ so there is no choice of $K_{\mathrm{EDMD}}$ that makes $r(\pmb{g},\pmb{x})$ zero for all $g\in V_N$ and $\pmb{x}\in\Omega$. Instead, it is natural to select $K_{\mathrm{EDMD}}$ as a solution of
\begin{equation} 
\mathrm{argmin}_{B\in\mathbb{C}^{N_K\times N_K}} \left\{\int_\Omega \max_{\pmb{g}\in\mathbb{C}^{N_K},\|\pmb{g}\|=1}|r(\pmb{g},\pmb{x})|^2\,d\omega(\pmb{x})=\int_\Omega \left\|\Psi(F(\pmb{x})) - \Psi(\pmb{x})B\right\|^2_{\ell^2}\,d\omega(\pmb{x})\right\}.
\label{eq:ContinuousLeastSquaresProblem}
\end{equation} 
Here, $\|\cdot\|_{\ell^2}$ denotes the standard Euclidean norm of a vector.

In practice, one cannot directly evaluate the integral in~\eqref{eq:ContinuousLeastSquaresProblem}. Instead, we approximate it via a quadrature rule with nodes $\{\pmb{x}^{(j)}\}_{j=1}^{M}$ and weights $\{w_j\}_{j=1}^{M}$. The discretized version of~\eqref{eq:ContinuousLeastSquaresProblem} is therefore the following weighted least-squares problem:
\begin{equation}
\label{EDMD_opt_prob2}
\mathrm{argmin}_{B\in\mathbb{C}^{N_K\times N_K}}\sum_{j=1}^{M} w_j\left\|\Psi(\pmb{y}^{(j)})-\Psi(\pmb{x}^{(j)})B\right\|^2_{\ell^2}.
\end{equation}
A solution to~\eqref{EDMD_opt_prob2} can be written down explicitly as $K_{\mathrm{EDMD}}=(\Psi_X^*W\Psi_X)^{\dagger}(\Psi_X^*W\Psi_Y)$, where `$\dagger$' denotes the pseudoinverse and $W=\mathrm{diag}(w_1,\ldots,w_{M})$. Here, $\Psi_X$ and $\Psi_Y$ are the $M\times N_K$ matrices given by 
\begin{equation}
\begin{split}
\Psi_X=\begin{pmatrix}
\Psi(\pmb{x}^{(1)})\\
\vdots\\
\Psi(\pmb{x}^{(M)})
\end{pmatrix}\in\mathbb{C}^{M\times N_K},\quad
\Psi_Y=\begin{pmatrix}
\Psi(\pmb{y}^{(1)})\\
\vdots \\
\Psi(\pmb{y}^{(M)})
\end{pmatrix}\in\mathbb{C}^{M\times N_K}.
\label{psidef}
\end{split}
\end{equation}
By reducing the size of the dictionary if necessary, we may assume without loss of generality that $\Psi_X^*W\Psi_X$ is invertible. In practice, regularization through truncated singular value decompositions or a change of basis representation may also be considered. Since $\Psi_X^*W\Psi_X = \sum_{j=1}^{M} w_j \Psi(\pmb{x}^{(j)})^*\Psi(\pmb{x}^{(j)})$ and $\Psi_X^*W\Psi_Y = \sum_{j=1}^{M} w_j \Psi(\pmb{x}^{(j)})^*\Psi(\pmb{y}^{(j)})$, if the quadrature converges (see~\cref{sec:matrix_conv_galerkin}) then
\[
\lim_{M\rightarrow\infty}[\Psi_X^*W\Psi_X]_{jk} = \langle \psi_k,\psi_j \rangle\quad \text{ and }\quad \lim_{M\rightarrow\infty}[\Psi_X^*W\Psi_Y]_{jk} = \langle \mathcal{K}\psi_k,\psi_j \rangle,
\]
where $\langle \cdot,\cdot \rangle$ is the inner product associated with $L^2(\Omega,\omega)$. Thus, EDMD can be viewed as a Galerkin method in the large data limit $M\rightarrow \infty$. Let $\mathcal{P}_{V_{N_K}}$ denote the orthogonal projection onto $V_{N_K}$. In the large data limit, $K_{\rm EDMD}$ approaches a matrix representation of $\mathcal{P}_{V_{N_K}}\mathcal{K}\mathcal{P}_{V_{N_K}}^*$ and the EDMD eigenvalues approach the spectrum of $\mathcal{P}_{V_{N_K}}\mathcal{K}\mathcal{P}_{V_{N_K}}^*$. Thus, approximating $\sigma(\mathcal{K})$ by the eigenvalues of $K_{\mathrm{EDMD}}$ is closely related to the so-called finite section method~\cite{bottcher1983finite}. Since the finite section method can suffer from spectral pollution (see~\cref{pseudospec_intro_NLP}), spectral pollution is also a concern for EDMD\footnote{It is pointed out in~\cite{korda2018convergence} that if $\mathcal{K}$ is bounded and the corresponding eigenvectors of a sequence of eigenvalues do not weakly converge to zero as the discretization size increases, then the EDMD eigenvalues have a convergent subsequence to an element of the spectrum. Unfortunately, one can prove that no algorithm can determine whether a sequence of eigenvectors converges weakly to zero.} and it is important to have an independent way to measure the accuracy of the candidate eigenvalue-eigenvector pairs.

\subsubsection{Measuring the accuracy of candidate eigenvalue-eigenvector pairs}
Suppose that we have a candidate eigenvalue-eigenvector pair $(\lambda,g)$ of $\mathcal{K}$, where $\lambda\in\mathbb{C}$ and $g=\Psi\,\pmb{g}\in V_{N_K}$. One way to measure the accuracy of $(\lambda,g)$ is by estimating the squared relative residual
\begin{align}
\label{residual_form1}
\frac{\int_{\Omega}\left|[\mathcal{K}g](\pmb{x})-\lambda g(\pmb{x})\right|^2\, d\omega(\pmb{x})}{\int_{\Omega}\left|g(\pmb{x})\right|^2\, d\omega(\pmb{x})}&=\frac{\langle (\mathcal{K}-\lambda)g,(\mathcal{K}-\lambda)g\rangle}{\langle g,g \rangle}\\
&=\frac{\sum_{j,k=1}^{N_K}\overline{{g}_j}{g}_k\left[\langle \mathcal{K}\psi_k,\mathcal{K}\psi_j\rangle -\lambda\langle \psi_k,\mathcal{K}\psi_j\rangle -\overline{\lambda}\langle \mathcal{K}\psi_k,\psi_j\rangle+|\lambda|^2\langle \psi_k,\psi_j\rangle\right]}{\sum_{j,k=1}^{N_K}\overline{{g}_j}{g}_k\langle \psi_k,\psi_j\rangle}.\notag
\end{align}
If $\mathcal{K}$ is a normal operator, then the minimum of~\eqref{residual_form1} over all normalized $g\in \mathcal{D}(\mathcal{K})$ is exactly the square distance of $\lambda$ to the spectrum of $\mathcal{K}$; otherwise, for nonnormal $\mathcal{K}$ the residual can still provide a measure of accuracy (see~\cref{sec:computing_spectra_limits}). One can also use the residual to bound the distance between $g$ and the eigenspace associated with $\lambda$, assuming $\lambda$ is a point in the discrete spectrum of $\mathcal{K}$~\cite[Chapter V]{stewart1990matrix}.

We approximate the residual in~\eqref{residual_form1} by 
\begin{equation} 
\mathrm{res}(\lambda,g)^2 = \frac{\sum_{j,k=1}^{N_K}\overline{{g}_j}{g}_k\left[(\Psi_Y^*W\Psi_Y)_{jk} - \lambda(\Psi_Y^*W\Psi_X)_{jk} - \overline{\lambda}(\Psi_X^*W\Psi_Y)_{jk} + |\lambda|^2(\Psi_X^*W\Psi_X)_{jk}\right]}{\sum_{j,k=1}^{N_K}\overline{{g}_j}{g}_k(\Psi_X^*W\Psi_X)_{jk}}. 
\label{eq:abs_res}
\end{equation} 
All the terms in this residual can be computed using the snapshot data. Note that, as well as the matrices found in EDMD, \eqref{eq:abs_res} has the \textit{additional matrix} $\Psi_Y^*W\Psi_Y$. Moreover, under certain conditions, $\lim_{M\rightarrow\infty} \mathrm{res}(\lambda,g)^2 = \int_{\Omega}\left|[\mathcal{K}g](\pmb{x})-\lambda g(\pmb{x})\right|^2\, d\omega(\pmb{x})/\int_{\Omega}\left|g(\pmb{x})\right|^2\, d\omega(\pmb{x})$ for any $g\in V_{N_K}$ (see~\cref{sec:matrix_conv_galerkin}). In particular, $\lim_{M\rightarrow\infty}[\Psi_Y^*W\Psi_Y]_{jk} = \langle \mathcal{K}\psi_k,\mathcal{K}\psi_j \rangle$ and $\Psi_Y^*W\Psi_Y$ formally corresponds to a Galerkin approximation of $\mathcal{K}^*\mathcal{K}$ as $M\rightarrow \infty$. In~\cref{sec:computing_spectra_limits}, we show that the quantity $\mathrm{res}(\lambda,g)$ can be used to rigorously compute spectra and pseudospectra of $\mathcal{K}$.

\subsubsection{Convergence of matrices in the large data limit}\label{sec:matrix_conv_galerkin}
Echoing \cref{sec:ComputingAutoCorrelations}, we focus on three situations where 
\begin{equation} 
\lim_{M\rightarrow\infty}[\Psi_X^*W\Psi_X]_{jk} = \langle \psi_k,\psi_j \rangle,\quad \lim_{M\rightarrow\infty}[\Psi_X^*W\Psi_Y]_{jk} = \langle \mathcal{K}\psi_k,\psi_j \rangle,\quad \lim_{M\rightarrow\infty}[\Psi_Y^*W\Psi_Y]_{jk} = \langle \mathcal{K}\psi_k,\mathcal{K}\psi_j \rangle.
\label{eq:convergenceMatrices} 
\end{equation} 
For initial conditions at quadrature nodes, if the dictionary functions and $F$ are sufficiently regular, then it is beneficial to select $\{\pmb{x}^{(j)}\}_{j=1}^{M}$ as an $M$-point quadrature rule with weights $\{w_j\}_{j=1}^{M}$. This can lead to much faster convergence rates in~\eqref{eq:convergenceMatrices} (see \cref{examp:mouse}). For example, if $\Omega$ is unbounded, then we can use quadrature rules such as the trapezoidal rule (see \cref{non_lin_pend_pse_exam}), and if $\Omega$ is a simple bounded domain, then one can use Gaussian quadrature (see~\cref{examp:mouse}). When the state-space dimension $d$ is moderately large, we can use sparse grids and a kernelized approach for large $d$ (see~\cref{sec:LARGEDIM}).

If $\omega$ is a probability measure and the initial points $\{\pmb{x}^{(j)}\}_{j=1}^{M}$ are drawn independently and at random according to $\omega$, the strong law of large numbers shows that $\lim_{M\rightarrow\infty}[\Psi_X^*W\Psi_X]_{jk} = \langle \psi_k,\psi_j \rangle$ and $\lim_{M\rightarrow\infty}[\Psi_X^*W\Psi_Y]_{jk} = \langle \mathcal{K}\psi_k,\psi_j \rangle$ holds with probability one~\cite[Section 3.4]{2158-2491_2016_1_51} provided that $\omega$ is not supported on a zero level set that is a linear combination of the dictionary~\cite[Section 4]{korda2018convergence}. This is with the quadrature weights $w_j=1/M$, and the convergence is typically at a Monte Carlo rate of $\mathcal{O}(M^{-1/2})$. This argument is straightforward to adapt to show the convergence $\lim_{M\rightarrow\infty}[\Psi_Y^*W\Psi_Y]_{jk} = \langle \mathcal{K}\psi_k,\mathcal{K}\psi_j \rangle$.

For a single fixed initial condition,  if the dynamical system is ergodic, then one can use Birkhoff's Ergodic theorem to show that $\lim_{M\rightarrow\infty}[\Psi_X^*W\Psi_X]_{jk} = \langle \psi_k,\psi_j \rangle$ and $\lim_{M\rightarrow\infty}[\Psi_X^*W\Psi_Y]_{jk} = \langle \mathcal{K}\psi_k,\psi_j \rangle$~\cite{korda2018convergence}. One chooses $w_j=1/M$, but the convergence rate is problem dependent~\cite{kachurovskii1996rate}. This argument is straightforward to adapt to show the convergence $\lim_{M\rightarrow\infty}[\Psi_Y^*W\Psi_Y]_{jk} = \langle \mathcal{K}\psi_k,\mathcal{K}\psi_j \rangle$.

\subsection{ResDMD: Avoiding spectral pollution and computing pseudospectra}\label{sec:RESDMD_spec_pollll1}

We now present two ResDMD algorithms. The first, shown in~\cref{alg:mod_EDMD}, is a simple modification of EDMD to remove spectral pollution. However, this algorithm can miss parts of the spectrum because the finite section method is not guaranteed to approximate the whole of the spectrum. Our second algorithm (see~\cref{alg:res_EDMD}) overcomes the limitation of missing spectra and computes pseudospectra with error control. Further methods and results are given in~\cref{sec:computing_spectra_limits}.

Throughout, our only assumption is that the dictionary $\{\psi_j\}_{j=1}^{N_K}$ lies in the domain of $\mathcal{K}$. For convenience, we assume that $\{\psi_j\}_{j=1}^{N_K}$ are linearly independent; otherwise, one can compute a truncated SVD and use that dictionary instead. To obtain convergence as $N_K\rightarrow\infty$, we assume that the span of all dictionary elements in the limit forms a core of $\mathcal{K}$ (see~\cref{sec:computing_spectra_limits}). Since our algorithms come with error bounds associated with residuals, we can also perform aposteri verification of a dictionary by simply looking at the errors, which is particularly useful in~\cref{sec:LARGEDIM} (see also~\cref{ap_sec:inner_prod_err_control}).

Our first ResDMD algorithm computes the residual using the snapshot data to avoid spectral pollution. As is usually done, the algorithm assumes that $K_{\mathrm{EDMD}}$ is diagonalizable. First, we compute the three matrices $\Psi_X^*W\Psi_X$, $\Psi_X^*W\Psi_Y$, and $\Psi_Y^*W\Psi_Y$, where $\Psi_X$ and $\Psi_Y$ are given in~\eqref{psidef}. Then, we find the eigenvalues and eigenvectors of $K_{\mathrm{EDMD}}$, i.e., we solve $(\Psi_X^*W\Psi_X)^\dagger(\Psi_X^*W\Psi_Y){\pmb g} = \lambda {\pmb g}$. One can solve this eigenproblem directly, but it is often numerically more stable to solve the generalized eigenproblem $(\Psi_X^*W\Psi_Y){\pmb g} = \lambda (\Psi_X^*W\Psi_X){\pmb g}$. Afterward, to avoid spectral pollution, we discard computed eigenpairs with a larger relative residual than an accuracy goal of $\epsilon>0$. 
\begin{algorithm}[t]o
\textbf{Input:} Snapshot data $\{\pmb{x}^{(j)}\}_{j=1}^{M},\{\pmb{y}^{(j)}\}_{j=1}^{M}$ (such that $\pmb{y}^{(j)}=F(\pmb{x}^{(j)})$), quadrature weights $\{w_j\}_{j=1}^{M}$, a dictionary of observables $\{\psi_j\}_{j=1}^{N_K}$ and an accuracy goal $\epsilon>0$.\\
\vspace{-4mm}
\begin{algorithmic}[1]
\State Compute $\Psi_X^*W\Psi_X$, $\Psi_X^*W\Psi_Y$, and $\Psi_Y^*W\Psi_Y$, where $\Psi_X$ and $\Psi_Y$ are given in~\eqref{psidef}. 
\State Solve $(\Psi_X^*W\Psi_Y)\pmb{g}=\lambda (\Psi_X^*W\Psi_X)\pmb{g}$ for eigenpairs $\{(\lambda_j,g_{(j)}=\Psi\pmb{g}_j)\}$.
\State Compute $\mathrm{res}(\lambda_j,g_{(j)})$ for all $j$ (see~\eqref{eq:abs_res}) and discard if $\mathrm{res}(\lambda_j,g_{(j)})>\epsilon$.
\end{algorithmic} \textbf{Output:} A collection of accurate eigenpairs $\{(\lambda_j,\pmb{g}_j):\mathrm{res}(\lambda_j,g_{(j)})\leq\epsilon\}$.
\caption{: \textbf{ResDMD for computing eigenpairs without spectral pollution.}}\label{alg:mod_EDMD}
\end{algorithm}

\Cref{alg:mod_EDMD} summarizes the procedure and is a simple modification of EDMD, as the only difference is a clean-up where spurious eigenpairs are discarded based on their residual. This clean-up avoids spectral pollution and also removes eigenpairs that are inaccurate because of numerical errors associated with nonnormal operators, up to the relative tolerance $\epsilon$. The following result makes this precise.

\begin{theorem}\label{triv_prop}
Let $\mathcal{K}$ be the associated Koopman operator of~\eqref{eq:DynamicalSystem} from which snapshot data is collected. Let $\Lambda_{M}(\epsilon)$ denote the eigenvalues in the output of~\cref{alg:mod_EDMD}. Then, assuming~\eqref{eq:convergenceMatrices},
\[
\limsup_{M\rightarrow\infty} \max_{\lambda\in\Lambda_{M}(\epsilon)}\|(\mathcal{K}-\lambda)^{-1}\|^{-1}\leq \epsilon.
\]
\end{theorem}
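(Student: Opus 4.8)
The plan is to reduce the statement to a quantitative, uniform-in-$\lambda$ version of the convergence $\mathrm{res}(\lambda,g)\to\|(\mathcal{K}-\lambda)g\|/\|g\|$ noted after \eqref{eq:abs_res}, and to combine this with the elementary lower bound for the reciprocal resolvent norm. First, record the following fact: for the closed operator $\mathcal{K}$, every $\lambda\in\mathbb{C}$, and every nonzero $g\in\mathcal{D}(\mathcal{K})$,
\[
\|(\mathcal{K}-\lambda)^{-1}\|^{-1}\leq\frac{\|(\mathcal{K}-\lambda)g\|}{\|g\|},
\]
where the left-hand side is read as $0$ when $\lambda\in\sigma(\mathcal{K})$. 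Indeed, if $\lambda$ lies in the resolvent set, put $h=(\mathcal{K}-\lambda)g$; then $g=(\mathcal{K}-\lambda)^{-1}h$, so $\|g\|\leq\|(\mathcal{K}-\lambda)^{-1}\|\,\|h\|$, and rearranging gives the bound (the spectral case is trivial). Hence it suffices to prove that for every output eigenpair $(\lambda,g_{(j)})$ of \cref{alg:mod_EDMD} one has $\|(\mathcal{K}-\lambda)g_{(j)}\|^2/\|g_{(j)}\|^2\leq\epsilon^2+o(1)$ as $M_1\to\infty$, uniformly over $\lambda\in\Lambda_{M_1}$, since then taking $\max_{\lambda\in\Lambda_{M_1}}$ and $\limsup_{M_1\to\infty}$ gives the theorem.

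Next I would quantify the convergence of the residual. Writing $g=\Psi\pmb{g}$, formula \eqref{eq:abs_res} is the Rayleigh-type quotient $\pmb{g}^*N_{M_1}(\lambda)\pmb{g}\,/\,\pmb{g}^*(\Psi_0^*W\Psi_0)\pmb{g}$, where $N_{M_1}(\lambda)=\Psi_1^*W\Psi_1-\lambda(\Psi_1^*W\Psi_0)-\overline{\lambda}(\Psi_0^*W\Psi_1)+|\lambda|^2(\Psi_0^*W\Psi_0)$, while the exact relative residual in \eqref{residual_form1} is the same quotient with the three data matrices replaced by their limits from \eqref{eq:convergenceMatrices}, namely the Gram matrices $G_{jk}=\langle\psi_k,\psi_j\rangle$, $A_{jk}=\langle\mathcal{K}\psi_k,\psi_j\rangle$, $L_{jk}=\langle\mathcal{K}\psi_k,\mathcal{K}\psi_j\rangle$ (using $\Psi_1^*W\Psi_0=(\Psi_0^*W\Psi_1)^*\to A^*$, and noting $(A^*)_{jk}=\langle\psi_k,\mathcal{K}\psi_j\rangle$, so the limiting numerator form is exactly $\langle(\mathcal{K}-\lambda)\Psi\pmb{g},(\mathcal{K}-\lambda)\Psi\pmb{g}\rangle$). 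Assuming, as is standard, that the $\psi_j$ are linearly independent in $L^2(\Omega,\omega)$, equivalently that the limiting Gram matrix $G$ is positive definite, for $M_1$ large the denominator is bounded below by $\tfrac12\lambda_{\min}(G)\|\pmb{g}\|^2$. Since $\lambda\mapsto N_{M_1}(\lambda)$ is a fixed quadratic matrix polynomial, a routine perturbation estimate for a ratio of Hermitian forms then yields, for any $R>0$,
\[
\Bigl|\mathrm{res}(\lambda,\Psi\pmb{g})^2-\frac{\|(\mathcal{K}-\lambda)\Psi\pmb{g}\|^2}{\|\Psi\pmb{g}\|^2}\Bigr|\leq C(R)\,\varepsilon_{M_1},\qquad |\lambda|\leq R,\ \ \pmb{g}\neq 0,
\]
where $\varepsilon_{M_1}\to 0$ measures the matrix convergence in \eqref{eq:convergenceMatrices} and $C(R)$ depends only on $R$, $\|G\|$, $\|A\|$, $\|L\|$, $\lambda_{\min}(G)$.

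Then I would confine the eigenvalues to a fixed bounded set: since $(\Psi_0^*W\Psi_0)^{-1}(\Psi_0^*W\Psi_1)=K_{\mathrm{EDMD}}\to G^{-1}A$, a fixed $N_K\times N_K$ matrix, there is $R_0$ with $\sigma(K_{\mathrm{EDMD}})\subset\{|\lambda|\leq R_0\}$ for all large $M_1$, so $\Lambda_{M_1}\subset\{|\lambda|\leq R_0\}$. Combining the pieces: for $M_1$ large and any $(\lambda,g_{(j)})$ in the output of \cref{alg:mod_EDMD} we have $\mathrm{res}(\lambda,g_{(j)})\leq\epsilon$ by construction, and $\|g_{(j)}\|^2=\pmb{g}_j^*G\pmb{g}_j\geq\lambda_{\min}(G)\|\pmb{g}_j\|^2>0$, so the first paragraph applies and
\[
\|(\mathcal{K}-\lambda)^{-1}\|^{-2}\leq\frac{\|(\mathcal{K}-\lambda)g_{(j)}\|^2}{\|g_{(j)}\|^2}\leq\mathrm{res}(\lambda,g_{(j)})^2+C(R_0)\varepsilon_{M_1}\leq\epsilon^2+C(R_0)\varepsilon_{M_1}.
\]
Taking the maximum over $\lambda\in\Lambda_{M_1}$ and then $\limsup_{M_1\to\infty}$, using $\varepsilon_{M_1}\to 0$, gives $\limsup_{M_1\to\infty}\max_{\lambda\in\Lambda_{M_1}}\|(\mathcal{K}-\lambda)^{-1}\|^{-1}\leq\epsilon$.

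\textbf{Main obstacle.} The only genuinely non-routine point is uniformity: the pointwise limit $\mathrm{res}(\lambda,g)^2\to\|(\mathcal{K}-\lambda)g\|^2/\|g\|^2$ is for fixed $(\lambda,g)$, whereas here both move with $M_1$, so one must control the convergence uniformly over the retained eigenpairs. This is what the a priori bound $\sigma(K_{\mathrm{EDMD}})\subset\{|\lambda|\leq R_0\}$ and the positive definiteness of $G$ buy us. An equivalent route is by contradiction: if the $\limsup$ exceeded $\epsilon$, one would extract a subsequence along which $\lambda\to\lambda_\star$ and (the normalized coefficient vectors in the finite-dimensional $V_{N_K}$, hence) $g_{(j)}\to g_\star$ with $\|g_\star\|\neq 0$, pass to the limit in $\mathrm{res}(\lambda,g_{(j)})^2\leq\epsilon^2$, and invoke the $1$-Lipschitz continuity of $\lambda\mapsto\|(\mathcal{K}-\lambda)^{-1}\|^{-1}$ to reach a contradiction with the first paragraph.
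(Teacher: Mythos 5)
Your proof is correct, and it takes a route that is close in spirit to the paper's but is organized differently. The paper argues by contradiction: assuming the $\limsup$ exceeds $\epsilon$, it extracts a convergent subsequence $\lambda_k\to\lambda$, passes to the limit using the chain $\|(\mathcal{K}-\lambda_k)^{-1}\|^{-1}\leq\mathrm{res}(\lambda_k,\pmb{g}_k)\leq\epsilon$ together with continuity of $\lambda\mapsto\|(\mathcal{K}-\lambda)^{-1}\|^{-1}$, and reaches a contradiction. You instead run a direct argument: you quantify the deviation of the data-driven residual from the exact relative residual as a Rayleigh-quotient perturbation estimate that holds \emph{uniformly} over all nonzero $\pmb{g}$ (via positive definiteness of the limiting Gram matrix) and over $|\lambda|\leq R$ (via polynomial dependence of the numerator matrix on $\lambda$), show that $\Lambda_{M_1}$ lies in a fixed ball for large $M_1$, and then feed the clean inequality $\|(\mathcal{K}-\lambda)^{-1}\|^{-1}\leq\|(\mathcal{K}-\lambda)g\|/\|g\|$ directly into $\mathrm{res}(\lambda,g_{(j)})\leq\epsilon$. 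The key mechanism is the same in both proofs --- the lower bound of the reciprocal resolvent norm by a residual norm, plus convergence of the Galerkin matrices --- but your version makes explicit the uniformity that the paper leaves implicit (in particular the extraction of a convergent normalized subsequence of $\pmb{g}_k$ in $\mathbb{C}^{N_K}$, which the paper's proof uses tacitly but never states). The payoff of the direct route is that it produces an explicit $\epsilon^2+C(R_0)\varepsilon_{M_1}$ rate rather than merely the limiting inequality; the payoff of the paper's contradiction route is brevity. Your ``equivalent route'' paragraph at the end is, in fact, essentially the paper's own proof.
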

\begin{proof}
Seeking a contradiction, assume that $\limsup_{M\rightarrow\infty} \max_{\lambda\in\Lambda_{M}(\epsilon)}\|(\mathcal{K}-\lambda)^{-1}\|^{-1} > \epsilon$. Then, there is a subsequence of eigenpairs $(\lambda_k,\pmb{g}_k)$ in the output of~\cref{alg:mod_EDMD} such that $\lambda_k\in\Lambda_{n_k}(\epsilon)$, $n_k\rightarrow\infty$, and $\|(\mathcal{K}-\lambda_k)^{-1}\|^{-1} >\epsilon+\delta$ for some $\delta >0$ and all $k$. Since~\eqref{eq:convergenceMatrices} is satisfied and $K_{\mathrm{EDMD}}=(\Psi_X^*W\Psi_X)^{\dagger}(\Psi_X^*W\Psi_Y)$ is a sequence of bounded finite matrices for fixed $N_K$ (recall that without loss of generality the Gram matrix $\lim_{M\rightarrow\infty}\Psi_X^*W\Psi_X$ is invertible), the sequence $\lambda_1,\lambda_2,\ldots$ stays bounded. By taking a subsequence if necessary, we may assume that $\lim_{k\rightarrow\infty}\lambda_k=\lambda$. It follows that
$$
\epsilon+\delta \leq \|(\mathcal{K}-\lambda)^{-1}\|^{-1} =\lim_{k\rightarrow\infty}\|(\mathcal{K}-\lambda_k)^{-1}\|^{-1} \leq \limsup_{k\rightarrow\infty} {\mathrm{res}(\lambda_k,\pmb{g}_k)} \leq \epsilon,
$$
which is the desired contradiction.
\end{proof}
In the large data limit,~\cref{triv_prop} tells us that ResDMD computes eigenvalues inside the $\epsilon$-pseudospectrum of $\mathcal{K}$ and hence, avoids spectral pollution and returns reasonable eigenvalues. Despite this, \cref{alg:mod_EDMD} may not approximate the whole $\epsilon$-pseudospectrum of $\mathcal{K}$, even as $M\rightarrow \infty$ and $N_K\rightarrow\infty$. This is because the eigenvalues of $K_{\mathrm{EDMD}}$ may not approximate the whole spectrum of $\mathcal{K}$. For example, consider the shift operator of~\cref{example_shift_operator}, which is unitary. Suppose our dictionary consists of the functions $\psi_j(k)=\delta_{k,q(j)}$, where $q:\mathbb{N}\rightarrow\mathbb{Z}$ is an enumeration of $\mathbb{Z}$. Then, in the large data limit, $K_{\mathrm{EDMD}}$ corresponds to a finite section of the shift operator and has spectrum $\{0\}$, whereas $\sigma(\mathcal{K})=\mathbb{T}$. Hence, for $\epsilon<1$, the output of \cref{alg:mod_EDMD} is the empty set. This issue is known as \textit{spectral inclusion}.

To overcome this issue, we discuss how to compute spectra and pseudospectra in~\cref{sec:computing_spectra_limits}. For example,~\cref{alg:res_EDMD} computes practical approximations of $\epsilon$-pseudospectra with rigorous convergence guarantees. Assuming \eqref{eq:convergenceMatrices}, the output of~\cref{alg:res_EDMD} is guaranteed to be inside the $\epsilon$-pseudospectrum of $\mathcal{K}$. \cref{alg:res_EDMD} also computes observables $g$ with $\mathrm{res}(\lambda,g)\!<\! \epsilon$, which are known as $\epsilon$-approximate eigenfunctions.\footnote{Previous methods to compute $\epsilon$-approximate eigenfunctions for Koopman operators include \cite{mezic2020numerical}, which requires the absence of continuous spectra, \cite{giannakis2021delay}, which uses delay coordinate maps \cite{giannakis2019data,das2019delay} to deal with certain isolated eigenvalues, and \cite{das2021reproducing}, which uses a compactification to jointly approximate $\epsilon$-approximate eigenfunctions of the Koopman operator associated with ergodic systems for multiple time steps.}

\begin{algorithm}[t]
\textbf{Input:} Snapshot data $\{\pmb{x}^{(j)}\}_{j=1}^{M},\{\pmb{y}^{(j)}\}_{j=1}^{M}$ (such that $\pmb{y}^{(j)}=F(\pmb{x}^{(j)})$), quadrature weights $\{w_j\}_{j=1}^{M}$, a dictionary of observables $\{\psi_j\}_{j=1}^{N_K}$, an accuracy goal $\epsilon>0$, and a grid $z_1,\ldots,z_k\in\mathbb{C}$ (see~\eqref{eq:grid_def}).\\
\vspace{-4mm}
\begin{algorithmic}[1]
\State Compute $\Psi_X^*W\Psi_X$, $\Psi_X^*W\Psi_Y$, and $\Psi_Y^*W\Psi_Y$, where $\Psi_X$ and $\Psi_Y$ are given in~\eqref{psidef}. 
\State For each $z_j$, compute $\tau_j = \min_{\pmb{g}\in\mathbb{C}^{N_{K}}} \mathrm{res}(z_j,\Psi\pmb{g})$ (see~\eqref{eq:abs_res}), which is a generalized SVD problem, and the corresponding singular vectors $\pmb{g}_j$.
\end{algorithmic} \textbf{Output:} Estimate of the $\epsilon$-pseudospectrum $\{z_j: \tau_j<\epsilon\}$ and approximate eigenfunctions $\{\pmb{g}_j: \tau_j<\epsilon\}$.
\caption{: \textbf{ResDMD for estimating $\epsilon$-pseudospectra.}}\label{alg:res_EDMD}
\end{algorithm}

\subsection{Numerical examples} 
We now apply ResDMD to several dynamical systems: (1) Nonlinear pendulum, (2) Gauss iterated map, and (3) Lorenz system. 

\subsubsection{Nonlinear pendulum revisited} \label{non_lin_pend_pse_exam}
We first return to the nonlinear pendulum from~\cref{pseudospec_intro_NLP}. For the dictionary of observables $\psi_1,\ldots,\psi_{N_K}$, we use a hyperbolic cross approximation with the standard Fourier basis (in $x_1\in[-\pi,\pi]_{\mathrm{per}}$) and Hermite functions (in $x_2\in \mathbb{R}$). The hyperbolic cross approximation is an efficient way to represent functions that have bounded mixed derivatives~\cite[Chapter 3]{dung2018hyperbolic}.  We use the trapezoidal quadrature rule discussed in~\cref{exam:1dpendulum} to compute $\Psi_X^*W\Psi_X$, $\Psi_X^*W\Psi_Y$, and $\Psi_Y^*W\Psi_Y$, where $\Psi_X$ and $\Psi_Y$ are given in~\eqref{psidef}. The pseudospectrum in~\cref{fig:pseudo1} (left) is computed using \cref{alg:res_EDMD} with $N_K=152$ basis functions corresponding to a hyperbolic cross approximation of order $20$ and $M = 10^4$ data points. The pseudospectrum in \cref{fig:pseudo1} (right) is computed using \cref{alg:res_EDMD} with $N_K=964$ basis functions corresponding to a hyperbolic cross approximation of order $100$ and $M = 9\times 10^4$ data points. Note that we can now see which EDMD eigenvalues are reliable. The output of \cref{alg:res_EDMD} lies inside $\sigma_{\epsilon}(\mathcal{K})$ and converges to the set $\sigma_{\epsilon}(\mathcal{K})$ as $N_K$ increases (see~\cref{half_pseudospectrum}). Using \cref{alg:res_EDMD} and $N_K=964$, we also compute some approximate eigenfunctions corresponding to $\lambda=\exp(0.4932i)$, $\lambda = \exp(0.9765i)$, $\lambda=\exp(1.4452i)$, and $\lambda = \exp(1.8951i)$ (see~\cref{fig:pseudo2}). As $\lambda$ moves further from $1$, we typically see increased oscillations in the approximate eigenfunctions.

\begin{figure}[!tbp]
 \centering
	\begin{minipage}[b]{0.24\textwidth}
  \begin{overpic}[width=\textwidth,trim={30mm 0mm 30mm 0mm},clip]{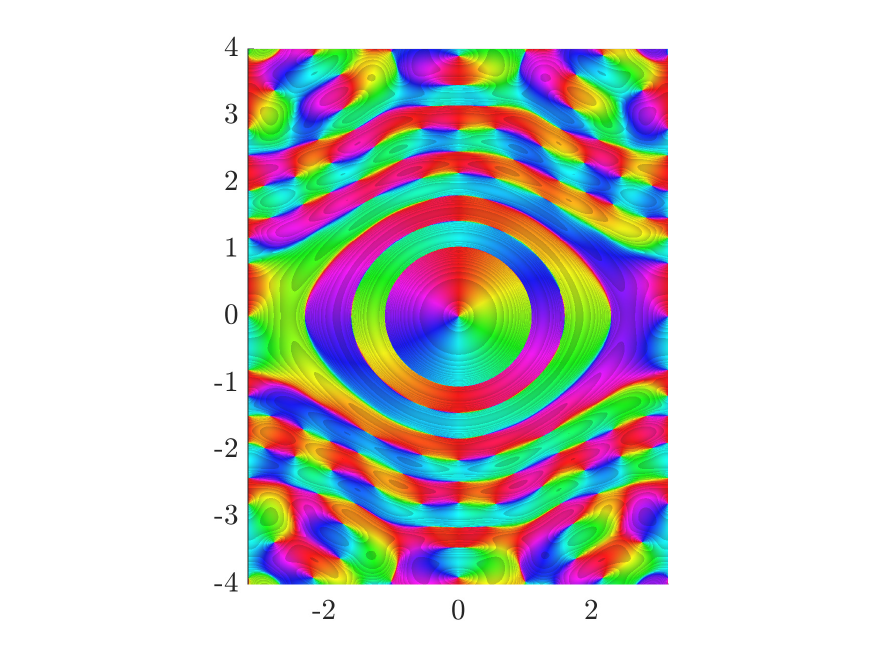}
		\put (16,96) {$\lambda=\exp(0.4932i)$}
		\put (40,-1) {$x_1$}
		\put (-2,51) {$x_2$}
   \end{overpic}
 \end{minipage}
	\begin{minipage}[b]{0.24\textwidth}
  \begin{overpic}[width=\textwidth,trim={30mm 0mm 30mm 0mm},clip]{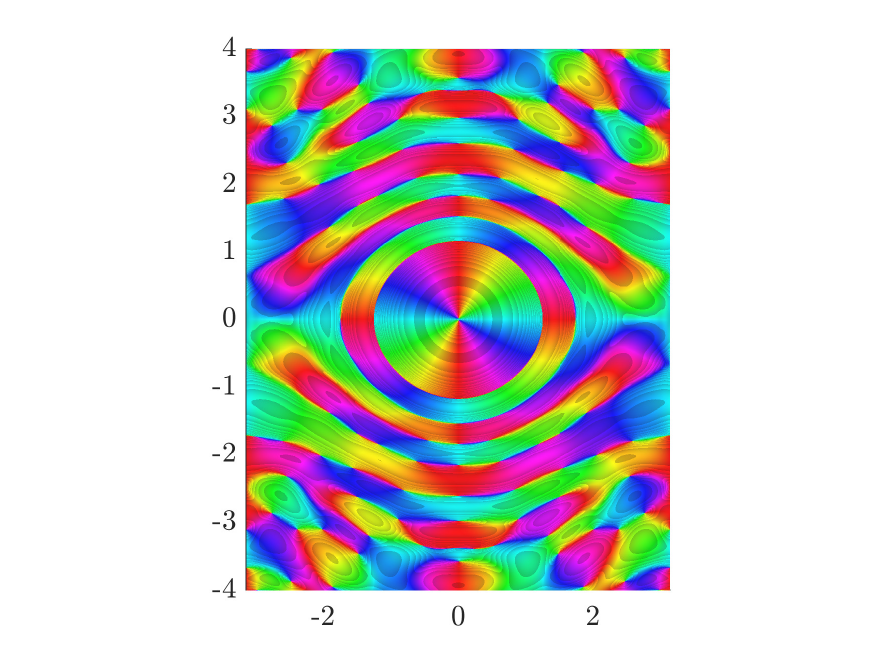}
		\put (16,96) {$\lambda=\exp(0.9765i)$}
		\put (40,-1) {$x_1$}
		\put (-2,51) {$x_2$}
   \end{overpic}
 \end{minipage}
	\begin{minipage}[b]{0.24\textwidth}
  \begin{overpic}[width=\textwidth,trim={30mm 0mm 30mm 0mm},clip]{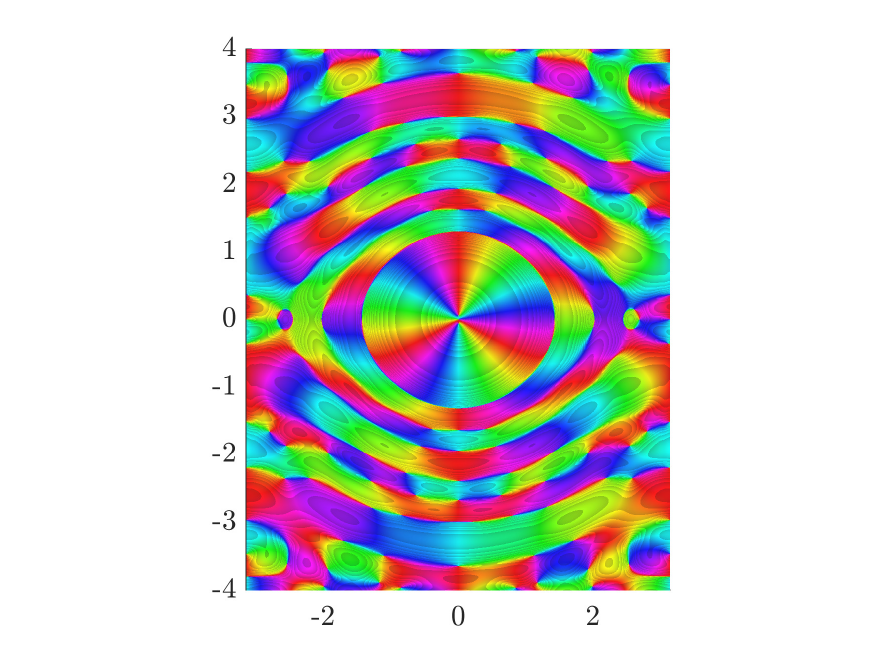}
		\put (16,96) {$\lambda=\exp(1.4452i)$}
		\put (40,-1) {$x_1$}
		\put (-2,51) {$x_2$}
   \end{overpic}
 \end{minipage}
		\begin{minipage}[b]{0.24\textwidth}
  \begin{overpic}[width=\textwidth,trim={30mm 0mm 30mm 0mm},clip]{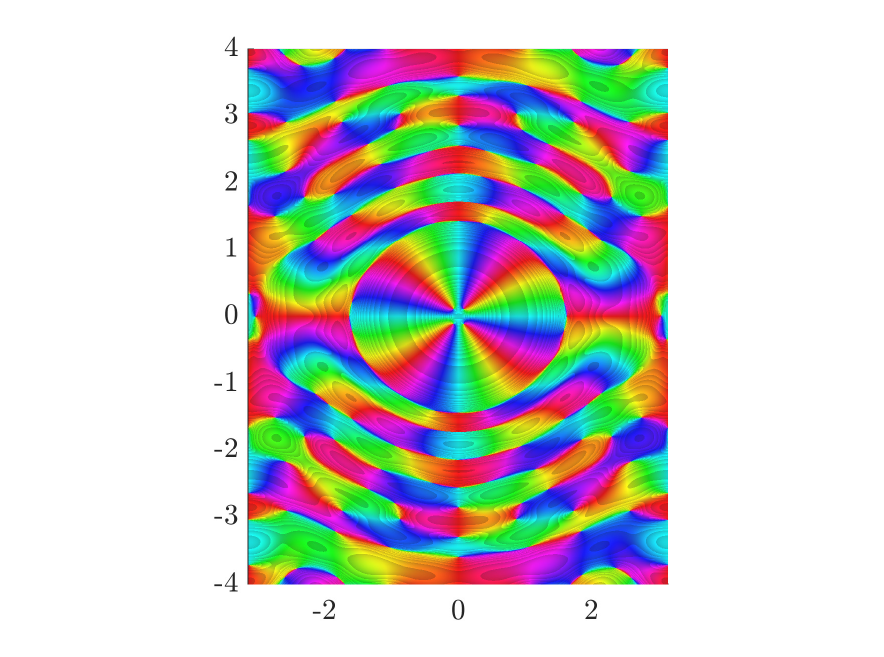}
		\put (16,96) {$\lambda=\exp(1.8951i)$}
		\put (40,-1) {$x_1$}
		\put (-2,51) {$x_2$}
   \end{overpic}
 \end{minipage}
	  \caption{The approximate eigenfunctions of the nonlinear pendulum visualized as phase portraits, where the color illustrates the complex argument of the eigenfunction~\cite{wegert2012visual}. We also plot lines of constant modulus as shadowed steps. All of these approximate eigenfunctions have residuals at most $\epsilon=0.05$ as judged by~\eqref{eq:abs_res}, which can be made smaller by increasing $N_K$.} 
\label{fig:pseudo2}
\end{figure}

\subsubsection{Gauss iterated map}\label{examp:mouse}
The Gauss iterated map is a real-valued function on the real line given by $F(x) = \exp(-\alpha x^2)+\beta$, where $\alpha$ and $\beta$ are real parameters. It has a bifurcation diagram that resembles a mouse~\cite[Figure 5.17]{hilborn2000chaos}. We consider the choice $\alpha=2$ and $\beta=-1-\exp(-\alpha)$, and restrict ourselves to the state-space $\Omega=[-1,0]$ with the usual Lebesgue measure. We select a dictionary of Legendre polynomials transplanted to the interval $[-1,0]$, i.e., $c_0P_0(2x+1),\dots,c_{N_K-1}P_{N_K-1}(2x+1)$, where $P_j$ is the degree $j$ Legendre polynomial on $[-1,1]$ and $c_j$ are such that the dictionary is orthonormal. \cref{fig:pseudo3} (left) shows the pseudospectra computed with $N_K = 40$ using~\cref{alg:res_EDMD}. We also show the EDMD eigenvalues, several of which are reliable (blue crosses) and correspond to the output of \cref{alg:mod_EDMD} with $\epsilon=0.01$.

\begin{figure}[!tbp]
 \centering
 \begin{minipage}[b]{0.49\textwidth}
  \begin{overpic}[width=\textwidth,trim={0mm 0mm 0mm 0mm},clip]{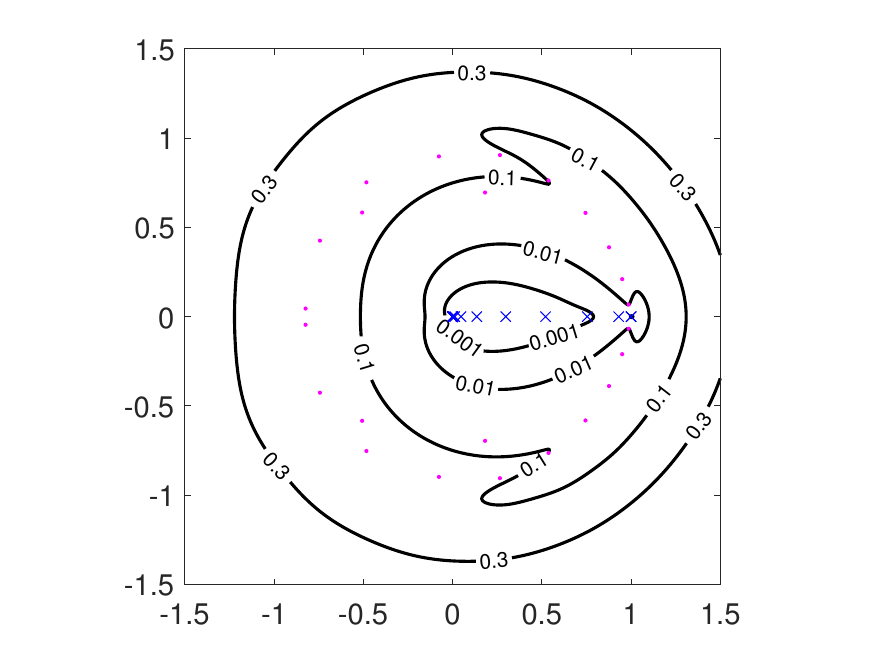}
   \put (47,-2) {$\mathrm{Re}(\lambda)$}
		\put (8,33) {\rotatebox{90}{$\mathrm{Im}(\lambda)$}}
   \end{overpic}
 \end{minipage}
	\begin{minipage}[b]{0.49\textwidth}
  \begin{overpic}[width=\textwidth,trim={0mm 0mm 0mm 0mm},clip]{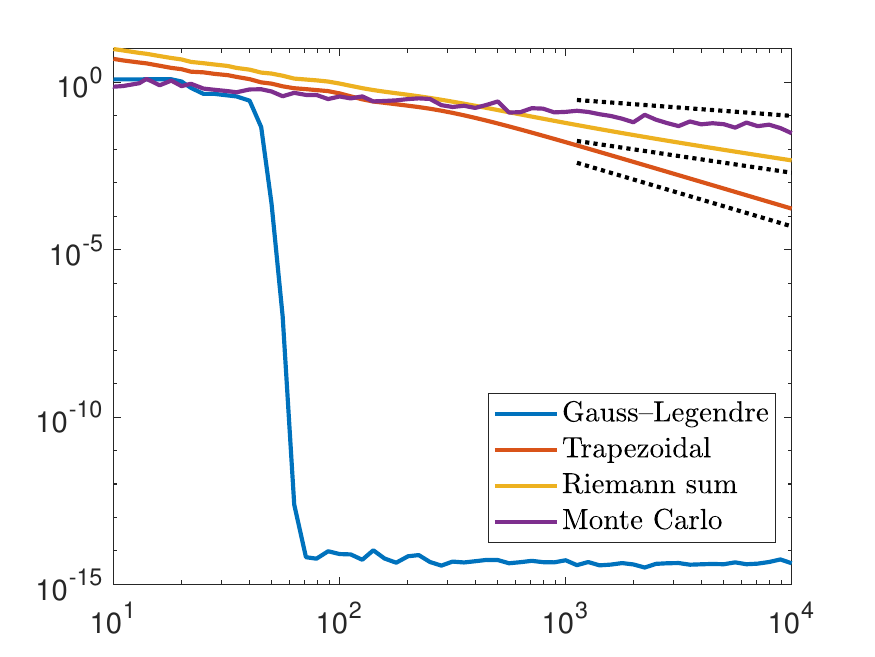}
		\put (12,72) {$\max_{1\leq j,k\leq N_K}|(\Psi_X^*W\Psi_Y)_{jk} -\langle \mathcal{K}\psi_k,\psi_j \rangle|$}
		\put (50,-2) {$M$}
		\put(70,65) {\rotatebox{-4}{\footnotesize{$\mathcal{O}(M^{-1/2})$}}}
		\put(75,58) {\rotatebox{-6}{\footnotesize{$\mathcal{O}(M^{-1})$}}}
		\put(75,49.5) {\rotatebox{-16}{\footnotesize{$\mathcal{O}(M^{-2})$}}}
   \end{overpic}
 \end{minipage}
		  \caption{Pseudospectral contours (black lines) of the Koopman operator associated with the Gauss iterated map computed using $N_K=40$ for $\epsilon=0.001,0.01,0.1$ and $0.3$. The EDMD eigenvalues (magenta dots and blue crosses) contain a mix of accurate and spurious eigenvalues. Reliable eigenvalues corresponding to the output of \cref{alg:mod_EDMD} with $\epsilon=0.01$ are shown as blue crosses. Right: Convergence of the Galerkin matrix $\Psi_X^*W\Psi_Y$ for different quadrature rules as the number of data points $M$ increases.} 
\label{fig:pseudo3}
\end{figure}

Rather than showing the convergence of the eigenvalues and pseudospectra as $N_K\rightarrow\infty$, we demonstrate the importance of the initial conditions of the snapshot data as it determines the quadrature rule in~\cref{extendEDMD}. If $M$ is not sufficiently large, or the quadrature rule is poor, the Galerkin method in \cref{alg:mod_EDMD,alg:res_EDMD} is not accurate. To compare initial conditions, we consider the computed matrix $\Psi_X^*W\Psi_Y$ for $N_K=40$ with four choices of quadrature:
\begin{itemize}[leftmargin=*]
	\item Gauss--Legendre: Select $\pmb{x}^{(1)},\ldots,\pmb{x}^{(M)}$ as the Gauss--Legendre quadrature nodes transplanted to $[-1,0]$ and $w_1,\ldots,w_{M}$ as the corresponding Gauss--Legendre weights.
	\item Trapezoidal: Select $\pmb{x}^{(j)}=-1+\frac{j-1}{M-1}$ with $w_j=\frac{1}{M-1}$ for $1<j<M$ and $w_1=w_{M}=\frac{1}{2(M-1)}$.
	\item Riemann sum: Select $\pmb{x}^{(j)}=-1+\frac{j-1}{M-1}$ and $w_j=\frac{1}{M-1}$.
	\item Monte Carlo: Select $\pmb{x}^{(1)},\ldots,\pmb{x}^{(M)}$ independently at random from a uniform distribution over $[-1,0]$ and $w_j=\frac{1}{M}$.
\end{itemize}
The final two rules are the most commonly used in the EDMD/Koopman literature. The trapezoidal rule uses the same data points as the Riemann sum but changes the weights at the endpoints.

\cref{fig:pseudo3} (right) shows the accuracy, measured as $\max_{1\leq j,k\leq N_K}|(\Psi_X^*W\Psi_Y)_{jk} -\langle \mathcal{K}\psi_k,\psi_j \rangle|$, of the four quadrature methods. Gauss--Legendre quadrature converges exponentially, trapezoidal at a rate $\mathcal{O}(M^{-2})$, Riemann sums at a rate $\mathcal{O}(M^{-1})$ and Monte Carlo at a rate $\mathcal{O}(M^{-1/2})$. The benefit of choosing the initial states at quadrature nodes is clear. Wisely choosing the weights can drastically affect convergence even when restricted to equispaced data. We never recommend using the Riemann sum, as one immediately obtains a better convergence rate by altering just two weights to employ a trapezoidal rule. 
 
\subsubsection{Lorenz system}

\begin{figure}[!tbp]
 \centering
\begin{minipage}[b]{0.32\textwidth}
  \begin{overpic}[width=\textwidth,trim={0mm 0mm 0mm 0mm},clip]{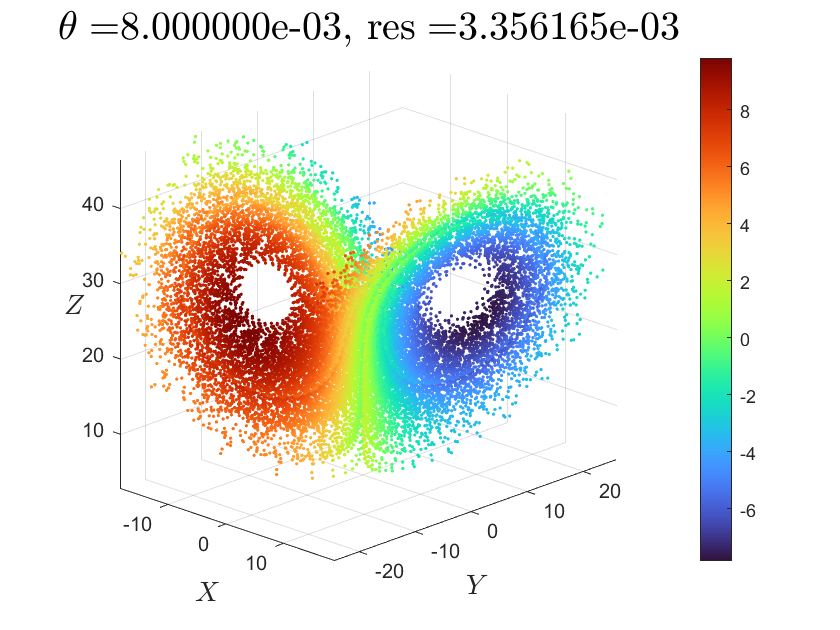}
   \end{overpic}
 \end{minipage}
\begin{minipage}[b]{0.32\textwidth}
  \begin{overpic}[width=\textwidth,trim={0mm 0mm 0mm 0mm},clip]{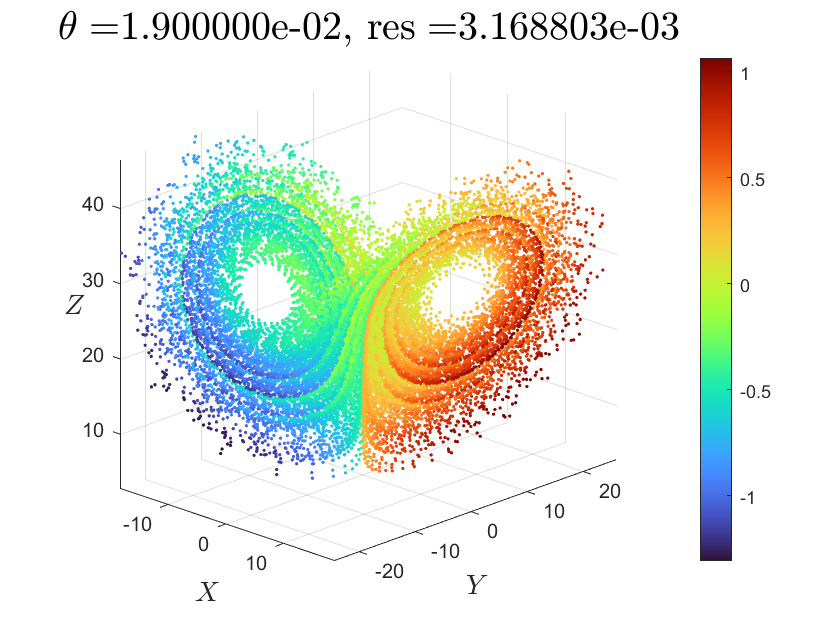}
   \end{overpic}
 \end{minipage}
 \begin{minipage}[b]{0.32\textwidth}
  \begin{overpic}[width=\textwidth,trim={0mm 0mm 0mm 0mm},clip]{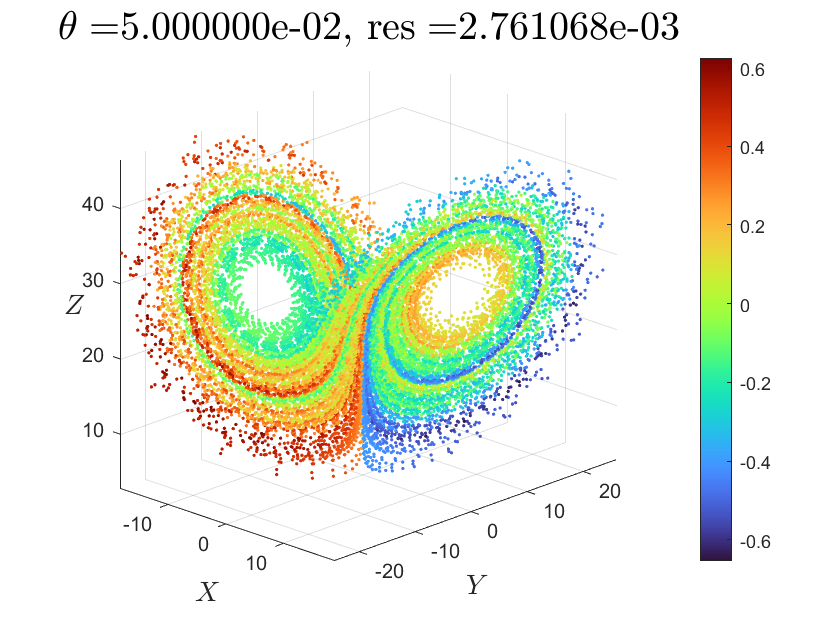}
   \end{overpic}
 \end{minipage}
\begin{minipage}[b]{0.32\textwidth}
  \begin{overpic}[width=\textwidth,trim={0mm 0mm 0mm 0mm},clip]{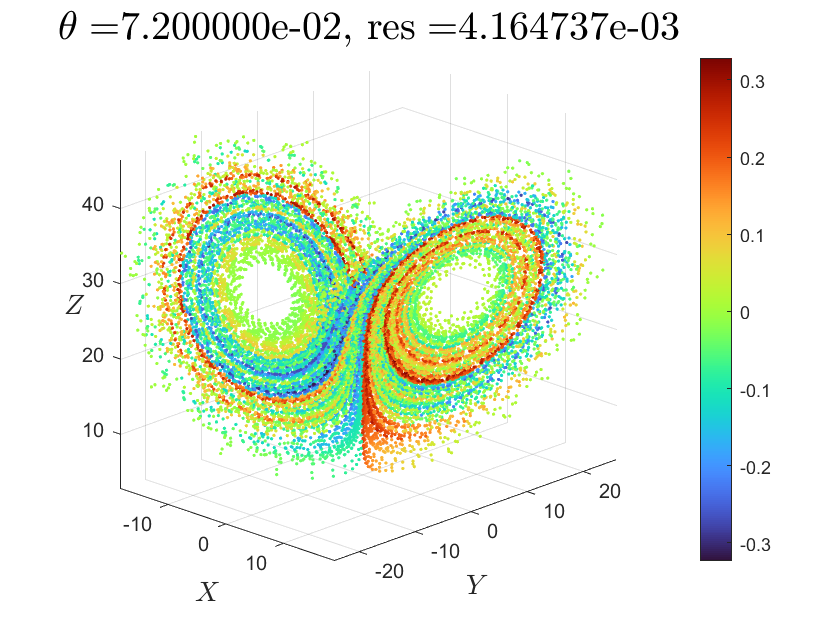}
   \end{overpic}
 \end{minipage}
\begin{minipage}[b]{0.32\textwidth}
  \begin{overpic}[width=\textwidth,trim={0mm 0mm 0mm 0mm},clip]{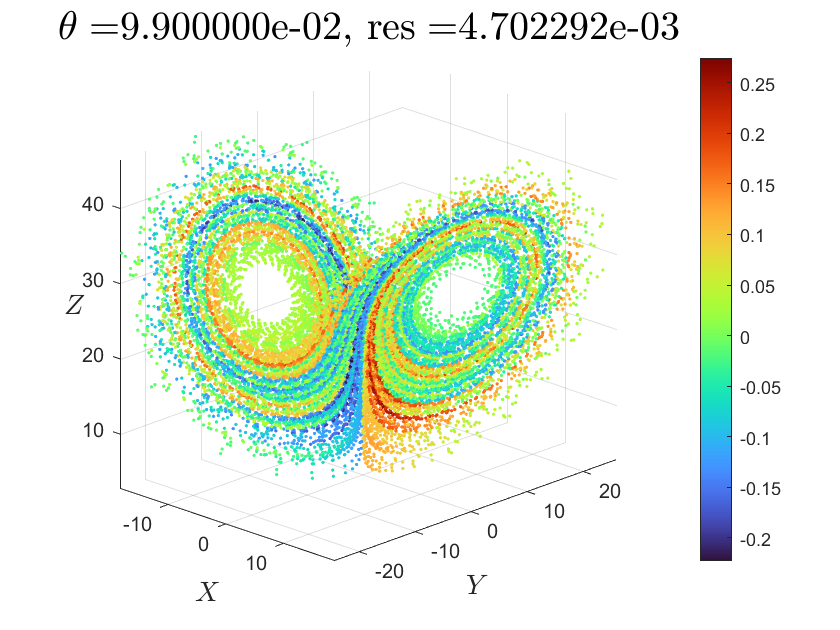}
   \end{overpic}
 \end{minipage}
\begin{minipage}[b]{0.32\textwidth}
  \begin{overpic}[width=\textwidth,trim={0mm 0mm 0mm 0mm},clip]{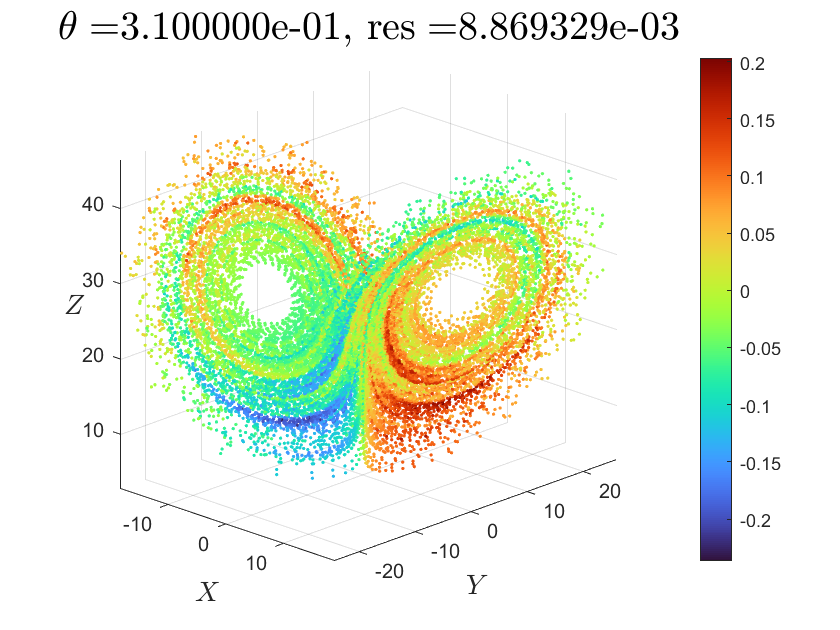}
   \end{overpic}
 \end{minipage}
\begin{minipage}[b]{0.32\textwidth}
  \begin{overpic}[width=\textwidth,trim={0mm 0mm 0mm 0mm},clip]{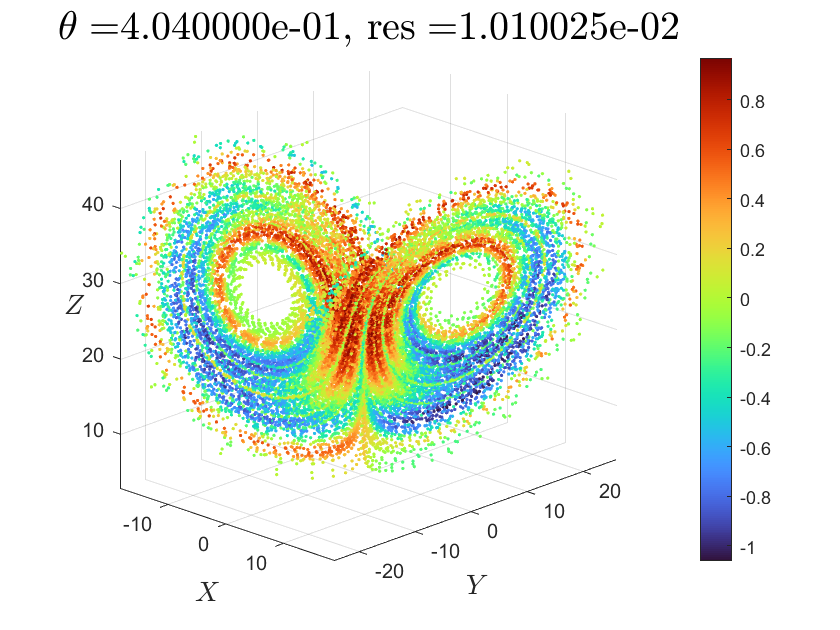}
   \end{overpic}
 \end{minipage}
\begin{minipage}[b]{0.32\textwidth}
  \begin{overpic}[width=\textwidth,trim={0mm 0mm 0mm 0mm},clip]{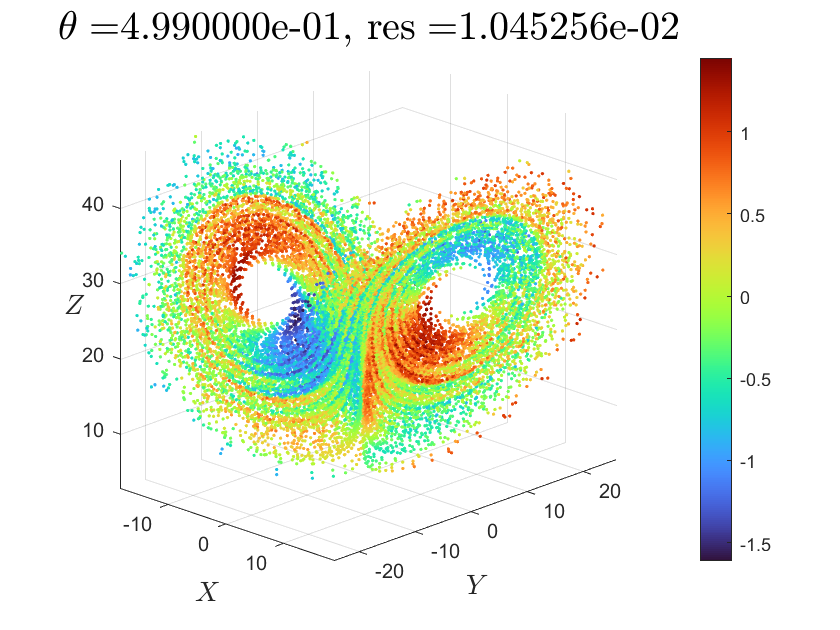}
   \end{overpic}
 \end{minipage}
\begin{minipage}[b]{0.32\textwidth}
  \begin{overpic}[width=\textwidth,trim={0mm 0mm 0mm 0mm},clip]{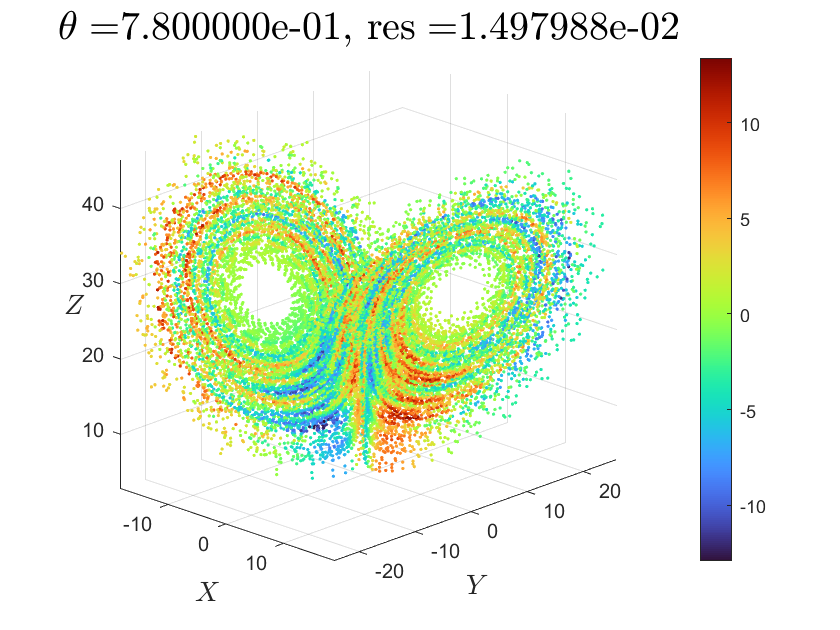}
   \end{overpic}
 \end{minipage}
\caption{The approximate eigenfunctions of the Lorenz system (real part shown), computed using \cref{alg:res_EDMD} with delay embedding. The spectral parameter is given by $\lambda=\exp(i\theta)$, and we show the residuals computed using~\eqref{eq:abs_res}.} 
\label{fig:lorenz_pseudo}
\end{figure}

The Lorenz system~\cite{lorenz1963deterministic} is the following system of three coupled ordinary differential equations~\cite{saltzman1962finite}:
$$
\dot{X}=\sigma\left(Y-X\right),\quad\dot{Y}=X\left(\rho-Z\right)-Y,\quad \dot{Z}=XY-\beta Z.
$$
The parameters $\sigma$, $\rho$, and $\beta$ are proportional to the Prandtl number, Rayleigh number, and the physical proportions of the fluid, respectively. The system describes a truncated model of Rayleigh--B\'{e}nard convection. We take $\sigma=10$, $\beta=8/3$, and $\rho=28$, corresponding to the original system studied by Lorenz.

We consider the Koopman operator corresponding to the time-step $\Delta_t=0.05$, with dynamics on the attractor. For this $\Omega$, one cannot explicitly write down a suitable dictionary with knowledge of the attractor. We use delay-embedding of the observables $g_j(\pmb{x})=[\pmb{x}]_j$, where the subscript corresponds to the $j$th coordinate. As a dictionary, we consider a union of Krylov subspaces
$$
\{g_1,\mathcal{K}g_1,\ldots,\mathcal{K}^{99}g_1,g_2,\mathcal{K}g_2,\ldots,\mathcal{K}^{99}g_2,g_3,\mathcal{K}g_3,\ldots,\mathcal{K}^{99}g_3\}.
$$
Time-delay embedding is a popular method for DMD-type algorithms~\cite{arbabi2017ergodic,kamb2020time}. For this example, we build the matrices $\Psi_X$ and $\Psi_Y$ using a single trajectory of length $M+1=10^5+1$ with a random initial condition on the attractor.\footnote{Another suitable choice of dictionary for this example is a set of radial basis functions with centers on the attractor sampled according to the SRB measure. One could use relatively short trajectories to compute the centers and build $\Psi_X$ and $\Psi_Y$ using multiple trajectories of length one from the centers.}~\cref{fig:lorenz_pseudo} shows various approximate eigenfunctions computed using \cref{alg:res_EDMD}. We have visualized each function as a cloud of points on the attractor. Some of these modes bear a resemblance to unstable periodic orbits, which in a sense, form a backbone of the attractor \cite{eckmann1985ergodic,tufillaro1992experimental}.

\section{Computing spectral measures using a ResDMD framework}\label{res_kern_fin}
In \cref{sec:section3_filter_fin}, we developed algorithms that computed the spectral properties of isometries from autocorrelations. Computing autocorrelations typically requires data collected as long trajectories, i.e., large $M_2$. In this section, we develop methods that build on ResDMD to compute spectral measures of Koopman operators for measure-preserving dynamical systems from arbitrary snapshot data. Throughout this section, we assume that $\mathcal{K}$ is an isometry and that we have access to snapshot data, $B_{\rm data}$, of the form given in~\eqref{data:snapshots}. That is, we have pairs $\{(\pmb{x}^{(j)},\pmb{y}^{(j)}=F(\pmb{x}^{(j)}))\}_{j=1}^M$. We develop rational kernels that allow us to compute smoothed approximations of spectral measures from the ResDMD matrices $\Psi_X^*W\Psi_X$, $\Psi_X^*W\Psi_Y$, and $\Psi_Y^*W\Psi_Y$ (see~\cref{extendEDMD}). Moreover, these matrices can be reused to compute spectral measures with respect to different observable functions $g$.

\subsection{Generalized Cauchy transform and the Poisson kernel for the unit disc}
As in \cref{sec:unitaryExtension}, we begin by considering a unitary extension $\mathcal{K}'$ of $\mathcal{K}$, which is defined on a Hilbert space $\mathcal{H}'$ that is an extension of $L^2(\Omega,\omega)$. Let $z\in\mathbb{C}$ with $|z|> 1$ and $g\in L^2(\Omega,\omega)$. Since $\|\mathcal{K}\|=1<|z|$ for $z\not\in\sigma(\mathcal{K})$, $(\mathcal{K}'-z)^{-1}g=(\mathcal{K}-z)^{-1}g$ and
\begin{equation}
\label{eq:keyres1}
\langle (\mathcal{K}-z)^{-1}g,\mathcal{K}^*g\rangle=\langle \mathcal{K}'(\mathcal{K}'-z)^{-1}g,g\rangle_{\mathcal{H}'}=\int_{\mathbb{T}}\frac{\lambda}{\lambda-z}\,d\mu_g(\lambda)=\int_{[-\pi,\pi]_{\mathrm{per}}}\frac{e^{i\theta}}{e^{i\theta}-z}\, d\nu_g(\theta),
\end{equation}
where the last equality follows from a change-of-variables.\footnote{Here we use the fact that the resolvent can be written as $(\mathcal{K}'-z)^{-1}=\int_{\mathbb{T}}(\lambda-z)\,d\mathcal{E}(\lambda)$, where $\mathcal{E}$ is the projection-valued spectral measure of $\mathcal{K}'$. For expansions of the resolvent in the context of different dynamical systems, see \cite{susuki2021koopman}.} If $z\neq 0$ with $|z|<1$, then $z$ may be in $\sigma(\mathcal{K})$ since $\mathcal{K}$ is not necessarily unitary. However, since $|\overline{z}^{-1}|>1$, $\overline{z}^{-1}\not\in\sigma(\mathcal{K})$ and hence $(\mathcal{K}'-\overline{z}^{-1})^{-1}g=(\mathcal{K}-\overline{z}^{-1})^{-1}g$. Since $\nu_g$ is a real-valued measure, we find that 
\begin{equation}
\label{eq:keyres1b}
\langle g,(\mathcal{K}-\overline{z}^{-1})^{-1}g\rangle=\langle g,(\mathcal{K}'-\overline{z}^{-1})^{-1}g\rangle_{\mathcal{H}'}=\overline{\int_{[-\pi,\pi]_{\mathrm{per}}}\frac{1}{e^{i\theta}-\overline{z}^{-1}}\, d\nu_g(\theta)}
=-z\int_{[-\pi,\pi]_{\mathrm{per}}}\frac{e^{i\theta}}{e^{i\theta}-z}\, d\nu_g(\theta).
\end{equation}
The leftmost and rightmost sides of \eqref{eq:keyres1} and \eqref{eq:keyres1b} are independent of $\mathcal{K}'$, so we can safely dispense with the extension and have an expression for a generalized Cauchy transform of $\nu_g$, i.e., 
\begin{equation}
\label{eq:keyres2}
\mathtt{C}_{\nu_g}\!(z)=\frac{1}{2\pi}\int_{[-\pi,\pi]_{\mathrm{per}}}\frac{e^{i\theta}}{e^{i\theta}-z}\, d\nu_g(\theta)=\frac{1}{2\pi}\begin{dcases}
\langle (\mathcal{K}-z)^{-1}g,\mathcal{K}^*g\rangle,\quad &\text{if $|z|>1$},\\
-z^{-1}\langle g,(\mathcal{K}-\overline{z}^{-1})^{-1}g\rangle,\quad &\text{if $z\neq0$ with $|z|<1$}.
\end{dcases}
\end{equation}
The importance of \eqref{eq:keyres2} is that it relates $\mathtt{C}_{\nu_g}$ to the resolvent operator $(\mathcal{K}-z)^{-1}$ for $|z|>1$. In~\cref{sec:comput_res}, we show how to compute the resolvent operator from snapshot data for $|z|>1$. Since $|z|>1$, we can provide convergence and stability results even when we replace $\mathcal{K}$ with a discretization.

To recover $\nu_g$ from $\mathtt{C}_{\nu_g}$, a derivation motivated by the Sokhotski--Plemelj formula shows that
\begin{equation}
\label{cauchy_compcalc}
\mathtt{C}_{\nu_g}\!\left(e^{i\theta_0}(1+\epsilon)^{-1}\right)-\mathtt{C}_{\nu_g}\!\left(e^{i\theta_0}(1+\epsilon )\right)=\int_{[-\pi,\pi]_{\mathrm{per}}} K_{\epsilon}(\theta_0-\theta)\,d\nu_g(\theta),
\end{equation}
where $K_\epsilon$ is the Poisson kernel for the unit disc given in~\eqref{pois_def_kern}. The Poisson kernel for the unit disc is a first-order kernel (see~\cref{rat_kern_linear_system}). In practice, it is important to develop high-order kernels for at least three reasons: (1) A numerical tradeoff, (2) A stability tradeoff, and (3) For localization of the kernel. We now explain each of these reasons in turn.

\subsubsection{The need for high-order kernels I: A numerical tradeoff}\label{sec:high_order_needed1}

\begin{figure}
  \centering
  \begin{minipage}[b]{0.49\textwidth}
    \begin{overpic}[width=\textwidth,trim={0mm 0mm 0mm 0mm},clip]{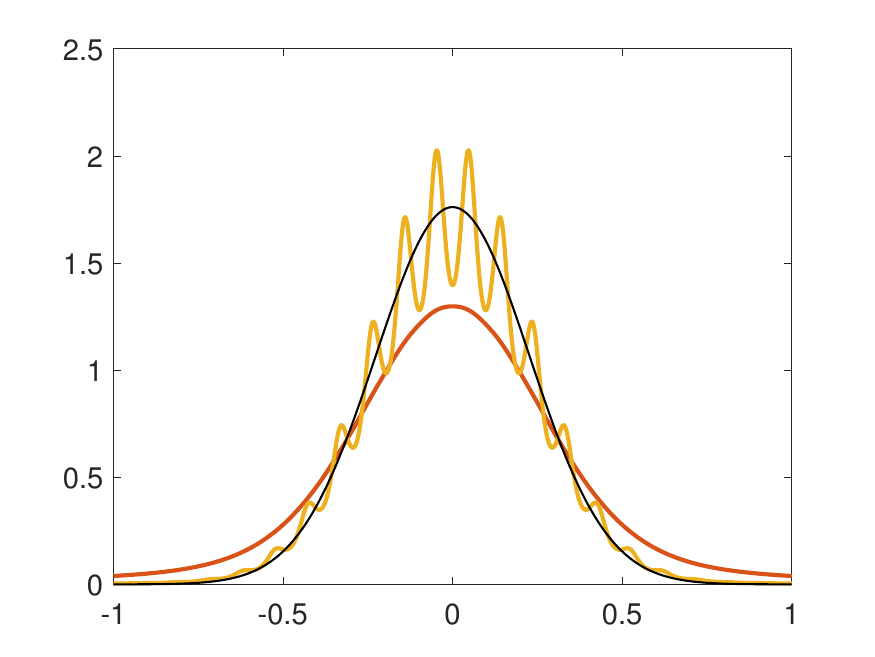}
		\put (10,73) {Convolution with Poisson kernel, $N_K=40$}
     \put (50,-3) {$\theta$}
		\put (73,53) { {$\displaystyle \rho_g$}}
	\put(73,53)  {\vector(-1,-1){14}}
	\put (17,22) { {$\displaystyle\nu_g^{0.1}$}}
	\put(20,20)  {\vector(1,-1){8}}
	\put (38,64) { {$\displaystyle\nu_g^{0.01}$}}
	\put(41,62)  {\vector(1,-1){8}}
     \end{overpic}
  \end{minipage}
	\begin{minipage}[b]{0.49\textwidth}
    \begin{overpic}[width=\textwidth,trim={0mm 0mm 0mm 0mm},clip]{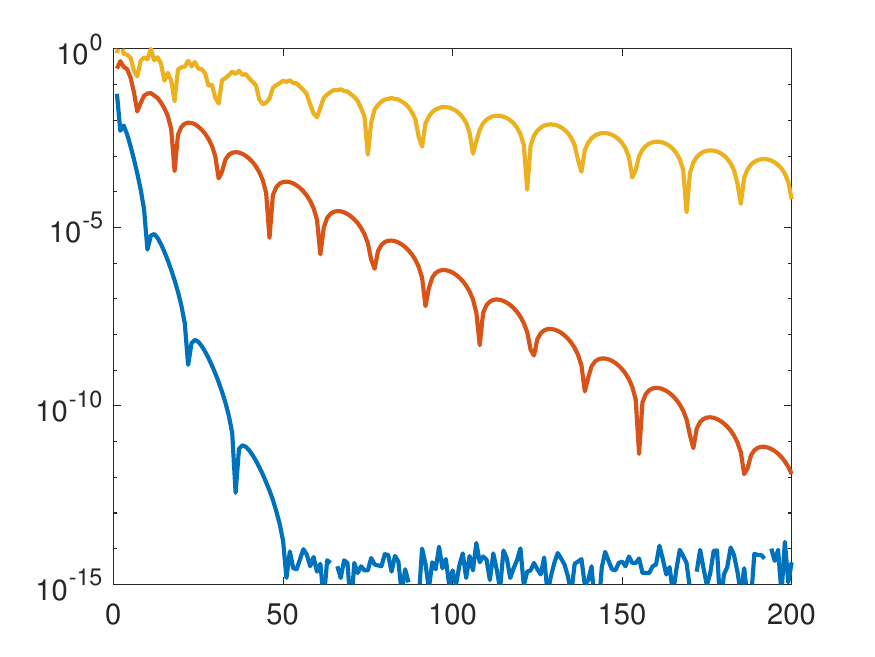}
		\put (23,73) {$|\nu_{g,N_K}^\epsilon(0.2)-\nu_{g}^\epsilon(0.2)|/|\nu_{g}^\epsilon(0.2)|$}
     \put (50,-3) {$N_K$}
     \put (70,61) {\rotatebox{-10}{$\epsilon = 0.01$}}
      \put (70,35) {\rotatebox{-25}{$\epsilon = 0.1$}}
       \put (70,13) {\rotatebox{0}{$\epsilon = 0.5$}}
     \end{overpic}
  \end{minipage}
    \caption{Left: Approximation of the spectral measure of $U$ in \eqref{U_mat_example} with respect to $g = e_1$ for a fixed truncation size of $N_K=40$. The oscillations for $\epsilon=0.01$ are an artifact caused by the discrete spectrum of the truncation. Right: The relative error in the approximation $\nu_{g,N_K}^\epsilon$ with varying $N_K$ of the smoothed measure $\nu_{g}^\epsilon=[K_\epsilon*\nu_g]$ for the Poisson kernel evaluated at $\theta=0.2$.} 
\label{fig:CMV1}
\end{figure}

As a model problem, consider the following generalization of the shift operator in \cref{example_shift_operator}:
\begin{equation}
\label{U_mat_example}
U=\begin{pmatrix}
\overline{\alpha_0} & \overline{\alpha_1}\rho_0 & \rho_1\rho_0 & & & \\[3pt]
\rho_0 & -\overline{\alpha_1}\alpha_0 & -\rho_1\alpha_0 & 0 &  & \\[3pt]
0 & \overline{\alpha_2}\rho_1 & -\overline{\alpha_2}\alpha_1 & \overline{\alpha_3}\rho_2 & \rho_3\rho_2 & \\
& \rho_2\rho_1 & -\rho_2\alpha_1 & -\overline{\alpha_3}\alpha_2 & -\rho_3\alpha_2& \ddots\\
 &  & 0 & \overline{\alpha_4}\rho_3 & -\overline{\alpha_4}\alpha_3 & \ddots\\
 & & &\ddots &\ddots &\ddots 
\end{pmatrix},\quad \alpha_j=(-1)^j0.95^{\frac{j+1}{2}}, \quad \rho_j=\sqrt{1-|\alpha_j|^2}.
\end{equation}
We seek the spectral measure with respect to the first canonical unit vector $g=e_1$, which is a cyclic vector. $U$ is the Cantero--Moral--Vel\'{a}zquez (CMV) matrix for the Rogers--Szeg{\H o} orthogonal polynomials on the unit circle~\cite{rogers1893second}. CMV matrices for measures on the unit circle are analogous to Jacobi matrices for measures on the real line~\cite{simon2007cmv}.\footnote{One applies Gram--Schmidt to $\{1,z,z^{-1},z^2,z^{-2},\ldots\}$, which leads to a pentadiagonal matrix.} The spectral measure $\nu_g$ is absolutely continuous with an analytic Radon--Nikodym derivative given by
$$
\rho_g(\theta)=\sqrt{\frac{2\pi}{\log(0.95^{-1})}}\sum_{m=-\infty}^\infty\exp\left(-\frac{(\theta-2\pi m)^2}{2\log(0.95^{-1})}\right).
$$

To compute the resolvent and hence $\mathtt{C}_{\nu_g}$, we use finite square truncations of the matrix $U$ (see \cref{sec:comput_res}). Since $U$ is banded, once it is truncated to an $N_K\times N_K$ matrix it costs $\mathcal{O}(N_K)$ operations to compute $\mathtt{C}_{\nu_g}$. The dominating computational cost in evaluating $[K_\epsilon*\nu_g]$ is solving the shifted linear systems to compute the resolvent $(\mathcal{K}-e^{i\theta_0}(1+\epsilon))^{-1}$. This cost increases as $\epsilon\downarrow 0$ and $e^{i\theta_0}(1+\epsilon)$ approaches the continuous spectrum of $\mathcal{K}$. This increase in cost is due to a larger truncation size of $U$ needed to capture the function $(\mathcal{K}-e^{i\theta_0}(1+\epsilon))^{-1}g$ because it becomes more oscillatory.  

\cref{fig:CMV1} (left) shows the approximations $[K_\epsilon*\nu_g]$ for the Poisson kernel and $\epsilon=0.1,0.01$ using $N_K=40$ basis functions for the operator $U$ in \eqref{U_mat_example}. The exact Radon--Nikodym derivative is also shown. Whilst the computation for $\epsilon=0.1$ is an accurate approximation of $[K_\epsilon*\nu_g]$, we must decrease $\epsilon$ to approximate $\rho_g$. However, decreasing $\epsilon$ to $0.01$ introduces oscillatory artifacts. This is because we have fixed the truncation size, and the truncated operator's spectral measure is a sum of Dirac delta distributions located in the closed unit disc. To avoid this, we must select $N_K$ adaptively for a given $\epsilon$ (see~\cref{sec:comput_res}). \cref{fig:CMV1} (right) shows the relative error of approximating $[K_\epsilon*\nu_g](0.2)$ for a truncation size $N_K$ and varying $\epsilon$. A smaller $\epsilon$ leads to slower convergence as the truncation size increases.

Therefore, there is a numerical balancing act. On the one hand, we wish to stay as far away from the spectrum as possible so that the evaluation of $[K_\epsilon*\nu_g]$ is computationally cheap. On the other hand, we desire a good approximation of $\nu_g$, which requires small $\epsilon>0$. The Poisson kernel leads to $\mathcal{O}(\epsilon\log(\epsilon^{-1}))$ convergence as $\epsilon\downarrow0$, making it computationally infeasible to compute the spectral measure to more than a few digits of accuracy. High-order kernels allow us to compute spectral measures with better accuracy. This trade-off is significant for data-driven computations since we want to keep $N_K$ as small as possible. 
 
\subsubsection{The need for high-order kernels II: A stability tradeoff}\label{sec:high_order_needed2}

\begin{figure}
  \centering
  \begin{minipage}[b]{0.49\textwidth}
    \begin{overpic}[width=\textwidth,trim={0mm 0mm 0mm 0mm},clip]{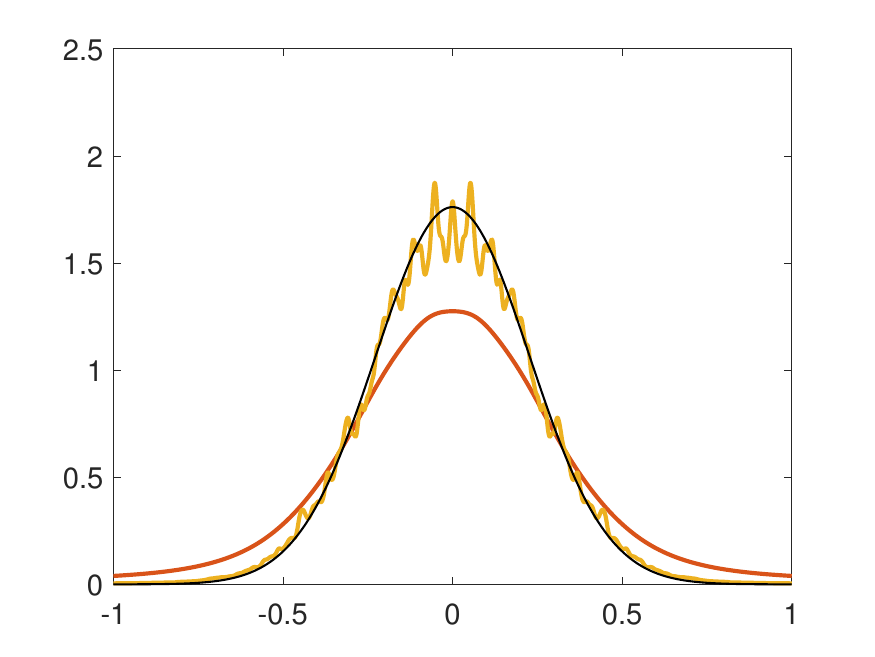}
		\put (11,73) {Conv. with Poisson kernel, noisy matrix}
     \put (50,-3) {$\theta$}
		\put (73,53) { {$\displaystyle \rho_g$}}
	\put(73,53)  {\vector(-1,-1){14}}
	\put (17,22) { {$\displaystyle\nu_g^{0.1}$}}
	\put(20,20)  {\vector(1,-1){8}}
	\put (38,64) { {$\displaystyle\nu_g^{0.01}$}}
	\put(41,62)  {\vector(1,-1){8}}
     \end{overpic}
  \end{minipage}
	\begin{minipage}[b]{0.49\textwidth}
    \begin{overpic}[width=\textwidth,trim={0mm 0mm 0mm 0mm},clip]{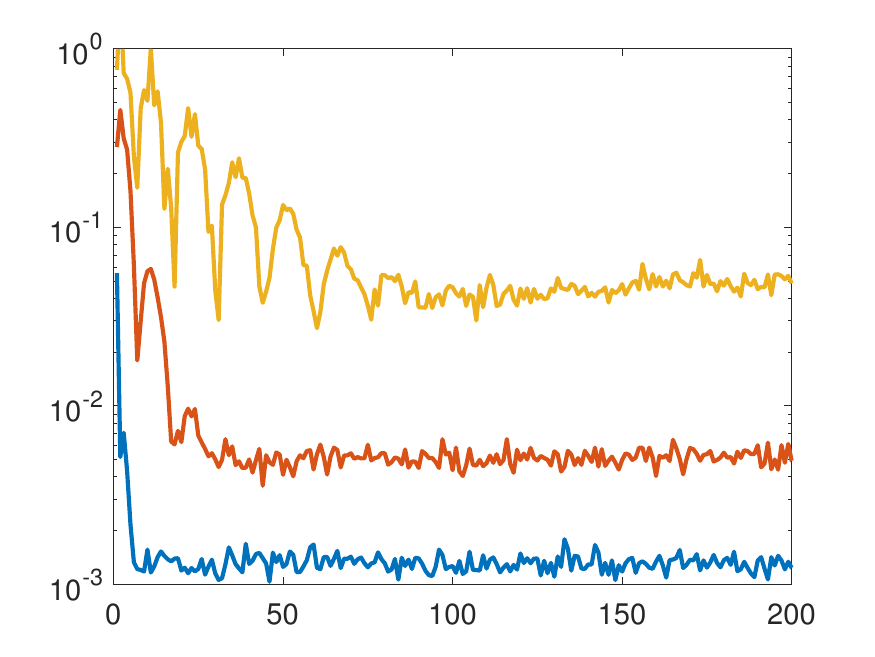}
		\put (5,73) {$|\nu_{g,N_K}^\epsilon(0.2)-\nu_{g}^\epsilon(0.2)| / |\nu_{g}^\epsilon(0.2)|$ with noisy matrix}
     \put (50,-3) {$N_K$}
		\put (70,45) {{$\epsilon = 0.01$}}
      \put (70,25) {{$\epsilon = 0.1$}}
       \put (70,12) {{$\epsilon = 0.5$}}
     \end{overpic}
  \end{minipage}
    \caption{Left: Approximation of the spectral measure of $U$ in \eqref{U_mat_example} with respect to $g = e_1$ for an adaptive choice of $N_K$, but with the truncated matrix perturbed by normal random variables of mean $0$ and standard deviation $\delta=0.001$. The oscillations for $\epsilon=0.01$ are an artifact caused by the ill-conditioning of the resolvent as $\epsilon\downarrow 0$. Right: The relative error in the numerical approximation $\nu_{g,N_K}^\epsilon$, corresponding to truncation size $N_K$, of the smoothed measure $\nu_{g}^\epsilon=[K_\epsilon*\nu_g]$ for the Poisson kernel in \eqref{pois_def_kern} with the same model of matrix perturbation. The error is averaged over 50 realizations.} 
\label{fig:CMV2}
\end{figure}

Continuing with~\eqref{U_mat_example}, the shifted linear system involved in computing the resolvent operator becomes increasingly ill-conditioned as $\epsilon\downarrow 0$ when $e^{i\theta_0}\in\sigma(\mathcal{K})$. This is because $\|(\mathcal{K}-e^{i\theta_0}(1+\epsilon))^{-1}\|=\mathcal{O}(\epsilon^{-1})$ (see \cref{sec:stability_argument}). The ResDMD framework developed in \cref{sec:RES_DMD} uses approximations of Galerkin truncations of $\mathcal{K}$, with convergence to the exact truncation in the large data limit $M\rightarrow\infty$ as the number of data points increases (see \cref{sec:matrix_conv_galerkin}). Therefore, the stability of the resolvent operator is crucial for data-driven approximations as we have inexact matrix entries. 

We now repeat the experiment from \cref{sec:high_order_needed1}, but with an adaptive $N_K$ for fixed $\epsilon$, and we perturb each entry of an $N_K\times N_K$ truncation of $U$ with independent Gaussian random variables of mean $0$ and standard deviation $\delta=0.001$. \cref{fig:CMV2} (left) shows the approximation of the spectral measure $\nu_g$ with $g = e_1$. While $\epsilon=0.1$ appears stable to the eye, decreasing to $\epsilon=0.01$ yields an inaccurate approximation since $\|(\mathcal{K}-e^{i\theta_0}(1+\epsilon))^{-1}\|$ increases and the shifted linear systems become more sensitive to noise. \cref{fig:CMV2} (right) shows the mean error of the approximation of $[K_\epsilon*\nu_g]$ over 50 independent realizations. In contrast to the convergence to machine precision in \cref{fig:CMV1} (right), the errors in \cref{fig:CMV2} (right) plateau because of the perturbation of the matrix, with a larger final error for smaller $\epsilon$.

The error in approximating $\nu_g$ with a perturbed truncation of the operator is unavoidable, but the Poisson kernel is suboptimal in this regard. For a perturbation of order $\delta$, the accuracy achieved by the Poisson kernel scales as $\mathcal{O}(\epsilon\log(\epsilon^{-1})+\delta/\epsilon)$. Balancing these terms while ignoring logarithmic factors leads to the noise-dependent choice $\epsilon\sim\delta^{1/2}$. This choice yields an overall error scaling of $\mathcal{O}(\delta^{1/2}\log(\delta^{-1}))$. High-order kernels can get arbitrarily close to the optimal scaling of $\mathcal{O}(\delta)$ as $\delta\rightarrow 0$.

\subsubsection{The need for high-order kernels III: Localization of the kernel}\label{sec:high_order_needed3}
We revisit the tent map of \cref{sec:tent_map} with the choice of $g$ given in \eqref{f_tent2}. \cref{fig:CMV3} shows the computed measure using the Poisson kernel, where we have chosen $N_K$ adaptively to avoid the problem in~\cref{sec:high_order_needed1} and $\epsilon=0.1$ so that with $M=2^5=32$ the error in~\cref{sec:high_order_needed2} is negligible. For comparison, we do the same computation, but with a $6$th order kernel (see~\cref{sec:kern_cjo}). As a function, the Poisson kernel decays slowly away from $0$, and hence the singular part of the spectral measure at $\theta=0$ is not localized. In contrast, $m$th order kernels obey a decay condition (see~\eqref{unitary_decay}) so are more localized for a particular $\epsilon$. For this example, this localization leads to a better approximation of the Radon--Nikodym derivative away from the singularity. We have also considered examples where the spectral measure is singular and observed that increased localization has the added benefit of avoiding over-smoothing.

\begin{figure}
  \centering
  \begin{minipage}[b]{0.49\textwidth}
    \begin{overpic}[width=\textwidth,trim={0mm 0mm 0mm 0mm},clip]{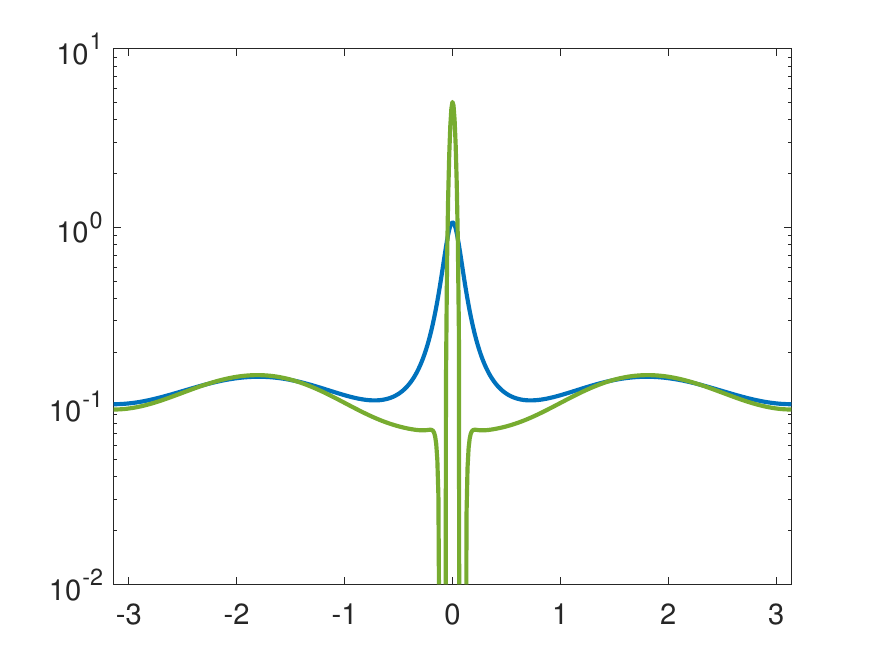}
		\put (22,73) {Smoothed approximations of $\nu_g$}
     \put (50,-3) {$\theta$}
		\put (69,49) { {$\displaystyle m=1$}}
	\put(69,49)  {\vector(-1,-1){14}}
	\put (22,59) { {$\displaystyle m=6$}}
	\put(37,60)  {\vector(1,0){14}}
     \end{overpic}
  \end{minipage}
    \caption{Convolutions of spectral measure with Poisson kernel ($m=1$) and a $6$th order rational kernel (see~\cref{sec:rat_kern_build22}). High-order kernels have better localization properties.}
\label{fig:CMV3}
\end{figure}

\subsection{Building high-order rational kernels}\label{sec:rat_kern_build22}
We now aim to generalize the Sokhotski--Plemelj-like formula in~\eqref{cauchy_compcalc} to develop high-order rational kernels. Let $\{z_j\}_{j=1}^{m}$ be distinct points with positive real part and consider the rational function: 
\begin{equation}\label{rat_kern_unitary}
K_{\epsilon}(\theta)=\frac{e^{-i\theta}}{2\pi}\sum_{j=1}^{m}\left[\frac{c_j}{e^{-i\theta}-(1+\epsilon \overline{z_j})^{-1}}-\frac{d_j}{e^{-i\theta}-(1+\epsilon z_j)}\right].
\end{equation}
A short derivation using \eqref{eq:keyres2} shows that
\begin{equation}
\label{unit_res_comp}
\begin{split}
\int_{[-\pi,\pi]_{\mathrm{per}}} K_{\epsilon}(\theta_0-\theta)\,d\nu_g(\theta)&=\sum_{j=1}^{m}\left[c_j\mathtt{C}_{\nu_g}\left(e^{i\theta_0}(1+\epsilon \overline{z_j})^{-1}\right)-d_j\mathtt{C}_{\nu_g}\left(e^{i\theta_0}(1+\epsilon z_j)\right)\right]\\
&\hspace{-30mm}=\frac{-1}{2\pi}\sum_{j=1}^{m}\big[c_je^{-i\theta_0}(1+\epsilon\overline{z_j})\langle g,(\mathcal{K}-e^{i\theta_0}(1+\epsilon z_j))^{-1}g \rangle+d_j\langle (\mathcal{K}-e^{i\theta_0}(1+\epsilon z_j))^{-1}g,\mathcal{K}^*g \rangle\big].
\end{split}
\end{equation}
It follows that we can compute the convolution $[K_\epsilon*\nu_g](\theta_0)$ by evaluating the resolvent at the $m$ points $\{e^{i\theta_0}(1+\epsilon z_j)\}_{j=1}^m$. We use rational kernels because they allow us to compute smoothed approximations of the spectral measure by applying the resolvent operator to functions. 

However, for~\eqref{unit_res_comp} to be a good approximation of $\nu_g$, we must carefully select the points $z_j$ and the coefficients $\{c_j,d_j\}$ in~\eqref{rat_kern_unitary}. In particular, we would like $\{K_\epsilon\}$ to be an $m$th order kernel. To design such a kernel, we carefully consider~\cref{unitary_rational_tool}. First, we define $\zeta_j(\epsilon)$ by the relationship $1+\epsilon\zeta_j(\epsilon)=(1+\epsilon \overline{z_j})^{-1}$ and use Cauchy's Residue theorem to show that for any integer $n\geq 1$,
\begin{equation*}
\begin{split}
\int_{[-\pi,\pi]_{\mathrm{per}}} K_{\epsilon}(-\theta) e^{in\theta}\,d\theta&=\frac{1}{2\pi i}\int_{\mathbb{T}}\left[\sum_{j=1}^{m}\frac{c_j}{\lambda-(1+\epsilon \overline{z_j})^{-1}}-\sum_{j=1}^{m}\frac{d_j}{\lambda-(1+\epsilon z_j)}\right]\lambda^{n}\,d\lambda\\
&=\sum_{j=1}^{m}c_j(1+\epsilon \overline{z_j})^{-n}=\left(\sum_{j=1}^mc_j\right)+\sum_{k=1}^{n} \epsilon^k \binom{n}{k} \sum_{j=1}^{m}c_j\zeta_j(\epsilon)^{k}.
\end{split}
\end{equation*}
It follows that condition~\eqref{fourier_cond} in~\cref{unitary_rational_tool} holds if
\begin{equation}\label{eqn:vandermonde_condition900}
\begin{pmatrix}
1 & \dots & 1 \\
{\zeta_1(\epsilon)} & \dots & {\zeta_{m}(\epsilon)} \\
\vdots & \ddots & \vdots \\
{\zeta_1(\epsilon)^{m-1}} &  \dots & {\zeta_{m}(\epsilon)^{m-1}}
\end{pmatrix}
\begin{pmatrix}
c_1(\epsilon) \\ c_2(\epsilon)\\ \vdots \\ c_{m}(\epsilon)
\end{pmatrix}
=\begin{pmatrix}
1 \\ 0 \\ \vdots \\0
\end{pmatrix}.
\end{equation}
Note also that, if this holds, the coefficients $c_j=c_j(\epsilon)$ remain bounded as $\epsilon\downarrow 0$. To ensure that the decay condition in~\eqref{bound_lemma} is satisfied, let $\omega=(e^{-i\theta}-1)/\epsilon$. The kernel in \eqref{rat_kern_unitary} can then be re-written as
\begin{equation}
K_{\epsilon}(\theta)=\frac{\epsilon^{-1}e^{-i \theta}}{2\pi}\sum_{j=1}^{m}\left[\frac{c_j}{\omega-\zeta_j(\epsilon)}-\frac{d_j}{\omega- z_j}\right].
\end{equation}
Therefore, we have
\begin{equation*}
\begin{split}
\omega K_{\epsilon}(\theta)&=\frac{\epsilon^{-1}e^{-i \theta}}{2\pi}\sum_{j=1}^{m}\left[c_j+\frac{c_j\zeta_j(\epsilon)}{\omega-\zeta_j(\epsilon)}-d_j-\frac{d_jz_j}{\omega- z_j}\right]\\
&=\frac{\epsilon^{-1}e^{-i \theta}}{2\pi}\sum_{j=1}^{m}(c_j-d_j)+\frac{\epsilon^{-1}e^{-i \theta}}{2\pi}\sum_{j=1}^{m}\left[\frac{c_j\zeta_j(\epsilon)}{\omega-\zeta_j(\epsilon)}-\frac{d_jz_j}{\omega- z_j}\right].
\end{split}
\end{equation*}
By repeating the same argument $m$ times, we arrive at
\begin{equation}
\label{trick87}
\omega^m K_{\epsilon}(\theta)=\frac{\epsilon^{-1}e^{-i \theta}}{2\pi}\left[\sum_{k=0}^{m-1}\omega^{m-1-k}\sum_{j=1}^{m}(c_j\zeta_j(\epsilon)^k-d_jz_j^k)+\sum_{j=1}^{m}\left(\frac{c_j\zeta_j(\epsilon)^m}{\omega-\zeta_j(\epsilon)}-\frac{d_jz_j^m}{\omega- z_j}\right)\right].
\end{equation}
This means that we should select the $d_k$'s so that
\begin{equation}\label{eqn:vandermonde_condition87}
\begin{pmatrix}
1 & \dots & 1 \\
z_1 & \dots & z_{m} \\
\vdots & \ddots & \vdots \\
z_1^{m-1} &  \dots & z_{m}^{m-1}
\end{pmatrix}
\begin{pmatrix}
d_1 \\ d_2\\ \vdots \\ d_{m}
\end{pmatrix}
=\begin{pmatrix}
1 \\ 0 \\ \vdots \\0
\end{pmatrix}.
\end{equation}
We conclude that if the coefficients $\{c_j\}_{j=1}^m$ and $\{d_j\}_{j=1}^m$ satisfy \eqref{eqn:vandermonde_condition900} and \eqref{eqn:vandermonde_condition87}, respectively, then
$$
\sum_{k=0}^{m-1}\omega^{m-1-k}\sum_{j=1}^{m}(c_j\zeta_j(\epsilon)^k-d_jz_j^k)=0, \qquad \left|\sum_{j=1}^{m}\left(\frac{c_j\zeta_j(\epsilon)^m}{\omega-\zeta_j(\epsilon)}-\frac{d_jz_j^m}{\omega- z_j}\right)\right|\lesssim |\omega|^{-1}.
$$
By~\eqref{trick87}, this means that $|\omega|^{m}\left|K_{\epsilon}(\theta)\right|\lesssim \epsilon^{-1}|\omega|^{-1}$. Moreover, since $\left|K_{\epsilon}(\theta)\right|\lesssim \epsilon^{-1}$ we see that $\left|K_{\epsilon}(\theta)\right|\lesssim \min\{\epsilon^{-1},\epsilon^{m}|\theta|^{-(m+1)}\}\lesssim \epsilon^m(\epsilon+|\theta|)^{-(m+1)}$. Using \cref{unitary_rational_tool}, we have proved the following proposition.

\begin{proposition}\label{rat_kern_linear_system}
Let $\{z_j\}_{j=1}^{m}$ be distinct points with positive real part and let $K_\epsilon$ be given by \eqref{rat_kern_unitary}. Then, $\{K_{\epsilon}\}$ is an $m$th order kernel for $[-\pi,\pi]_{\mathrm{per}}$ if the coefficients $\{c_j\}_{j=1}^m$ and $\{d_j\}_{j=1}^m$ satisfy \eqref{eqn:vandermonde_condition900} and \eqref{eqn:vandermonde_condition87}, respectively.
\end{proposition}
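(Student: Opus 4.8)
The plan is to verify the three defining conditions of an $m$th order kernel in \cref{def:mthOrderKernel} for the family $\{K_\epsilon\}$ in \eqref{rat_kern_unitary}; in fact the bulk of the analytic work has already been carried out in the discussion preceding the statement, so the proof is largely a matter of assembling those estimates and checking that they hold uniformly as $\epsilon\downarrow 0$. Before anything else I would record that the two linear systems \eqref{eqn:vandermonde_condition900} and \eqref{eqn:vandermonde_condition87} have unique solutions: the nodes $z_j$ are distinct by hypothesis, so the Vandermonde matrix in \eqref{eqn:vandermonde_condition87} is invertible, and since $w\mapsto -w/(1+\epsilon w)$ is a M\"obius transformation the numbers $\zeta_j(\epsilon)$ (defined by $1+\epsilon\zeta_j(\epsilon)=(1+\epsilon\overline{z_j})^{-1}$, i.e.\ $\zeta_j(\epsilon)=-\overline{z_j}/(1+\epsilon\overline{z_j})$) are distinct as well, so the Vandermonde matrix in \eqref{eqn:vandermonde_condition900} is invertible too. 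Moreover $\zeta_j(\epsilon)\to-\overline{z_j}$ as $\epsilon\downarrow 0$, so this second matrix converges to a fixed invertible matrix and hence $\{c_j(\epsilon)\}_{j=1}^m$ stays bounded, while $\{d_j\}_{j=1}^m$ is independent of $\epsilon$. This boundedness is what makes all the $\lesssim$ estimates below uniform in $\epsilon$.

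Next I would establish the normalization (i) and the approximately vanishing moments (ii) simultaneously, from the contour computation displayed above. For any integer $n\ge 0$, the substitution $\lambda=e^{i\theta}$ and Cauchy's residue theorem (the $d_j$ poles $1+\epsilon z_j$ lie outside the unit disc since $\mathrm{Re}(z_j)>0$, so contribute nothing) give $\int_{[-\pi,\pi]_{\mathrm{per}}}K_\epsilon(-\theta)e^{in\theta}\,d\theta=\sum_{j=1}^m c_j(\epsilon)(1+\epsilon\overline{z_j})^{-n}=\sum_{k=0}^n\binom{n}{k}\epsilon^k\sum_{j=1}^m c_j(\epsilon)\zeta_j(\epsilon)^k$, using $(1+\epsilon\overline{z_j})^{-1}=1+\epsilon\zeta_j(\epsilon)$ and the binomial theorem. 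The first row of \eqref{eqn:vandermonde_condition900} is $\sum_j c_j(\epsilon)=1$, which is exactly the $n=0$ case and hence condition (i); rows $2,\dots,m$ give $\sum_j c_j(\epsilon)\zeta_j(\epsilon)^k=0$ for $1\le k\le m-1$, so for every $1\le n\le m-1$ all terms with $k\ge 1$ drop out and the $n$th Fourier coefficient equals $1$ exactly. Feeding this ``Fourier coefficients equal $1$ through order $m-1$'' into \cref{unitary_rational_tool} (the translation lemma also invoked at the end of the proof of \cref{polynomial_magic}) yields the moment bound \eqref{unit_first_lem2}, i.e.\ condition (ii).

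Finally, for the decay estimate (iii) I would carry out the $\omega$-substitution already set up, with $\omega=(e^{-i\theta}-1)/\epsilon$. The telescoping identity \eqref{trick87} writes $\omega^m K_\epsilon(\theta)$ as $\tfrac{\epsilon^{-1}e^{-i\theta}}{2\pi}$ times a polynomial in $\omega$ of degree $m-1$ with coefficients $\sum_j\bigl(c_j(\epsilon)\zeta_j(\epsilon)^k-d_jz_j^k\bigr)$, plus a strictly proper rational remainder. By \eqref{eqn:vandermonde_condition900} and \eqref{eqn:vandermonde_condition87} one has $\sum_j c_j(\epsilon)\zeta_j(\epsilon)^k=\sum_j d_jz_j^k$ (both equal $1$ for $k=0$ and $0$ for $1\le k\le m-1$), so the polynomial part cancels identically and the remainder is $O(|\omega|^{-1})$ as $|\omega|\to\infty$ uniformly in $\epsilon$; hence $|\omega|^m|K_\epsilon(\theta)|\lesssim\epsilon^{-1}|\omega|^{-1}$. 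Reading \eqref{rat_kern_unitary} directly, $\mathrm{Re}(z_j)>0$ forces $1+\epsilon z_j$ and $(1+\epsilon\overline{z_j})^{-1}$ to sit at distance $\gtrsim\epsilon$ from the unit circle, so also $|K_\epsilon(\theta)|\lesssim\epsilon^{-1}$; combining the two bounds and using $|\omega|\asymp|\theta|/\epsilon$ for $|\theta|\le\pi$ gives $|K_\epsilon(\theta)|\lesssim\min\{\epsilon^{-1},\epsilon^m|\theta|^{-(m+1)}\}\lesssim\epsilon^m(\epsilon+|\theta|)^{-(m+1)}$, which is \eqref{unitary_decay}. With (i)--(iii) in hand the proposition follows (equivalently, one invokes \cref{unitary_rational_tool} once more to package the Fourier condition \eqref{fourier_cond} with the decay condition \eqref{bound_lemma}).

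The step I expect to require the most care is not a single calculation but the uniformity-in-$\epsilon$ of the implied constants: I would need to argue that the Vandermonde system \eqref{eqn:vandermonde_condition900}, which superficially looks near-degenerate, is in fact well-conditioned for all small $\epsilon$ so that $\{c_j(\epsilon)\}$ stays bounded, and that $\zeta_j(\epsilon)$ and $z_j$ remain a fixed positive distance from the unit circle so that the rational remainder in \eqref{trick87} is genuinely $O(|\omega|^{-1})$ with an $\epsilon$-free constant. Once those points are pinned down, the rest is the bookkeeping already on display before the statement.
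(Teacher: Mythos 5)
Your proposal is correct and follows essentially the same route as the paper: the residue computation establishing that the Fourier coefficients through order $m-1$ equal $1$, the $\omega$-substitution and telescoping identity \eqref{trick87} to derive the decay bound, and the final invocation of \cref{unitary_rational_tool}. The extra preliminaries you supply (unique solvability and $\epsilon$-uniform boundedness of the Vandermonde coefficients, the comparison $|\omega|\asymp|\theta|/\epsilon$) merely make explicit details the paper leaves implicit.
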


\subsubsection{The choice of rational kernel}\label{sec:kern_cjo}

\begin{figure}
  \centering
  \begin{minipage}[b]{0.5\textwidth}
    \begin{overpic}[width=\textwidth,trim={0mm 0mm 0mm 0mm},clip]{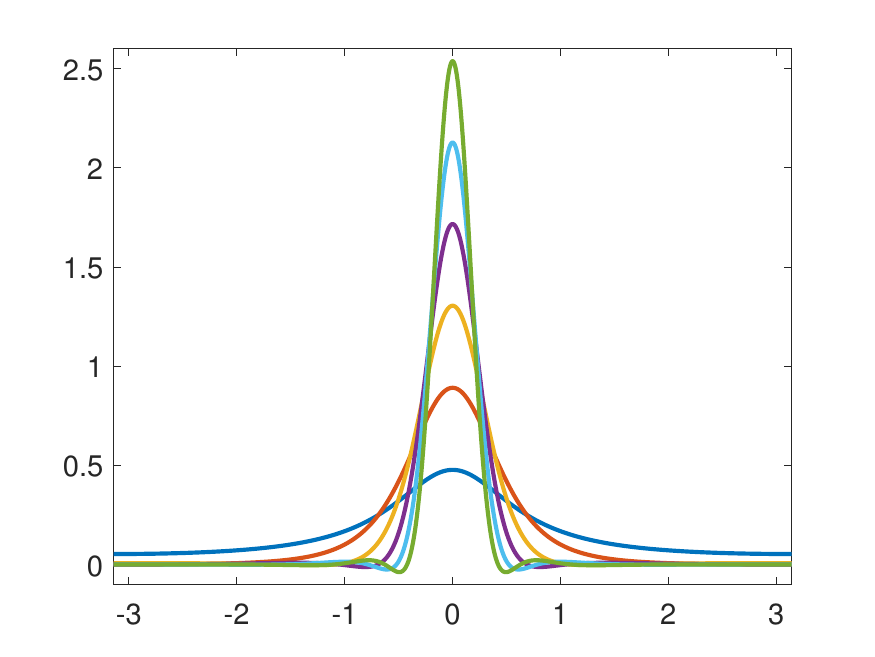}
		\put (42,73) {$\displaystyle \mathrm{Re}(K_1(\theta))$}
 	\put (50,-2) {$\displaystyle \theta$}
	\put (66,21) { {$\displaystyle m=1$}}
	\put(66,21)  {\vector(-1,0){14}}
	\put (19,30.5) { {$\displaystyle m=2$}}
	\put(37.5,30.5)  {\vector(1,0){14}}
	\put (66,40) { {$\displaystyle m=3$}}
	\put(66,40)  {\vector(-1,0){14}}
	\put (19,49) { {$\displaystyle m=4$}}
	\put(37.5,49)  {\vector(1,0){14}}
	\put (66,59) { {$\displaystyle m=5$}}
	\put(66,59)  {\vector(-1,0){14}}
	\put (19,66) { {$\displaystyle m=6$}}
	\put(37.5,67.5)  {\vector(1,0){14}}
     \end{overpic}
  \end{minipage}
    \caption{The $m$th-order kernels \eqref{rat_kern_unitary} constructed with the choice \eqref{gen_poisson} for $\epsilon=1$ and $1 \leq m \leq 6$.} 
\label{fig:rat_kernels}
\end{figure}

We are free to choose the points $\{z_j\}_{j=1}^{m}$ in \cref{rat_kern_linear_system} subject to $\mathrm{Re}(z_j)>0$, after which the linear systems \eqref{eqn:vandermonde_condition900} and \eqref{eqn:vandermonde_condition87} provide suitable $\{c_j\}_{j=1}^m$ (dependent on $\epsilon$) and $\{d_j\}_{j=1}^m$ (independent of $\epsilon$). As a natural extension of the Poisson kernel in \eqref{pois_def_kern}, we select the points $\{z_j\}_{j=1}^{m}$ as 
\begin{equation}
\label{gen_poisson}
z_j=1+\left(\frac{2j}{m+1}-1\right)i,\qquad 1\leq j\leq m.
\end{equation}
The kernels that we have developed are typically not real-valued. Since $\nu_g$ is a probability measure and hence real-valued, we often gain better accuracy for a particular $\epsilon$ by considering the kernel $\mathrm{Re}(K_{\epsilon})$, and this is what we do throughout this paper. The convolution with $\mathrm{Re}(K_{\epsilon})$ can be computed by taking the real part of the right-hand side of \eqref{unit_res_comp}. The first six kernels with the choice $\epsilon=1$ are shown in \cref{fig:rat_kernels}. The exact coefficients $\{c_j,d_j\}_{j=1}^m$ for $\epsilon=0.1$ are shown in \cref{rat_unit_kern_table} for the first six kernels.

\begin{table}
\centering
\begin{tabular}{l|c|c}
$m$ & $\{d_1,\ldots,d_{\ceil{m/2}}\}$ & $\{c_1(\epsilon),\ldots,c_{\ceil{m/2}}(\epsilon)\}$, $\epsilon=0.1$\\[3pt]
\hline
$2$ & $\left\{\frac{1-3i}{2}\right\}$ & $\left\{\frac{3+10i}{6}\right\}$\\[3pt]
$3$ & $\left\{-2-i,5\right\}$ & $\left\{\frac{-202+79i}{80},\frac{121}{20}\right\}$\\[3pt]
$4$ &  $\left\{\frac{-39+65i}{24},\frac{17-85i}{8}\right\}$ & $\left\{\frac{-1165710-2944643i}{750000},\frac{513570+3570527i}{250000}\right\}$\\[3pt]
$5$ &  $\left\{\frac{15+10i}{4},\frac{-39-13i}{2},\frac{65}{2}\right\}$ & $\left\{\frac{4052283 - 1460282i}{648000},\frac{-2393157+486551i}{81000},\frac{190333}{4000}\right\}$
\\[3pt]
$6$ & \tiny{$\left\{\frac{725-1015i}{192},\frac{-2775+6475i}{192},\frac{1073-7511i}{96}\right\}$} &\tiny{
$\left\{\frac{24883929805+81589072062i}{8067360000},\frac{-19967590755-93596942182i}{1613472000},\frac{7898770397+102424504746i}{806736000}\right\}$}\\
\end{tabular}
\normalsize
\caption{Coefficients in the rational kernels in~\eqref{rat_kern_unitary} for $1\leq m \leq 6$, the choice \eqref{gen_poisson}, and $\epsilon = 0.1$. We give the first $\lceil m/2\rceil$ coefficients as $c_{m+1-j}= \overline{c_j}$ and $d_{m+1-j}= \overline{d_j}$.}\label{rat_unit_kern_table}
\end{table}

\subsubsection{Revisiting the examples in \texorpdfstring{\cref{sec:high_order_needed1}}{} and \texorpdfstring{\cref{sec:high_order_needed2}}{}}\label{sec:stability_argument}
To demonstrate the advantages of high-order rational kernels, we revisit the operator $U$ given in \eqref{U_mat_example} and look again at two of the numerical needs for high-order kernels. 

\begin{itemize}[leftmargin=*,noitemsep]
\item {\bf A numerical tradeoff.}  \cref{fig:CMV4} (left) shows the convergence of $[K_\epsilon*\nu_g]$ for the rational kernels in \cref{sec:kern_cjo}. We see the convergence rates predicted from~\cref{thm:unitary_pointwise_convergence}. In practice, this means that we rarely need to take $\epsilon$ below $0.01$ for very accurate results, and often we take $\epsilon=0.1$.

\item{\bf A stability tradeoff.}
Let $K_{N_K}$ be an ${N_K}\times {N_K}$ truncation of $\mathcal{K}$ (see~\cref{sec:comput_res} for how we do this). An important property of our truncations is that $\|K_{N_K}\|\leq \|\mathcal{K}\|=1$. Using the standard Neumann series for the inverse, it follows that any $\lambda$ such that $|\lambda|>1$ has $\|(K_{N_K}-\lambda)^{-1}\|\leq 1/(|\lambda|-1)$. In general, this is the best one can hope for. Suppose that instead of $K_{N_K}$, we have access to $\tilde K_{N_K}=K_{N_K}+\Delta$ with $\|\Delta\|\leq\delta$. If $|\lambda|=1+\mathcal{O}(\epsilon)>1+\delta$, then since $\|\tilde K_{N_K}\|\leq 1+\delta$ we have $\|(\tilde K_{N_K}-\lambda)^{-1}\|\leq1/(|\lambda|-1-\delta)$. Using the second resolvent identity for the first equality, the relative error is bounded via 
\begin{align*}
\frac{\|[(\tilde K_{N_K}-\lambda)^{-1}-(K_{N_K}-\lambda)^{-1}]g\|}{\|(K_{N_K}-\lambda)^{-1}g\|}&=\frac{\|[-(\tilde K_{N_K}-\lambda)^{-1}\Delta] (K_{N_K}-\lambda)^{-1}g\|}{\|(K_{N_K}-\lambda)^{-1}g\|}\\
&\leq \|(\tilde K_{N_K}-\lambda)^{-1}\Delta\| \leq \frac{\delta}{|\lambda|-1-\delta} = \mathcal{O}(\epsilon^{-1}\delta).
\end{align*}
\cref{thm:unitary_pointwise_convergence,thm:unitary_weak_convergence} show that an $m$th order kernel provides error rates of $\mathcal{O}(\epsilon^m\log(\epsilon^{-1}))$ as $\epsilon\rightarrow 0$. Therefore, we have an error that is asymptotically of size $\mathcal{O}(\epsilon^m\log(\epsilon^{-1}))$ from using a kernel and $\mathcal{O}(\epsilon^{-1}\delta)$ from using a perturbed version of $K_{N_K}$. We assume that ${N_K}$ is sufficiently large to render the truncation error negligible. By selecting $\epsilon=\delta^{1/(m+1)}$, we balance these two errors and have an overall error of $\mathcal{O}(\delta^{\frac{m}{m+1}}\log(\delta^{-1}))$. For large $m$, the convergence rate is close to the optimal rate of $\mathcal{O}(\delta)$. In~\cref{fig:CMV4} (right), we show the relative error $|\nu_{g}^\epsilon(0.2)-\rho_{g}(0.2)|/|\rho_{g}(0.2)|$ when using truncations where each entry is perturbed by independent normal random variables of mean $0$ and standard deviation $\delta$.\footnote{Strictly speaking, the operator norm of this perturbation need not be bounded by $\delta$, but the scaling argument for the choice $\epsilon=\delta^{1/(m+1)}$ remains the same.} We have chosen ${N_K}$ sufficiently large and $\epsilon\sim \delta^{1/(m+1)}$. The error is averaged over $50$ trials so that we show the expected convergence rates. 

\end{itemize} 

\begin{figure}[!tbp]
  \centering
  \begin{minipage}[b]{0.49\textwidth}
    \begin{overpic}[width=\textwidth,trim={0mm 0mm 0mm 0mm},clip]{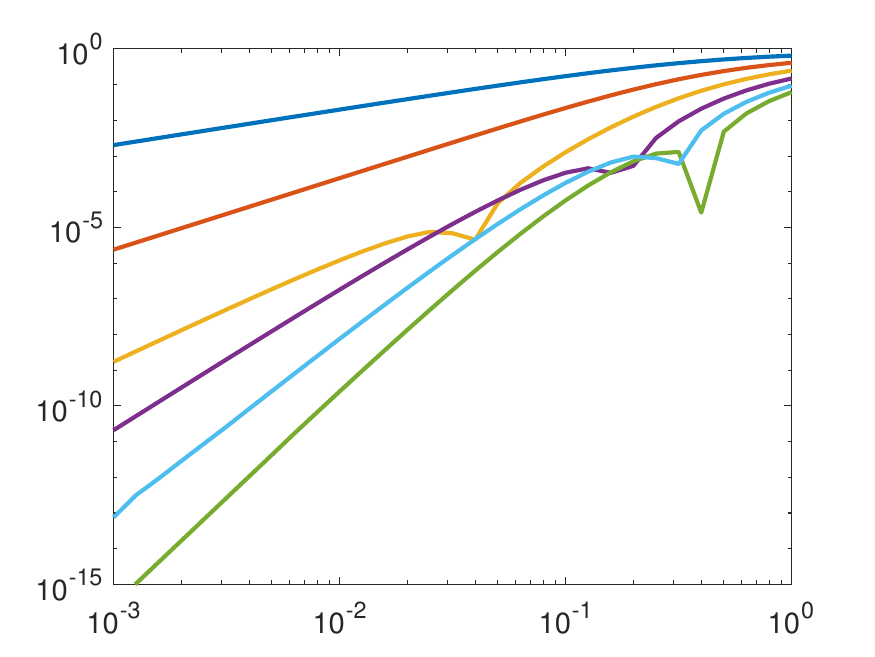}
		\put (20,73) {$|\rho_g(0.2)-[K_\epsilon*\nu_g](0.2)|/|\rho_g(0.2)|$}
     \put (50,-3) {$\epsilon$}
		\put (14,62) {\rotatebox{7} {$\displaystyle m=1$}}
	\put (14,49) {\rotatebox{18} {$\displaystyle m=2$}}
	\put (14,37) {\rotatebox{26} {$\displaystyle m=3$}}
	\put (14,28) {\rotatebox{34} {$\displaystyle m=4$}}
	\put (14,20) {\rotatebox{38} {$\displaystyle m=5$}}
	\put (20,10.5) {\rotatebox{40} {$\displaystyle m=6$}}
     \end{overpic}
  \end{minipage}
	\begin{minipage}[b]{0.49\textwidth}
    \begin{overpic}[width=\textwidth,trim={0mm 0mm 0mm 0mm},clip]{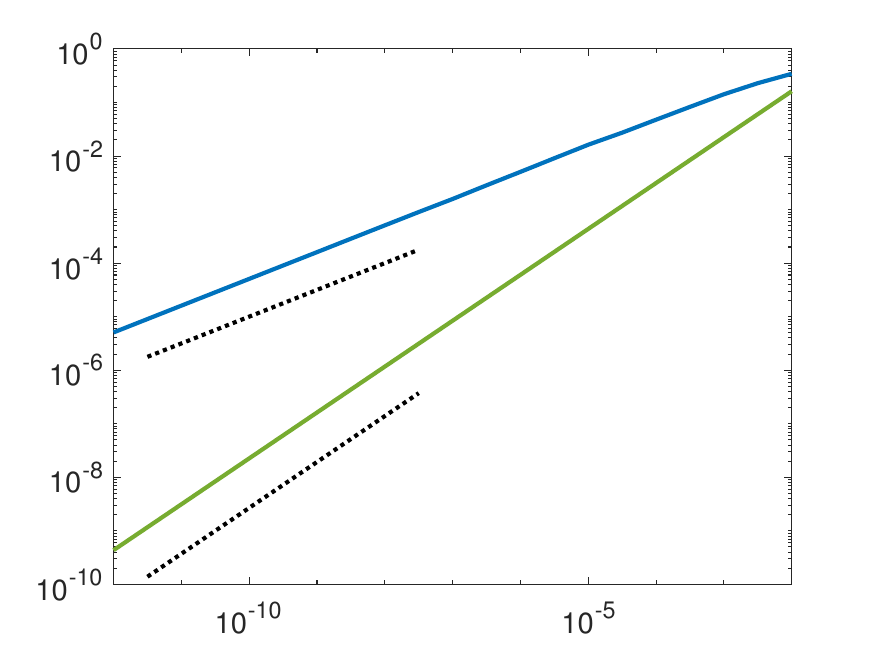}
		\put (15,73) {$\frac{|\rho_g(0.2)-[K_\epsilon*\nu_g](0.2)|}{|\rho_g(0.2)|}$ with noisy matrix}
     \put (50,-3) {$\delta$}
		\put (17,29) {\rotatebox{22} {$\mathcal{O}(\delta^{\frac{1}{2}}\log(\delta^{-1}))$}}
	\put (22,8) {\rotatebox{35} {$\mathcal{O}(\delta^{\frac{6}{7}}\log(\delta^{-1}))$}}
	\put (60,58) {\rotatebox{19} {$m = 1$}}
	\put (60,47) {\rotatebox{31} {$m = 6$}}
     \end{overpic}
  \end{minipage}
    \caption{Left: The pointwise relative error in smoothed measures of the operator in \eqref{U_mat_example} computed using the high-order kernels in \cref{sec:kern_cjo} for $1 \leq m \leq 6$. Right: The pointwise relative error in the computed smoothed measure using the perturbed $K_{N_K}$ and $\epsilon\sim \delta^{1/(m+1)}$.} 
\label{fig:CMV4}
\end{figure}

\subsection{An algorithm for evaluating a smoothed spectral measure}\label{sec:comput_res}
We now seek to evaluate $[K_\epsilon*\nu_g]$ at $\theta_1,\ldots,\theta_P\in[-\pi,\pi]_{\rm per}$, where $\{K_\epsilon\}$ is a rational kernel of the form \eqref{rat_kern_unitary}. This requires the evaluation of $(\mathcal{K}-\lambda)^{-1}$ for different values of $\lambda$ (see~\eqref{unit_res_comp}). We first describe how to evaluate $[K_\epsilon*\nu_g]$ at a single point $\theta_0$ before precomputing a factorization to derive an efficient scheme for evaluating at $\theta_1,\ldots,\theta_P$. 

\subsubsection{Evaluating a smoothed spectral measure at a single point}
To evaluate $[K_\epsilon*\nu_g]$ at a single point $\theta_0\in[-\pi,\pi]_{\rm per}$, we use the setup of ResDMD (see~\cref{extendEDMD}) so that we can obtain rigorous a posteriori error bounds on the computed resolvents, allowing us to adaptively select the dictionary size $N_K$ based on the smoothing parameter $\epsilon$. Since $\mathcal{K}$ is an isometry, we only need to compute $(\mathcal{K}-z)^{-1}$ for $|z|>1=\|\mathcal{K}\|$ and so we can achieve our goal with standard Galerkin truncations of $\mathcal{K}$.

\begin{theorem}
\label{res_square_truncate}
Suppose that $\mathcal{K}$ is an isometry and $\lambda\in\mathbb{C}$ with $|\lambda|>1$. Let $\psi_1,\psi_2,\ldots$ be a dictionary of observables and $V_{N_K} = {\rm span}\left\{\psi_1,\ldots,\psi_{N_K}\right\}$, so that $\cup_{N_K\in\mathbb{N}}V_{N_K}$ is dense in $L^2(\Omega,\omega)$. Then, for any sequence of observables $g_{N_K}\in{V}_{N_K}$ such that $\lim_{N_K\rightarrow\infty}g_{N_K}=g \in L^2(\Omega,\omega)$,
$$
\lim_{N_K\rightarrow\infty}\left(\mathcal{P}_{V_{N_K}}{\mathcal{K}}\mathcal{P}_{V_{N_K}}^*-\lambda I_{N_K}\right)^{-1}g_{N_K}= (\mathcal{K}-\lambda)^{-1}g,
$$
where $\mathcal{P}_{V_{N_K}}$ is the orthogonal projection operator onto $V_{N_K}$ and $I_{N_K}$ is the $N_K\times N_K$ identity matrix. 
\end{theorem}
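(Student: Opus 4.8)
The plan is to exploit the fact that we are working strictly away from the spectrum. Since $\mathcal{K}$ is an isometry we have $\|\mathcal{K}\|=1$, and the hypothesis $|\lambda|>1$ puts $\lambda$ in the resolvent set with the Neumann-series bound $\|(\mathcal{K}-\lambda)^{-1}\|\leq(|\lambda|-1)^{-1}$. The heart of the proof is then a finite-section version of the standard strong-resolvent-convergence statement. Write $P_N:=\mathcal{P}_{V_{N_K}}$ for the orthogonal projection of $L^2(\Omega,\omega)$ onto $V_{N_K}$, regarded as an operator on $L^2(\Omega,\omega)$, and set $B_N:=P_N\mathcal{K}P_N$. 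This is a bounded operator on $L^2(\Omega,\omega)$ that vanishes on $V_{N_K}^\perp$ and whose restriction to the subspace $V_{N_K}$ is exactly the matrix $\mathcal{P}_{V_{N_K}}\mathcal{K}\mathcal{P}_{V_{N_K}}^*$ under the identification $V_{N_K}\cong\mathbb{C}^{N_K}$; with this identification $(B_N-\lambda)^{-1}g_{N_K}$ is precisely the quantity on the left of the claimed limit.

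First I would establish a uniform resolvent bound. Because $\|B_N\|\leq\|\mathcal{K}\|=1<|\lambda|$, the operator $B_N-\lambda$ is invertible on all of $L^2(\Omega,\omega)$ (it is $-\lambda I$ on $V_{N_K}^\perp$ and a norm-$\leq 1$ perturbation of $-\lambda I$ on $V_{N_K}$), and a Neumann series gives $\|(B_N-\lambda)^{-1}\|\leq(|\lambda|-1)^{-1}$, uniformly in $N_K$. Next I would record strong convergence $B_N\to\mathcal{K}$: since the $V_{N_K}$ are nested and $\cup_{N_K}V_{N_K}$ is dense, $P_N\to I$ strongly, and the splitting $B_Nf-\mathcal{K}f=P_N(\mathcal{K}P_Nf-\mathcal{K}f)+(P_N\mathcal{K}f-\mathcal{K}f)$ together with $\|P_N\|\leq 1$ and continuity of $\mathcal{K}$ shows both terms tend to $0$.

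The strong convergence of the resolvents then follows from the identity $(B_N-\lambda)^{-1}-(\mathcal{K}-\lambda)^{-1}=(B_N-\lambda)^{-1}(\mathcal{K}-B_N)(\mathcal{K}-\lambda)^{-1}$: applied to any fixed $h\in L^2(\Omega,\omega)$, the vector $u=(\mathcal{K}-\lambda)^{-1}h$ is fixed, $(\mathcal{K}-B_N)u\to 0$, and $(B_N-\lambda)^{-1}$ is uniformly bounded, so $(B_N-\lambda)^{-1}h\to(\mathcal{K}-\lambda)^{-1}h$. Finally I would write $(B_N-\lambda)^{-1}g_{N_K}=(B_N-\lambda)^{-1}(g_{N_K}-g)+(B_N-\lambda)^{-1}g$; the first term has norm at most $(|\lambda|-1)^{-1}\|g_{N_K}-g\|\to 0$ by hypothesis, and the second converges to $(\mathcal{K}-\lambda)^{-1}g$ by the previous step, which yields the theorem.

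There is no serious obstacle here; the decisive structural input is simply $|\lambda|>\|\mathcal{K}\|=1$, which at once furnishes invertibility of every compression, the uniform resolvent bound, and (implicitly) the absence of spectral pollution — without it one would run into exactly the instabilities discussed around \cref{alg:res_EDMD}. The only points that warrant a little care are the identification of the finite matrix inverse $(\mathcal{P}_{V_{N_K}}\mathcal{K}\mathcal{P}_{V_{N_K}}^*-\lambda I_{N_K})^{-1}$ with the restriction of $(B_N-\lambda)^{-1}$ to $V_{N_K}$, and the verification that $P_N\to I$ strongly from the density of $\cup_{N_K}V_{N_K}$.
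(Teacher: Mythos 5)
Your proof is correct, and it takes a genuinely different route from the paper's. The paper expands both resolvents as Neumann series, bounds the tails uniformly in $N_K$ via $|\lambda| > \|\mathcal{K}\| = 1$, and then shows term-by-term that $(\mathcal{P}_{V_{N_K}}\mathcal{K}\mathcal{P}_{V_{N_K}}^*)^k$ converges strongly to $\mathcal{K}^k$, with the induction step resting on the fact that strong limits pass through products of uniformly bounded operators. You instead invoke the second resolvent identity $(B_N-\lambda)^{-1}-(\mathcal{K}-\lambda)^{-1}=(B_N-\lambda)^{-1}(\mathcal{K}-B_N)(\mathcal{K}-\lambda)^{-1}$, which packages the whole comparison into a single formula: the middle factor applied to the fixed vector $(\mathcal{K}-\lambda)^{-1}h$ goes to zero by strong convergence of $B_N$, and the outer factor is controlled by the uniform Neumann-series bound $\|(B_N-\lambda)^{-1}\|\le(|\lambda|-1)^{-1}$. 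Both arguments hinge on exactly the same structural input, $|\lambda| > \|\mathcal{K}\| = 1 \ge \|B_N\|$, which furnishes invertibility and uniform resolvent bounds for every compression simultaneously; the resolvent-identity route is a bit more compact and avoids the induction on powers, while the Neumann-series route is slightly more elementary in that it never writes down the resolvent identity. One thing you do spell out that the paper leaves implicit is the final step handling a varying sequence $g_{N_K}\to g$ rather than a fixed $g$, splitting off $(B_N-\lambda)^{-1}(g_{N_K}-g)$ and using the uniform bound; this is the right way to close that small gap and worth keeping.
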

\begin{proof}
Since $|\lambda|>\|{\mathcal{K}}\|$ and $\|\mathcal{P}_{{V}_{N_K}}{\mathcal{K}}\mathcal{P}_{{V}_{N_K}}^*\|\leq \|{\mathcal{K}}\|$, we have the following absolutely convergent Neumann series:
$$
\left(\mathcal{P}_{V_{N_K}}{\mathcal{K}}\mathcal{P}_{V_{N_K}}^*-\lambda I_{N_K}\right)^{-1}=-\frac{1}{\lambda}\sum_{k=0}^\infty \frac{1}{\lambda^k}(\mathcal{P}_{V_{N_K}}{\mathcal{K}}\mathcal{P}_{V_{N_K}}^*)^k,\quad (\mathcal{K}-\lambda)^{-1}=-\frac{1}{\lambda}\sum_{k=0}^\infty \frac{1}{\lambda^k}{\mathcal{K}}^k.
$$
Using $|\lambda|>\|{\mathcal{K}}\|$, we can bound the tails of these series uniformly in $N_K$ so we just need to show that $(\mathcal{P}_{V_{N_K}}{\mathcal{K}}\mathcal{P}_{V_{N_K}}^*)^k$ converges strongly to ${\mathcal{K}}^k$ for $k\geq 0$. This clearly holds for $k=0$ and $k=1$. For $k>1$, we apply induction using the fact that if a sequence of matrices $T_{N_K}$ and $S_{N_K}$ converge strongly to $\mathcal{T}$ and $\mathcal{S}$, respectively, then the product $T_{N_K}S_{N_K}$ converges strongly to the composition $\mathcal{T}\circ \mathcal{S}$.
\end{proof}
We now apply~\cref{res_square_truncate} to evaluate $[K_\epsilon*\nu_g]$ at $\theta_0$. Recall from~\eqref{unit_res_comp} that there are two types of inner products to compute: (i) $\langle g,(\mathcal{K}-\lambda)^{-1}g \rangle$ and (ii) $\langle (\mathcal{K}-\lambda)^{-1}g,\mathcal{K}^*g \rangle$ for some observable $g$. We form a sequence of observables $g_{N_K}\in V_{N_K}$ by setting $g_{N_K} = \mathcal{P}_{V_{N_K}}g$, which can be approximated from the snapshot data as
\begin{equation}
\label{f_fN_approx}
\tilde{g}_{N_K}=\sum_{j=1}^{N_K}\pmb{a}_j\psi_j, \qquad \pmb{a}=(\Psi_X^*W\Psi_X)^{-1}\Psi_X^*W\begin{bmatrix}g(\pmb{x}^{(1)})\\\vdots\\g(\pmb{x}^{(M)})\end{bmatrix}\in\mathbb{C}^{N_K},
\end{equation}
where $\Psi_X$ and $W$ are given in~\cref{sec:basic_EDMD}.  Under suitable conditions, such as those discussed in~\cref{sec:matrix_conv_galerkin}, $\lim_{M\rightarrow\infty}\tilde{g}_{N_K}=g_{N_K}$. Since
$$
\left(\mathcal{P}_{V_{N_K}}{\mathcal{K}}\mathcal{P}_{V_{N_K}}^*-\lambda I_{N_K}\right)^{-1}{g}_{N_K}=\lim_{M\rightarrow\infty}\sum_{j=1}^{N_K}\left[(\Psi_X^*W\Psi_Y-\lambda \Psi_X^*W\Psi_X)^{-1}\Psi_X^*W\Psi_X \pmb{a}\right]_j\psi_j,
$$
it follows that our two types of inner products satisfy
\begin{align}
&\left\langle \!{g}_{N_K},\left(\mathcal{P}_{V_{N_K}}{\mathcal{K}}\mathcal{P}_{V_{N_K}}^*-\lambda I_{N_K}\right)^{-1}\!\!\!{g}_{N_K}\!\right\rangle=\lim_{M\rightarrow\infty}\overline{\pmb{a}^*\Psi_X^*W\Psi_X(\Psi_X^*W\Psi_Y-\lambda \Psi_X^*W\Psi_X)^{-1}\Psi_X^*W\Psi_X\pmb{a}},\label{eq:ra_kern_inner_prod_type1}\\
&\left\langle\! \left(\mathcal{P}_{V_{N_K}}{\mathcal{K}}\mathcal{P}_{V_{N_K}}^*-\lambda I_{N_K}\right)^{-1}\!\!\!{g}_{N_K},\mathcal{K}^* {g}_{N_K}\!\right\rangle=\lim_{M\rightarrow\infty}\pmb{a}^*\Psi_X^*W\Psi_Y(\Psi_X^*W\Psi_Y-\lambda\Psi_X^*W\Psi_X)^{-1}\Psi_X^*W\Psi_X\pmb{a}.\label{eq:ra_kern_inner_prod_type2}
\end{align}
For a given value of $M$, the right-hand side of \eqref{eq:ra_kern_inner_prod_type1} and \eqref{eq:ra_kern_inner_prod_type2} can then be substituted into~\eqref{unit_res_comp} to evaluate $[K_\epsilon*\nu_g](\theta_0)$. Often we can estimate the error between these computed inner products and the limiting value as $M\rightarrow\infty$ by comparing the computations for different $M$ or by using a priori knowledge of the convergence rates. In \cref{ap_sec:inner_prod_err_control}, we show how $N_K$ can be adaptively chosen (by approximating the error in the large data limit and adding observables to the dictionary if required) so that we approximate the inner products $\langle g,(\mathcal{K}-\lambda)^{-1}g \rangle$ and $\langle (\mathcal{K}-\lambda)^{-1}g,\mathcal{K}^*g \rangle$ to a desired accuracy. Thus, for a given smoothing parameter $\epsilon$, we have a principled way of selecting (a) the sample size $M$ and (b) the truncation size $N_K$ to ensure that our approximations of the inner products in~\eqref{unit_res_comp} are accurate. In general, the cost of point evaluation of $[K_\epsilon*\nu_g]$ using these formulas is $\mathcal{O}(N_K^3)$ operations as it requires $m$ solutions of $N_K\times N_K$ dense linear systems.

\subsubsection{Evaluating a smoothed spectral measure at multiple points}
To evaluate $[K_\epsilon*\nu_g]$ at $\theta_1,\ldots,\theta_P\in[-\pi,\pi]_{\rm per}$, one can be more computationally efficient than independently computing each of the inner products in \eqref{eq:ra_kern_inner_prod_type1} and \eqref{eq:ra_kern_inner_prod_type2} for each $\theta_k$ for $1\leq k\leq P$. 
Instead, one can compute a generalized Schur decomposition and to speed up the evaluation. Let $\Psi_X^*W\Psi_Y=Q S Z^*$ and $\Psi_X^*W\Psi_X=Q T Z^*$ be a generalized Schur decomposition, where $Q$ and $Z$ are unitary matrices, and $S$ and $T$ are upper-triangular matrices. With this decomposition in hand,
\begin{align*}
{\pmb{a}^*\Psi_X^*W\Psi_X(\Psi_X^*W\Psi_Y-\lambda \Psi_X^*W\Psi_X)^{-1}\Psi_X^*W\Psi_X\pmb{a}}&={\pmb{a}^*QT(S-\lambda T)^{-1}TZ^*\pmb{a}},\\
\pmb{a}^*\Psi_X^*W\Psi_Y(\Psi_X^*W\Psi_Y-\lambda\Psi_X^*W\Psi_X)^{-1}\Psi_X^*W\Psi_X\pmb{a}&=\pmb{a}^*QS(S-\lambda T)^{-1}TZ^*\pmb{a}.
\end{align*}
Now, after computing the generalized Schur decomposition costing $\mathcal{O}(N_K^3)$ operations, each evaluation requires solving $N_K\times N_K$ upper-triangular linear systems in $\mathcal{O}(N_K^2)$ operations. Additional computational savings can be realized if one is willing to do each evaluation at $\theta_1,\ldots,\theta_P$ in parallel. We summarize the evaluation scheme in~\cref{alg:spec_meas_rat}.

\begin{algorithm}[t]
\textbf{Input:} Snapshot data $\{\pmb{x}^{(j)}\}_{j=1}^{M},\{\pmb{y}^{(j)}\}_{j=1}^{M}$ (such that $\pmb{y}^{(j)}=F(\pmb{x}^{(j)})$), quadrature weights $\{w_j\}_{j=1}^{M}$, a dictionary of observables $\{\psi_j\}_{j=1}^{N_K}$, $m\in\mathbb{N}$, smoothing parameter $0<\epsilon<1$ (accuracy goal is $\epsilon^m$), distinct points $\{z_j\}_{j=1}^m\!\subset\!\mathbb{C}$ with ${\rm Re}(z_j)>0$ (recommended choice is \eqref{gen_poisson}), and evaluation points $\{\theta_k\}_{k=1}^P\subset[-\pi,\pi]_{\rm per}$.\\
\vspace{-4mm}
\begin{algorithmic}[1]
\State Solve~\eqref{eqn:vandermonde_condition900} and~\eqref{eqn:vandermonde_condition87} for $c_1(\epsilon),\dots,c_m(\epsilon)\in\mathbb{C}$ and $d_1,\dots,d_m\in\mathbb{C}$, respectively. 
\State Compute $\Psi_X^*W\Psi_X$ and $\Psi_X^*W\Psi_Y$, where $\Psi_X$ and $\Psi_Y$ are given in~\eqref{psidef}.
\State Compute a generalized Schur decomposition of $\Psi_X^*W\Psi_Y$ and $\Psi_X^*W\Psi_X$, i.e., $\Psi_X^*W\Psi_Y =Q S Z^*$ and $\Psi_X^*W\Psi_X =Q T Z^*$, where $Q,Z$ are unitary and $S,T$ are upper triangular.
\State Compute $\pmb{a}$ in~\eqref{f_fN_approx} and $v_1=TZ^*\pmb{a}$, $v_2= T^*Q^*\pmb{a}$, and $v_3=S^*Q^*\pmb{a}$.
\For{$k=1,\ldots,P$}
        \State Compute $I_j=(S-e^{i\theta_k}(1+\epsilon z_j) T)^{-1}v_1$ for $1\leq j\leq m$.
	\State Compute $\nu_g^\epsilon(\theta_k)=\frac{-1}{2\pi}\sum_{j=1}^{m}\mathrm{Re}\!\left[c_j(\epsilon)e^{-i\theta_k}(1+\epsilon\overline{z_j})(I_j^*v_2)+d_j (v_3^*I_j)\right].$
\EndFor
\end{algorithmic} \textbf{Output:} Values of the approximate spectral measure, i.e., $\{\nu_g^\epsilon(\theta_k)\}_{k=1}^P$.
\caption{A computational framework for evaluating an approximate spectral measure with respect to $g\in L^2(\Omega,\omega)$ at $\{\theta_k\}_{k=1}^P\subset[-\pi,\pi]_{\rm per}$ of an isometry $\mathcal{K}$ using snapshot data.}\label{alg:spec_meas_rat}
\end{algorithm}

\subsection{Numerical example: The double pendulum}\label{sec:double_pend_exam}

\begin{figure}[!tbp]
  \centering
	\begin{minipage}[b]{0.4\textwidth}
    \begin{overpic}[width=\textwidth,trim={0mm -40mm 0mm 0mm},clip]{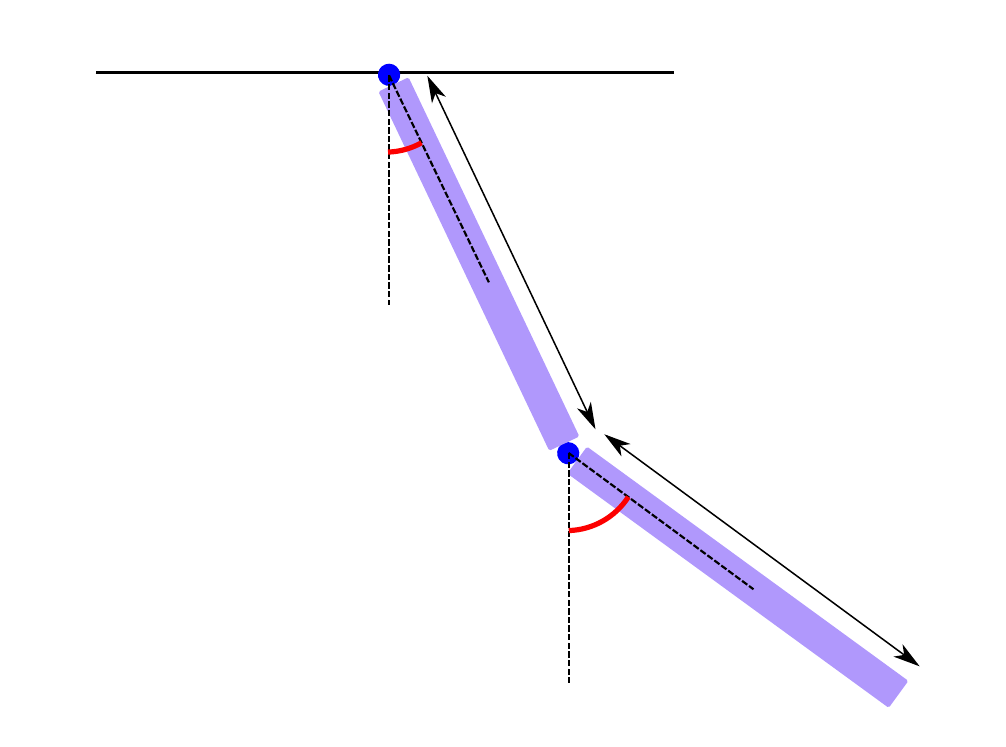}
		\put (39,75) {$\theta_1$}
		\put (58,40) {$\theta_2$}
     \put (52,75) {$\ell$}
		\put (77,46) {$\ell$}
     \end{overpic}
  \end{minipage}
	\hfill
	\begin{minipage}[b]{0.59\textwidth}
  \begin{minipage}[b]{0.49\textwidth}
    \begin{overpic}[width=\textwidth,trim={0mm 0mm 0mm 0mm},clip]{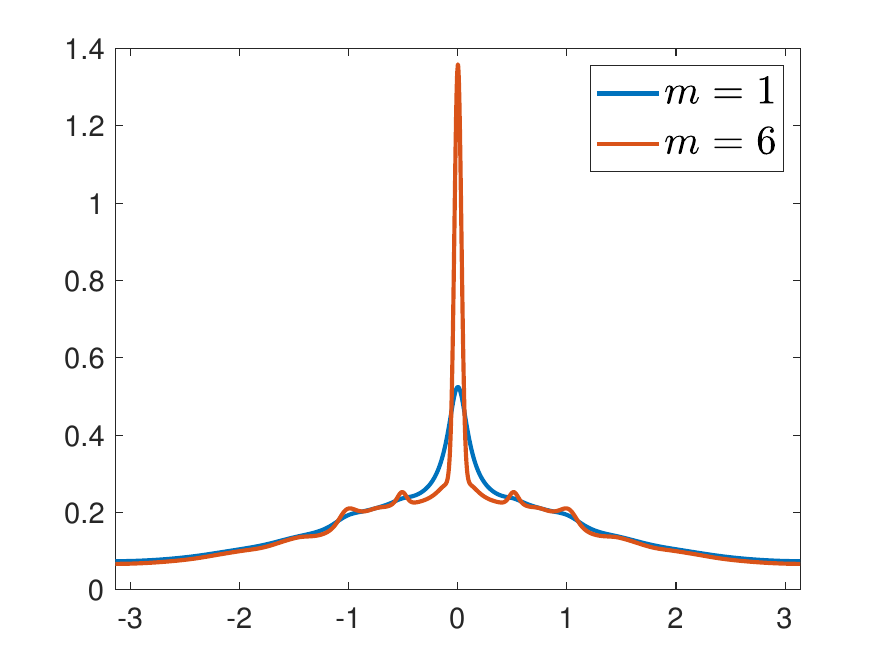}
		\put (14,73) {$g=C_1 e^{ix_1} e^{-(x_3^2+x_4^2)/{2}}$}
     \put (50,-3) {$\theta$}
     \end{overpic}
  \end{minipage}\hfill
	\begin{minipage}[b]{0.49\textwidth}
    \begin{overpic}[width=\textwidth,trim={0mm 0mm 0mm 0mm},clip]{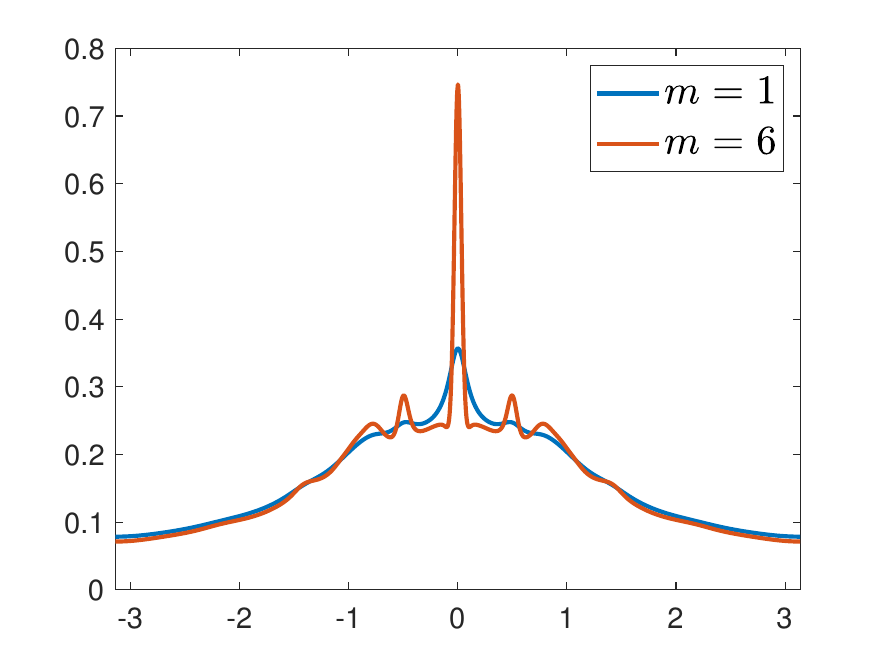}
		\put (14,73) {$g=C_2 e^{ix_2} e^{-(x_3^2+x_4^2)/{2}}$}
     \put (50,-3) {$\theta$}
     \end{overpic}
  \end{minipage}\\
	\\
	\begin{minipage}[b]{0.49\textwidth}
    \begin{overpic}[width=\textwidth,trim={0mm 0mm 0mm 0mm},clip]{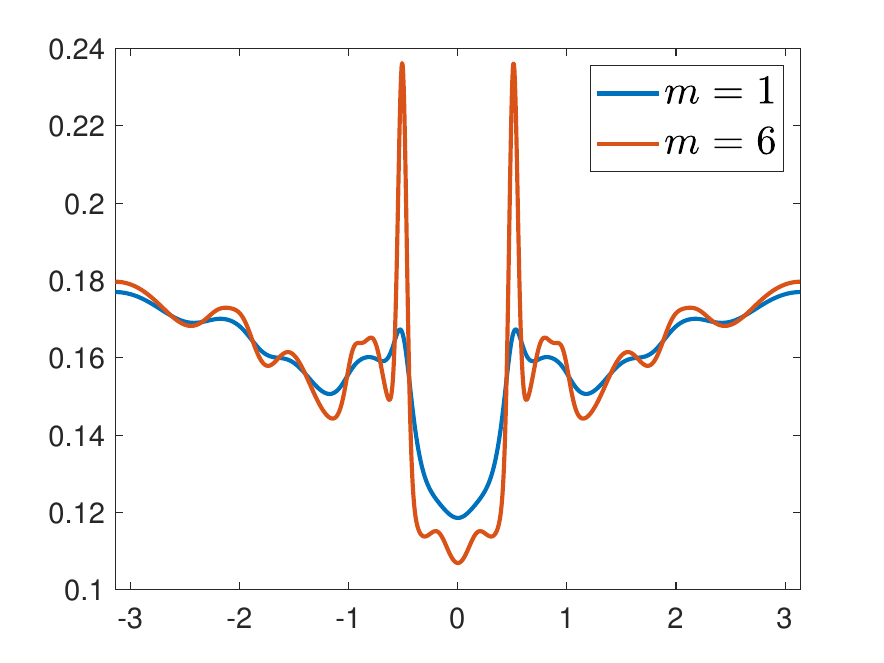}
		\put (18,72) {$g=C_3  x_3  e^{-(x_3^2+x_4^2)/{2}}$}
     \put (50,-3) {$\theta$}
     \end{overpic}
  \end{minipage}\hfill
	\begin{minipage}[b]{0.49\textwidth}
    \begin{overpic}[width=\textwidth,trim={0mm 0mm 0mm 0mm},clip]{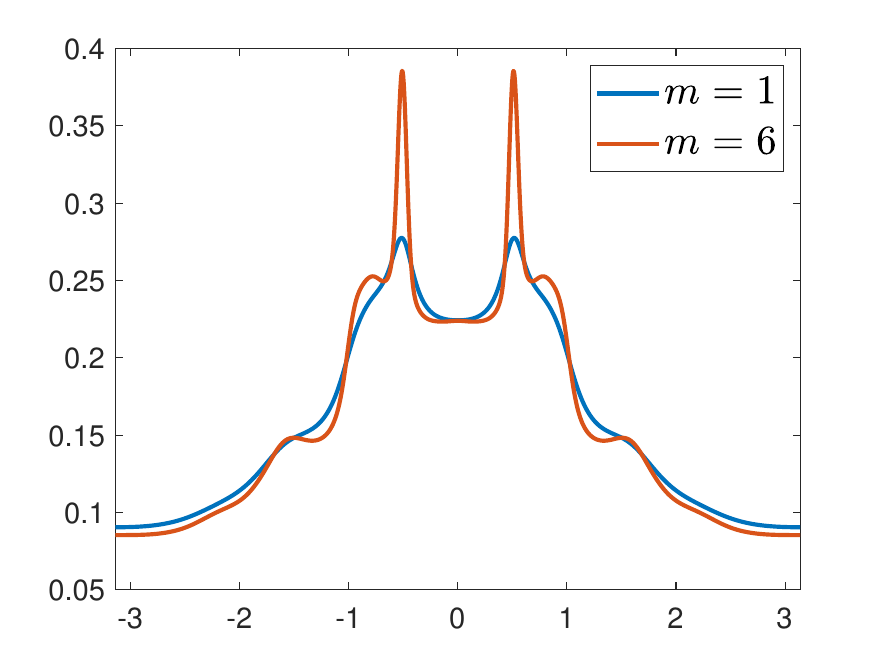}
		\put (18,72) {$g=C_4  x_4 e^{-(x_3^2+x_4^2)/{2}}$}
     \put (50,-3) {$\theta$}
     \end{overpic}
  \end{minipage}
	\end{minipage}
    \caption{Left: Physical setup of the double pendulum. Right: Computed spectral measures for four observables $g$. The constants $C_j$ are normalization factors so that $\|g\|=1$.} 
\label{fig:double_pendulum2}
\end{figure}

The double pendulum is shown in \cref{fig:double_pendulum2} (left) and is a physical system that exhibits rich dynamic behavior. It is chaotic and, after a suitable scaling of physical constants, governed by the following system of ordinary differential equations~\cite{levien1993double}:
\[
\begin{aligned}
&\dot{\theta_1}=\frac{2p_1-3p_2\cos(\theta_1-\theta_2)}{16-9\cos^2(\theta_1-\theta_2)},  \qquad \qquad \quad \dot{\theta_2}=\frac{8p_2-3p_1\cos(\theta_1-\theta_2)}{16-9\cos^2(\theta_1-\theta_2)},\\
&\dot{p_1}=-3\left(\dot{\theta_1}\dot{\theta_2}\sin(\theta_1-\theta_2)+\sin(\theta_1)\right), \quad 
\dot{p_2}=-3\left(-\dot{\theta_1}\dot{\theta_2}\sin(\theta_1-\theta_2)+\frac{1}{3}\sin(\theta_2)\right),
\end{aligned}
\]
where $p_1=8\dot{\theta_1}+3\dot{\theta_2}\cos(\theta_1-\theta_2)$ and $p_2=2\dot{\theta_2}+3\dot{\theta_1}\cos(\theta_1-\theta_2)$. The state is $\pmb{x}=(\theta_1,\theta_2,p_1,p_2)$ in $\Omega=[-\pi,\pi]_{\mathrm{per}}^2\times \mathbb{R}^2$. We consider the Koopman operator corresponding to discrete time-steps with $\Delta_t=1$. The system is Hamiltonian; hence, $\mathcal{K}$ is unitary on $L^2(\Omega,\omega)$ with the usual Lebesgue measure.

To compute the resolvent, we use \cref{alg:spec_meas_rat} with a dictionary consisting of tensor-products of the Fourier basis in $x_1$ and $x_2$, and Hermite functions in $x_3$ and $x_4$, together with an ordering corresponding to a hyperbolic cross approximation. We simulate the dynamics using the \texttt{ode45} command in MATLAB for a single time-step on an equispaced grid and use the trapezoidal rule. The chaotic nature of the dynamical system is not a problem as we only collect snapshot data from trajectories with one time step. Following the procedure outlined in \cref{sec:comput_res}, the maximum value of $M$ used in this example is 1,562,500, and the maximum value of $N_K$ used is $2528$. 

\cref{fig:double_pendulum2} (right) shows spectral measures for four observables computed using the $6$th order kernel in \cref{rat_unit_kern_table} with $\epsilon=0.1$. The results are verified against the $6$th order kernel with smaller $\epsilon$ and a larger $N_K$. We also show the results for the Poisson kernel ($m=1$), which produces an over-smoothed estimate of $\nu_g$. The Fourier observables (top row) display a pronounced peak at $\theta=0$ and relatively flat spectral measures away from zero. In contrast, the Hermite observables (bottom row) display no peak at $\theta=0$ and a broader spectral measure away from zero. These observables have a richer interplay with the dynamics, and their spectral measures are more spread out over the spectrum of the Koopman operator.

\section{High-dimensional dynamical systems}\label{sec:LARGEDIM}

In the numerical examples of \cref{sec:RES_DMD,res_kern_fin}, the dimension of the state-space of~\eqref{eq:DynamicalSystem} is modest. However, it can be very difficult to explicitly store a dictionary for larger dimensions that arise in applications such as fluid dynamics, molecular dynamics, and climate forecasting. DMD is a popular approach to studying Koopman operators associated with high-dimensional dynamics. It can yield accurate results for periodic or quasiperiodic systems but can often not adequately capture relevant nonlinear dynamics~\cite{williams2015data,baddoo2021kernel,brunton2016koopman} as it implicitly seeks to fit linear dynamics. Moreover, a rigorous connection with Galerkin approximations of Koopman operators only sometimes holds~\cite {tu2014dynamic}.

When $d$ is large, we are naturally in the setting of $N_K\gg M$ as a sufficiently rich dictionary must be selected. In other words, the dimension of the dictionary subspace is usually much larger than the snapshot data when $d$ is large. How can we approximate spectral properties of $\mathcal{K}$ when $d$ is large and $N_K\gg M$? This section extends our algorithms to high dimensions and provides applications to molecular dynamics and the study of noise leakage of turbulent flow past a cascade of airfoils.

\subsection{Adapting our algorithms to use data-driven dictionaries}
Our main idea to tackle large $d$ is to construct a data-driven dictionary using the kernel trick. We use the error control aspect of ResDMD to verify the learned dictionary aposteri.

\subsubsection{Kernelized EDMD}
A naive construction of the matrix $\Psi_X^*W\Psi_Y$ in \cref{sec:basic_EDMD} requires $\mathcal{O}(N_K^2M)$ operations, which becomes impractical when $N_K$ is large. Kernelized EDMD~\cite{williams2015kernel} aims to make EDMD practical for large $N_K$. The idea is to compute a much smaller matrix $\widetilde{K}_{\mathrm{EDMD}}$ that has a subset of the same eigenvalues as $K_{\mathrm{EDMD}}$.

\begin{proposition}[Proposition 1 of~\cite{williams2015kernel} with additional weight matrix]
\label{prop_kern_EDMD}
Let $\sqrt{W}\Psi_X = U\Sigma V^*$ be a SVD, where $U\in\mathbb{C}^{M\times M}$ is a unitary matrix, $\Sigma\in\mathbb{C}^{M\times M}$ is a diagonal matrix with nonincreasing nonnegative entries, and $V\in \mathbb{C}^{N_K\times M}$ is an isometry. Define the $M\times M$ matrix
$$
\widetilde{K}_{\mathrm{EDMD}}=(\Sigma^\dagger U^*)(\sqrt{W}\Psi_Y\Psi_X^*\sqrt{W})(U\Sigma^\dagger),
$$
where `$\dagger$' is the pseudoinverse.  Then, for some $\lambda\neq 0$ and $\tilde v\in \mathbb{C}^{M}$, $(\lambda,\tilde{v})$ is an eigenvalue-eigenvector pair of $\widetilde{K}_{\mathrm{EDMD}}$ if and only if $(\lambda,V\tilde{v})$ is an eigenvalue-eigenvector pair of $K_{\mathrm{EDMD}}$.
\end{proposition}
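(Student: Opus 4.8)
The plan is to reduce everything to the defining relation $K_{\mathrm{EDMD}}=(\Psi_0^*W\Psi_0)^{\dagger}(\Psi_0^*W\Psi_1)$ and the SVD $\sqrt{W}\Psi_0 = U\Sigma V^*$, and then chase eigenvector identities between the $N_K$-dimensional space and the $M_1$-dimensional reduced space. First I would record the two Gram-type matrices in SVD coordinates: since $\Psi_0^*W\Psi_0 = (\sqrt W\Psi_0)^*(\sqrt W\Psi_0) = V\Sigma^2 V^*$ and $\Psi_0^*W\Psi_1 = (\sqrt W\Psi_0)^*(\sqrt W\Psi_1) = V\Sigma U^*(\sqrt W\Psi_1)$, we get $(\Psi_0^*W\Psi_0)^{\dagger} = V\Sigma^{-2}V^*$ on the range of $V$ (here $\Sigma$ invertible after discarding zero singular values, which is harmless since we assume WLOG $\Psi_0^*W\Psi_0$ invertible, or one works on $\mathrm{range}(V)$). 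Hence $K_{\mathrm{EDMD}} = V\Sigma^{-2}V^* V\Sigma U^*(\sqrt W\Psi_1) = V\Sigma^{-1}U^*(\sqrt W\Psi_1)$. On the other hand, $\widetilde K_{\mathrm{EDMD}} = \Sigma^{\dagger}U^*(\sqrt W\Psi_1)\Psi_0^*\sqrt W\, U\Sigma^{\dagger} = \Sigma^{-1}U^*(\sqrt W\Psi_1)(\sqrt W\Psi_0)^* U\Sigma^{-1} = \Sigma^{-1}U^*(\sqrt W\Psi_1)V\Sigma V^* U\Sigma^{-1}$; using $V^*U$ need not simplify, so instead keep $\widetilde K_{\mathrm{EDMD}} = \Sigma^{-1}U^*(\sqrt W\Psi_1) V$.

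Now the forward direction: suppose $\widetilde K_{\mathrm{EDMD}}\tilde v = \lambda\tilde v$ with $\lambda\neq 0$. Set $\pmb g = V\tilde v$. Then $K_{\mathrm{EDMD}}\pmb g = V\Sigma^{-1}U^*(\sqrt W\Psi_1)V\tilde v = V(\widetilde K_{\mathrm{EDMD}}\tilde v) = \lambda V\tilde v = \lambda\pmb g$, and $\pmb g\neq 0$ because $V$ is an isometry and $\tilde v\neq 0$. This gives the ``only if'' half immediately. For the converse: suppose $K_{\mathrm{EDMD}}\pmb g = \lambda\pmb g$ with $\lambda\neq 0$. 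Since $K_{\mathrm{EDMD}} = V\Sigma^{-1}U^*(\sqrt W\Psi_1)$ has range contained in $\mathrm{range}(V)$, we have $\pmb g = \lambda^{-1}K_{\mathrm{EDMD}}\pmb g \in \mathrm{range}(V)$, so $\pmb g = V\tilde v$ for $\tilde v = V^*\pmb g$ (unique since $V^*V = I_{M_1}$), and $\tilde v\neq 0$. Then $V\widetilde K_{\mathrm{EDMD}}\tilde v = V\Sigma^{-1}U^*(\sqrt W\Psi_1)V\tilde v = K_{\mathrm{EDMD}}V\tilde v = \lambda V\tilde v$; applying $V^*$ and using $V^*V = I_{M_1}$ yields $\widetilde K_{\mathrm{EDMD}}\tilde v = \lambda\tilde v$. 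This closes the equivalence.

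The step I expect to require the most care is the bookkeeping around pseudoinverses and the zero singular values of $\Sigma$: one must be precise that $(\Psi_0^*W\Psi_0)^{\dagger} = V\Sigma^{\dagger}\Sigma^{\dagger}V^*$ and that the identity $V\Sigma^{-2}V^*\cdot V\Sigma U^* = V\Sigma^{-1}U^*$ is valid on the relevant subspace — i.e.\ $\Sigma^{\dagger}\Sigma^{\dagger}\Sigma = \Sigma^{\dagger}$ — and that the reduction from $K_{\mathrm{EDMD}}$ to $\widetilde K_{\mathrm{EDMD}}$ only loses the eigenvalue $0$ and the portion of the eigenspace transverse to $\mathrm{range}(V)$, which is exactly why the hypothesis $\lambda\neq 0$ is needed. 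A secondary subtlety is the role of the weight matrix $W$: because $W$ is diagonal with positive entries (quadrature weights), $\sqrt W$ is well defined and the factorization $\Psi_0^*W\Psi_0 = (\sqrt W\Psi_0)^*(\sqrt W\Psi_0)$ holds verbatim, so the proof of~\cite[Proposition 1]{williams2015kernel} goes through after this substitution with no structural change; I would remark that this is why the statement is labelled ``with additional weight matrix.'' Everything else is linear algebra that can be carried out symbolically.
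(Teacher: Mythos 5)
Your proof is correct. A small point of context: the paper does not supply its own proof of this proposition --- it is cited verbatim (with the weight matrix inserted) from Williams et al.\ --- so there is no in-paper argument to compare against; your argument is the natural one. Concretely, the engine of your proof is the intertwining identity $K_{\mathrm{EDMD}}\,V = V\,\widetilde{K}_{\mathrm{EDMD}}$, equivalently $\widetilde{K}_{\mathrm{EDMD}} = V^*K_{\mathrm{EDMD}}V$, which drops out once both matrices are rewritten via the SVD as $K_{\mathrm{EDMD}} = V\Sigma^{\dagger}U^*(\sqrt{W}\Psi_1)$ and $\widetilde{K}_{\mathrm{EDMD}} = \Sigma^{\dagger}U^*(\sqrt{W}\Psi_1)V\,\Sigma\Sigma^{\dagger}$; under the paper's WLOG assumption that $\Psi_0^*W\Psi_0$ (hence $\Sigma$) is invertible, the trailing $\Sigma\Sigma^{\dagger}$ is the identity and your simplifications go through exactly. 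You correctly isolate the two places where care is needed: $\Sigma^{\dagger}\Sigma^{\dagger}\Sigma=\Sigma^{\dagger}$ when collapsing the pseudoinverse factors, and the use of $\lambda\neq 0$ to conclude $\pmb g = \lambda^{-1}K_{\mathrm{EDMD}}\pmb g\in\mathrm{range}(V)$ in the converse direction, after which applying $V^*$ and $V^*V=I_{M_1}$ closes the equivalence. Your observation that the weight matrix enters only through $\sqrt{W}$ and that $\Psi_0^*W\Psi_0 = (\sqrt W\Psi_0)^*(\sqrt W\Psi_0)$ is precisely the reason the statement is a routine extension of~\cite[Prop.~1]{williams2015kernel}, as the paper's label indicates.
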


Suitable matrices $U$ and $\Sigma$ in \cref{prop_kern_EDMD} can be recovered from the eigenvalue decomposition $\sqrt{W}\Psi_X\Psi_X^*\sqrt{W}=U\Sigma^2U^*$. 
Moreover, both matrices $\sqrt{W}\Psi_X\Psi_X^*\sqrt{W}\in\mathbb{C}^{M\times M}$ and $\sqrt{W}\Psi_Y\Psi_X^*\sqrt{W}\in\mathbb{C}^{M\times M}$ can be computed using inner products:
	\begin{equation}
	\label{kern_trick1}
	[\sqrt{W}\Psi_X\Psi_X^*\sqrt{W}]_{jk}=\sqrt{w_j}\Psi(\pmb{x}^{(j)})\Psi(\pmb{x}^{(k)})^*\sqrt{w_k},\quad [\sqrt{W}\Psi_Y\Psi_X^*\sqrt{W}]_{jk}=\sqrt{w_j}\Psi(\pmb{y}^{(j)})\Psi(\pmb{x}^{(k)})^*\sqrt{w_k},
	\end{equation}
where we recall that $\Psi(\pmb{x})$ is a row vector of the dictionary evaluated at $\pmb{x}$.\footnote{This is the transpose of the convention in~\cite{williams2015kernel}.} Kernelized EDMD applies the kernel trick to compute the inner products in \eqref{kern_trick1} in an implicitly defined reproducing Hilbert space $\mathcal{H}$ with inner product $\langle\cdot,\cdot\rangle_{\mathcal{H}}$~\cite{scholkopf2001kernel}. A positive-definite kernel function $\mathcal{S}:\Omega\times \Omega\rightarrow\mathbb{R}$ induces a feature map $\varphi:\mathbb{R}^d\rightarrow\mathcal{H}$ so that $\langle\varphi(\pmb{x}),\varphi(\pmb{x}')\rangle_{\mathcal{H}}=\mathcal{S}(\pmb{x},\pmb{x}')$. This leads to a choice of dictionary (or reweighted feature map) $\Psi(\pmb{x})$ so that $\Psi(\pmb{x})\Psi(\pmb{x}')^*=\langle\varphi(\pmb{x}),\varphi(\pmb{x}')\rangle_{\mathcal{H}}=\mathcal{S}(\pmb{x},\pmb{x}')$. Often $\mathcal{S}$ can be evaluated in $\mathcal{O}(d)$ operations so that the matrices $\sqrt{W}\Psi_X\Psi_X^*\sqrt{W}$ and $\sqrt{W}\Psi_Y\Psi_X^*\sqrt{W}$ can be computed in $\mathcal{O}(dM^2)$ operations. $\widetilde{K}_{\mathrm{EDMD}}$ can thus be constructed in $\mathcal{O}(dM^2)$ operations, a considerable saving, with a significant reduction in memory consumption.

As a naive first attempt at extending our algorithms this way, one could consider \cref{alg:mod_EDMD} with kernelized EDMD. The residual for a vector $\tilde{v}\in\mathbb{C}^{M}$ becomes
\begin{equation}
\label{biagbfvuibi}
\frac{\tilde{v}^*[(\Psi_YV)^*W(\Psi_YV)-\lambda (\Psi_YV)^*W(\Psi_XV)-\overline{\lambda}(\Psi_XV)^*W(\Psi_YV)+|\lambda|^2(\Psi_XV)^*W(\Psi_XV)]\tilde{v}}{\tilde{v}^*(\Psi_XV)^*W(\Psi_XV)\tilde{v}^*}.
\end{equation}
Unfortunately, the following simple proposition shows that the residual in \eqref{biagbfvuibi} always vanishes. This should be interpreted as \textit{over-fitting of the snapshot data} when $N_K\geq M$.

\begin{proposition}
Suppose that $N_K\geq M$ and $\sqrt{W}\Psi_X$ has rank $M$ (independent rows). For any eigenvalue-eigenvector pair $\lambda$ and $\tilde v\in \mathbb{C}^{M}$ of $\widetilde{K}_{\mathrm{EDMD}}$, the residual in \eqref{biagbfvuibi} vanishes.
\end{proposition}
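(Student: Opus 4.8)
The plan is to rewrite the numerator of the residual in \eqref{biagbfvuibi} as a squared Euclidean norm and then show that this norm annihilates any eigenvector of $\widetilde{K}_{\mathrm{EDMD}}$. Set $P = \sqrt{W}\,\Psi_1 V$ and $Q = \sqrt{W}\,\Psi_0 V$, both $M_1\times M_1$ matrices. Since $(\Psi_1 V)^* W (\Psi_1 V) = P^*P$, $(\Psi_1 V)^* W (\Psi_0 V) = P^*Q$, and so on, the bracketed matrix in the numerator of \eqref{biagbfvuibi} equals $P^*P - \lambda P^*Q - \overline{\lambda} Q^*P + |\lambda|^2 Q^*Q = (P-\lambda Q)^*(P-\lambda Q)$. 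Hence the numerator equals $\|(P-\lambda Q)\tilde v\|_{\ell^2}^2$ and the denominator equals $\|Q\tilde v\|_{\ell^2}^2$, so it suffices to prove that $P\tilde v = \lambda Q\tilde v$ whenever $\widetilde{K}_{\mathrm{EDMD}}\tilde v = \lambda\tilde v$, together with $Q\tilde v\neq 0$.

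First I would use the SVD $\sqrt{W}\,\Psi_0 = U\Sigma V^*$ from \cref{prop_kern_EDMD}. Because $\sqrt{W}\,\Psi_0$ has rank $M_1$, the diagonal matrix $\Sigma$ is invertible, so $\Sigma^\dagger = \Sigma^{-1}$. Using that $V$ is an isometry, i.e.\ $V^*V = I_{M_1}$, gives $Q = \sqrt{W}\,\Psi_0 V = U\Sigma V^* V = U\Sigma$; in particular $\|Q\tilde v\|_{\ell^2} = \|\Sigma\tilde v\|_{\ell^2} > 0$ for $\tilde v\neq 0$, which handles the denominator. Next I would simplify $\widetilde{K}_{\mathrm{EDMD}}$: since $\Psi_0^*\sqrt{W} = (\sqrt{W}\,\Psi_0)^* = V\Sigma U^*$, we have $\sqrt{W}\,\Psi_1\Psi_0^*\sqrt{W} = (\sqrt{W}\,\Psi_1 V)\Sigma U^* = P\Sigma U^*$, and therefore
\[
\widetilde{K}_{\mathrm{EDMD}} = (\Sigma^{-1}U^*)(P\Sigma U^*)(U\Sigma^{-1}) = \Sigma^{-1}U^*P .
\]

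From this identity the conclusion is immediate: $\widetilde{K}_{\mathrm{EDMD}}\tilde v = \lambda\tilde v$ is equivalent to $\Sigma^{-1}U^*P\tilde v = \lambda\tilde v$, i.e.\ $U^*P\tilde v = \lambda\Sigma\tilde v$, i.e.\ $P\tilde v = \lambda U\Sigma\tilde v = \lambda Q\tilde v$; hence $(P-\lambda Q)\tilde v = 0$, the numerator of \eqref{biagbfvuibi} vanishes, and the residual is $0$. There is no genuinely hard step; the only points needing care are the bookkeeping between the two SVD conventions — in particular that $V$ satisfies $V^*V = I_{M_1}$ but not $VV^* = I_{N_K}$ when $N_K > M_1$ — and the passage $\Sigma^\dagger = \Sigma^{-1}$, which uses the rank hypothesis (equivalently, that the quadrature weights are positive so that $\sqrt{W}$ is invertible and $\operatorname{rank}\Psi_0 = \operatorname{rank}\sqrt{W}\Psi_0$). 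Conceptually the statement says that when $N_K\geq M_1$ the rows $\{\Psi(\pmb{x}_0^{(j)})\}_{j=1}^{M_1}$ span every test direction seen by the least-squares fit, so $K_{\mathrm{EDMD}}$ interpolates the one-step data exactly and the \emph{discrete} residual is blind to the true approximation error — the over-fitting phenomenon that motivates the modified residual used in the kernelized algorithms that follow.
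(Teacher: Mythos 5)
Your proof is correct and follows essentially the same route as the paper's: both rewrite the numerator as the squared norm of $\sqrt{W}(\Psi_1-\lambda\Psi_0)V\tilde v$ and then exploit the SVD $\sqrt{W}\Psi_0 = U\Sigma V^*$ (together with $\Sigma$ being invertible) to express this vector in terms of $(\widetilde{K}_{\mathrm{EDMD}}-\lambda)\tilde v = 0$. Your extra step of packaging $\widetilde{K}_{\mathrm{EDMD}} = \Sigma^{-1}U^*P$ and checking that the denominator $\|Q\tilde v\|>0$ is a small, welcome clarification rather than a different argument.
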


\begin{proof} The numerator in \eqref{biagbfvuibi} is the square of the $\ell^2$-vector norm of $\sqrt{W}(\Psi_Y-\lambda \Psi_X)V\tilde{v}$.
From the decomposition $\sqrt{W}\Psi_X=U\Sigma V^*$, we see that
$$
\sqrt{W}(\Psi_Y-\lambda \Psi_X)V\tilde{v}=\sqrt{W}\Psi_YV\tilde{v}-\lambda U \Sigma\tilde{v}=\sqrt{W}\Psi_Y\Psi_X^*\sqrt{W}U\Sigma^\dagger \tilde{v}-\lambda U \Sigma\tilde{v}.
$$
Since $\Sigma$ is invertible, $\sqrt{W}\Psi_Y\Psi_X^*\sqrt{W}U\Sigma^\dagger=U\Sigma \widetilde{K}_{\mathrm{EDMD}}$. Hence, $\sqrt{W}(\Psi_Y-\lambda \Psi_X)V\tilde{v}=U\Sigma(\widetilde{K}_{\mathrm{EDMD}}-\lambda)\tilde{v}=0$.
\end{proof}

In other words, the restriction $N_K\geq M$ prevents the large data convergence $(M\rightarrow\infty)$ of \cref{alg:mod_EDMD} for fixed $N_K$. Moreover, the residual in \eqref{biagbfvuibi} may not fit into the $L^2(\Omega,\omega)$ Galerkin framework because of the implicit reproducing Hilbert space. A way to overcome these issues is to consider two sets of snapshot data (see~\cref{sec:kernelizedAlgorithms}).

\subsubsection{Kernelized versions of the algorithms}\label{sec:kernelizedAlgorithms}
Kernelized EDMD computes the eigenvalues, eigenvectors, and corresponding Koopman modes of $\widetilde{K}_{\mathrm{EDMD}}$. The required operations to compute these quantities are inner products and hence can be computed by evaluating $\mathcal{S}$.  In particular, the $k$th eigenfunction $\psi_k$ corresponding to eigenvector $\tilde v_k\in\mathbb{C}^{M}$ of $\widetilde{K}_{\mathrm{EDMD}}$ can be evaluated as~\cite[Remark 8]{williams2015kernel}
\begin{equation}
\label{hiohvikn}
\psi_k(\pmb{x})=\begin{bmatrix}
\mathcal{S}(\pmb{x},\pmb{x}^{(1)})&\mathcal{S}(\pmb{x},\pmb{x}^{(2)})&\cdots & \mathcal{S}(\pmb{x},\pmb{x}^{(M)})
\end{bmatrix}(U\Sigma^\dagger)\tilde v_k,\qquad k=1,\ldots, M.
\end{equation}
This leads to the simple kernelization of our algorithms summarized in~\cref{alg:kern_algs}. The idea is a two-step process: (1) We use kernelized EDMD with a data set $\{\pmb{x}^{(j)}, \pmb{y}^{(j)}\}_{j=1}^{M'}$ to compute a dictionary corresponding to the dominant eigenfunctions of $\widetilde{K}_{\mathrm{EDMD}}$ and (2) We apply our algorithms using this dictionary on another set $\{\hat{\pmb{x}}^{(j)}, \hat{\pmb{y}}^{(j)}\}_{j=1}^{M''}$ of snapshot data. Of course, there are many algorithmic details regarding the choice of kernel, the second set of snapshot data, and how to a posteriori check that the learned dictionary is suitable. We describe these details now: 

\begin{itemize}[leftmargin=*]
	
	\item \textbf{Choice of kernel:} The choice of kernel $\mathcal{S}$ determines the dictionary, and the best choice depends on the application. In the following experiments, we use the Gaussian radial basis function kernel 
\begin{equation}
\label{exp_kern_formula}
\mathcal{S}(\pmb{x},\pmb{y})=\exp\left(-\gamma{\|\pmb{x}-\pmb{y}\|^2}\right).
\end{equation}
We select $\gamma$ as the squared reciprocal of the average $\ell^2$-norm of the snapshot data after it is shifted to mean zero.

\item \textbf{Regularization:} The matrix $\sqrt{W}\Psi_X\Psi_X^*\sqrt{W}$ can be ill-conditioned~\cite{belkin2018approximation}, in which case we can regularize and consider $\sqrt{W}\Psi_X\Psi_X^*\sqrt{W}+\eta \|\sqrt{W}\Psi_X\Psi_X^*\sqrt{W}\|$ for small $\eta$ or consider a more stable representation of the range of the feature map. This is standard practice in DMD. However, we found that it was often not needed for~\cref{alg:kern_algs}, due to steps 3 and 4.

	\item \textbf{Acquiring a second set of snapshot data:}  Given snapshot data with random and independent initial conditions, one can split the snapshot data up into two parts $\{\pmb{x}^{(j)}, \pmb{y}^{(j)}\}_{j=1}^{M'}$ and $\{\hat{\pmb{x}}^{(j)}, \hat{\pmb{y}}^{(j)}\}_{j=1}^{M''}$, where $M = M' + M''$. For initial conditions distributed according to a quadrature rule, if one already has access to $M'$ snapshots, one must typically go back to the original dynamical system and request $M''$ further snapshots. In both cases, we denote the total amount of snapshot data as $M=M' + M''$. The first set of $M'$ snapshots is used in the kernelized approach to refine a large initial dictionary of size $N_K'$ (implicitly defined by the chosen kernel) to a smaller data-driven one of size $N_K''$, while the second set of $M''$ snapshots are used to apply our ResDMD algorithms. We recommend that $M''\geq M'$.
	
	\item \textbf{Selecting the second dictionary:} We start with a large dictionary of size $N_K'$ and apply a kernelized approach to construct a second dictionary of size $N_K''$. To construct the second dictionary, we compute the dominant $N_K''$ eigenvectors of $\widetilde{K}_{\mathrm{EDMD}}$ that hopefully capture the essential dynamics. In this paper, we compute a complete eigendecomposition of $\widetilde{K}_{\mathrm{EDMD}}$; however, for problems with extremely large $M'$, iterative methods are recommended to compute these dominant eigenvectors. 
	
\item \textbf{Rigorous results:} It is well-known that the eigenvalues computed by kernel EDMD may suffer from spectral pollution. In our setting, we do not directly use kernel EDMD to compute eigenvalues of $\mathcal{K}$. Instead, we only use kernel EDMD to select a good dictionary of size $N_K''$, after which our rigorous algorithms can be used (see~\cref{alg:spec_meas_rat,alg:res_EDMD,alg:mod_EDMD}, with convergence as $M''\rightarrow\infty$).
	
	\item \textbf{A posteriori check of the second dictionary:} We use $\{\hat{\pmb{x}}^{(j)}, \hat{\pmb{y}}^{(j)}\}_{j=1}^{M''}$ to check the quality of the constructed second dictionary. By studying the residuals and using the error control in \cref{half_pseudospectrum}, we can tell a posteriori whether the second dictionary is satisfactory and $N_K''$ is sufficiently large.
	
\end{itemize}

\begin{algorithm}[t]
\textbf{Input:} Snapshot data $\{\pmb{x}^{(j)}, \pmb{y}^{(j)}\}_{j=1}^{M'}$ and $\{\hat{\pmb{x}}^{(j)}, \hat{\pmb{y}}^{(j)}\}_{j=1}^{M''}$, positive-definite kernel function $\mathcal{S}:\Omega\times \Omega\rightarrow\mathbb{R}$, and positive integer $N_K''\leq M'$.\\
\vspace{-4mm}
\begin{algorithmic}[1]
\State Apply kernel EDMD to $\{\pmb{x}^{(j)}, \pmb{y}^{(j)}\}_{j=1}^{M'}$ with kernel $\mathcal{S}$ to compute the matrices $\sqrt{W}\Psi_X\Psi_X^*\sqrt{W}$ and $\sqrt{W}\Psi_Y\Psi_X^*\sqrt{W}$ using \eqref{kern_trick1} and the kernel trick.
\State Compute $U$ and $\Sigma$ from the eigendecomposition $\sqrt{W}\Psi_X\Psi_X^*\sqrt{W}=U\Sigma^2U^*$.
\State Compute the dominant $N_K''$ eigenvectors of $\widetilde{K}_{\mathrm{EDMD}}=(\Sigma^\dagger U^*)\sqrt{W}\Psi_Y\Psi_X^*\sqrt{W}(U\Sigma^\dagger)$ and stack them column-by-column into $Z\in\mathbb{C}^{M'\times N_K''}$. 
\State Apply a QR decomposition to orthogonalize $Z$ to $Q=\begin{bmatrix}Q_1 & \cdots& Q_{N_K''} \end{bmatrix}\in\mathbb{C}^{M'\times N_K''}$.
\State Apply \cref{alg:spec_meas_rat,alg:res_EDMD,alg:mod_EDMD} with $\{\hat{\pmb{x}}^{(j)}, \hat{\pmb{y}}^{(j)}\}_{j=1}^{M''}$ and the dictionary $\{\psi_j\}_{j=1}^{N_K''}$, where
$$
\psi_j(\pmb{x})=\begin{bmatrix}\mathcal{S}(\pmb{x},\pmb{x}^{(1)})&\mathcal{S}(\pmb{x},\pmb{x}^{(2)})&\cdots & \mathcal{S}(\pmb{x},\pmb{x}^{(M')})
\end{bmatrix}(U\Sigma^\dagger)Q_j, \qquad 1\leq j\leq N_K''.
$$
\end{algorithmic} \textbf{Output:} Spectral properties of Koopman operator according to \cref{alg:spec_meas_rat,alg:res_EDMD,alg:mod_EDMD}.
\caption{A computational framework for kernelized versions of \cref{alg:spec_meas_rat,alg:res_EDMD,alg:mod_EDMD}.}\label{alg:kern_algs}
\end{algorithm}

\subsection{Example: MD simulation of the Adenylate Kinase enzyme \texorpdfstring{($d=20,\!046$)}{}}
\label{examMDMD}

Molecular dynamics (MD) analyzes the movement of atoms and molecules by numerically solving Newton's equations of motion for a system of interacting particles. Energies and forces between particles are typically computed using potentials. MD is arguably one of the most robust approaches for simulating macromolecular dynamics, primarily due to the availability of full atomistic detail~\cite{dror2012biomolecular}. Recently, DMD-type and Koopman techniques are impacting MD~\cite{nuske2014variational,klus2018data,schwantes2015modeling,schwantes2013improvements}. For example,~\cite{klus2020eigendecompositions} applies kernel EDMD to the positions of the carbon atoms in \textit{n}-butane ($d=12$) and shows that the EDMD eigenfunctions parameterize a dihedral angle that controls crucial dynamics.

Here, we study trajectory data from the dynamics of Adenylate Kinase (ADK), which is an enzyme (see~\cref{fig:ADK1}) that catalyzes important phosphate reactions in cellular biology~\cite{goldberg2004thermodynamics}. ADK is a common benchmark enzyme in MD~\cite{seyler2014sampling} and consists of $3341$ atoms split into 214 residues (specific monomers that can be thought of as parts). The trajectory data comes from an all-atom equilibrium simulation for $1.004\times 10^{-6}$s, with a so-called CHARMM force field, that is produced on PSC Anton~\cite{shaw2009millisecond} and publicly available~\cite{mdanalysis_website}. The data consists of a single trajectory of the positions of all atoms as ADK moves. To make the system Hamiltonian, we append the data with approximations of the velocities computed using centered finite differences. This leads to $d=6\times3341=20046$. We vertically stack the data (as discussed in \cref{sec:RES_DMD}) and sample the trajectory data every $240\times 10^{-12}$s so that $M=4184$. 

To apply the kernelized version of \cref{alg:spec_meas_rat}, we subselect $M'=2000$ initial conditions from the trajectory data and used the Gaussian radial basis function kernel in~\eqref{exp_kern_formula}. We select ${N}_K''=1000$ EDMD eigenfunctions and append the dictionary with the four observables of interest that are discussed below. Accuracy of the corresponding matrices in \eqref{eq:convergenceMatrices} is verified by comparing to smaller $M''$ and also by computing pseudospectra with \cref{alg:res_EDMD}.

ADK has three parts of its molecule called CORE, LID, and NMP (see~\cref{fig:ADK1} (left)). The LID and NMP domains move around the stable CORE. We select the most mobile residue from the LID and NMP domains by computing root-mean-square-fluctuations. These residues have canonical dihedral angles $(\phi,\psi)$ defined on the backbone atoms~\cite{ramachandran1963stereochemistry} that determine the overall shape of the residue. \cref{fig:ADK1} (middle, right) shows the spectral measures with respect to these dihedral angles (where we have subtracted the mean angle value) for both selected residues. These spectral measures are computed using the sixth-order rational kernel  with $\epsilon=0.1$ (see~\cref{rat_unit_kern_table}). The computed spectral measures are verified with higher order kernels and smaller $\epsilon$, and through comparison with a polynomial kernel $\mathcal{S}$ instead of a Gaussian radial basis function kernel. The spectral measures for the angles in the LID residue are much broader than for the NMP residue. This hints at a more complicated dynamical interaction and may have biological consequences. We hope that this example can be a catalyst for the use of spectral measures in MD.

\begin{figure}[!tbp]
  \centering
	\begin{minipage}[b]{0.30\textwidth}
    \begin{overpic}[width=\textwidth,trim={0mm 0mm 0mm 0mm},clip]{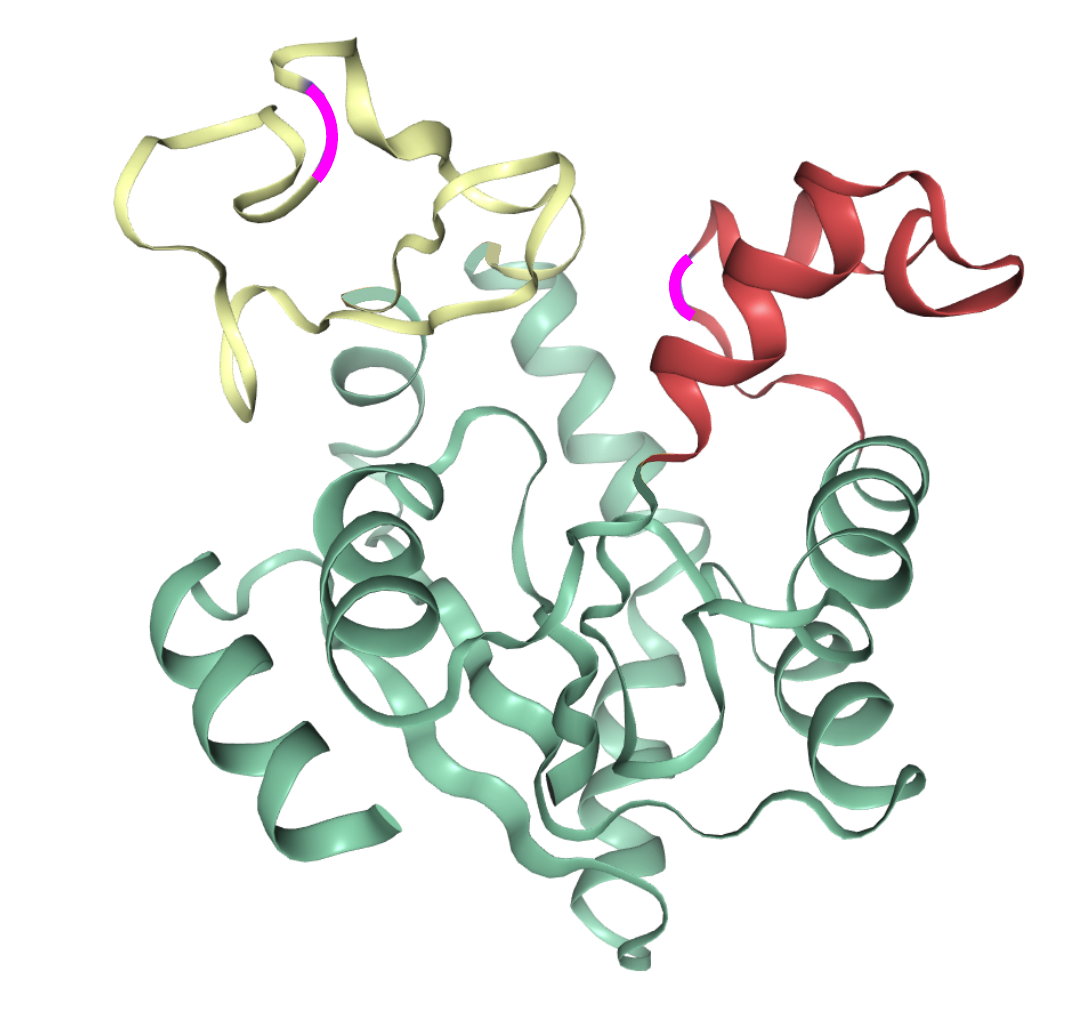}
		\put(33,81){\line(2,1){11}}
		\put(47,87){\line(1,-1){15}}
		\put (47,88) {residues}
		\put(10,60){\line(1,1){8}}
		\put(5,54){LID}
		\put(20,12){\line(0,1){6}}
		\put(15,6){CORE}
		\put(88,60){\line(-1,1){8}}
		\put(85.5,54){NMP}
     \end{overpic}
  \end{minipage}\hspace{1mm}
	\begin{minipage}[b]{0.33\textwidth}
    \begin{overpic}[width=\textwidth,trim={0mm -10mm 0mm 0mm},clip]{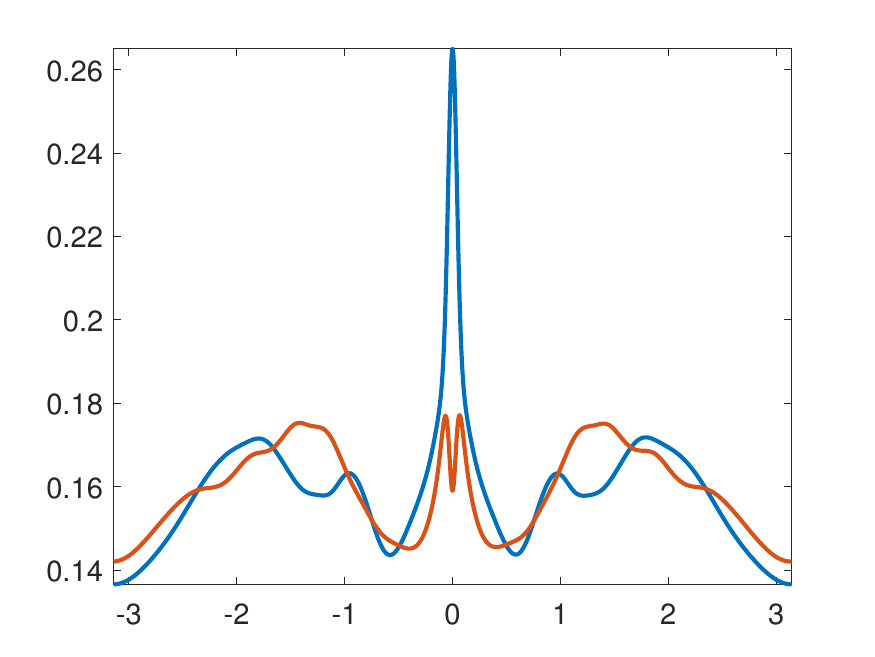}
		\put (50,0) {$\theta$}
		\put (46,80) {LID}
		\put (67,60) { {$\displaystyle \phi$}}
	\put(67,60)  {\vector(-1,-1){14}}
	\put (18,60) { {$\displaystyle \psi$}}
	\put(22,57)  {\vector(1,-2){12}}
     \end{overpic}
  \end{minipage}
	\begin{minipage}[b]{0.33\textwidth}
    \begin{overpic}[width=\textwidth,trim={0mm -10mm 0mm 0mm},clip]{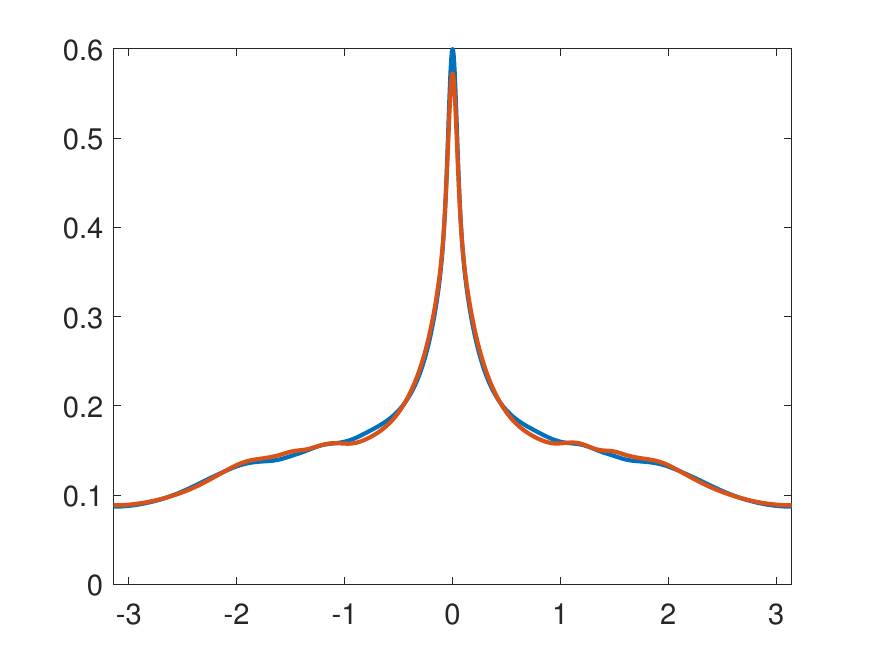}
		\put (50,0) {$\theta$}
		\put (44,80) {NMP}
		\put (67,59) { {$\displaystyle \phi$}}
	\put(67,61)  {\vector(-1,1){14}}
	\put (23,17) { {$\displaystyle \psi$}}
	\put(32,22)  {\vector(1,1){9}}
     \end{overpic}
  \end{minipage}
    \caption{Left: Structure of ADK, which has three domains: CORE (green), LID (yellow) and NMP (red). Middle and right: Spectral measures with respect to the dihedral angles of the selected residues.} 
\label{fig:ADK1}
\end{figure}

\subsection{Example: Turbulent flow past a cascade of aerofoils \texorpdfstring{($d=295,\!122$)}{}}\label{sec:flow_example}
Some of the most successful applications of DMD are in fluid dynamics~\cite{rowley2009spectral,budivsic2012applied,chen2012variants,mezic2013analysis,taira2017modal}. Here, we consider a large-scale wall-resolved turbulent flow past a periodic cascade of aerofoils with a stagger angle $56.9^{\circ}$ and a one-sided tip gap. The setup is motivated by the need to reduce noise sources from flying objects~\cite{peake2012modern}. We use a high-fidelity simulation that solves the fully nonlinear Navier--Stokes equations~\cite{koch2021large} with Reynolds number $3.88\times10^5$ and Mach number $0.07$. The data consists of a 2D slice of the pressure field, measured at $295,\!122$ points, for trajectories of length 798 that are sampled every $2\times 10^{-5}$s. We stack the data in the form of \eqref{data:snapshots} so that $M=797$.

To apply the kernelized version of \cref{alg:mod_EDMD}, we subselect $M'=350$ initial conditions from the trajectory data and used the Gaussian radial basis function kernel in~\eqref{exp_kern_formula}. We select $N_K''=250$ EDMD eigenfunctions as our dictionary. Accuracy of the corresponding matrices in \eqref{eq:convergenceMatrices} is verified by comparing to smaller $M''$ and also by computing pseudospectra with \cref{alg:res_EDMD}.

\cref{fig:ResDMD_fluids} shows the computed Koopman modes for a range of representative frequencies. We also show the corresponding Koopman modes computed using DMD. In the case of DMD and ResDMD, each mode is only defined up to a global phase. Moreover, the color maps for each $\lambda$ differ because of the pressure modulus variations with frequency. For the first row of \cref{fig:ResDMD_fluids}, ResDMD shows stronger acoustic waves between the cascades. Detecting these vibrations is important as they can damage turbines in engines~\cite{parker1984acoustic}. For the second and third row of \cref{fig:ResDMD_fluids}, ResDMD shows larger scale turbulent fluctuations past the trailing edge corresponding to the figure's right. This can be crucial for understanding acoustic interactions with nearby structures such as subsequent blade rows~\cite{woodley1999resonant}. DMD shows a clear acoustic source for the third row just above the wing. This source is less distinct in the case of ResDMD because of nonlinear interference. The residuals for ResDMD are also shown and are small, particularly given the enormous state-space dimension. This example demonstrates two benefits of the kernelized version of ResDMD (see~\cref{alg:mod_EDMD,alg:res_EDMD}) compared with DMD: (1) ResDMD can capture the nonlinear dynamics, and (2) It computes residuals as well, thus providing an accuracy certificate.

\begin{figure}[!tbp]
  \centering\setlength{\tabcolsep}{1pt}
    \begin{tabular}{@{}>{\centering}m{0.03\textwidth}|>{\centering}m{0.47\textwidth}|>{\centering\arraybackslash}m{0.47\textwidth}@{}}
		&DMD & ResDMD, $\mathrm{res}=0.0054$ \\		
		\rotatebox[origin=c]{90}{$\lambda=e^{0.11i}$} &\begin{overpic}[width=\linewidth,trim={14mm 0mm 10mm 0mm},clip]{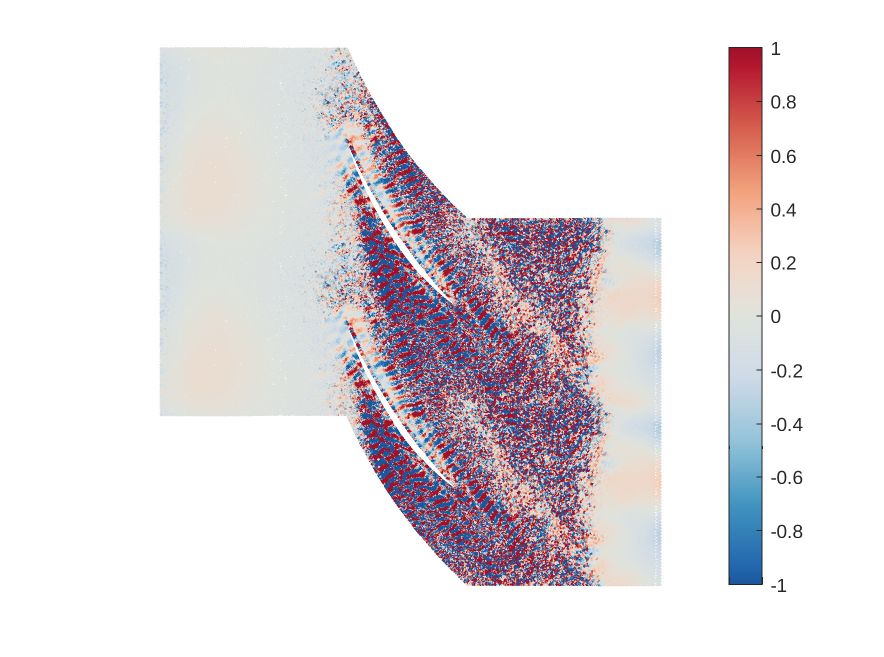}
     \end{overpic}&\begin{overpic}[width=\linewidth,trim={14mm 0mm 10mm 0mm},clip]{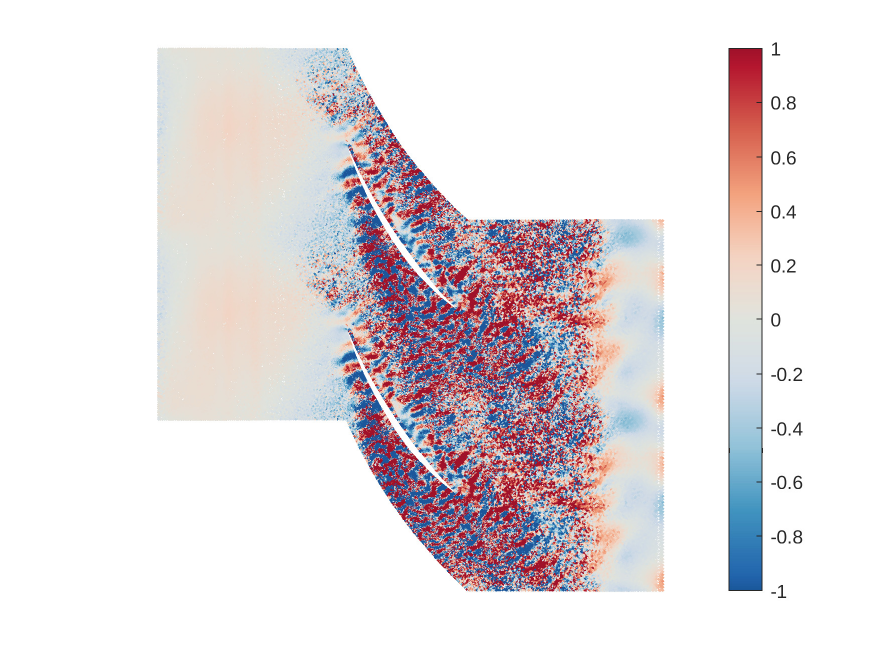}
		\put(10,5){acoustic vibrations}
		\put(30,10){\line(1,1){12}}
     \end{overpic}\\
		\cline{1-3}&&\\
		&DMD & ResDMD, $\mathrm{res}=0.0128$\\		
		\rotatebox[origin=c]{90}{$\lambda=e^{0.51i}$}&\begin{overpic}[width=\linewidth,trim={14mm 0mm 10mm 0mm},clip]{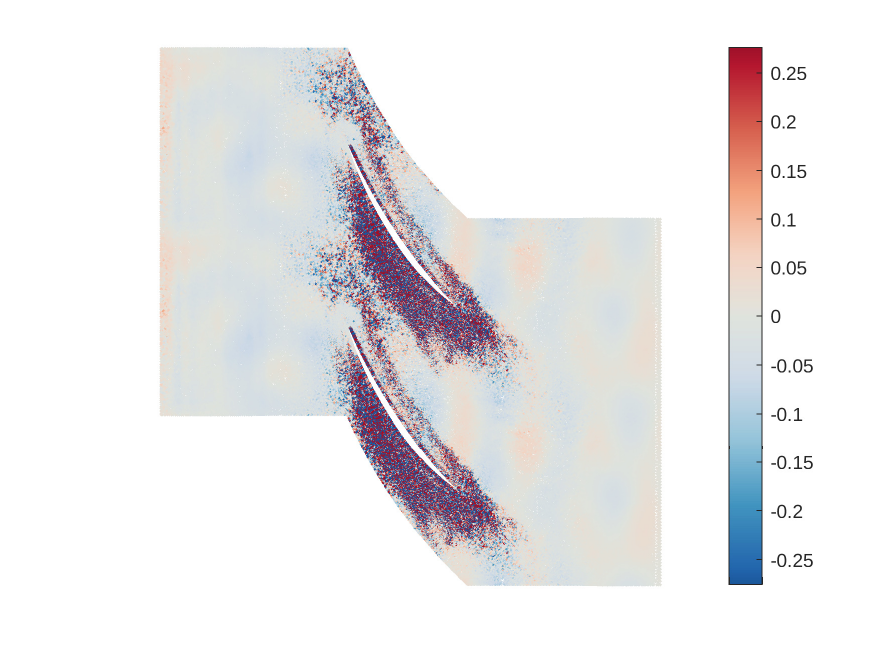}
     \end{overpic}&\begin{overpic}[width=\linewidth,trim={14mm 0mm 10mm 0mm},clip]{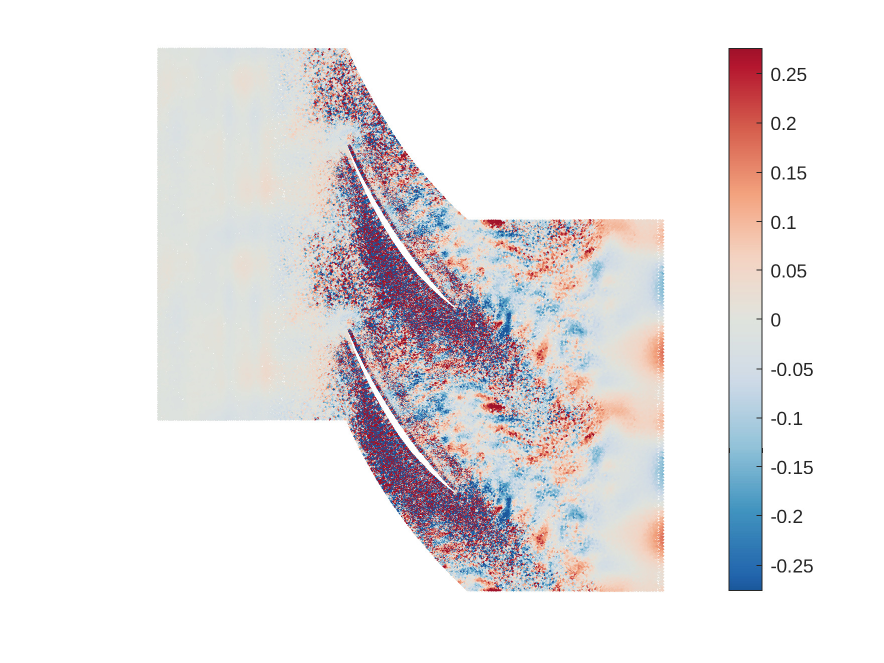}
		\put(50,68){turbulent}
		\put(50,63){fluctuations}
		\put(56,60){\line(0,-1){8}}
		     \end{overpic}\\
		\cline{1-3}&&\\
		&DMD  & ResDMD, $\mathrm{res}=0.0196$\\		
		\rotatebox[origin=c]{90}{$\lambda=e^{0.71i}$}&\begin{overpic}[width=\linewidth,trim={14mm 0mm 10mm 0mm},clip]{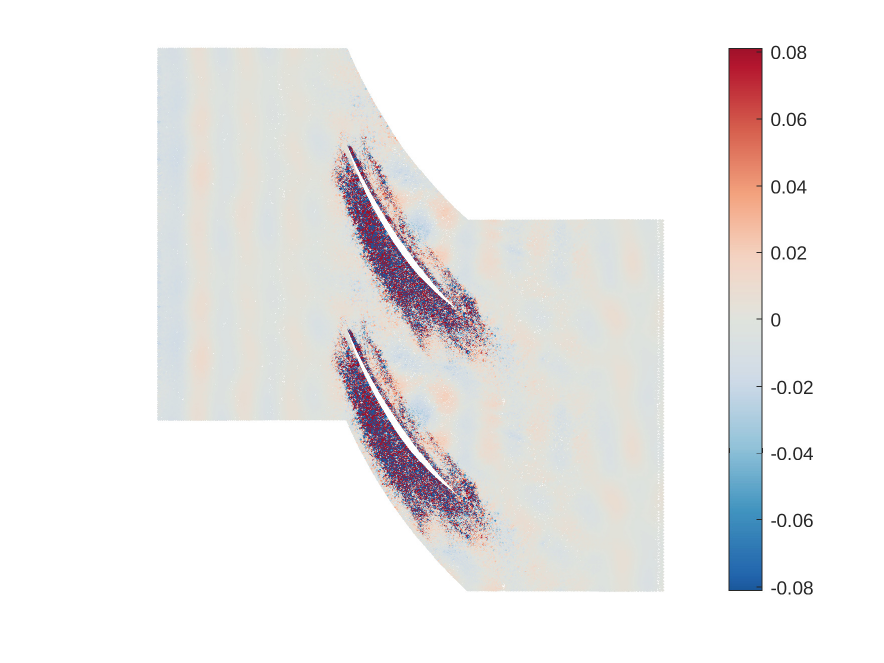}
		\put(43,68){acoustic source}
		\put(45,58){\line(1,1){8}}
     \end{overpic}&\begin{overpic}[width=\linewidth,trim={14mm 0mm 10mm 0mm},clip]{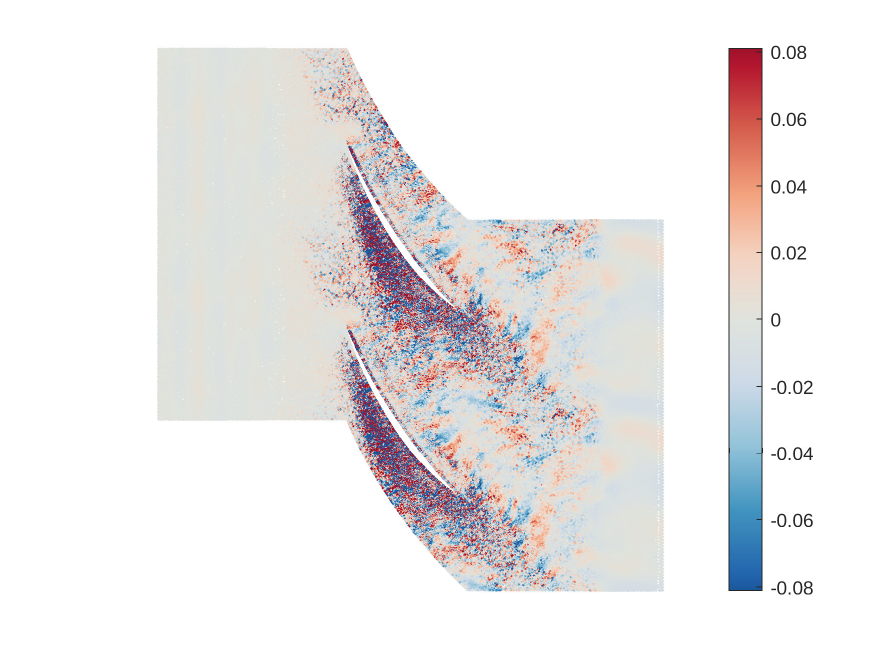}
     \end{overpic}
		\end{tabular}
    \caption{Left column: Modes computed by DMD. Right column: Modes computed by ResDMD with computed residuals. Each row corresponds to the different values of $\lambda$ that correspond to certain physical frequencies of noise pollution.} 
\label{fig:ResDMD_fluids}
\end{figure}

\section*{Acknowledgements}
The work of the first author was supported by a Research Fellowship at Trinity College, Cambridge, a Fondation Sciences Mathématiques de Paris Postdoctoral Fellowship at École Normale Supérieure, and an LMS Cecil King Travel Scholarship. The work of the second author was supported by the National Science Foundation grants DMS-1818757, DMS-1952757 and DMS-2045646. The authors are grateful to John Guckenheimer for stimulating discussions during the completion of this work, Andrew Horning and Jennifer Zvonek for reading a draft of this manuscript, Dimitris Giannakis and Igor Mezić for feedback, Regis Koch, Marlène Sanjosé and Stephane Moreau for sharing their flow data for the example in \cref{sec:flow_example}, and to the anonymous referees whose careful comments helped improve the clarity of the manuscript.

\appendix

\section{A useful criterion for \texorpdfstring{$m$}{m}th order kernels}
The following lemma is used to prove \cref{polynomial_magic} and build $m$th order rational kernels of the form \eqref{rat_kern_unitary}. It provides sufficient conditions for a family of integrable functions to be an $m$th order kernel.

\begin{lemma}\label{unitary_rational_tool}
Let $\{K_{\epsilon}:\epsilon\in(0,1]\}$ be a family of integrable functions on the periodic interval $[-\pi,\pi]_{\mathrm{per}}$ that integrate to $1$. Suppose that there exists a constant $C$ such that for any integer $n$ with $0< n\leq m-1$,
\begin{equation}\label{fourier_cond}
\left|\int_{[-\pi,\pi]_{\mathrm{per}}}K_{\epsilon}(-\theta)e^{in\theta}d\theta-1\right|\leq C \epsilon^m \log(\epsilon^{-1}),
\end{equation}
and such that
\begin{equation}
\label{bound_lemma}
\left|K_{\epsilon}(\theta)\right|\leq \frac{C\epsilon^m}{(\epsilon+|\theta|)^{m+1}},
\end{equation}
for any $\theta\in[-\pi,\pi]_{\mathrm{per}}$ and $\epsilon\in(0,1]$. Then $\{K_{\epsilon}\}$ is an $m$th order kernel for $[-\pi,\pi]_{\mathrm{per}}$.
\end{lemma}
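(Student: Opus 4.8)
The plan is to verify the three defining conditions of an $m$th order kernel from \cref{unit_mth_kern_def} in turn. Condition (i) is exactly the standing hypothesis that each $K_\epsilon$ integrates to $1$, and condition (iii) --- the decay bound \eqref{unitary_decay} --- is literally \eqref{bound_lemma} with $C_K=C$. So everything reduces to establishing the approximately vanishing moments \eqref{unit_first_lem2}, i.e.\ converting the Fourier-coefficient information \eqref{fourier_cond} at the frequencies $n=1,\dots,m-1$ into control of the moments $\int_{[-\pi,\pi]_{\mathrm{per}}}\theta^j K_\epsilon(\theta)\,d\theta$ for $j=1,\dots,m-1$.

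For this I would fix $n$ with $1\le n\le m-1$ and Taylor-expand the exponential about the origin, $e^{in\theta}=\sum_{j=0}^{m-1}\frac{(in\theta)^j}{j!}+R_n(\theta)$, where the classical integral form of the remainder for $e^{ix}$ gives $|R_n(\theta)|\le \frac{(n|\theta|)^m}{m!}\le \frac{(m-1)^m}{m!}|\theta|^m$. Integrating this identity against $K_\epsilon(-\cdot)$ (which also integrates to $1$, via $\theta\mapsto-\theta$): the $j=0$ term contributes exactly $1$; the left-hand side equals $1+\mathcal{O}(\epsilon^m\log(\epsilon^{-1}))$ by \eqref{fourier_cond}; and the remainder term is controlled by \eqref{bound_lemma} through the key estimate
\[
\int_{[-\pi,\pi]_{\mathrm{per}}}|\theta|^m\,|K_\epsilon(-\theta)|\,d\theta\ \le\ 2C\epsilon^m\int_0^\pi\frac{\theta^m}{(\epsilon+\theta)^{m+1}}\,d\theta\ \le\ 2C\epsilon^m\int_0^\pi\frac{d\theta}{\epsilon+\theta}\ =\ 2C\epsilon^m\log\!\Big(1+\frac{\pi}{\epsilon}\Big),
\]
which is exactly where the logarithmic factor in \eqref{unit_first_lem2} comes from. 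Rearranging, for every $n\in\{1,\dots,m-1\}$ one obtains $\big|\sum_{j=1}^{m-1}\frac{(in)^j}{j!}\,x_j\big|\lesssim \epsilon^m\log(\epsilon^{-1})$, where $x_j:=\int_{[-\pi,\pi]_{\mathrm{per}}}\theta^j K_\epsilon(-\theta)\,d\theta$ and the implied constant depends only on $m$ and $C$.

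Finally I would read these $m-1$ inequalities as a single linear system for the $m-1$ unknowns $x_1,\dots,x_{m-1}$. Its coefficient matrix $\big((in)^j/j!\big)_{n,j=1}^{m-1}$ factors as the Vandermonde matrix $(n^j)_{n,j=1}^{m-1}$ times the invertible diagonal matrix $\mathrm{diag}(i^j/j!)$, and $\det(n^j)_{n,j=1}^{m-1}=(m-1)!\prod_{1\le a<b\le m-1}(b-a)\neq0$, so its inverse is a fixed matrix depending only on $m$. Applying it shows each $x_j$, and hence $\int_{[-\pi,\pi]_{\mathrm{per}}}\theta^j K_\epsilon(\theta)\,d\theta=(-1)^j x_j$, is $\mathcal{O}(\epsilon^m\log(\epsilon^{-1}))$, which is \eqref{unit_first_lem2} with a constant $C_K$ absorbing $C$ and dimensional factors. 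I expect the only genuinely non-routine point to be this last observation --- that the Fourier data at exactly the $m-1$ frequencies $1,\dots,m-1$ is, after the Taylor expansion, precisely enough to pin down all $m-1$ low-order moments through an invertible Vandermonde system; the rest is the standard estimate for $\int_0^\pi\theta^m(\epsilon+\theta)^{-(m+1)}\,d\theta$ together with careful bookkeeping of the constants (and noting that the stated bounds are to be read as $\epsilon\downarrow 0$ asymptotics).
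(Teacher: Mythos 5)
Your proposal is correct, and it takes a genuinely different route from the paper. You verify the moment bound by expanding $e^{in\theta}=\sum_{j=0}^{m-1}\frac{(in\theta)^j}{j!}+R_n(\theta)$ globally, reducing \eqref{fourier_cond} to the $(m-1)\times(m-1)$ linear system $\sum_{j=1}^{m-1}\frac{(in)^j}{j!}x_j=\mathcal{O}(\epsilon^m\log(\epsilon^{-1}))$ in the unknown moments $x_j$, and then observing that the coefficient matrix is a nonsingular (scaled) Vandermonde matrix depending only on $m$. The paper goes in the opposite direction: it expands $\theta^n$ as a power series in $(e^{i\theta}-1)$ (via the logarithm series, valid only on $[-\pi/4,\pi/4]$), splits it into a trigonometric polynomial $p_{n,m}$ plus a $\theta^m$-controlled remainder, and then estimates the contribution of $|\theta|>\pi/4$ separately using \eqref{bound_lemma}; this yields the bound on each moment directly with no linear system to invert. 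Both arguments hinge on the identical key estimate $\int_0^\pi\theta^m(\epsilon+\theta)^{-(m+1)}\,d\theta=\mathcal{O}(\log(\epsilon^{-1}))$. What your route buys is a cleaner bookkeeping (the Taylor expansion of $e^{in\theta}$ is valid on all of $[-\pi,\pi]$, so there is no interval truncation and no convergence-radius concern), at the mild cost of introducing the Vandermonde inversion to disentangle the moments; the paper's route obtains each moment bound independently, at the cost of the $[-\pi/4,\pi/4]$ cutoff and the less elementary logarithm-series manipulation. Both are sound, and the constants in your $\lesssim$ depend only on $m$ and $C$ as claimed.
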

\begin{proof}[Proof of \cref{unitary_rational_tool}]
There are three properties that an $m$th order kernel must satisfy (see~\cref{unit_mth_kern_def}).  The lemma statement already assumes properties (i) and (iii).  To show part (ii) of~\cref{unit_mth_kern_def}, let $n$ be an integer and consider $\theta^n$ with $n<m$. We use the convergent power series for the principal branch of the logarithm around $1$ to write, for $\theta\in[-\pi/4,\pi/4]$,
$$
\theta^n=\sum_{k=1}^\infty a_{k,n} (e^{i\theta}-1)^k=\underbrace{\sum_{k=1}^{m-1}a_{k,n} (e^{i\theta}-1)^k}_{=p_{n,m}(\theta)}+\theta^m\underbrace{\frac{(e^{i\theta}-1)^m}{\theta^m}\sum_{k=m}^{\infty}a_{k,n}(e^{i\theta}-1)^{k-m}}_{=h_{n,m}(\theta)},
$$
where $a_{k,n}\in\mathbb{C}$ and the domain of the trigonometric polynomial $p_{n,m}$ is extended to $[-\pi,\pi]_{\mathrm{per}}$. We note that $h_{n,m}(\theta)$ is a bounded continuous function on $[-\pi/4,\pi/4]$. Since $|\int_{[-\pi,\pi]_{\mathrm{per}}}K_{\epsilon}(-\theta)p_{n,m}(\theta)d\theta-p_{n,m}(0)|\lesssim\epsilon^m \log(\epsilon^{-1})$, we know by~\eqref{bound_lemma} that
\begin{align*}
\left|\int_{-\pi/4}^{\pi/4}\theta^nK_{\epsilon}(-\theta)\,d\theta\right|&\lesssim \int_{0}^{\pi/(4\epsilon)}\!\!\!\!\frac{\epsilon^m\tau^m}{(1+\tau)^{m+1}}\,d\tau+\int_{|\theta|>\pi/4}\!\!\!\!\!\!\!\!\!\! |K_{\epsilon}(-\theta)||p_{n,m}(\theta)|\,d\theta+\epsilon^m \log(\epsilon^{-1})\\
&\lesssim \epsilon^m \log(\epsilon^{-1})+\int_{|\theta|>\pi/4} \frac{\epsilon^m\|p_{n,m}\|_{L^\infty}}{(\epsilon+\pi/4)^{m+1}}\,d\theta\lesssim \epsilon^m \log(\epsilon^{-1}).
\end{align*} 
Furthermore, by~\eqref{bound_lemma} we obtain
$$
\left|\int_{[-\pi,-\pi/4)\cup(\pi/4,\pi]}\theta^n K_{\epsilon}(-\theta)\,d\theta\right|\leq \pi^{n}(2\pi-\pi/2)\frac{C\epsilon^m}{(\epsilon+\pi/4)^{m+1}}\lesssim \epsilon^m.
$$
Property (ii) of~\cref{unit_mth_kern_def} now follows by the symmetry of~\eqref{unit_first_lem2} under the mapping $\theta\mapsto-\theta$.
\end{proof}

\section{ResDMD for computing spectra and pseudospectra}\label{sec:computing_spectra_limits}

In this section, we show how to modify ResDMD in~\cref{sec:RESDMD_spec_pollll1} to rigorously compute the global spectral properties of the Koopman operator $\mathcal{K}$. Recall that we assume that $\mathcal{K}$ is a closed and densely defined operator. Throughout this section, we make the following assumption about the dictionary $\{\psi_j\}_{j=1}^{N_K}$:
\begin{center}
\begin{itemize}
	\item[] \textbf{Assumption:} $\mathrm{span}\{\psi\in{V}_{N_K}:N_K\in\mathbb{N}\}$ forms a core of $\mathcal{K}$.
\end{itemize}
\end{center}
This assumption means that the closure of the restriction of $\mathcal{K}$ to $\mathrm{span}\{\psi\in{V}_{N_K}:N_K\in\mathbb{N}\}$ is $\mathcal{K}$. In other words, the assumption allows us to study the infinite-dimensional operator $\mathcal{K}$, which acts on a possibly strict subdomain $\mathcal{D}(\mathcal{K})\subset L^2(\Omega,\omega)$, by considering its action on the observables in $V_{N_K}$. This is vital for capturing the full operator $\mathcal{K}$, and hence, equivalently, the full dynamics. To simplify our arguments, we also assume that $\psi_j$ is independent of $N_K$ for a fixed $j$, and that $V_{N_K}\subset V_{N_K+1}$. These last two assumptions can be dropped with minor modifications to our arguments.

We begin by re-writing the quantity $\|(\mathcal{K}-\lambda)^{-1}\|^{-1}$ as a minimal residual. Given a linear operator $T$ with domain $\mathcal{D}(T)$, we define the injection modulus $\mathcal{J}(T)=\inf\{\|T\psi\|:\|\psi\|=1,\psi\in\mathcal{D}(T)\}.$ It is well-known that
$$
\|(\mathcal{K}-\lambda)^{-1}\|^{-1}=\min\{\mathcal{J}(\mathcal{K}-\lambda),\mathcal{J}(\mathcal{K}^*-\overline{\lambda})\}\leq\mathcal{J}(\mathcal{K}-\lambda) =\inf_{g\in\mathcal{D}(\mathcal{K}), g\neq 0}\frac{\|(\mathcal{K}-\lambda)g\|}{\|g\|},
$$
with $\mathcal{J}(\mathcal{K}-\lambda)=\mathcal{J}(\mathcal{K}^*-\overline{\lambda})$ if $\lambda\notin\mathrm{\sigma}(\mathcal{K})$. Therefore ${\sigma}(\mathcal{K})=\{\lambda:\min\{\mathcal{J}(\mathcal{K}-\lambda),\mathcal{J}(\mathcal{K}^*-\overline{\lambda})\}=0\}$.

\subsection{The metric space}
Before discussing the convergence of \cref{alg:res_EDMD}, we must be precise about what convergence means. A reliable algorithm for computing a spectral set should converge locally on compact subsets of $\mathbb{C}$. In other words, it should converge to the complete spectral set and have no limiting points not in it. We will see that our algorithms achieve this, and we use the following metric space to quantify this convergence.

In general, $\sigma(\mathcal{K})$ is a closed and possibly unbounded subset of $\mathbb{C}$. We therefore use the Attouch--Wets metric defined by~\cite{beer1993topologies}:
\begin{equation}\label{eq:Attouch-Wets}
d_{\mathrm{AW}}(C_1,C_2)=\sum_{n=1}^{\infty} 2^{-n}\min\big\{{1,\underset{\left|x\right|\leq n}{\sup}\left|\mathrm{dist}(x,C_1)-\mathrm{dist}(x,C_2)\right|}\big\},
\end{equation}
where $C_1,C_2\in\mathrm{Cl}(\mathbb{C})$ and $\mathrm{Cl}(\mathbb{C})$ is the set of closed nonempty\footnote{We can add the empty set as an isolated point of this metric space to cover the case that $\sigma(\mathcal{K})$ may be empty. In what follows, this is implicitly done in our algorithms since the output is eventually the empty set in such circumstances.} subsets of $\mathbb{C}$. This metric generalizes the familiar Hausdorff metric on compact sets to unbounded closed sets and corresponds to local uniform converge on compact subsets of $\mathbb{C}$. If the spectral set is bounded, the two topologies and notions of convergence agree. An intuitive characterization of the Attouch--Wets topology we use in our proofs is as follows. For any closed nonempty sets $C$ and $C_n$, $d_{\mathrm{AW}}(C_n,C)\rightarrow{0}$ if and only if for any $\delta>0$ and $B_m(0)$ (closed ball of radius $m\in\mathbb{N}$ about $0$), there exists $N$ such that if $n>N$ then $C_n\cap B_m(0)\subset{C+B_{\delta}(0)}$ and $C\cap B_m(0)\subset{C_n+B_{\delta}(0)}$. 

\subsection{Computing approximate point pseudospectra}
We first compute the approximate point pseudospectrum, which is given by
\begin{equation}
\label{def:approx_pt_pseudo}
\sigma_{\epsilon,\mathrm{ap}}(\mathcal{K}):=\mathrm{cl}\left(\{\lambda\in\mathbb{C}:\mathcal{J}({\mathcal{K}}-\lambda)< \epsilon\}\right).
\end{equation}
In other words, $\sigma_{\epsilon,\mathrm{ap}}(\mathcal{K})$ is the closure of the set of all $\lambda$ such that there exists $g\in \mathcal{D}(\mathcal{K})$ of norm $1$ with $\|(\mathcal{K}-\lambda)g\|<\epsilon$. Such an observable $g$ is known as an ($\epsilon$-)approximate eigenfunction. Moreover, $\sigma_{\epsilon}(\mathcal{K})=\sigma_{\epsilon,\mathrm{ap}}(\mathcal{K})\cup \left\{\overline{\lambda}:\lambda\in\sigma_{\mathrm{p}}(\mathcal{K}^*)\right\}$. Thus $\sigma_{\epsilon,\mathrm{ap}}(\mathcal{K})$ is equivalent to the usual pseudospectrum $\sigma_{\epsilon}(\mathcal{K})$, up to the eigenvalues of $\mathcal{K}^*$. In most cases of interest, we have $\sigma_{\epsilon,\mathrm{ap}}(\mathcal{K})=\sigma_{\epsilon}(\mathcal{K})$.

To compute $\sigma_{\epsilon,\mathrm{ap}}(\mathcal{K})$, we begin by providing approximations of the function $\lambda\mapsto\mathcal{J}(\mathcal{K}-\lambda)$ using the residual in \eqref{eq:abs_res}. The following lemma shows how to do this.

\begin{lemma}
\label{techno2}
Define a relative residual as 
\begin{equation}
\label{the_key_dist_func}
\tau_{M,N_K}(\lambda,{\mathcal{K}})=\min_{\pmb{g}\in\mathbb{C}^{N_K}}\sqrt{\frac{\sum_{j,k=1}^{N_K}\overline{g_j}g_k\left[(\Psi_Y^*W\Psi_Y)_{jk} - \lambda(\Psi_Y^*W\Psi_X)_{jk} - \overline{\lambda}(\Psi_X^*W\Psi_Y)_{jk} + |\lambda|^2(\Psi_X^*W\Psi_X)_{jk}\right]}{\sum_{j,k=1}^{N_K}\overline{g_j}g_k(\Psi_X^*W\Psi_X)_{jk}}.}
\end{equation}
Then, assuming~\eqref{eq:convergenceMatrices} holds, 
\begin{equation}
\label{triv_limit}
\lim_{M\rightarrow\infty}\tau_{M,N_K}(\lambda,{\mathcal{K}})=\tau_{N_K}(\lambda,{\mathcal{K}}):=\min_{g\in V_{N_K}}\sqrt{\frac{\int_{\Omega}\left|[\mathcal{K}g](\pmb{x})-\lambda g(\pmb{x})\right|^2\, d\omega(\pmb{x})}{\int_{\Omega}\left|g(\pmb{x})\right|^2\, d\omega(\pmb{x})}}.
\end{equation}
Moreover, $\tau_{N_K}(\lambda,{\mathcal{K}})$ is a nonincreasing function of $N_K$ and converges uniformly down to $\mathcal{J}(\mathcal{K}-\lambda)$ on compact subsets of $\mathbb{C}$ as a function of the spectral parameter $\lambda$.
\end{lemma}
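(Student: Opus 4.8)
The plan is to establish the lemma in three stages, corresponding to its three claims: the large-data limit $M_1\to\infty$, monotonicity in $N_K$, and uniform convergence down to $\mathcal{J}(\mathcal{K}-\lambda)$ on compact sets. For the first claim, I would note that $\tau_{M_1,N_K}(\lambda,\mathcal{K})^2$ is a generalized Rayleigh quotient $\min_{\pmb g}\frac{\pmb g^* A_{M_1}(\lambda)\pmb g}{\pmb g^* B_{M_1}\pmb g}$, where $A_{M_1}(\lambda)=\Psi_1^*W\Psi_1-\lambda\Psi_1^*W\Psi_0-\overline\lambda\Psi_0^*W\Psi_1+|\lambda|^2\Psi_0^*W\Psi_0$ and $B_{M_1}=\Psi_0^*W\Psi_0$. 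By the convergence assumption \eqref{eq:convergenceMatrices}, each of the four matrix families converges entrywise, so $A_{M_1}(\lambda)\to A(\lambda)$ and $B_{M_1}\to B$ where $B_{jk}=\langle\psi_k,\psi_j\rangle$ is the (invertible, by the reduction in \cref{sec:basic_EDMD}) Gram matrix and $A(\lambda)_{jk}=\langle(\mathcal{K}-\lambda)\psi_k,(\mathcal{K}-\lambda)\psi_j\rangle$. Since the generalized eigenvalues of a pencil depend continuously on its entries when the right-hand matrix stays invertible, and the minimal generalized eigenvalue equals the minimum of the Rayleigh quotient, we get $\tau_{M_1,N_K}(\lambda,\mathcal{K})\to\tau_{N_K}(\lambda,\mathcal{K})$, and the limiting Rayleigh quotient is exactly $\min_{g\in V_{N_K}}\|(\mathcal{K}-\lambda)g\|^2/\|g\|^2$ after expanding $g=\sum_j g_j\psi_j$.

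For the monotonicity claim, since $V_{N_K}\subset V_{N_K+1}$, the infimum defining $\tau_{N_K+1}(\lambda,\mathcal{K})$ is over a larger set, so $\tau_{N_K+1}(\lambda,\mathcal{K})\le\tau_{N_K}(\lambda,\mathcal{K})$: the sequence is non-increasing. It is also bounded below by $\mathcal{J}(\mathcal{K}-\lambda)=\inf_{g\in\mathcal{D}(\mathcal{K})}\|(\mathcal{K}-\lambda)g\|/\|g\|$ because each $V_{N_K}\subset\mathcal{D}(\mathcal{K})$. Hence the pointwise limit $\tau_\infty(\lambda,\mathcal{K}):=\lim_{N_K\to\infty}\tau_{N_K}(\lambda,\mathcal{K})\ge\mathcal{J}(\mathcal{K}-\lambda)$ exists. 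The core assumption on the dictionary is what gives the reverse inequality: given $\delta>0$, pick $g\in\mathcal{D}(\mathcal{K})$ with $\|g\|=1$ and $\|(\mathcal{K}-\lambda)g\|<\mathcal{J}(\mathcal{K}-\lambda)+\delta$; since $\bigcup_{N_K}V_{N_K}$ is a core, there is $g_n\in V_{N_K}$ (for $N_K$ large) with $g_n\to g$ and $\mathcal{K}g_n\to\mathcal{K}g$, so $\|(\mathcal{K}-\lambda)g_n\|/\|g_n\|\to\|(\mathcal{K}-\lambda)g\|<\mathcal{J}(\mathcal{K}-\lambda)+\delta$, forcing $\tau_\infty(\lambda,\mathcal{K})\le\mathcal{J}(\mathcal{K}-\lambda)+\delta$. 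Letting $\delta\downarrow0$ gives pointwise convergence $\tau_{N_K}(\lambda,\mathcal{K})\downarrow\mathcal{J}(\mathcal{K}-\lambda)$.

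For the uniform convergence on compact subsets $L\subset\mathbb{C}$, I would upgrade pointwise convergence using equicontinuity plus Dini's theorem. Each map $\lambda\mapsto\tau_{N_K}(\lambda,\mathcal{K})$ is $1$-Lipschitz: if $g$ is a (near-)minimizer for $\tau_{N_K}(\lambda,\mathcal{K})$ with $\|g\|=1$, then $\tau_{N_K}(\mu,\mathcal{K})\le\|(\mathcal{K}-\mu)g\|\le\|(\mathcal{K}-\lambda)g\|+|\lambda-\mu|\,\|g\|$, so $\tau_{N_K}(\mu,\mathcal{K})-\tau_{N_K}(\lambda,\mathcal{K})\le|\lambda-\mu|$, and symmetrically; the same bound holds for the limit $\mathcal{J}(\mathcal{K}-\lambda)$. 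Thus $\{\tau_{N_K}(\cdot,\mathcal{K})\}_{N_K}$ is a uniformly equicontinuous family of continuous functions decreasing pointwise to the continuous limit $\mathcal{J}(\mathcal{K}-\cdot)$ on the compact set $L$; by Dini's theorem (or directly: an equicontinuous monotone pointwise-convergent net on a compact set converges uniformly) the convergence is uniform on $L$. I expect the main obstacle to be the careful handling of the core assumption in the lower-bound half of the monotonicity step — one must be sure that "core" supplies a sequence $g_n\in\bigcup_{N_K}V_{N_K}$ approximating both $g$ and $\mathcal{K}g$ simultaneously in $L^2$, which is precisely the graph-norm density that "core" provides, and that the normalization $\|g_n\|\to 1$ is harmless; everything else is standard perturbation theory for generalized eigenvalue pencils and elementary real analysis.
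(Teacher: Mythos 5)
Your proposal is correct and follows essentially the same route as the paper's proof: matrix convergence gives the $M_1\to\infty$ limit, nested subspaces give monotonicity, the core assumption supplies a graph-norm approximating sequence to drive $\tau_{N_K}$ down to $\mathcal{J}(\mathcal{K}-\lambda)$, and Dini's theorem upgrades pointwise to uniform convergence on compacts. The only difference is that you spell out the $1$-Lipschitz continuity of $\tau_{N_K}(\cdot,\mathcal{K})$ and $\mathcal{J}(\mathcal{K}-\cdot)$ explicitly (the paper only states continuity of the limit and leaves the rest implicit), which is a harmless and indeed welcome bit of extra detail.
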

\begin{proof}
The limit \eqref{triv_limit} follows trivially from the convergence of matrices in~\eqref{eq:convergenceMatrices}. Since ${V}_{N_K}\subset {V}_{N_K+1}$, $\tau_{N_K}(\lambda,{\mathcal{K}})$ is a nonincreasing function of $N_K$. By definition, we also have that $\tau_{N_K}(\lambda,{\mathcal{K}})\geq \mathcal{J}({\mathcal{K}}-\lambda)$ for every $\lambda$ and $N_K$. Now let $0<\epsilon<1$ and choose $g\in\mathcal{D}({\mathcal{K}})$ of norm $1$ such that $\|({\mathcal{K}}-\lambda)g\|\leq \mathcal{J}({\mathcal{K}}-\lambda)+\epsilon.$ Since $\mathrm{span}\{\psi\in{V}_{N_K}:N_K\in\mathbb{N}\}$ forms a core of ${\mathcal{K}}$, there exists some $n$ and $g_{n}\in{V}_{n}$ such that $\|({\mathcal{K}}-\lambda)g_{n}\|\leq \|({\mathcal{K}}-\lambda)g\|+\epsilon$ and $\|g-g_{n}\|\leq \epsilon$. We find that $\|g_{n}\|\geq 1-\epsilon$ and hence that $\tau_{n}(\lambda,{\mathcal{K}})\leq{(\mathcal{J}({\mathcal{K}}-\lambda)+2\epsilon)}/({1-\epsilon}).$ Since this holds for any $0<\epsilon<1$, $\lim_{N_K\rightarrow\infty}\tau_{N_K}(\lambda,{\mathcal{K}})=\mathcal{J}({\mathcal{K}}-\lambda)$. Since $\mathcal{J}({\mathcal{K}}-\lambda)$ is Lipschitz continuous in $\lambda$ and hence continuous, $\tau_{N_K}(\lambda,{\mathcal{K}})$ converges uniformly down to $\mathcal{J}({\mathcal{K}}-\lambda)$ on compact subsets of $\mathbb{C}$ by Dini's theorem.
\end{proof}

In other words, in the large data limit, we can compute $\tau_{N_K}(\lambda,{\mathcal{K}})$, which is at least as large as $\mathcal{J}(\mathcal{K}-\lambda)$, and converges uniformly as our truncation parameter $N_K$ increases. To compute $\tau_{M,N_K}(\lambda,{\mathcal{K}})$, one can first diagonalize $\Psi_X^*W\Psi_X$ to compute $(\Psi_X^*W\Psi_X)^{-1/2}$ and then the problem reduces to computing a standard singular value decomposition. To turn $\tau_{N_K}(\lambda,{\mathcal{K}})$ into an approximation of $\sigma_{\epsilon,\mathrm{ap}}(\mathcal{K})$, we search for points where this function is less than $\epsilon$. Suppose that $\mathrm{Grid}(N_K)$ is a sequence of finite grids such that for any $\lambda\in\mathbb{C}$, $\lim_{N_K\rightarrow\infty}\mathrm{dist}(\lambda,\mathrm{Grid}(N_K))=0.$ For example, we could take
\begin{equation}
\label{eq:grid_def}
\mathrm{Grid}(N_K)=\frac{1}{N_K}\left[\mathbb{Z}+i\mathbb{Z}\right]\cap \{z\in\mathbb{C}:|z|\leq N_K\}.
\end{equation}
In practice, one considers a grid of points over the region of interest in the complex plane. We then define
$$
\Gamma^\epsilon_{N_K}(\mathcal{K}):=\left\{\lambda\in\mathrm{Grid}(N_K):\tau_{N_K}(\lambda,{\mathcal{K}})<\epsilon\right\}.
$$
Our algorithm is summarized in \cref{alg:res_EDMD}. Note that because of the nature of the nonlinear pencil in \eqref{the_key_dist_func}, one cannot use decompositions such as the generalized Schur decomposition to speed up the computational time. We can now state our convergence theorem, which says that the output of \cref{alg:res_EDMD} lies in $\sigma_{\epsilon,\mathrm{ap}}(\mathcal{K})$ and converges to $\sigma_{\epsilon,\mathrm{ap}}(\mathcal{K})$ as $M\rightarrow\infty$.  
\begin{theorem}
\label{half_pseudospectrum}
Let $\epsilon>0$. Then, $\Gamma^\epsilon_{N_K}(\mathcal{K})\subset\sigma_{\epsilon,\mathrm{ap}}(\mathcal{K})$ and
\begin{equation}
\label{conv_prop}
\lim_{N_K\rightarrow\infty}d_{\mathrm{AW}}\left(\Gamma^\epsilon_{N_K}(\mathcal{K}),\sigma_{\epsilon,\mathrm{ap}}(\mathcal{K})\right)=0.
\end{equation}
\label{thm:pseudospectrum}
In other words, we recover the full approximate point pseudospectrum without spectral pollution.
\end{theorem}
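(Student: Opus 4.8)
The plan is to prove the two parts of the statement separately: the inclusion $\Gamma^\epsilon_{N_K}(\mathcal{K})\subset\sigma_{\epsilon,\mathrm{ap}}(\mathcal{K})$, which is essentially immediate, and then the Attouch--Wets convergence \eqref{conv_prop}, for which I would use the characterization of the Attouch--Wets topology stated just before the theorem. Throughout I assume \eqref{eq:convergenceMatrices} holds so that \cref{techno2} applies and we may work with $\tau_{N_K}(\lambda,\mathcal{K})$ directly in place of $\tau_{M_1,N_K}(\lambda,\mathcal{K})$ in the large-data limit; I would remark at the outset that the honest statement is a double limit but that the proof is cleanest phrased with the limiting quantities, the data-driven version following by the convergence in \eqref{triv_limit}.

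For the inclusion: if $\lambda\in\Gamma^\epsilon_{N_K}(\mathcal{K})$ then by definition $\tau_{N_K}(\lambda,\mathcal{K})<\epsilon$. By \cref{techno2}, $\tau_{N_K}(\lambda,\mathcal{K})\ge\mathcal{J}(\mathcal{K}-\lambda)$ for all $N_K$, so $\mathcal{J}(\mathcal{K}-\lambda)<\epsilon$, and hence $\lambda\in\{\mu:\mathcal{J}(\mathcal{K}-\mu)<\epsilon\}\subset\sigma_{\epsilon,\mathrm{ap}}(\mathcal{K})$ by the definition \eqref{def:approx_pt_pseudo}. This gives the first claim.

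For the convergence, I would verify the two set-inclusion conditions in the stated characterization, for arbitrary $\delta>0$ and ball $B_m(0)$. The first, $\Gamma^\epsilon_{N_K}(\mathcal{K})\cap B_m(0)\subset\sigma_{\epsilon,\mathrm{ap}}(\mathcal{K})+B_\delta(0)$, is trivial from the inclusion already proved (one even has containment in $\sigma_{\epsilon,\mathrm{ap}}(\mathcal{K})$ itself, no $\delta$-fattening needed). The substantive direction is $\sigma_{\epsilon,\mathrm{ap}}(\mathcal{K})\cap B_m(0)\subset\Gamma^\epsilon_{N_K}(\mathcal{K})+B_\delta(0)$ for all large $N_K$. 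Here I would argue: let $\lambda\in\sigma_{\epsilon,\mathrm{ap}}(\mathcal{K})\cap B_m(0)$; since this set is the closure of $\{\mu:\mathcal{J}(\mathcal{K}-\mu)<\epsilon\}$, pick $\mu$ within $\delta/2$ of $\lambda$ with $\mathcal{J}(\mathcal{K}-\mu)<\epsilon$, say $\mathcal{J}(\mathcal{K}-\mu)\le\epsilon-2\gamma$ for some $\gamma>0$. Because $\tau_{N_K}(\cdot,\mathcal{K})$ decreases uniformly to $\mathcal{J}(\mathcal{K}-\cdot)$ on the compact set $\overline{B_{m+1}(0)}$ (this is the Dini-type statement in \cref{techno2}), there is $N_0$ so that for $N_K\ge N_0$ we have $\tau_{N_K}(\mu,\mathcal{K})<\epsilon-\gamma$ for all such $\mu$. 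Then picking a grid point $\lambda'\in\mathrm{Grid}(N_K)$ with $\mathrm{dist}(\mu,\mathrm{Grid}(N_K))$ small (possible since $\mathrm{dist}(\mu,\mathrm{Grid}(N_K))\to0$, uniformly on compacts, and using that $\tau_{N_K}$ is Lipschitz in the spectral parameter with the same constant as $\mathcal{J}$), we get $\tau_{N_K}(\lambda',\mathcal{K})<\epsilon$, i.e.\ $\lambda'\in\Gamma^\epsilon_{N_K}(\mathcal{K})$, with $|\lambda-\lambda'|<\delta$. The only mild subtlety to handle carefully is uniformity: one needs the closeness of the grid and the Dini convergence to be uniform over $\lambda\in\sigma_{\epsilon,\mathrm{ap}}(\mathcal{K})\cap B_m(0)$, which is why I restricted attention to the compact ball $\overline{B_{m+1}(0)}$ and used the Lipschitz continuity (with $\lambda$-independent constant $1$) of $\lambda\mapsto\mathcal{J}(\mathcal{K}-\lambda)$ and of each $\tau_{N_K}(\cdot,\mathcal{K})$. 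I would also note one edge case: if $\sigma_{\epsilon,\mathrm{ap}}(\mathcal{K})$ is empty then $\Gamma^\epsilon_{N_K}(\mathcal{K})$ is eventually empty and convergence holds by the isolated-point convention.

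The main obstacle is not deep but bookkeeping: making precise the interplay between the two limits ($N_K\to\infty$ and the implicit $M_1\to\infty$), and ensuring all the "eventually, for large $N_K$" statements can be made simultaneously uniform over the compact region and over the grid refinement. Everything rests on \cref{techno2}'s assertion that $\tau_{N_K}(\lambda,\mathcal{K})\downarrow\mathcal{J}(\mathcal{K}-\lambda)$ uniformly on compacts (itself a Dini argument using the core assumption), so the proof is really just a careful unwinding of the Attouch--Wets definition against that monotone uniform convergence plus the density of the grids.
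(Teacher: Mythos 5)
Your overall structure matches the paper's: the inclusion follows immediately from \cref{techno2}, and the convergence is an unwinding of the Attouch--Wets characterization against the uniform Dini convergence of $\tau_{N_K}(\cdot,\mathcal{K})$ to $\mathcal{J}(\mathcal{K}-\cdot)$. The paper, however, argues the substantive inclusion by contradiction (extract a convergent subsequence $\lambda_{n_j}\to\lambda$, then work with a single limit point $\lambda$ and a single nearby $z$ with $\mathcal{J}(\mathcal{K}-z)<\epsilon$), whereas you argue directly, and that is where a gap appears.

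Concretely: for each $\lambda\in\sigma_{\epsilon,\mathrm{ap}}(\mathcal{K})\cap B_m(0)$ you produce a $\mu$ within $\delta/2$ of $\lambda$ with $\mathcal{J}(\mathcal{K}-\mu)\le\epsilon-2\gamma$, but your $\gamma$ depends on $\lambda$. The uniform Dini convergence from \cref{techno2} then produces an $N_0$ that depends on $\gamma$, hence on $\lambda$ --- so you have not shown a single $N_0$ works for all $\lambda\in\sigma_{\epsilon,\mathrm{ap}}(\mathcal{K})\cap B_m(0)$ simultaneously, which is exactly what the Attouch--Wets condition requires. Restricting to $\overline{B_{m+1}(0)}$ and invoking the $1$-Lipschitz continuity of $\mathcal{J}(\mathcal{K}-\cdot)$ and $\tau_{N_K}(\cdot,\mathcal{K})$ does not repair this: Lipschitzness controls how $\tau_{N_K}$ changes when you move from $\mu$ to a grid point, but it does not give a $\lambda$-independent lower bound on the slack $\epsilon-\mathcal{J}(\mathcal{K}-\mu_\lambda)$. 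You flag uniformity as the subtlety, but the ingredients you offer do not close it. One fix is a compactness argument: the function $\lambda\mapsto\inf_{|\mu-\lambda|\le\delta/2}\mathcal{J}(\mathcal{K}-\mu)$ is continuous and $<\epsilon$ on the compact set $\sigma_{\epsilon,\mathrm{ap}}(\mathcal{K})\cap\overline{B_m(0)}$, hence bounded away from $\epsilon$; this gives a uniform $\gamma_0$ and the rest of your argument then goes through. The paper's contradiction route is precisely a device to avoid having to establish this uniform slack explicitly: by reducing to a single limit point $\lambda$ it gets a single $z$ and a single margin for free. Either route is fine once the uniformity is addressed, but as written your proof proposal leaves this step open.
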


\begin{proof}
If $\lambda\in\Gamma^\epsilon_{N_K}({\mathcal{K}})$, then $\mathcal{J}({\mathcal{K}}-\lambda)\leq \tau_{N_K}(\lambda,{\mathcal{K}})<\epsilon$ by Lemma \ref{techno2}. It follows that $\Gamma^\epsilon_{N_K}({\mathcal{K}})\subset\sigma_{\epsilon,\mathrm{ap}}({\mathcal{K}})$. Now suppose that $\sigma_{\epsilon,\mathrm{ap}}({\mathcal{K}})\neq\emptyset$. It follows that there exists $\lambda\in\mathbb{C}$ with $\mathcal{J}({\mathcal{K}}-\lambda)< \epsilon$. Since $\mathcal{J}({\mathcal{K}}-\lambda)$ is continuous in $\lambda$ and $\lim_{N_K\rightarrow\infty}\mathrm{dist}(\lambda,\mathrm{Grid}(N_K))=0$, it follows that $\Gamma^\epsilon_{N_K}({\mathcal{K}})\neq\emptyset$ for large $N_K$. We use the characterization of the Attouch--Wets topology. Suppose that $m$ is large such that $B_m(0)\cap\sigma_{\epsilon,\mathrm{ap}}({\mathcal{K}})\neq\emptyset$. Since $\Gamma^\epsilon_{N_K}({\mathcal{K}})\subset\sigma_{\epsilon,\mathrm{ap}}({\mathcal{K}})$, we clearly have $\Gamma_{N_K}^{\epsilon}({\mathcal{K}})\cap B_m(0)\subset\sigma_{\epsilon,\mathrm{ap}}({\mathcal{K}})$. Hence, to prove \eqref{conv_prop}, we must show that given $\delta>0$, there exists $n_0$ such that if $N_K>n_0$ then $\sigma_{\epsilon,\mathrm{ap}}({\mathcal{K}})\cap B_m(0)\subset{\Gamma_{N_K}^{\epsilon}({\mathcal{K}})+B_{\delta}(0)}$.   Suppose for a contradiction that this statement is false. Then, there exists $\delta>0$, $\lambda_{n_j}\in\sigma_{\epsilon,\mathrm{ap}}({\mathcal{K}})\cap B_m(0)$, and $n_j\rightarrow\infty$ such that $\mathrm{dist}(\lambda_{n_j},\Gamma_{n_j}^{\epsilon}({\mathcal{K}}))\geq \delta$. Without loss of generality, we can assume that $\lambda_{n_j}\rightarrow \lambda\in\sigma_{\epsilon,\mathrm{ap}}({\mathcal{K}})\cap B_m(0)$. There exists some $z$ with $\mathcal{J}(\mathcal{K}-z)<\epsilon$ and $\left|\lambda-z\right|\leq \delta/2$. Let $z_{n_j}\in\mathrm{Grid}(n_j)$ such that $|z-z_{n_j}|\leq \mathrm{dist}(z,\mathrm{Grid}(n_j))+{n_j}^{-1}.$ In particular, note that $\lim_{n_j\rightarrow\infty}|z-z_{n_j}|=0$. Moreover,
\begin{equation}
\label{name7878769}
\tau_{n_j}(z_{n_j},{\mathcal{K}})\leq \left|\tau_{n_j}(z_{n_j},{\mathcal{K}})-\mathcal{J}(\mathcal{K}-z_{n_j})\right|+\left|\mathcal{J}(\mathcal{K}-z)-\mathcal{J}(\mathcal{K}-z_{n_j})\right|+\underbrace{\mathcal{J}(\mathcal{K}-z)}_{<\epsilon}.
\end{equation}
But, $\mathcal{J}(\mathcal{K}-\lambda)$ is continuous in $\lambda$ and $\tau_{n_j}$ converges uniformly to $\mathcal{J}(\mathcal{K}-\cdot)$ on compact subsets of $\mathbb{C}$ by Lemma \ref{techno2}. It follows that the first two terms on the right-hand side of \eqref{name7878769} must converge to zero. Hence for large $n_j$, $\tau_{n_j}(z_{n_j},{\mathcal{K}})<\epsilon$ so that $z_{n_j}\in\Gamma_{n_j}^{\epsilon}({\mathcal{K}})$. But
$
\left|z_{n_j}-\lambda\right|\leq \left|z-\lambda\right|+\left|z_{n_j}-z\right|\leq \delta/2 + |z-z_{n_j}|,
$
which is smaller than $\delta$ for large $n_j$, and we reach the desired contradiction.
\end{proof}

\cref{half_pseudospectrum} is stated for an algorithm that uses $\tau_{N_K}$. Strictly speaking, $\tau_{N_K}$ is only computed as $M\rightarrow\infty$ (see~\cref{techno2}). In practice, this is not an issue since we can often estimate the accuracy of the quadrature rule for the computed inner products by using known convergence rates as $M\rightarrow\infty$ (see \cref{sec:matrix_conv_galerkin}) or by comparing answers for different values of $M$. We can then combine these estimates and adaptively select $M$ based on $N_K$ so that $\tau_{N_K}$ is sufficiently accurate. For example, if $\tau_{N_K}$ is approximated to accuracy $\mathcal{O}(N_K^{-1})$ by $\tilde \tau_{N_K}$ we can set $\Gamma^\epsilon_{N_K}(\mathcal{K})=\{\lambda\in\mathrm{Grid}(N_K):\tilde \tau_{N_K}(\lambda,\mathcal{K})+N_K^{-1/2}<\epsilon\}$, where the $N_K^{-1/2}$ term is to ensure that it eventually dominates the quadrature error.

\subsection{Recovering the full spectrum and pseudospectrum}
In this subsection, we make the following additional assumption:
\begin{itemize}
	\item[] \textbf{Assumption:} $\mathrm{span}\{\psi\in{V}_{N_K}:N_K\in\mathbb{N}\}$ forms a core of $\mathcal{K}^*$.
\end{itemize}
If we have access to the matrix elements $\langle {\mathcal{K}}^*\psi_j,{\mathcal{K}}^*\psi_i\rangle$, then we can apply the techniques described in~\cref{sec:RES_DMD} to compute $\tau_{N_K}(\overline{\lambda},{\mathcal{K}}^*)$ in the large data limit $M\rightarrow\infty$. Define
$$
\hat\Gamma^\epsilon_{N_K}({\mathcal{K}})=\left\{\lambda\in\mathrm{Grid}(N_K):\min\{\tau_{N_K}(\lambda,{\mathcal{K}}),\tau_{N_K}(\overline{\lambda},{\mathcal{K}}^*)\}<\epsilon\right\}.
$$
The proof of~\cref{half_pseudospectrum} can be adapted to show that $\hat\Gamma^\epsilon_{N_K}({\mathcal{K}})\subset\sigma_{\epsilon}({\mathcal{K}})$, as well as the convergence $\lim_{N_K\rightarrow\infty}d_{\mathrm{AW}}(\hat \Gamma^\epsilon_{N_K}({\mathcal{K}}),\sigma_{\epsilon}({\mathcal{K}}))=0.$ Unfortunately, in general, one does not have access to approximations of $\langle {\mathcal{K}}^*\psi_j,{\mathcal{K}}^*\psi_i\rangle$. Nevertheless, one can overcome this.

To compute $\sigma_\epsilon(\mathcal{K})$, we note that because of \cref{half_pseudospectrum}, it is enough to compute $\sigma_{\epsilon,\mathrm{ap}}(\mathcal{K}^*)$ since $\sigma_{\epsilon}(\mathcal{K})=\sigma_{\epsilon,\mathrm{ap}}(\mathcal{K})\cup \{\overline{\lambda}:\lambda\in\sigma_{\epsilon,\mathrm{ap}}(\mathcal{K}^*)\}$. Let $N_1,N_2\in\mathbb{N}$, with $N_1\geq N_2$, and consider the matrices $\Psi_X$ and $\Psi_Y$ computed using $N_1$ dictionary functions. Assume that \eqref{eq:convergenceMatrices} holds and let
$$
L_{N_1}(\lambda)=\lim_{M\rightarrow\infty}\Psi_X^*W\Psi_Y(\Psi_X^*W\Psi_X)^{-1}\Psi_Y^*W\Psi_X-\lambda \Psi_Y^*W\Psi_X -\overline{\lambda}\Psi_X^*W\Psi_Y+|\lambda|^2 \Psi_X^*W\Psi_X\in\mathbb{C}^{N_1\times N_1}.
$$
Note that this matrix can be obtained from the given data in the large data limit $M\rightarrow\infty$. Let $\mathcal{P}_S$ denote the orthogonal projection onto a subspace $S\subset L^2(\Omega,\omega)$. If $g=\sum_{j=1}^{N_2}\psi_jg_j\in V_{N_2}$, then
\begin{equation}
\label{inner_prod_est}
\left\langle \mathcal{P}_{{V}_{N_1}}({\mathcal{K}}^*-\overline{\lambda})g,\mathcal{P}_{{V}_{N_1}}({\mathcal{K}}^*-\overline{\lambda})g\right\rangle=\sum_{j,k=1}^{N_2}\overline{g_j}g_k [L_{N_1}(\lambda)]_{jk}.
\end{equation}
In other words, this inner product is given by a square $N_2\times N_2$ truncation of the matrix $L_{N_1}(\lambda)\in\mathbb{C}^{N_1\times N_1}$. The following lemma uses the fact that \eqref{inner_prod_est} provides an increasingly more accurate approximation of the inner product $\left\langle ({\mathcal{K}}^*-\overline{\lambda})g,({\mathcal{K}}^*-\overline{\lambda})g\right\rangle$, as $N_1\rightarrow\infty$.

\begin{lemma}
\label{techno3}
Define $\smash{\upsilon_{N_1,N_2}(\lambda,{\mathcal{K}})=\min_{g\in V_{N_2},\|g\|=1}\sqrt{{\sum_{j,k=1}^{N_2}\overline{g_j}g_k [L_{N_1}(\lambda)]_{jk}}}}.$ Then, $\upsilon_{N_1,N_2}(\lambda,{\mathcal{K}})$ is a nondecreasing function of $N_1$ and, for fixed $N_2$, converges uniformly up to $\tau_{N_2}(\overline{\lambda},{\mathcal{K}}^*)$ on compact subsets of $\mathbb{C}$ as a function of $\lambda$ as $N_1\rightarrow\infty$.
\end{lemma}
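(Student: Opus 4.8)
## Proof Proposal for Lemma~\ref{techno3}

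The plan is to analyze the two assertions separately: monotonicity in $N_1$, and uniform convergence up to $\tau_{N_2}(\overline{\lambda},\mathcal{K}^*)$ on compact subsets of $\mathbb{C}$. The key observation underpinning both is the identity~\eqref{inner_prod_est}, which says that for $g=\sum_{j=1}^{N_2}\psi_j g_j\in V_{N_2}$ we have $\sum_{j,k=1}^{N_2}\overline{g_j}g_k[L_{N_1}(\lambda)]_{jk}=\|\mathcal{P}_{V_{N_1}}(\mathcal{K}^*-\overline{\lambda})g\|^2$. Thus $\upsilon_{N_1,N_2}(\lambda,\mathcal{K})=\min_{g\in V_{N_2},\|g\|=1}\|\mathcal{P}_{V_{N_1}}(\mathcal{K}^*-\overline{\lambda})g\|$, a minimal residual of a \emph{projected} operator. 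Since $V_{N_1}\subset V_{N_1+1}$, the projections satisfy $\|\mathcal{P}_{V_{N_1}}h\|\leq\|\mathcal{P}_{V_{N_1+1}}h\|$ for every $h\in L^2(\Omega,\omega)$ (Pythagoras), and applying this pointwise to $h=(\mathcal{K}^*-\overline{\lambda})g$ and then taking the minimum over the (fixed, $N_1$-independent) unit sphere of $V_{N_2}$ gives $\upsilon_{N_1,N_2}\leq\upsilon_{N_1+1,N_2}$. This establishes the monotonicity in $N_1$. It is also immediate that $\upsilon_{N_1,N_2}(\lambda,\mathcal{K})\leq\tau_{N_2}(\overline{\lambda},\mathcal{K}^*)$ for all $N_1$, since $\|\mathcal{P}_{V_{N_1}}(\mathcal{K}^*-\overline{\lambda})g\|\leq\|(\mathcal{K}^*-\overline{\lambda})g\|$ and $\tau_{N_2}(\overline{\lambda},\mathcal{K}^*)=\min_{g\in V_{N_2},\|g\|=1}\|(\mathcal{K}^*-\overline{\lambda})g\|$ (here we use that $V_{N_2}\subset\mathcal{D}(\mathcal{K}^*)$, which follows from the standing assumption that the dictionary spans a core of $\mathcal{K}^*$ together with $\psi_j$ being fixed).

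For the pointwise limit $\lim_{N_1\to\infty}\upsilon_{N_1,N_2}(\lambda,\mathcal{K})=\tau_{N_2}(\overline{\lambda},\mathcal{K}^*)$, I would fix $\lambda$ and argue as follows. Since $V_{N_2}$ is finite-dimensional, its unit sphere is compact, and the map $g\mapsto\|(\mathcal{K}^*-\overline{\lambda})g\|$ is continuous on it, so the minimum defining $\tau_{N_2}(\overline{\lambda},\mathcal{K}^*)$ is attained at some $g^\star\in V_{N_2}$, $\|g^\star\|=1$. Because $\cup_{N_K}V_{N_K}$ is dense in $L^2(\Omega,\omega)$, $\mathcal{P}_{V_{N_1}}h\to h$ strongly as $N_1\to\infty$ for every $h\in L^2(\Omega,\omega)$; in particular $\|\mathcal{P}_{V_{N_1}}(\mathcal{K}^*-\overline{\lambda})g\|\to\|(\mathcal{K}^*-\overline{\lambda})g\|$ for each fixed $g\in V_{N_2}$. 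Evaluating at $g=g^\star$ gives $\liminf_{N_1}\upsilon_{N_1,N_2}\geq\tau_{N_2}(\overline{\lambda},\mathcal{K}^*)$ is \emph{not} quite what we want directly — rather, $\upsilon_{N_1,N_2}\leq\|\mathcal{P}_{V_{N_1}}(\mathcal{K}^*-\overline{\lambda})g^\star\|\to\|(\mathcal{K}^*-\overline{\lambda})g^\star\|=\tau_{N_2}(\overline{\lambda},\mathcal{K}^*)$, so combined with the monotone increase and the uniform upper bound $\upsilon_{N_1,N_2}\leq\tau_{N_2}(\overline{\lambda},\mathcal{K}^*)$, we conclude $\upsilon_{N_1,N_2}(\lambda,\mathcal{K})\uparrow\tau_{N_2}(\overline{\lambda},\mathcal{K}^*)$. (One should double-check that strong convergence of $\mathcal{P}_{V_{N_1}}$ to the identity suffices here; since we only ever evaluate it on the \emph{fixed finite-dimensional} subspace $V_{N_2}$, the convergence $\|\mathcal{P}_{V_{N_1}}(\mathcal{K}^*-\overline{\lambda})g\|\to\|(\mathcal{K}^*-\overline{\lambda})g\|$ is actually uniform over the compact unit sphere of $V_{N_2}$ for fixed $\lambda$, which is more than enough.)

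The remaining piece is upgrading pointwise convergence in $\lambda$ to \emph{uniform} convergence on compact subsets of $\mathbb{C}$. The standard tool, exactly as in the proof of Lemma~\ref{techno2}, is Dini's theorem: we have a sequence $\upsilon_{N_1,N_2}(\cdot,\mathcal{K})$ that is monotone \emph{non-decreasing} in $N_1$, converging pointwise to the limit $\tau_{N_2}(\overline{\cdot},\mathcal{K}^*)$; to invoke Dini it suffices to check that each $\upsilon_{N_1,N_2}(\cdot,\mathcal{K})$ is continuous in $\lambda$ and that the limit $\tau_{N_2}(\overline{\cdot},\mathcal{K}^*)$ is continuous in $\lambda$. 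Continuity of $\lambda\mapsto\tau_{N_2}(\overline{\lambda},\mathcal{K}^*)$ follows from the fact that it is a minimal residual over a finite-dimensional subspace and hence Lipschitz in $\lambda$ with constant $1$ (the map $g\mapsto\|(\mathcal{K}^*-\overline{\lambda})g\|$ varies Lipschitz-continuously in $\lambda$, uniformly over $\|g\|=1$ in $V_{N_2}$, because $\|(\mathcal{K}^*-\overline{\lambda_1})g-(\mathcal{K}^*-\overline{\lambda_2})g\|=|\lambda_1-\lambda_2|\,\|g\|$). The same Lipschitz argument applies to $\upsilon_{N_1,N_2}$ since projection is a contraction. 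Then Dini's theorem gives uniform convergence on any compact subset of $\mathbb{C}$.

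I expect the main obstacle — really the only subtle point — to be the verification that the matrix-level quantity $\upsilon_{N_1,N_2}$ genuinely equals the operator-theoretic minimal residual $\min_{g\in V_{N_2},\|g\|=1}\|\mathcal{P}_{V_{N_1}}(\mathcal{K}^*-\overline{\lambda})g\|$ in the large-data limit, i.e.\ that identity~\eqref{inner_prod_est} holds with the $M_1\to\infty$ limits of the ResDMD matrices substituted in. This requires expanding $\mathcal{P}_{V_{N_1}}(\mathcal{K}^*-\overline{\lambda})g$ in the (non-orthonormal) dictionary basis: writing $\mathcal{P}_{V_{N_1}}\mathcal{K}^*g=\sum_i\psi_i c_i$ and using $\langle\mathcal{K}^*g,\psi_i\rangle=\langle g,\mathcal{K}\psi_i\rangle$ together with the Gram matrix $G=(\langle\psi_k,\psi_j\rangle)$, one finds that $\|\mathcal{P}_{V_{N_1}}\mathcal{K}^*g\|^2$ involves $(\text{cross terms})\,G^{-1}(\text{cross terms})$, which in the notation of the paper is exactly $\Psi_0^*W\Psi_1(\Psi_0^*W\Psi_0)^{-1}\Psi_1^*W\Psi_0$ in the limit $M_1\to\infty$ (using~\eqref{eq:convergenceMatrices}). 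The cross terms and the $|\lambda|^2$ term assemble into $L_{N_1}(\lambda)$ restricted to its top-left $N_2\times N_2$ block. This is the ``simple but tedious computation'' alluded to just before the lemma; once it is in place, everything else is the Dini/monotonicity machinery already used for Lemma~\ref{techno2}, applied with the roles of ``truncation index'' and ``test-function subspace'' now decoupled into $N_1$ and $N_2$.
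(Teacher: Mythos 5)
Your proof is correct and follows essentially the same route as the paper's: monotonicity of $\upsilon_{N_1,N_2}$ in $N_1$ via the nested projections, the limit $\lim_{N_1\to\infty}\|(\mathcal{P}_{V_{N_1}}-I)(\mathcal{K}^*-\overline\lambda)\mathcal{P}_{V_{N_2}}^*\|=0$ (strong convergence of $\mathcal{P}_{V_{N_1}}$ to the identity, promoted to norm convergence because it is restricted to a fixed finite-dimensional subspace), and then Dini's theorem with the Lipschitz continuity in $\lambda$. One caution on the arrangement: the $g^\star$ argument in your main paragraph only produces the one-sided inequality $\limsup_{N_1}\upsilon_{N_1,N_2}\leq\tau_{N_2}(\overline\lambda,\mathcal{K}^*)$, which you already have for free since projections are contractions; the missing $\liminf$ direction is supplied precisely by the uniform convergence over the compact unit sphere of $V_{N_2}$ that you mention only parenthetically, so that observation should be promoted from an afterthought to the main step of the pointwise-limit argument.
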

\begin{proof}
It is clear that $\upsilon_{N_1,N_2}(\lambda,{\mathcal{K}})$ is a nondecreasing function of $N_1$. Since $\mathcal{P}_{{V}_{N_1}}$ converges strongly to the identity and $({\mathcal{K}}^*-\overline{\lambda})\mathcal{P}_{{V}_{N_2}}^*$ acts on a finite-dimensional subspace, we also have that $\lim_{N_1\rightarrow\infty}\|(\mathcal{P}_{{V}_{N_1}}-I)({\mathcal{K}}^*-\overline{\lambda})\mathcal{P}_{{V}_{N_2}}^*\|=0.$ The uniform convergence of $\upsilon_{N_1,N_2}(\lambda,{\mathcal{K}})$ up to $\tau_{N_2}(\overline{\lambda},{\mathcal{K}}^*)$ on compact subsets of $\mathbb{C}$ now follows from Dini's theorem.
\end{proof}

The following theorem shows that we have inclusion and convergence. 
\begin{theorem}
\label{Other_half_pseudospectrum}
Let $\epsilon> 0$ and define $\smash{\hat\Gamma^\epsilon_{N_2,N_1}({\mathcal{K}})=\left\{\lambda\in\mathrm{Grid}(N_2):\upsilon_{N_1,N_2}(\lambda,{\mathcal{K}})+1/{N_2}\leq\epsilon\right\}}.$ Then $\lim_{N_1\rightarrow\infty}\hat\Gamma^\epsilon_{N_2,N_1}({\mathcal{K}})=\hat\Gamma^\epsilon_{N_2}({\mathcal{K}})=\left\{\lambda\in\mathrm{Grid}(N_2):\tau_{N_2}(\overline{\lambda},{\mathcal{K}}^*)+1/{N_2}\leq\epsilon\right\}=:\hat\Gamma^\epsilon_{N_2}({\mathcal{K}}).$ Moreover, we have $\hat\Gamma^\epsilon_{N_2}({\mathcal{K}})\subset\sigma_{\epsilon,\mathrm{ap}}({\mathcal{K}}^*)$ and
$
\smash{\lim_{N_2\rightarrow\infty}d_{\mathrm{AW}}(\hat\Gamma^\epsilon_{N_2}({\mathcal{K}}),\left\{\overline{\lambda}:\lambda\in\sigma_{\epsilon,\mathrm{ap}}({\mathcal{K}}^*)\right\})=0.}
$
\end{theorem}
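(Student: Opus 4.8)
\emph{Proof strategy.} The plan is to establish the three assertions of the statement in turn, reducing each one to facts about $\mathcal{K}^*$ (which is legitimate because of the standing assumption of this subsection that $\mathrm{span}\{\psi\in V_{N_K}:N_K\in\mathbb{N}\}$ is a core of $\mathcal{K}^*$) and then re-using \cref{techno2,techno3} together with the template of the proof of \cref{half_pseudospectrum}. The one structural point to keep in mind is that there are two nested limits running in \emph{opposite} directions: for fixed $N_2$ the computable quantity $\upsilon_{N_1,N_2}(\lambda,\mathcal{K})$ increases up to $\tau_{N_2}(\overline\lambda,\mathcal{K}^*)$ as $N_1\to\infty$ (\cref{techno3}), whereas $\tau_{N_2}(\overline\lambda,\mathcal{K}^*)$ decreases down to $\mathcal{J}(\mathcal{K}^*-\overline\lambda)$ as $N_2\to\infty$ (\cref{techno2} applied to $\mathcal{K}^*$). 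The regularizing term $+1/N_2$ is exactly what reconciles these opposite monotonicities so that the output never pollutes.

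\emph{The inner limit $N_1\to\infty$.} Fix $N_2$, so that $\mathrm{Grid}(N_2)$ is a finite set. For each $\lambda$ in this finite set, \cref{techno3} gives $\upsilon_{N_1,N_2}(\lambda,\mathcal{K})\le\tau_{N_2}(\overline\lambda,\mathcal{K}^*)$ for all $N_1$, with $\upsilon_{N_1,N_2}(\lambda,\mathcal{K})\uparrow\tau_{N_2}(\overline\lambda,\mathcal{K}^*)$. Hence: if $\tau_{N_2}(\overline\lambda,\mathcal{K}^*)+1/N_2\le\epsilon$ then $\upsilon_{N_1,N_2}(\lambda,\mathcal{K})+1/N_2\le\epsilon$ for every $N_1$; while if $\tau_{N_2}(\overline\lambda,\mathcal{K}^*)+1/N_2>\epsilon$ then, by the upward convergence, $\upsilon_{N_1,N_2}(\lambda,\mathcal{K})+1/N_2>\epsilon$ for all sufficiently large $N_1$. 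Taking the maximum of the finitely many thresholds over $\lambda\in\mathrm{Grid}(N_2)$ shows $\hat\Gamma^\epsilon_{N_2,N_1}(\mathcal{K})=\hat\Gamma^\epsilon_{N_2}(\mathcal{K})$ for all large $N_1$, which is the claimed limit; note the dichotomy is clean at the boundary value $\tau_{N_2}(\overline\lambda,\mathcal{K}^*)+1/N_2=\epsilon$ precisely because $\upsilon_{N_1,N_2}\le\tau_{N_2}$.

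\emph{No spectral pollution and the outer limit.} If $\lambda\in\hat\Gamma^\epsilon_{N_2}(\mathcal{K})$ then $\tau_{N_2}(\overline\lambda,\mathcal{K}^*)\le\epsilon-1/N_2<\epsilon$, so \cref{techno2} applied to $\mathcal{K}^*$ yields $\mathcal{J}(\mathcal{K}^*-\overline\lambda)\le\tau_{N_2}(\overline\lambda,\mathcal{K}^*)<\epsilon$, i.e. $\lambda\in\{\overline z:z\in\sigma_{\epsilon,\mathrm{ap}}(\mathcal{K}^*)\}$; this is the inclusion, and one half of the Attouch--Wets convergence ($\hat\Gamma^\epsilon_{N_2}(\mathcal{K})\cap B_m(0)\subset\{\overline z:z\in\sigma_{\epsilon,\mathrm{ap}}(\mathcal{K}^*)\}$) follows at once. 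For the reverse half one repeats verbatim the contradiction argument in the proof of \cref{half_pseudospectrum}, with $\mathcal{K}$ replaced by $\mathcal{K}^*$, the spectral parameter replaced by its conjugate (conjugation is an isometry of $\mathbb{C}$, hence preserves $d_{\mathrm{AW}}$), and $\tau_{N_K}$ replaced by $\tau_{N_2}(\overline\cdot,\mathcal{K}^*)+1/N_2$: given $\lambda$ with $\overline\lambda$ in the target set and grid points $z_{N_2}\in\mathrm{Grid}(N_2)$, $z_{N_2}\to\lambda$, the splitting $\tau_{N_2}(\overline{z_{N_2}},\mathcal{K}^*)\le|\tau_{N_2}(\overline{z_{N_2}},\mathcal{K}^*)-\mathcal{J}(\mathcal{K}^*-\overline{z_{N_2}})|+|\mathcal{J}(\mathcal{K}^*-\overline{z_{N_2}})-\mathcal{J}(\mathcal{K}^*-\overline\lambda)|+\mathcal{J}(\mathcal{K}^*-\overline\lambda)$, combined with the uniform convergence in \cref{techno2}, the Lipschitz continuity of $\mathcal{J}(\mathcal{K}^*-\cdot)$, and $1/N_2\to0$, forces $\tau_{N_2}(\overline{z_{N_2}},\mathcal{K}^*)+1/N_2<\epsilon$ for large $N_2$, so $z_{N_2}\in\hat\Gamma^\epsilon_{N_2}(\mathcal{K})$, and the contradiction closes exactly as before.

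\emph{The main obstacle.} The only genuinely non-routine ingredient is calibrating the buffer $+1/N_2$ against the two nested limits. Using the bare strict inequality $\upsilon_{N_1,N_2}<\epsilon$ (as in \cref{half_pseudospectrum}) would cost nothing in the limit $N_2\to\infty$, but it would not let us certify $\hat\Gamma^\epsilon_{N_2}(\mathcal{K})\subset\{\overline z:z\in\sigma_{\epsilon,\mathrm{ap}}(\mathcal{K}^*)\}$ from the computable $\upsilon_{N_1,N_2}$, since $\upsilon_{N_1,N_2}$ only underestimates $\tau_{N_2}(\overline\lambda,\mathcal{K}^*)$; conversely too large a buffer would shrink the output below the full pseudospectrum. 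The choice $1/N_2$ (in fact any positive sequence tending to $0$ that eventually dominates the approximation error of $\upsilon_{N_1,N_2}$ for $\tau_{N_2}$ works) is summable-to-zero, so it does not obstruct convergence, yet it guarantees $\tau_{N_2}(\overline\lambda,\mathcal{K}^*)<\epsilon$ on the output; this is the mechanism already flagged in the remark following \cref{half_pseudospectrum}, and making it interact correctly with the two-layer limit structure is the crux of the argument.
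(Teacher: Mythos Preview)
Your proposal is correct and follows essentially the same approach as the paper: both use \cref{techno3} to settle the inner limit $N_1\to\infty$ via the monotone dichotomy on the finite grid, and both defer the outer limit and the no-pollution inclusion to the argument of \cref{half_pseudospectrum} applied to $\mathcal{K}^*$ (with the conjugation and the $+1/N_2$ buffer). Your discussion of the role of $1/N_2$ is somewhat more detailed than the paper's parenthetical remark, but the mechanism you identify---that $\upsilon_{N_1,N_2}$ only underestimates $\tau_{N_2}(\overline\lambda,\mathcal{K}^*)$, so one needs the buffer together with ``$\le$'' to get both a stable inner limit and a strict inequality $\tau_{N_2}<\epsilon$ for the inclusion---is exactly the point.
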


\begin{proof}
We prove that $\hat\Gamma^\epsilon_{N_2}({\mathcal{K}})=\lim_{N_1\rightarrow\infty}\hat\Gamma^\epsilon_{N_2,N_1}({\mathcal{K}})$ exists and is given by $\{\lambda\in\mathrm{Grid}(N_2):\tau_{N_2}(\overline{\lambda},{\mathcal{K}}^*)+1/{N_2}\leq\epsilon\}$. The rest of the proof carries over from~\cref{half_pseudospectrum} and its proof with minor modifications. (The additional term $N_2^{-1}$ is needed to ensure convergence to the pseudospectrum in the second limit - it is added so that we can use ``$\leq$'' instead of ``$<$'' in the definition of $\hat\Gamma^\epsilon_{N_2,N_1}$ to ensure the existence of the first limit via the following argument.) Fix $\lambda\in \mathrm{Grid}(N_2)$. If $\tau_{N_2}(\overline{\lambda},{\mathcal{K}}^*)+N_2^{-1}\leq\epsilon$, then by Lemma \ref{techno3}, $\upsilon_{N_1,N_2}(\lambda,{\mathcal{K}})+N_2^{-1}\leq\epsilon$. It follows that
$\hat\Gamma^\epsilon_{N_2}({\mathcal{K}})\subset \hat\Gamma^\epsilon_{N_2,N_1}({\mathcal{K}}).$ If $\tau_{N_2}(\overline{\lambda},{\mathcal{K}}^*)+N_2^{-1}>\epsilon$, then since $\lim_{N_1\rightarrow\infty}\upsilon_{N_1,N_2}(\lambda,{\mathcal{K}})=\tau_{N_2}(\overline{\lambda},{\mathcal{K}}^*)$, $\upsilon_{N_1,N_2}(\lambda,{\mathcal{K}})+N_2^{-1}>\epsilon$ for large $N_1$. The theorem now follows.
\end{proof}

Combining \cref{half_pseudospectrum} and \cref{Other_half_pseudospectrum}, we have the following corollary that shows how the full spectrum and pseudospectrum can be recovered.

\begin{corollary}
For any $\epsilon>0$,
$$
\lim_{N_2\rightarrow\infty} \lim_{N_1\rightarrow\infty}\left[\Gamma^\epsilon_{N_2}({\mathcal{K}})\cup\hat\Gamma^\epsilon_{N_2,N_1}({\mathcal{K}})\right]=\sigma_{\epsilon}({\mathcal{K}}).
$$
Additionally,
$$
\lim_{\epsilon\downarrow 0}\lim_{N_2\rightarrow\infty} \lim_{N_1\rightarrow\infty}\left[\Gamma^\epsilon_{N_2}({\mathcal{K}})\cup\hat\Gamma^\epsilon_{N_2,N_1}({\mathcal{K}})\right]=\sigma({\mathcal{K}}),
$$
where the convergence holds in the Attouch--Wets topology.
\end{corollary}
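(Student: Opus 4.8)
The plan is to peel off the three limits one at a time, using \cref{half_pseudospectrum,Other_half_pseudospectrum} as black boxes and only soft properties of the Attouch--Wets metric. Fix $\epsilon>0$ and first dispose of the innermost limit in $N_1$. For fixed $N_2$ the set $\hat\Gamma^\epsilon_{N_2,N_1}(\mathcal{K})$ is a subset of the \emph{finite} grid $\mathrm{Grid}(N_2)$, and by \cref{techno3} each $\upsilon_{N_1,N_2}(\lambda,\mathcal{K})$ is non-decreasing in $N_1$ and converges up to $\tau_{N_2}(\overline{\lambda},\mathcal{K}^*)$. Hence, exactly as in the proof of \cref{Other_half_pseudospectrum}, for every grid point the truth value of the condition $\upsilon_{N_1,N_2}(\lambda,\mathcal{K})+1/N_2\le\epsilon$ stabilizes, and since the grid is finite there is an $N_1^*$ beyond which $\hat\Gamma^\epsilon_{N_2,N_1}(\mathcal{K})=\hat\Gamma^\epsilon_{N_2}(\mathcal{K})$. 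As $\Gamma^\epsilon_{N_2}(\mathcal{K})$ does not depend on $N_1$, this gives $\lim_{N_1\rightarrow\infty}[\Gamma^\epsilon_{N_2}(\mathcal{K})\cup\hat\Gamma^\epsilon_{N_2,N_1}(\mathcal{K})]=\Gamma^\epsilon_{N_2}(\mathcal{K})\cup\hat\Gamma^\epsilon_{N_2}(\mathcal{K})$, the convergence being eventually exact and hence valid in the Attouch--Wets topology.

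Next I take the limit in $N_2$. The key soft fact is that finite unions are $d_{\mathrm{AW}}$-continuous: since $\mathrm{dist}(x,C\cup D)=\min\{\mathrm{dist}(x,C),\mathrm{dist}(x,D)\}$ and $|\min(a,b)-\min(c,d)|\le\max(|a-c|,|b-d|)\le|a-c|+|b-d|$, one reads off from \eqref{eq:Attouch-Wets} the bound $d_{\mathrm{AW}}(C_1\cup D_1,C_2\cup D_2)\le d_{\mathrm{AW}}(C_1,C_2)+d_{\mathrm{AW}}(D_1,D_2)$. Applying this with \cref{half_pseudospectrum} ($\Gamma^\epsilon_{N_2}(\mathcal{K})\subset\sigma_{\epsilon,\mathrm{ap}}(\mathcal{K})$ and $d_{\mathrm{AW}}(\Gamma^\epsilon_{N_2}(\mathcal{K}),\sigma_{\epsilon,\mathrm{ap}}(\mathcal{K}))\rightarrow0$) and \cref{Other_half_pseudospectrum} ($\hat\Gamma^\epsilon_{N_2}(\mathcal{K})\subset\{\overline{\lambda}:\lambda\in\sigma_{\epsilon,\mathrm{ap}}(\mathcal{K}^*)\}$ and $d_{\mathrm{AW}}(\hat\Gamma^\epsilon_{N_2}(\mathcal{K}),\{\overline{\lambda}:\lambda\in\sigma_{\epsilon,\mathrm{ap}}(\mathcal{K}^*)\})\rightarrow0$) yields
\[
\lim_{N_2\rightarrow\infty}\lim_{N_1\rightarrow\infty}\left[\Gamma^\epsilon_{N_2}(\mathcal{K})\cup\hat\Gamma^\epsilon_{N_2,N_1}(\mathcal{K})\right]=\sigma_{\epsilon,\mathrm{ap}}(\mathcal{K})\cup\{\overline{\lambda}:\lambda\in\sigma_{\epsilon,\mathrm{ap}}(\mathcal{K}^*)\}=\sigma_\epsilon(\mathcal{K}),
\]
the last equality being the identity recorded just before \cref{Other_half_pseudospectrum}. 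Degenerate situations in which one of the two target sets is empty are handled by treating $\emptyset$ as an isolated point of $\mathrm{Cl}(\mathbb{C})$, exactly as in the proofs of \cref{half_pseudospectrum,Other_half_pseudospectrum}: then the corresponding approximants are eventually empty and the union reduces to the other term. This establishes the first displayed identity of the corollary.

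For the second identity it remains to show $\lim_{\epsilon\downarrow0}\sigma_\epsilon(\mathcal{K})=\sigma(\mathcal{K})$ in the Attouch--Wets topology. From the definition $\sigma_\epsilon(\mathcal{K})=\mathrm{cl}\{\lambda:\|(\mathcal{K}-\lambda)^{-1}\|>1/\epsilon\}$ the family $\{\sigma_\epsilon(\mathcal{K})\}_{\epsilon>0}$ is nested and decreasing as $\epsilon\downarrow0$; moreover $\sigma(\mathcal{K})\subset\sigma_\epsilon(\mathcal{K})$ for all $\epsilon$, while for $\lambda\notin\sigma(\mathcal{K})$ one has $\|(\mathcal{K}-\lambda)^{-1}\|<\infty$, so $\lambda\notin\sigma_\epsilon(\mathcal{K})$ once $\epsilon<\|(\mathcal{K}-\lambda)^{-1}\|^{-1}$; hence $\bigcap_{\epsilon>0}\sigma_\epsilon(\mathcal{K})=\sigma(\mathcal{K})$. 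A decreasing sequence of closed sets with nonempty intersection converges in the Attouch--Wets metric to that intersection — because for such a sequence $\mathrm{dist}(x,\sigma_{\epsilon}(\mathcal{K}))$ increases, and in fact uniformly on compact sets, to $\mathrm{dist}(x,\sigma(\mathcal{K}))$ (a short compactness argument) — so choosing any $\epsilon_k\downarrow0$ completes the proof; the case $\sigma(\mathcal{K})=\emptyset$ is again covered by the isolated-point convention.

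I expect the only genuine subtlety to be bookkeeping: verifying that $d_{\mathrm{AW}}$-convergence survives finite unions and decreasing intersections, keeping the three iterated limits in the correct order, and handling the empty-set conventions. All of the analytic content — spectral inclusion without pollution, uniform convergence of the residual functions, and the surrogate pencil $L_{N_1}(\lambda)$ for $\mathcal{K}^*$ — is already packaged in \cref{half_pseudospectrum,Other_half_pseudospectrum} and the identity $\|(\mathcal{K}-\lambda)^{-1}\|^{-1}=\min\{\mathcal{J}(\mathcal{K}-\lambda),\mathcal{J}(\mathcal{K}^*-\overline{\lambda})\}$, so nothing further needs to be proved from scratch.
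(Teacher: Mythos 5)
Your proposal is correct, and in spirit it is exactly what the paper intends (the corollary is stated without an explicit proof and is meant to follow by "combining" \cref{half_pseudospectrum} and \cref{Other_half_pseudospectrum}). Your reduction of the $N_1$-limit to eventual exact stabilization on the finite grid $\mathrm{Grid}(N_2)$, your observation that the Attouch--Wets metric is subadditive under finite unions (using $\mathrm{dist}(x,C\cup D)=\min\{\mathrm{dist}(x,C),\mathrm{dist}(x,D)\}$ together with $\min\{1,a+b\}\le\min\{1,a\}+\min\{1,b\}$), and your Dini/compactness argument showing that a decreasing family of closed sets with nonempty intersection converges in $d_{\mathrm{AW}}$ to that intersection, are all sound and fill in the bookkeeping the paper leaves implicit. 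Incidentally, you implicitly correct a small inconsistency in the statement of \cref{Other_half_pseudospectrum}, which asserts both $\hat\Gamma^\epsilon_{N_2}(\mathcal{K})\subset\sigma_{\epsilon,\mathrm{ap}}(\mathcal{K}^*)$ and $d_{\mathrm{AW}}$-convergence to $\{\overline{\lambda}:\lambda\in\sigma_{\epsilon,\mathrm{ap}}(\mathcal{K}^*)\}$ (the conjugate should appear in both); you use the conjugated version, which is the one consistent with the identity $\sigma_\epsilon(\mathcal{K})=\sigma_{\epsilon,\mathrm{ap}}(\mathcal{K})\cup\{\overline{\lambda}:\lambda\in\sigma_{\epsilon,\mathrm{ap}}(\mathcal{K}^*)\}$.

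One minor caveat: the final remark that "the case $\sigma(\mathcal{K})=\emptyset$ is again covered by the isolated-point convention" is not quite accurate. If $\sigma(\mathcal{K})=\emptyset$ but $\sigma_\epsilon(\mathcal{K})\neq\emptyset$ for every $\epsilon>0$ (which can happen for unbounded operators), then $\sigma_\epsilon(\mathcal{K})$ is never eventually empty and cannot converge to $\emptyset$ as an isolated point, so your decreasing-intersection argument genuinely requires $\sigma(\mathcal{K})\neq\emptyset$. This is, however, a degenerate edge case that equally affects the paper's statement; for the Koopman operators actually treated here (bounded, or isometric), the spectrum is nonempty and your argument goes through.
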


As a final remark, the reader may notice that a few of our algorithms require us to take several parameters successively to infinity. These limits do not generally commute, and it may be impossible to rewrite them in fewer limits. This is a generic feature of infinite-dimensional spectral problems~\cite{colbrookthesis} and has given rise to the Solvability Complexity Index~\cite{hansen2011solvability,colbrook2022foundations}. We do not go into the details but there are many open questions on the foundations of computing spectral properties of Koopman operators.

\section{Computing rational kernel convolutions with error control}
\label{ap_sec:inner_prod_err_control}
This section shows that the required convolutions with rational kernels can be computed rigorously with error control. Recall from~\eqref{unit_res_comp} that there are two types of inner products that we need to compute: (i) $\langle g,(\mathcal{K}-\lambda)^{-1}g \rangle$, and (ii) $\langle (\mathcal{K}-\lambda)^{-1}g,\mathcal{K}^*g \rangle$ for some observable $g$. Here, we show that the inner products can be computed with error control via the right-hand sides of \eqref{eq:ra_kern_inner_prod_type1} and \eqref{eq:ra_kern_inner_prod_type2} by adaptively selecting the truncation size $N_K$ for a fixed smoothing parameter $\epsilon$. Without loss of generality, we assume that $g$ is normalized so that $\|g\|=1$ and we denote $\tilde g_{N_K}=\Psi \pmb{a}$. To compute the error bounds in the large data limit, we also use the fact that
$$
h_{N_K}=\sum_{j=1}^{N_K}\left[(\Psi_X^*W\Psi_Y-\lambda \Psi_X^*W\Psi_X)^{-1}\Psi_X^*W\Psi_X \pmb{a}\right]_j\psi_j=\Psi\pmb{h}\approx(\mathcal{P}_{{V}_N}{\mathcal{K}}\mathcal{P}_{{V}_N}^*-\lambda I_N)^{-1} g_{N_K}.
$$
The following theorem shows that we can bound the error of using the right-hand sides of \eqref{eq:ra_kern_inner_prod_type1} and \eqref{eq:ra_kern_inner_prod_type2} to approximate the desired inner products.
\begin{theorem}
\label{inner_prod_error_cont}
Let $\lambda\in\mathbb{C}$ with $|\lambda|>1$. Consider the setup of \cref{sec:comput_res} with $\|g\|=1$ and let
\begin{align*}
\delta_1({N_K})&=\|g-g_{N_K}\|,\\
\delta_2({N_K})&=\|(\mathcal{K}-\lambda)(\mathcal{P}_{{V}_{N_K}}{\mathcal{K}}\mathcal{P}_{{V}_{N_K}}^*-\lambda I_{N_K})^{-1}g_{N_K}- g_{N_K}\|,\\
\delta_3({N_K})&=\|(\mathcal{P}_{{V}_{N_K}}{\mathcal{K}}\mathcal{P}_{{V}_{N_K}}^*-\lambda I_{N_K})^{-1} g_{N_K}\|.
\end{align*}
Then, $\lim_{{N_K}\rightarrow\infty}\delta_1({N_K})=\lim_{{N_K}\rightarrow\infty}\delta_2({N_K})=0$ and $\delta_3({N_K})$ remains bounded as ${N_K}\rightarrow\infty$. Moreover,
\begin{align*}
\left|\left\langle g_{N_K},(\mathcal{P}_{{V}_{N_K}}{\mathcal{K}}\mathcal{P}_{{V}_{N_K}}^*-\lambda I_{N_K})^{-1} g_{N_K}\right\rangle-\left\langle g,(\mathcal{K}-\lambda)^{-1}g\right\rangle\right|&\leq\frac{\delta_1({N_K})+\delta_2({N_K})}{|\lambda|-1}+\delta_1({N_K})\delta_3({N_K}),\\
\left|\left\langle (\mathcal{P}_{{V}_{N_K}}{\mathcal{K}}\mathcal{P}_{{V}_{N_K}}^*-\lambda I_{N_K})^{-1} g_{N_K},\mathcal{K}^* g_{N_K}\right\rangle-\left\langle (\mathcal{K}-\lambda)^{-1}g,\mathcal{K}^*g\right\rangle\right|&\leq\frac{\delta_1({N_K})+\delta_2({N_K})}{|\lambda|-1}+\delta_1({N_K})\delta_3({N_K}).
\end{align*}
Assuming~\eqref{eq:convergenceMatrices}, $\delta_2({N_K})$ and $\delta_3({N_K})$ can be computed in the large data limit via
\begin{align}
\delta_2({N_K})^2&=\lim_{M\rightarrow\infty}\pmb{h}^*\Psi_X^*W\Psi_X\pmb{h}\cdot\mathrm{res}(\lambda,h_{N_K})^2-2\mathrm{Re}\big(\pmb{a}^*(\Psi_X^*W\Psi_Y-\lambda \Psi_X^*W\Psi_X)\pmb{h}\big)+\pmb{a}^*\Psi_X^*W\Psi_X\pmb{a},\label{compute11}\\
\delta_3({N_K})^2&=\lim_{M\rightarrow\infty}\pmb{h}^*\Psi_X^*W\Psi_X\pmb{h}.\label{compute12}
\end{align}
\end{theorem}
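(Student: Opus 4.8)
The plan is to reduce the theorem to three ingredients: the uniform resolvent bound $\|(\mathcal{K}-\lambda)^{-1}\|\le(|\lambda|-1)^{-1}$, valid because $\mathcal{K}$ is an isometry and $|\lambda|>1=\|\mathcal{K}\|$; the strong resolvent convergence of the Galerkin truncations established in \cref{res_square_truncate}; and the expansion of $L^2(\Omega,\omega)$ inner products in dictionary coordinates together with the large-data convergence \eqref{eq:convergenceMatrices}. Throughout I would write $R=(\mathcal{K}-\lambda)^{-1}$, $R_{N_K}=(\mathcal{P}_{V_{N_K}}\mathcal{K}\mathcal{P}_{V_{N_K}}^*-\lambda I_{N_K})^{-1}$, $h_{N_K}=R_{N_K}g_{N_K}=\Psi\pmb{h}$, $w=g-g_{N_K}$, and $e=(\mathcal{K}-\lambda)h_{N_K}-g_{N_K}$, so that $\|w\|=\delta_1(N_K)$, $\|e\|=\delta_2(N_K)$ and $\|h_{N_K}\|=\delta_3(N_K)$.

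First I would dispatch the qualitative statements. Density of $\bigcup_{N_K}V_{N_K}$ in $L^2(\Omega,\omega)$ gives $g_{N_K}=\mathcal{P}_{V_{N_K}}g\to g$, so $\delta_1(N_K)\to 0$. Since $\|\mathcal{P}_{V_{N_K}}\mathcal{K}\mathcal{P}_{V_{N_K}}^*\|\le\|\mathcal{K}\|=1<|\lambda|$, a Neumann series bounds $\|R_{N_K}\|\le(|\lambda|-1)^{-1}$, whence $\delta_3(N_K)=\|R_{N_K}g_{N_K}\|\le(|\lambda|-1)^{-1}$ stays bounded; and \cref{res_square_truncate} gives $h_{N_K}\to Rg$, so boundedness of $\mathcal{K}$ yields $(\mathcal{K}-\lambda)h_{N_K}\to(\mathcal{K}-\lambda)Rg=g$ while $g_{N_K}\to g$, forcing $\delta_2(N_K)=\|(\mathcal{K}-\lambda)h_{N_K}-g_{N_K}\|\to 0$.

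For the two error inequalities, the single algebraic identity I need is $h_{N_K}-Rg=R\big((\mathcal{K}-\lambda)h_{N_K}-g\big)=R(e-w)$, which uses $R(\mathcal{K}-\lambda)h_{N_K}=h_{N_K}$ (legitimate since $h_{N_K}\in V_{N_K}\subset\mathcal{D}(\mathcal{K})$ and $\mathcal{K}$ is bounded); the resolvent bound then gives $\|h_{N_K}-Rg\|\le(\delta_1+\delta_2)/(|\lambda|-1)$. For the first inner product I split $\langle g_{N_K},h_{N_K}\rangle-\langle g,Rg\rangle=\langle g_{N_K}-g,h_{N_K}\rangle+\langle g,h_{N_K}-Rg\rangle$ and bound the two terms by $\|w\|\,\|h_{N_K}\|=\delta_1\delta_3$ and $\|g\|\,\|h_{N_K}-Rg\|\le(\delta_1+\delta_2)/(|\lambda|-1)$ via Cauchy--Schwarz and $\|g\|=1$. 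For the second, I would rewrite $\langle(\mathcal{K}-\lambda)^{-1}g,\mathcal{K}^*g\rangle=\langle\mathcal{K}Rg,g\rangle$ and $\langle h_{N_K},\mathcal{K}^*g_{N_K}\rangle=\langle\mathcal{K}h_{N_K},g_{N_K}\rangle$, then split $\langle\mathcal{K}h_{N_K},g_{N_K}\rangle-\langle\mathcal{K}Rg,g\rangle=\langle\mathcal{K}h_{N_K},g_{N_K}-g\rangle+\langle\mathcal{K}(h_{N_K}-Rg),g\rangle$ and use $\|\mathcal{K}\|=1$ to obtain the same two bounds $\delta_1\delta_3$ and $(\delta_1+\delta_2)/(|\lambda|-1)$.

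Finally, for \eqref{compute11} and \eqref{compute12} I would expand in dictionary coordinates: with $G_{jk}=\langle\psi_k,\psi_j\rangle$ and $A_{jk}=\langle\mathcal{K}\psi_k,\psi_j\rangle$, the $M_1\to\infty$ limits of $\Psi_0^*W\Psi_0$ and $\Psi_0^*W\Psi_1$ by \eqref{eq:convergenceMatrices}, and with $g_{N_K}$ approximated by $\Psi\pmb{a}$, one has $\|h_{N_K}\|^2=\pmb{h}^*G\pmb{h}$, $\langle(\mathcal{K}-\lambda)h_{N_K},g_{N_K}\rangle=\pmb{a}^*(A-\lambda G)\pmb{h}$, $\|g_{N_K}\|^2=\pmb{a}^*G\pmb{a}$, and $\|(\mathcal{K}-\lambda)h_{N_K}\|^2=\|h_{N_K}\|^2\,\mathrm{res}(\lambda,h_{N_K})^2$ with $\mathrm{res}$ the ResDMD residual of \eqref{eq:abs_res}. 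Substituting into $\delta_3^2=\|h_{N_K}\|^2$ and $\delta_2^2=\|(\mathcal{K}-\lambda)h_{N_K}\|^2-2\mathrm{Re}\langle(\mathcal{K}-\lambda)h_{N_K},g_{N_K}\rangle+\|g_{N_K}\|^2$ then yields the stated formulas. None of the steps is deep; the only external input is the strong resolvent convergence of \cref{res_square_truncate} (powering $\delta_2\to0$), and the part needing the most care is tracking which quantities are genuinely data-accessible, i.e.\ expressible through the ResDMD matrices and residual in the large-data limit.
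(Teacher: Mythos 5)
Your argument is correct and follows essentially the same route as the paper: resolvent bound $\|(\mathcal{K}-\lambda)^{-1}\|\le(|\lambda|-1)^{-1}$, strong resolvent convergence from \cref{res_square_truncate} for the qualitative limits, the same two-term splits of the inner products with Cauchy--Schwarz, and coordinate expansion for \eqref{compute11}--\eqref{compute12}. Your explicit identity $h_{N_K}-(\mathcal{K}-\lambda)^{-1}g=(\mathcal{K}-\lambda)^{-1}(e-w)$ and the adjoint move $\langle h_{N_K},\mathcal{K}^*g_{N_K}\rangle=\langle\mathcal{K}h_{N_K},g_{N_K}\rangle$ are cosmetic rephrasings of the paper's estimates, not a different method.
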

\begin{proof}
First, by assumption, $\lim_{{N_K}\rightarrow\infty}\delta_1({N_K})=0$. Using this, together with \cref{res_square_truncate} and the fact that $\mathcal{K}$ is bounded, it follows that $\lim_{{N_K}\rightarrow\infty}\delta_2({N_K})=0$. Similarly, $\delta_3({N_K})$ remains bounded as ${N_K}\rightarrow\infty$. Since $|\lambda|>1$ and $\|\mathcal{K}\|=1$, the arguments in \cref{sec:stability_argument} show that $\|(\mathcal{K}-\lambda)^{-1}\|\leq {1}/({|\lambda|-1}).$ It follows that
\begin{align*}
\|(\mathcal{P}_{{V}_{N_K}}{\mathcal{K}}\mathcal{P}_{{V}_{N_K}}^*-\lambda I_{N_K})^{-1} g_{N_K}-(\mathcal{K}-\lambda)^{-1}g\|&\leq \frac{\|(\mathcal{K}-\lambda)(\mathcal{P}_{{V}_{N_K}}{\mathcal{K}}\mathcal{P}_{{V}_{N_K}}^*-\lambda I_{N_K})^{-1} g_{N_K}-g\|}{|\lambda|-1}\\
&\hspace{-10mm}\leq \frac{\|(\mathcal{K}-\lambda)(\mathcal{P}_{{V}_{N_K}}{\mathcal{K}}\mathcal{P}_{{V}_{N_K}}^*-\lambda I_{N_K})^{-1} g_{N_K}- g_{N_K}\|+\|g- g_{N_K}\|}{|\lambda|-1}\\
&\hspace{-10mm}\leq \frac{\delta_1({N_K})+\delta_2({N_K})}{|\lambda|-1}.
\end{align*}
We now re-write the inner product
\begin{align*}
\left\langle g_{N_K},(\mathcal{P}_{{V}_{N_K}}{\mathcal{K}}\mathcal{P}_{{V}_{N_K}}^*-\lambda I_{N_K})^{-1} g_{N_K}\right\rangle=&\left\langle g,(\mathcal{K}-\lambda)^{-1}g\right\rangle+\left\langle g_{N_K}-g,(\mathcal{P}_{{V}_{N_K}}{\mathcal{K}}\mathcal{P}_{{V}_{N_K}}^*-\lambda I_{N_K})^{-1} g_{N_K}\right\rangle\\
&+\left\langle g,(\mathcal{P}_{{V}_{N_K}}{\mathcal{K}}\mathcal{P}_{{V}_{N_K}}^*-\lambda I_{N_K})^{-1} g_{N_K}-(\mathcal{K}-\lambda)^{-1}g\right\rangle.
\end{align*}
Since $\|g\|=1$, it follows by the Cauchy--Schwarz inequality that
\begin{equation*}
\begin{split}
\left|\left\langle g_{N_K},(\mathcal{P}_{{V}_{N_K}}{\mathcal{K}}\mathcal{P}_{{V}_{N_K}}^*-\lambda I_{N_K})^{-1} g_{N_K}\right\rangle-\left\langle g,(\mathcal{K}-\lambda)^{-1}g\right\rangle\right|&\leq \frac{\delta_1({N_K})+\delta_2({N_K})}{|\lambda|-1}\\
&\quad+\delta_1({N_K})\left\|(\mathcal{P}_{{V}_{N_K}}{\mathcal{K}}\mathcal{P}_{{V}_{N_K}}^*-\lambda I_{N_K})^{-1}g_{N_K}\right\|.
\end{split}
\end{equation*}
Similarly, since $\|\mathcal{K}^*\|\leq 1$, we obtain
\begin{equation*}
\begin{split}
\left|\left\langle (\mathcal{P}_{{V}_{N_K}}{\mathcal{K}}\mathcal{P}_{{V}_{N_K}}^*-\lambda I_{N_K})^{-1} g_{N_K},\mathcal{K}^* g_{N_K}\right\rangle-\left\langle (\mathcal{K}-\lambda)^{-1}g,\mathcal{K}^*g\right\rangle\right|&\leq \frac{\delta_1({N_K})+\delta_2({N_K})}{|\lambda|-1}\\
&\hspace{-5mm}+\delta_1({N_K})\left\|(\mathcal{P}_{{V}_{N_K}}{\mathcal{K}}\mathcal{P}_{{V}_{N_K}}^*-\lambda I_{N_K})^{-1} g_{N_K}\right\|.
\end{split}
\end{equation*}
This proves the upper bounds of the theorem. The equalities in \eqref{compute11} and \eqref{compute12} follow from expanding the inner products and from the definition of $\pmb{h}$.
\end{proof}

From~\cref{inner_prod_error_cont} we see that if $\delta_1({N_K})$ is known, then we can compute the inner products $\langle g,(\mathcal{K}-\lambda)^{-1}g \rangle$ and $\langle (\mathcal{K}-\lambda)^{-1}g,\mathcal{K}^*g \rangle$ to any desired accuracy by adaptively selecting ${N_K}$. Moreover, the error bounds in \cref{inner_prod_error_cont} can be computed in $\mathcal{O}(N_K^2)$ operations without the generalized Schur decomposition. In practice, we increase $N_K$, keeping track of the computed inner product until convergence is apparent.

\linespread{0.9}\selectfont{}
\small
\bibliographystyle{abbrv}
\bibliography{bib_file}
\end{document}